\newcommand{\bea}{\begin{eqnarray}}
\newcommand{\eea}{\end{eqnarray}}
\def\beaa{\begin{eqnarray*}}
\def\eeaa{\end{eqnarray*}}
\def\ba{\begin{array}}
\def\ea{\end{array}}
\def\be#1{\begin{equation} \label{#1}}
\def \eeq{\end{equation}}
\def\lab{\label}
\newcommand{\nn}{\nonumber}
\newcommand\pa{\partial}
\def\a{{\alpha}}
\def\b{{\beta}}
\def\be{{\beta}}
\def\ga{\gamma}
\def\de{\delta}
\def\De{\Delta}
\def\ep{\epsilon}
\def\eps{\epsilon}
\def\la{\lambda}
\def\si{\sigma}
\def\Si{\Sigma}
\def\om{\omega}
\def\nab{\nabla}
\def\pr{{\partial}}
\def\les{\lesssim}
\def\c{\cdot}
\def\CC{{\mathcal C}}
\def\MM{{\mathcal M}}
\def\EE{{\mathcal E}}
\def\HH{{\mathcal H}}
\def\LL{{\mathcal L}}
\def\SS{{\mathcal S}}
\def\PP{{\mathcal P}}
\def\A{{\bf A}}
\def\D{{\bf D}}
\def\F{{\bf F}}
\def\J{{\bf J}}
\def\L{{\bf L}}
\def\O{{\bf O}}
\def\R{{\bf R}}
\def\U{{\bf U}}
\def\g{{\bf g}}
\def\m{{\bf m}}
\def\SSS{{\mathbb S}}
\def\RRR{{\mathbb R}}
\def\f12{{\frac 1 2}}
\def\dual{{\,\,^*}}
\def\div{{\mbox div\,}}
\def\curl{{\mbox curl\,}}
\def\lot{\mbox{ l.o.t.}}
\def\Lb{{\,\underline{L}}}
\def\tr{\mbox{tr}}
\def\f{\widetilde{f}}
\def\lap{\De}
\def\Da{{^{(\A)}\hskip-.15 pc \D}}
\newcommand{\nabb}{{\bf \nab} \mkern-13mu /\,}
\providecommand{\norm}[1]{\lVert#1\rVert}
\newcommand{\lsit}[1]{L^{\infty}_tL^{#1}(\Si_t)}
\newcommand{\lt}[1]{L^{#1}(\Si_t)}
\newcommand{\lsitt}[2]{L^{#1}_tL^{#2}(\Si_t)}
\newcommand{\lu}[1]{L^\infty_uL^{#1}(\mathcal{H}_u)}
\newcommand{\luom}[1]{L^\infty_{\uom}L^{#1}(\mathcal{H}_{\uom})}
\def\uom{{ \, \, ^{\om}  u}}
\def\Lom{ {\,\,  ^{\om}  L}}
\def\Nom{ {\,\,  ^{\om}  N}}
\def\bom{{ \, \, ^{\om}  b}}
\def\eom{{ \, \, ^{\om}  e}}
\def\uoms{{ \, \, ^{\om,s}  u}}
\def\prb{\boldsymbol{\pr}}
\newtheorem{theorem}{Theorem}[section]
\newtheorem{lemma}[theorem]{Lemma}
\newtheorem{proposition}[theorem]{Proposition}
\newtheorem{corollary}[theorem]{Corollary}
\newtheorem{definition}[theorem]{Definition}
\newtheorem{remark}[theorem]{Remark}
\numberwithin{equation}{section}
\begin{document}

\author{Sergiu Klainerman}
\address{Department of Mathematics, Princeton University,
 Princeton NJ 08544}
\email{ seri@math.princeton.edu}
\title[The Bounded $L^2$ Curvature Conjecture]{The Bounded $L^2$ Curvature Conjecture}

\author{Igor Rodnianski}
\address{Department of Mathematics, Princeton University, 
Princeton NJ 08544}
\email{ irod@math.princeton.edu}
\subjclass{35J10\newline\newline
}
\author{Jeremie Szeftel}
\address{DMA, Ecole Normale Superieure, Paris 75005}
\email{Jeremie.Szeftel@ens.fr}
\thanks{The third author is supported by the project ERC 291214 BLOWDISOL}
\vspace{-0.3in}
\maketitle
\begin{abstract}
 This is the main  paper    in a sequence  in which we 
   give a complete  proof of the   bounded $L^2$ curvature conjecture.
     More precisely we show that  the   time
    of existence   of a classical solution to    the Einstein-vacuum
     equations  depends   only on the  $L^2$-norm
      of the  curvature    and  a lower bound on the volume radius   of the corresponding  initial data set.  We note that  though  the result is not  optimal   with respect to the    scaling  of the Einstein equations, it is  nevertheless critical   with respect  to   its causal   geometry. Indeed, $L^2 $ bounds on the curvature    is     the minimum  requirement   necessary to  obtain lower bounds on   the radius of  injectivity of  causal  boundaries.   We note also  that, while   the first   nontrivial improvements  for well posedness     for quasilinear  hyperbolic systems in spacetime dimensions greater than $1+1$ (based on Strichartz estimates)  were obtained in   \cite{Ba-Ch1}, \cite{Ba-Ch2}, \cite{Ta1}, \cite{Ta2},   \cite{Kl-R1}  and optimized in      \cite{Kl-R2},  \cite{Sm-Ta},    the  result   we present  here   is the first   in which the  full    structure of the quasilinear hyperbolic   system, not just its principal part,  plays  a  crucial  role.  
      
To achieve our goals  we  recast the Einstein vacuum equations as a   quasilinear  $so(3,1)$-valued  Yang-Mills    theory and introduce a Coulomb type  gauge condition in   which  the equations    exhibit  a    specific new type of   \textit{null   structure} compatible with  the quasilinear, covariant nature of the equations.  To prove the  conjecture 
      we formulate and establish    bilinear  and trilinear estimates on  rough backgrounds which allow us to make use of that crucial structure. These require  a careful construction and control  of parametrices  including   $L^2$ error
       bounds    which   is  carried      out in \cite{param1}-\cite{param4}, as well as a proof of sharp Strichartz estimates for the wave equation on a rough background which is carried out in \cite{bil2}.  
       It is at this level     that      our  problem is critical.        Indeed,  any known notion of a parametrix  relies in an essential way on 
       the eikonal equation, and       our space-time possesses, barely,  the minimal regularity needed to make sense of   its  solutions. 
           \end{abstract}
     
\section{Introduction}
      
      This is the main   in a sequence of papers in which we 
   give a complete  proof of the   bounded $L^2$ curvature conjecture.
   According to  the conjecture  
     the   time    of existence   of a classical solution to    the Einstein-vacuum
     equations  depends   only on the  $L^2$-norm
      of the  curvature  and a   lower bound on the volume radius  
       of the corresponding  initial data set.  At a deep level   the  $L^2$ curvature conjecture  
   concerns   the  relationship
between the curvature tensor  and the causal  geometry of an Einstein vacuum  
space-time.  Thus,  though  the result is not  optimal   with respect to the    scaling  of the Einstein equations, it is  nevertheless critical  with respect    to      its      causal   properties.    More precisely,      $L^2 $   curvature       bounds    are strictly    necessary to  obtain lower bounds on   the radius of  injectivity  of  causal  boundaries.  These lower bounds  turn out  to be  crucial  for the construction of parametrices   and derivation of bilinear and trilinear spacetime estimates  for solutions  to  scalar wave equations.  We note also  that, while   the first   nontrivial improvements  for well posedness     for quasilinear  hyperbolic systems in spacetime dimensions greater than $1+1$ (based on Strichartz estimates)  were obtained in   \cite{Ba-Ch1}, \cite{Ba-Ch2}, \cite{Ta1}, \cite{Ta2},   \cite{Kl-R1}  and optimized in      \cite{Kl-R2},  \cite{Sm-Ta},    the  result   we present  here   is the first   in which the  full    structure of the quasilinear hyperbolic   system, not just its principal part,  plays  a  crucial  role.      
 
 \subsection{Initial value problem}
  
We consider the Einstein vacuum equations (EVE),
\begin{equation}\lab{EVE}
{\bf Ric}_{\alpha\beta}=0
\end{equation}
where ${\bf Ric}_{\alpha\beta}$
denotes the  Ricci curvature tensor  of  a four dimensional Lorentzian space time  $(\mathcal{M},\,  {\bf g})$. 
An  initial data  set for \eqref{EVE}  consists of a three dimensional   $3$-surface 
$\Si_0$   together with a    Riemannian  metric $g$ and a symmetric  $2$-tensor $k$  verifying the constraint equations,
\begin{equation}\lab{const}
\left\{\begin{array}{l}
\nabla^j k_{ij}-\nabla_i \textrm{tr}k=0,\\
 {R_{scal}}-|k|^2+(\textrm{tr}k)^2=0, 
\end{array}\right.
\end{equation}
where the covariant derivative $\nabla$ is defined with respect to the metric $g$, $R_{scal}$ is the scalar curvature of $g$, and $\textrm{tr}k$ is the trace of $k$ with respect to the metric $g$.  In this  work  we restrict ourselves 
 to asymptotically flat  initial data sets with one end. 
 For a given initial data set the Cauchy problem  consists in finding a metric ${\bf g}$ satisfying \eqref{EVE}   and an embedding of $\Si_0$  in $\MM$ such that the metric induced by ${\bf g}$ on $\Si_0$ coincides with $g$ and the 2-tensor $k$ is the second fundamental form of the hypersurface $\Si_0\subset \MM$. 
 The  first   local existence and uniqueness   result for (EVE)  was   established
by Y.C. Bruhat,  see \cite{Br},  with the help of 
 wave coordinates  which
 allowed her to cast
the Einstein vacuum  equations in the form of a system of nonlinear wave 
equations to which one can apply\footnote{The original proof in 
\cite{Br} relied  on  representation formulas, following an approach pioneered by Sobolev,   see \cite{Sob}.  }   the standard theory of nonlinear  hyperbolic systems.  The optimal,
classical\footnote{Based only on energy estimates and classical Sobolev inequalities.} result states the following,

\begin{theorem}[Classical local existence \cite{FM} \cite{HKM}]
\label{thm:Bruhat} Let $(\Si_0, g, k)$ be an initial data set
for the Einstein vacuum equations \eqref{EVE}. Assume that $\Si_{0}$ can
be covered by a locally finite system of coordinate charts,
 related to each other by $C^1$ diffeomorphisms, such that
$(g,\, k )\in H^s_{loc}(\Si_0)\times H^{s-1}_{loc}(\Si_0)$
with $s>\frac{5}{2}$. Then there exists a unique\footnote{The original proof in \cite{FM}, \cite{HKM} actually requires  one more derivative for the uniqueness. The fact that uniqueness holds at the same level of regularity than the existence has been obtained in \cite{PlRo}} (up to an isometry)
globally hyperbolic   development
$(\MM, \g)$, verifying   \eqref{EVE},   for which
$\Si_0$ is a Cauchy hypersurface\footnote{That is any past directed, in-extendable  causal curve in $\MM$ intersects $\Si_0$.}.
\end{theorem}

\subsection{Bounded $L^2$ curvature conjecture}
The classical   exponents $s>5/2$ are  clearly not   optimal. 
 By  straightforward  scaling considerations one might  expect to make sense of the initial
value problem for    $s\ge s_c=3/2$, with $s_c$ the natural scaling exponent   for  $L^2$ based Sobolev norms.  Note that   for  $s=s_c=3/2$ a local in time  existence result, for sufficiently small  data,  would be  equivalent to  a global result.  More precisely any  smooth initial data, small in the corresponding  critical norm,  would be  globally smooth. 
  Such a     well-posedness   (WP)  result   would  be thus  comparable with  the so called $\epsilon$- regularity results for   nonlinear elliptic and parabolic problems, which play such a fundamental     role in  the global  regularity properties
  of general solutions.   For quasilinear  hyperbolic problems     critical  WP  results have only been established in the case
  of  $1+1$ dimensional systems, or  spherically symmetric solutions  of higher dimensional problems, 
  in which case  the  $L^2$- Sobolev norms  can be replaced  by bounded variation (BV) type norms\footnote{Recall  that the entire theory of  shock waves  for  1+1 systems of conservation laws  is based on  BV norms, which are \textit{critical} with respect to the scaling of the equations.  Note also  that these   BV norms are not, typically,  conserved   and   that Glimm's famous  existence  result  \cite{Gl}
   can be interpreted as   a global well posedness result for   initial data with small   BV norms.  }.     A  particularly important     example of this type is the critical  BV    well-posedness  result  established by  Christodoulou    for spherically symmetric solutions of the  Einstein equations   coupled with a scalar field, see  \cite{Ch1}. The result  played a  crucial  role  in his celebrated work on the Weak Cosmic Censorship  for the same model, see  \cite{Ch2}.   As well known, unfortunately,   the   BV-norms   are completely   inadequate in higher dimensions;  the only norms which  can propagate   the regularity properties    of the data are necessarily    $L^2$ based.

The quest  for   optimal well-posedness   in  higher dimensions   has been  one of the major themes   in non-linear   hyperbolic   PDE's   in the last twenty years.    Major   advances  have been made in the particular    case of 
 semi-linear  wave equations.    In the case  of geometric   wave  equations such as Wave Maps and Yang-Mills,  which possess a well understood  null structure,
  well-posedness    holds true  for all  exponents larger 
 than the corresponding critical exponent.   For example, 
in the case of    Wave Maps  defined  from the Minkowski space $\RRR^{n+1}$ to a complete Riemannian manifold,    
 the critical  scaling exponents is $s_c=n/2$  and   well-posedness is known to hold all the way down  to $s_c$  for all  dimensions $n\ge 2$.  This  critical    well-posedness result, for $s=n/2$, plays a fundamental   role  in the recent,  large data,       global  results of  \cite{Tao}, \cite{St-Ta1}, \cite{St-Ta2} and \cite{Kr-S} for $2+1$ dimensional wave maps. \\
 
  The   role played  by   critical  exponents   for quasi-linear   equations  is  much less understood.
  The first well  posedness   results, on any     (higher dimensional)  quasilinear  hyperbolic system,  which go  beyond the classical Sobolev exponents,      obtained  in    \cite{Ba-Ch1}, \cite{Ba-Ch2}, and \cite{Ta1}, \cite{Ta2} and   \cite{Kl-R1},   do not take into account the 
  specific (null)  structure   of the   equations.  Yet the presence of such   structure   was  crucial    in the derivation of the optimal  results mentioned above,   for geometric  semilinear equations.  In the case of the Einstein equations it is not at all clear what such structure should be, if there is one at all.  Indeed,  the only specific structural condition, known for (EVE),    discovered    in \cite{wNC} under the name  of  the \textit{weak null condition},   is    not at all adequate   for   improved  well posedness results, see remark  \ref{rem:weekN}.  It is known however, see \cite{L}, that without such a structure  one cannot have well posedeness   for  exponents\footnote{Note that the dimension here is $n=3$.}  $s\le 2$. Yet (EVE)  are of  fundamental  importance   and as such    it is not unreasonable  to expect that such a structure must exist.

   Even assuming     such a structure,  a result of well-posedness  for the Einstein equations at, or near,  the critical regularity  $s_c=3/2$
is not only completely out of reach  but may in fact be  wrong.    This is due to the presence 
  of a different criticality  connected to  the geometry of boundaries of causal domains.  It is  because of  this  more subtle  criticality that we  need   at least $L^2$-bounds for the     curvature  to derive a lower bound on the radius of injectivity  of    null    hypersurfaces   and thus control their local regularity properties. This imposes a crucial   obstacle  to    well posedness below  $s=2$.  Indeed, as we will  show
   in the next subsection, any    such    result  would require, crucially,   bilinear and even trillinear  estimates  for solutions to wave equations of the form  $\square_\g \phi=F$. Such  estimates, however, depend   on    Fourier   integral representations, with a   phase function
    $u$  which   solves the     eikonal equation  $\g^{\a\b}\pr_\a  u \pr_\b u=0$.   Thus    the much needed
      bilinear estimates depend, ultimately, on the regularity properties  of the level hypersurfaces of the phase $u$ which are, of course, null.   The catastrophic        breakdown of    the regularity of these null  hypersurfaces,   in the absence of a lower bound for the injectivity  radius,  would     make     these  Fourier  integral representations    entirely  useless.
   
 These considerations  lead one to conclude that, the  following conjecture,   proposed in \cite{PDE},  is  most probably 
sharp in so far as the minimal     number of derivatives    in $L^2$  is concerned:

\noindent {\bf Conjecture}[Bounded $L^2$ Curvature Conjecture (BCC)]\quad 
 {\it The Einstein- vacuum equations  admit   local    Cauchy developments  for initial data sets   $(\Si_0, g, k)$
with locally  finite $L^2$ curvature and 
locally finite
$L^2$ norm of the  first covariant derivatives of $k$\footnote{As we shall see,  from the precise theorem     stated below, other  weaker conditions, such as  a lower bound
on the volume radius, are needed.}.}\\

\begin{remark}
It is important  to emphasize here that the conjecture
 should be primarily  interpreted as a continuation argument
for the Einstein equations; that is  the space-time
constructed by evolution from
smooth data can be smoothly continued,
together with a time foliation, 
as long as the curvature  of the foliation
and the  first 
 covariant derivatives of its second fundamental form
remain $L^2$- bounded on the leaves of the foliation. 
In fact the conjecture implies the  break-down criterion
 previously   obtained  in \cite{conditional} and improved 
 in \cite{Pa}, \cite{W}. According to that criterion a vacuum  space-time,
  endowed with   a constant mean curvature   (CMC) foliation  $\Si_t$,  can be extended, together 
  with the foliation, as long as the $L_t^1 L^\infty(\Si_t) $    norm
  of the deformation tensor of the  future unit normal to the 
  foliation remains  bounded.    It is straightforward to see, by standard energy estimates,  that   this  condition implies  bounds   for the $L_t^\infty L^2(\Si_t)$  norm of the  space-time   curvature  from which   one can derive bounds
  for the  induced curvature tensor $R$ and the  first derivatives
  of the second fundamental form $k$. Thus, 
  if  we can ensure that the time of  existence  of  a space-time  foliated by   $\Si_t$   depends only on the  $L^2$ norms of    $R$ and  first covariant  derivatives of $k$,  we can extend the space-time  indefinitely.
   \end{remark}
    In this paper we provide the framework and  the main ideas  of the proof of the conjecture. 
   We rely  on   the results  of \cite{param1},    \cite{param2},  \cite{param3},  \cite{param4}, \cite{bil2}, which we use  here  as a   black box. A summary of the entire  proof is given in \cite {KRS-Sum}. 
   We also need  to emphasize  that  the sharpness   of the    Bounded $L^2$  coonjecture   remains  itself    conjectural\footnote{ In other words we  believe    that        no well-posedness results   can be established  for general intial data     in $H^s$, with $s<2$.    }
   
   \subsection{Brief history}    
 The conjecture has its roots in the  remarkable   developments  
 of the last twenty years centered around  the  issue of optimal  well-posedness for semilinear wave equations. 
 The case of the Einstein equations  turns out  to   be a lot more complicated    due to the quasilinear character of the equations.
 To make  the discussion more tangible   it is  worthwhile to recall  the  form   of the Einstein vacuum equations in the wave gauge. 
 Assuming  given coordinates $x^\a$,  verifying  $\square_\g x^\a=0$,
  the  metric coefficients $g_{\a\b}=\g(\pr_\a, \pr_\b)$,  with respect to these coordinates,  verify the system of quasilinear wave equations,
 \bea
 \label{E-wave.coord}
 g^{\mu\nu}\pr_\mu\pr_\nu g_{\a\b} =F_{\a\b}(g, \pr g)
 \eea
 where $F_{\a\b}$ are   quadratic functions of   $\pr g$, i.e. the derivatives  of $g$ with respect to the coordinates $x^\a$.
  In a first approximation we  may   compare   \eqref{E-wave.coord}
 with the semilinear wave equation,
 \bea
 \label{semil}
 \square\phi=F(\phi, \pr \phi) 
 \eea
 with $F$ quadratic in $\pr\phi$. Using standard energy estimates,
  one can prove an estimate, roughly,   of the form:
  \beaa
  \|\phi(t)\|_{s}\les   \|\phi(0)\|_{s}\exp\left(C_s\int_0^t \|\pr \phi(\tau)\|_{L^{\infty}} d\tau\right) .
  \eeaa
 The classical exponent $s >3/2+1$  arises  simply 
 from the  Sobolev embedding of $H^r$, $r>3/2$ 
  into  $L^\infty$.  To go beyond  the classical exponent, see   \cite{Po-Si}, 
 one  has to replace   Sobolev    inequalities with  
 Strichartz estimates of, roughly,  the following type,
 \beaa
\left( \int_0^t   \|\pr \phi(\tau)\|_{L^{\infty}}  ^2d\tau\right)^{1/2}
\les C\left(\|\pr \phi(0)\|_{H^{1+\ep}}+\int_0^t \|\square \phi (\tau)\|_{H^{1+\ep}}\right)
 \eeaa 
 where  $\ep>0$ can be chosen arbitrarily small. This leads to
  a gain of $1/2$ derivatives, i.e. we can prove well-posedness 
  for equations  of type  \eqref{semil}  for any exponent $s>2$.
   
 The same  type of improvement in the case of quasilinear equations requires a highly non-trivial extension of such
  estimates  for wave operators with non-smooth coefficients.
   The first improved regularity results  for  quasilinear wave equations of the type,
   \bea
   g^{\mu\nu }(\phi) \pr_\mu\pr_\nu\phi= F(\phi, \pr \phi)\label{eq:wave-intr}
   \eea
    with $g^{\mu\nu}(\phi)$    a non-linear perturbation of 
    the Minkowski metric  $m^{\mu\nu}$, are due to 
  \cite{Ba-Ch1}, \cite{Ba-Ch2}, and \cite{Ta1}, \cite{Ta2} and 
  \cite{Kl-R1}.  
   The best known  results  for  equations  of type \eqref{E-wave.coord}  were obtained 
  in 
   \cite{Kl-R2} and \cite{Sm-Ta}.  According to them
    one can lower the Sobolev exponent  $s>5/2$ in 
    Theorem \ref{thm:Bruhat} to $s>2$.  It turns out,  see \cite{L},  that 
      these  results are sharp    in the  general class of  quasilinear wave equations    of type   \eqref{E-wave.coord}. To do better one needs   to take into account the special structure of the  Einstein      equations and rely     on  a class of  estimates  which go  beyond   Strichartz, namely  the so called bilinear estimates\footnote{Note that   no  such   result,   i.e.  well-posedness  for $s=2$,  is  presently  known  for   either   scalar equations of the   form   \eqref{eq:wave-intr}      or systems of the form \eqref{E-wave.coord}.}.

      In the case of     semilinear wave equations,
      such as Wave Maps,  Maxwell-Klein-Gordon and  Yang-Mills,  
      the first results  which make use     of bilinear estimates     
       go back to     \cite{Kl-Ma1}, \cite{Kl-Ma2}, \cite{Kl-Ma3}. 
       In the particular case of the Maxwell-Klein-Gordon and  Yang-Mills equation   the
       main observation  was that,  after the choice of a special gauge (Coulomb gauge),  the   most dangerous 
       nonlinear  terms exhibit a special,  null   structure  
      for   which one can apply the  bilinear  estimates 
         derived in   \cite{Kl-Ma1}.           
          With the help of  these  
         estimates  one was able to derive a well posedness 
         result, in  the flat Minkowski space  $\RRR^{1+3}$, 
         for the exponent   $s=s_c+1/2=1$, where $s_c=1/2$         is the critical Sobolev exponent   in that case\footnote{ This   corresponds precisely    to  the $s=2$  exponent in the case of          the Einstein-vacuum equations}.

                          To  carry out   a similar program  in the case of the  Einstein equations  one would need, at the very  least, the following crucial ingredients:\\
                         
                         {\em\begin{enumerate}
\item[{\bf A}.] Provide a   coordinate   condition, relative to which the Einstein vacuum equations verifies an appropriate version of the null condition.
\item[{\bf B}.]  Provide an appropriate    geometric  framework  for  deriving          bilinear estimates     for the null quadratic terms   appearing in the previous step.  
 \item[{\bf C}.] Construct an effective   progressive wave representation    $\Phi_F$   (parametrix)   for  solutions to 
the scalar linear wave equation  $\square_\g\phi=F$,    derive  appropriate   bounds for  both the parametrix and the corresponding error term   $E=F-\square_\g\Phi_F$ and use them to derive the desired bilinear estimates.
\end{enumerate}
    }
\noindent As it turns out, the proof of several bilinear estimates of Step B reduces to the proof of sharp $L^4(\MM)$ Strichartz estimates  for a localized version of the parametrix of step C. Thus we will also need the following fourth ingredient. 
                         {\em\begin{enumerate}
\item[{\bf D}.] Prove sharp $L^4(\MM)$ Strichartz estimates for a localized version of the parametrix of step C. \\
\end{enumerate}
    }

    Note that   the last three   steps are  to be implemented using only   hypothetical $L^2$ bounds for the space-time curvature tensor,  consistent with the conjectured result. 
    To start with,   it is  not at all  clear    what should be the correct     coordinate  condition, or even if there is one for that matter.
    \begin{remark}\label{rem:weekN}
  As mentioned above,  the only known   structural condition  related  to the classical  null condition, called the \textit{weak null condition} \cite{wNC}, tied to  wave coordinates,
 fails the regularity  test. Indeed, the following  simple  system  in Minkowski space  verifies  the  weak null condition  and yet, according to \cite{L},  it is ill posed for $s=2$.
 \beaa \Box \phi=0, \qquad \Box\psi= \phi\c \De\phi.
 \eeaa
Other coordinate conditions, such as    spatial harmonic\footnote{Maximal foliation  together with  spatial harmonic  coordinates   on  the leaves of the foliation would be the  coordinate condition closest in spirit to    the Coulomb gauge.  },  also  do not  seem to work. 
\end{remark}
  We rely  instead     on a      Coulomb type condition,  for   orthonormal frames,  adapted  to a maximal foliation.  Such a gauge condition 
 appears naturally if we adopt  a  Yang-Mills description    of the Einstein    field equations using
  Cartan's formalism of moving frames\footnote{ We would like to thank L. Andersson  for pointing     out to  us the possibility of using       such a  formalism as a  potential  bridge  to  \cite{Kl-Ma2} .}, see \cite{Ca}.   It is important  to note   
  that    it is not at all a priori   clear   that such a     choice  would  do the job. Indeed, the null  form nature 
  of the  Yang-Mills equations  in the Coulomb  gauge   is only revealed once we  commute the   resulting   equations   with     the projection operator $\PP$ on the divergence free vectorfields.  
  Such an operation is natural in that case,  since   $\PP$ commutes with the flat  d'Alembertian. 
     In the case  of the     Einstein   equations,  however,  the  corresponding   commutator  term  $[\square_\g,  \PP]$     generates\footnote{Note also that additional error terms  are generated   by projecting the    equations   on the components of the frame. } a whole host of new terms  and it is  quite a miracle that  they  can all be treated  by    an  extended version of bilinear estimates.      At an even more  fundamental level, the flat Yang-Mills equations  possess natural energy estimates  based on the  time symmetry of the Minkowski space.
 There are  no such timelike   Killing   vectorfield  in curved space.  We  have  to rely instead  on   the future unit normal to   the maximal foliation $\Si_t$     whose deformation tensor 
 is non-trivial.  This leads to another class 
 of    nonlinear terms which  have to be  treated    by a  novel trilinear estimate.

  We   will make more comments 
   concerning the implementations of   all   four  ingredients   later on,  in the   section \ref{sec:strategyproof}.
   
   \begin{remark}
   In addition to the  ingredients  mentioned  above, we also need  a mechanism of reducing     the proof of the  conjecture to  small  data,  in an appropriate sense. Indeed, even in the  flat case,  
   the Coulomb gauge condition cannot be   globally imposed for large data. In fact    \cite{Kl-Ma3}
    relied on a cumbersome technical  device based on   local Coulomb gauges, defined on domain of dependence of small  balls.      Here we rely instead on    a variant  of   the gluing construction of    \cite{Cor},  \cite{CorSch}, see section \ref{sec:reductionsmall}.         
   \end{remark}
\section{Statement of the main results}

\subsection{Maximal foliations}\lab{sec:maxfoliation}

In this section, we recall some well-known facts about maximal foliations (see for example the introduction in \cite{ChKl}). We assume the space-time $(\MM, \g)$ to be foliated by  the level surfaces $\Si_t$ of a time function $t$. Let $T$ denote the unit normal to $\Si_t$, and let $k$ the the second fundamental form of  $\Si_t$, i.e. $k_{ab}=-\g(\D_aT,e_b)$, where $e_a, a=1, 2, 3$ denotes an arbitrary frame on $\Si_t$ and $\D_aT=\D_{e_a}T$. We assume that the $\Si_t$ foliation is maximal, i.e. we have:
\bea
\label{maxfoliation}
\tr_g k=0
\eea
where $g$ is the induced metric  on $\Si_t$. The constraint equations on $\Si_t$ for a maximal 
foliation are given by:
\bea\label{constk}
\nabla^ak_{ab}=0,
\eea
where $\nabla$ denotes the induced covariant derivative on $\Si_t$, and
\begin{equation}\lab{constR}
R_{scal}=|k|^2.
\end{equation} 
Also, we denote by $n$ the lapse of the $t$-foliation, i.e. $n^{-2}=-\g(\D t, \D
t)$. $n$ satisfies the following elliptic equation on $\Si_t$:
\bea\lab{eqlapsen}
\Delta n=n|k|^2.
\eea
Finally, we recall the structure equations of the maximal foliation:
\bea\lab{eq:structfol1}
\nabla_0k_{ab}= \R_{a\,0\,b\,0}-n^{-1}\nabla_a\nabla_bn-k_{ac}k_b\,^c,
\eea
\bea\lab{eq:structfol2}
\nabla_ak_{bc}-\nabla_bk_{ac}=\R_{c0ab}
\eea
and:
\bea\lab{eq:structfol3}
R_{ab}-k_{ac}k^c\,_b=\R_{a0b0}.
\eea

\subsection{Main Theorem}

We recall below the definition of the volume radius on a general Riemannian manifold $M$.
\begin{definition}
Let $B_r(p)$ denote the geodesic ball of center $p$ and radius $r$. The volume radius $r_{vol}(p,r)$ at a point $p\in M$ and scales $\leq r$ is defined by
$$r_{vol}(p,r)=\inf_{r'\leq r}\frac{|B_{r'}(p)|}{r^3},$$
with $|B_r|$ the volume of $B_r$ relative to the metric on $M$. The volume radius $r_{vol}(M,r)$ of $M$ on scales $\leq r$ is the infimum of $r_{vol}(p,r)$ over all points $p\in M$.
\end{definition}

Our main result is the following:
\begin{theorem}[Main theorem]\lab{th:main}
Let $(\MM, {\bf g})$ an asymptotically flat solution to the Einstein vacuum equations \eqref{EVE} together with a maximal foliation by space-like hypersurfaces $\Si_t$ defined as level hypersurfaces of a time function $t$. Assume that the initial slice $(\Si_0,g,k)$ is such that  the Ricci curvature  $\mbox{Ric} \in L^2(\Si_0)$, $\nabla k\in L^2(\Si_0)$, and $\Si_0$ has a strictly positive volume radius on scales $\leq 1$, i.e. $r_{vol}(\Si_0,1)>0$. Then,
\begin{enumerate}
\item \textbf{$L^2$ regularity.} There exists a time
$$T=T(\norm{\mbox{Ric}}_{L^2(\Si_0)}, \norm{\nabla k}_{L^2(\Si_0)}, r_{vol}(\Si_0,1))>0$$
and a constant 
$$C=C(\norm{\mbox{Ric}}_{L^2(\Si_0)}, \norm{\nabla k}_{L^2(\Si_0)}, r_{vol}(\Si_0,1))>0$$
such that the following control holds on $0\leq t\leq T$:
$$\norm{\R}_{L^\infty_{[0,T]}L^2(\Si_t)}\leq C,\,\norm{\nabla k}_{L^\infty_{[0,T]}L^2(\Si_t)}\leq C\textrm{ and }\inf_{0\leq t\leq T}r_{vol}(\Si_t,1)\geq \frac{1}{C}.$$
\item \textbf{Higher regularity.} Within the same time interval as in
  part (1) we also have the higher derivative  estimates\footnote{Assuming that the initial has more regularity so that the right-hand side of \eqref{ffllffll} makes sense.},
\bea\lab{ffllffll}
\sum_{|\a|\le m}\|\D^{(\a)}\R\|_{L^\infty_{[0,T]}L^2(\Sigma_t)}\le  C_m  \sum_{|i|\le m} \bigg [\|\nab^{(i)}\textrm{Ric}\|_{L^2(\Sigma_0)} + \|\nab^{(i)} \nab k\|_{L^2(\Sigma_0)}\bigg],
\eea
where $C_m$  depends only on  the   previous $C$ and $m$. 
\end{enumerate}
\end{theorem}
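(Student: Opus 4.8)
The proof is a single large bootstrap argument run on a curvature--flux functional, preceded by a reduction to small data; my plan has four stages. First I would use the constraint equations \eqref{constk}--\eqref{constR} together with a variant of the Corvino--Schoen gluing construction of \cite{Cor}, \cite{CorSch} to reduce Theorem \ref{th:main} to the case in which $\norm{\mbox{Ric}}_{L^2(\Si_0)}+\norm{\nab k}_{L^2(\Si_0)}$ is as small as we wish and $r_{vol}(\Si_0,1)$ is close to $1$. The mechanism is that the $L^2$ curvature norm is supercritical for the standard scaling of (EVE) in $3+1$ dimensions, so dilating the data makes the relevant norms small on a unit region; one glues the dilated data to an exactly flat exterior, solves on a unit time interval, and rescales back. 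After this reduction it suffices to produce $\eps_0>0$ such that, for data with smallness parameter $\le \eps_0$, the bootstrap quantity
$$\mathcal{B}(t):=\norm{\R}_{L^\infty_{[0,t]}L^2(\Si_s)}+\norm{\nab k}_{L^\infty_{[0,t]}L^2(\Si_s)}+\sup_u\norm{\R}_{L^2(\HH_u)}$$
(including the curvature flux through the null cones $\HH_u$) obeys $\mathcal{B}(t)\les\eps_0$ on $[0,1]$ whenever it is merely assumed $\le\eps_0^{1/2}$ there, together with the lower bounds $r_{vol}(\Si_t,1)\ge\f12$ and a lower bound on the null radius of injectivity.

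Next, using Cartan's formalism of moving frames I would recast (EVE) in the maximal gauge \eqref{maxfoliation} as an $so(3,1)$-valued Yang--Mills system for the connection one-form $A$ of a spatially Coulomb orthonormal frame, $\nab^iA_i=0$ on each $\Si_t$, and commute the resulting wave equations with the projection $\PP$ onto divergence-free vector fields. This produces, schematically,
$$\square_\g(\PP A)=\PP\,(A\c\pr A)_{\mathrm{null}}+[\square_\g,\PP]A+\big(\text{lapse and second fundamental form error terms}\big),$$
where the first term is a genuine null form in $A$. The energy estimate must be taken relative to the future unit normal $T$, whose deformation tensor is governed by $k$ and $n^{-1}\nab^2n$ via \eqref{eqlapsen} and the structure equations \eqref{eq:structfol1}--\eqref{eq:structfol3}; the resulting additional terms feed a trilinear estimate, and the $[\square_\g,\PP]$ commutator spawns a whole family of new terms that must still be shown to be of null type.

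The analytic core is the third stage. I would first control the causal geometry on the rough background: solve the eikonal equation $\g^{\a\b}\pr_\a u\,\pr_\b u=0$, prove a lower bound on the null radius of injectivity of the level sets $\HH_u$ --- this is exactly where the $L^2$ curvature hypothesis is sharp --- and estimate the Ricci coefficients of the $u$-foliation. With these in place I would invoke, as black boxes, the parametrix construction with $L^2$ error bounds of \cite{param1}--\cite{param4} and the sharp $L^4(\MM)$ Strichartz estimate of \cite{bil2}, and use them to establish the bilinear estimates controlling $\PP(A\c\pr A)_{\mathrm{null}}$ and $[\square_\g,\PP]A$ and the trilinear estimate controlling the normal-deformation terms. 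Inserting these into the energy identity for $\PP A$, together with the elliptic estimate for $n$ from \eqref{eqlapsen} and the propagation of the $r_{vol}$ and injectivity-radius lower bounds, closes the bootstrap and yields $\mathcal{B}\les\eps_0$ on $[0,1]$. A continuity argument then promotes the closed bootstrap into the existence of $T$ and $C$ with the asserted dependence on $\norm{\mbox{Ric}}_{L^2(\Si_0)}$, $\norm{\nab k}_{L^2(\Si_0)}$ and $r_{vol}(\Si_0,1)$, and undoing the Stage-1 rescaling and gluing gives part (1). For part (2), once the $L^2$ curvature norm is controlled on $[0,T]$ one commutes the Bianchi and structure equations with $\D^{(\a)}$, $|\a|\le m$, and runs the resulting energy estimates, which are linear at top order with coefficients built from the already-controlled lower-order geometry; this standard persistence-of-regularity argument yields \eqref{ffllffll}.

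The main obstacle is the third stage: making sense of, and quantitatively controlling, the parametrix $\Phi_F$ and its $L^2$ error term on a spacetime carrying only $L^2$ curvature --- equivalently, the lower bound on the null radius of injectivity and the sharp $L^4(\MM)$ Strichartz estimate --- because the level surfaces of the eikonal function $u$ possess barely enough regularity to support any Fourier-integral representation; running in parallel is the delicate verification that the $[\square_\g,\PP]$ commutator and the normal-deformation terms retain precisely the null structure demanded by the bilinear and trilinear estimates, without which the energy identity for $\PP A$ cannot be closed at this level of regularity.
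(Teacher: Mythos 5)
Your plan is correct and follows essentially the same route as the paper: rescaling plus Corvino--Schoen gluing to reduce to small data, the Cartan/Yang--Mills reformulation in a Coulomb-type maximal-frame gauge, a large bootstrap closed by bilinear and trilinear estimates built on the parametrix of \cite{param1}--\cite{param4} and the $L^4(\MM)$ Strichartz estimate of \cite{bil2}, and a standard commuted energy argument for higher regularity. The only implementation difference worth noting is that the paper avoids working with the nonlocal projection $\PP$ and the commutator $[\square_\g,\PP]$ directly, achieving its effect instead through the variable $B=(-\De)^{-1}\curl A$ (recovering $A=\curl B+\mbox{l.o.t.}$), and that the relevant scaling heuristic is that the $L^2$ curvature norm is \emph{subcritical} (hence shrinks under dilation), not supercritical.
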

\begin{remark}
Since  the core of the  main theorem is local in nature  we do not need to be very precise here with  our asymptotic flatness assumption.    We may thus  assume the existence of a coordinate system
 at infinity,   relative to which the metric     has two derivatives bounded in $L^2$, with
  appropriate asymptotic decay. Note that such bounds could be deduced from 
   weighted $L^2$ bounds  assumptions for $\mbox{Ric}$ and $\nab k$.
\end{remark}
\begin{remark} Note  that the dependence  on
 $ \norm{\mbox{Ric}}_{L^2(\Si_0)}    , \norm{\nabla k}_{L^2(\Si_0)}$    in the main theorem  can be replaced by   dependence on $  \norm{\R}_{L^2(\Si_0)}$ where $\R$ denotes the space-time curvature tensor\footnote{Here and in what follows the notations $R, {\bf R}$ will stand for the Riemann curvature tensors of $\Sigma_t$ and ${\mathcal M}$, while 
 $Ric$, ${\bf {Ric}}$ and $R_{scal}, {\bf {R}}_{scal}$ will denote the corresponding Ricci and scalar curvatures.}.  Indeed this follows from the following well known   $L^2$   estimate (see section 8 in \cite{conditional}).
 
\bea\label{eq:L2-estim-k}
\int_{\Si_0}|\nab k|^2+\frac{1}{4} |k|^4\le\int_{\Si_0} |\R|^2.
\eea
and the Gauss equation relating $\mbox{Ric}$ to $\R$.
\end{remark}

\subsection{Reduction to small initial data}\lab{sec:reductionsmall}

We first need an appropriate covering of $\Si_0$ by harmonic coordinates. This is obtained using the following general result based on Cheeger-Gromov convergence of Riemannian manifolds. 

\begin{theorem}[\cite{An} or Theorem 5.4 in \cite{Pe}]\label{th:coordharm}
Given $c_1>0, c_2>0, c_3>0$, there exists $r_0>0$ such that any 3-dimensional, complete, Riemannian manifold $(M,g)$ with $\norm{\mbox{Ric}}_{L^2(M)}\leq c_1$ and volume radius at scales $\leq 1$ bounded from below by $c_2$, i.e. $r_{vol}(M,1)\geq c_2$, verifies the following property:

Every geodesic ball $B_r(p)$ with $p\in M$ and $r\leq r_0$ admits a system of harmonic coordinates $x=(x_1,x_2,x_3)$ relative to which we have
\begin{equation}\label{coorharmth1}
(1+c_3)^{-1}\de_{ij}\leq g_{ij}\leq (1+c_3)\de_{ij},
\end{equation}
and
\begin{equation}\label{coorharmth2}
r\int_{B_r(p)}|\partial^2g_{ij}|^2\sqrt{|g|}dx\leq c_3.
\end{equation}
\end{theorem}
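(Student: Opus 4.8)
The plan is to argue by contradiction via the Cheeger--Gromov--Anderson compactness theory for Riemannian manifolds with $L^p$ Ricci bounds, $p>n/2$, exploiting that in dimension $n=3$ the quantity $\norm{\mathrm{Ric}}_{L^2}$ is \emph{subcritical} --- it tends to $0$ under blow-up. First I would suppose the statement fails: then there are $r_0^{(i)}\to 0$, complete $3$-manifolds $(M_i,g_i)$ with $\norm{\mathrm{Ric}}_{L^2(M_i)}\le c_1$ and $r_{vol}(M_i,1)\ge c_2$, points $p_i\in M_i$ and radii $r_i\le r_0^{(i)}$ such that no harmonic chart on $B_{r_i}(p_i)$ satisfies both \eqref{coorharmth1} and \eqref{coorharmth2}. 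Let $\rho_0=\rho_0(c_2)>0$ be the lower bound, furnished by Anderson's harmonic radius estimate, for the $C^{1,\al}\cap W^{2,2}$ harmonic radius of any $3$-manifold with $\norm{\mathrm{Ric}}_{L^2}\le 1$ and $r_{vol}(\cdot,1)\ge c_2$. Rescale $g_i$ to $\tilde g_i:=(2r_i/\rho_0)^{-2}g_i$, so $B_{r_i}(p_i)$ becomes $\tilde B_{\rho_0/2}(p_i)$. Because $n=3$, $\norm{\mathrm{Ric}[\tilde g_i]}_{L^2(M_i)}^2=(2r_i/\rho_0)\,\norm{\mathrm{Ric}[g_i]}_{L^2(M_i)}^2\to 0$, while the non-collapsing bound is scale-invariant and survives at all scales $\le (2r_i/\rho_0)^{-1}\to\infty$; hence Anderson's estimate applies to $\tilde g_i$ for $i$ large and gives harmonic radius $\ge\rho_0$.

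Next I would extract a limit. By the harmonic radius bound, a subsequence of $(M_i,\tilde g_i,p_i)$ converges in the pointed $C^{1,\al}$ topology, with weak $W^{2,2}_{loc}$ convergence of the metric in harmonic charts, to a pointed limit $(M_\infty,\tilde g_\infty,p_\infty)$, $\tilde g_\infty\in C^{1,\al}_{loc}\cap W^{2,2}_{loc}$. In harmonic coordinates one has the DeTurck--Kazdan identity $\tilde g_i^{\,kl}\pr_k\pr_l(\tilde g_i)_{ab}=-2\,\mathrm{Ric}[\tilde g_i]_{ab}+Q_{ab}(\tilde g_i,\pr\tilde g_i)$ with $Q$ quadratic in $\pr\tilde g_i$; its right-hand side tends to $0$ in $L^2_{loc}$ (the Ricci term since $\norm{\mathrm{Ric}[\tilde g_i]}_{L^2}\to 0$, the quadratic term since $\pr\tilde g_i\to\pr\tilde g_\infty$ in $C^0_{loc}$), so in the limit $\mathrm{Ric}[\tilde g_\infty]=0$. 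Elliptic regularity in harmonic coordinates then makes $\tilde g_\infty$ smooth, and a Ricci-flat metric in dimension $3$ has vanishing Riemann tensor, so $\tilde g_\infty$ is flat. Consequently there is a single harmonic chart on $\tilde B_{\rho_0/2}(p_\infty)$ --- which sits well inside the limit's harmonic radius $\ge\rho_0$ --- in which $\tilde g_\infty\equiv\de_{ab}$, and hence $\pr^2\tilde g_\infty\equiv 0$.

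Then comes the quantitative contradiction. For $i$ large I would transport this chart to $\tilde B_{\rho_0/2}(p_i)$: it is harmonic for $\tilde g_i$ and $(\tilde g_i)_{ab}\to\de_{ab}$ in $C^0$ there, which gives \eqref{coorharmth1}. Writing $h_i:=\tilde g_i-\de$ and using $\pr^2\de=0$, the same identity yields $\De(h_i)_{ab}=(\de^{kl}-\tilde g_i^{\,kl})\pr_k\pr_l(\tilde g_i)_{ab}-2\,\mathrm{Ric}[\tilde g_i]_{ab}+Q_{ab}(\tilde g_i,\pr\tilde g_i)$, whose right-hand side tends to $0$ in $L^2_{loc}$ --- for the first term one uses the uniform $W^{2,2}$ bound from Anderson together with $\norm{\de-\tilde g_i}_{C^0}\to 0$, for the others the $C^{1,\al}$ convergence. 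Interior $L^2$ elliptic estimates $\norm{\pr^2 h_i}_{L^2(B)}\les\norm{\De h_i}_{L^2(2B)}+\norm{h_i}_{L^2(2B)}$ then force $\norm{\pr^2\tilde g_i}_{L^2(\tilde B_{\rho_0/2}(p_i))}\to 0$, so \eqref{coorharmth2} holds for $i$ large. Undoing the rescaling --- and noting that in dimension $3$ the factor $r$ in \eqref{coorharmth2} is exactly what makes $r\int_{B_r}|\pr^2 g|^2\sqrt{|g|}\,dx$ scale into $\int_{\tilde B_{\rho_0/2}}|\pr^2\tilde g|^2\sqrt{|\tilde g|}\,dx$, while \eqref{coorharmth1} is scale-invariant --- I would conclude that $B_{r_i}(p_i)$ does admit a harmonic chart obeying both bounds, contradicting the choice of the sequence. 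This yields the desired $r_0$ (any value below the threshold forced above).

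The hard part is the compactness input used above: Anderson's lower bound on the $C^{1,\al}\cap W^{2,2}$ harmonic radius under an $L^p$ Ricci bound with $p>n/2$ and volume non-collapsing. This $\eps$-regularity statement is itself proved by a blow-up argument combined with a Moser/De Giorgi iteration for the harmonic-coordinate elliptic system, in which volume non-collapsing is what controls the Sobolev constant; alternatively one simply quotes it from \cite{An} or \cite{Pe}. Everything else --- the scaling bookkeeping, the identification of the flat limit, and the final elliptic estimates --- is routine by comparison.
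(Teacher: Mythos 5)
The paper does not prove this statement at all: it is quoted verbatim from the literature (\cite{An}, or Theorem 5.4 in \cite{Pe}), so there is no in-paper argument to compare yours against. Your blow-up strategy is the standard route to this kind of $\ep$-regularity statement, and the scaling bookkeeping is correct: subcriticality of $\norm{\mbox{Ric}}_{L^2}$ in dimension $3$, scale invariance of $r\int_{B_r}|\pr^2g|^2$ and of the volume-radius lower bound, and flatness of the Ricci-flat limit in dimension $3$. Note, though, that the input you isolate as the hard part --- Anderson's lower bound on the $C^{1,\al}\cap W^{2,2}$ harmonic radius for a \emph{fixed} closeness parameter --- is already the cited theorem with $c_3$ fixed; what your argument genuinely adds is the bootstrap from a fixed constant to arbitrarily small $c_3$ as $r\to 0$.

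The one step that is wrong as written is ``transport this chart to $\tilde B_{\rho_0/2}(p_i)$: it is harmonic for $\tilde g_i$.'' Pulling back a $\tilde g_\infty$-harmonic chart through the Cheeger--Gromov diffeomorphisms does \emph{not} produce $\tilde g_i$-harmonic coordinates, and harmonicity is exactly the property the theorem asserts. Relatedly, a flat metric written in an arbitrary harmonic chart need not equal $\de_{ij}$ (non-affine harmonic diffeomorphisms of $\RRR^3$ give harmonic charts with non-constant metric components), so you must first single out the Cartesian coordinates $y^k$ of the flat limit; only then is $\pr^2\tilde g_\infty\equiv 0$, which your final elliptic estimate relies on. The standard repair: take the $y^k$ as boundary data on a ball slightly larger than $\tilde B_{\rho_0/2}(p_i)$ and solve the Dirichlet problem $\De_{\tilde g_i}z^k=0$ with $z^k=y^k$ on the boundary; since the $y^k$ are approximately $\tilde g_i$-harmonic and you have uniform $C^{1,\al}\cap W^{2,2}$ control from Anderson's estimate, elliptic theory gives $z^k\to y^k$ in $C^{1,\al}\cap W^{2,2}_{loc}$, so $(z^1,z^2,z^3)$ is a genuine $\tilde g_i$-harmonic chart (a diffeomorphism onto its image covering $\tilde B_{\rho_0/2}(p_i)$ for large $i$, by the $C^{1,\al}$ convergence) in which $\tilde g_i\to\de$ in $C^{1,\al}$ and, by your interior estimate, $\pr^2\tilde g_i\to 0$ in $L^2$. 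With that replacement the argument closes.
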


We consider $\ep>0$ which will be chosen as a small universal constant. We apply theorem \ref{th:coordharm} to the Riemannian manifold $\Si_0$. Then, there exists a constant:
$$r_0=r_0(\norm{\mbox{Ric}}_{L^2(\Si_0)}, \norm{\nabla k}_{L^2(\Si_0)}, r_{vol}(\Si_0,1),\ep)>0$$
such that every geodesic ball $B_r(p)$ with $p\in \Si_0$ and $r\leq r_0$ admits a system of harmonic coordinates $x=(x_1,x_2,x_3)$ relative to which we have:
$$(1+\ep)^{-1}\de_{ij}\leq g_{ij}\leq (1+\ep)\de_{ij},$$
and
$$r\int_{B_r(p)}|\partial^2g_{ij}|^2\sqrt{|g|}dx\leq \ep.$$

Now, by the asymptotic flatness of $\Si_0$,  the complement of its end   can be covered by the union of a finite number of geodesic balls of radius $r_0$, where the number $N_0$ of geodesic balls required only depends on $r_0$. In particular, it is therefore enough to obtain the control of $\R$, $k$ and $r_{vol}(\Si_t,1)$ of Theorem \ref{th:main} when one restricts to the domain of dependence of one such ball. Let us denote this ball by $B_{r_0}$. Next, we rescale the metric of this geodesic ball by:
$$g_\la(t,x)=g(\la t,\la x),\, \la=\min\left(\frac{\ep^2}{\norm{R}^2_{L^2(B_{r_0})}},\,\frac{\ep^2}{\norm{\nabla k}^2_{L^2(B_{r_0})}},\, r_0\ep\right)>0.$$
Let\footnote{Since in  what follows there is no danger to confuse  the Ricci curvature  $\mbox{Ric}$ with the scalar  curvature $R$
we use the short hand  $R$    to denote the full curvature  tensor   $\mbox{Ric}$.  }    $R_\la, k_\la$ and $B_{r_0}^\la$ be the rescaled versions of $R$, $k$ and $B_{r_0}$. Then, in view of our choice for $\la$, we have:
$$\norm{R_\la}_{L^2(B_{r_0}^\la)}=\sqrt{\la} \norm{R}_{L^2(B_{r_0})}\leq \ep,$$
$$\norm{\nabla k_\la}_{L^2(B_{r_0}^\la)}=\sqrt{\la} \norm{\nabla k}_{L^2(B_{r_0})}\leq \ep,$$
and 
$$\norm{\partial^2g_\la}_{L^2(B_{r_0}^\la)}=\sqrt{\la} \norm{\partial^2g}_{L^2(B_{r_0})}\leq \sqrt{\frac{\la\ep}{r_0}}\leq \ep.$$
Note that $B_{r_0}^\la$ is the rescaled version of $B_{r_0}$. Thus, it is a geodesic ball for $g_\la$ of radius $\frac{r_0}{\la}\geq \frac{1}{\ep}\geq 1$. Now, considering $g_\la$ on $0\leq t\leq 1$ is equivalent to considering $g$ on $0\leq t\leq \la$. Thus, since $r_0$, $N_0$ and $\la$ depend only on $\norm{R}_{L^2(\Si_0)}$, $\norm{\nabla k}_{L^2(\Si_0)}$, $r_{vol}(\Si_0,1)$ and $\ep$, Theorem \ref{th:main} is equivalent to the following theorem:

\begin{theorem}[Main theorem, version 2]\lab{th:mainbis}
Let $(\MM, {\bf g})$ an asymptotically flat solution to the Einstein vacuum equations \eqref{EVE} together with a maximal foliation by space-like hypersurfaces $\Si_t$ defined as level hypersurfaces of a time function $t$. Let $B$ a geodesic ball of radius one in $\Si_0$, and let $D$ its domain of dependence. Assume that the initial slice $(\Si_0,g,k)$ is such that:
$$\norm{R}_{L^2(B)}\leq \ep,\,\norm{\nabla k}_{L^2(B)}\leq \ep\textrm{ and }r_{vol}(B,1)\geq \frac{1}{2}.$$
Let $B_t=D\cap \Si_t$ the slice of $D$ at time $t$. Then:
\begin{enumerate}
\item\textbf{$L^2$ regularity.} There exists a small universal constant $\ep_0>0$ such that if $0<\ep<\ep_0$, then the following control holds on $0\leq t\leq 1$:
$$\norm{\R}_{L^\infty_{[0,1]}L^2(B_t)}\lesssim \ep,\,\norm{\nabla k}_{L^\infty_{[0,1]}L^2(B_t)}\lesssim \ep\textrm{ and }\inf_{0\leq t\leq 1}r_{vol}(B_t,1)\geq \frac{1}{4}.$$
\item \textbf{Higher regularity.} The following control holds on $0\leq t\leq 1$:
\bea
\sum_{|\a|\le m}\|\D^{(\a)} \R\|_{L^\infty_{[0,1]}L^2(B_t)}  \les  
  \|\nab^{(i)} \textrm{Ric}\|_{L^2(B)} + \|\nab^{(i)} \nab k\|_{L^2(B)}.
\eea
\end{enumerate}
\end{theorem}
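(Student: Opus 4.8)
The plan is to run a continuity (bootstrap) argument on the interval $[0,1]$. I would assume that on a subinterval $[0,t^*]$ one has the bounds $\norm{\R}_{L^\infty_tL^2(B_t)}\lesssim \ep$, $\norm{\nabla k}_{L^\infty_tL^2(B_t)}\lesssim \ep$, a comparable bound on $n^{-1}\nabla n$ and on $n-1$, and $\inf_t r_{vol}(B_t,1)\geq 1/4$, all with universal constants slightly worse than those to be proved, and then improve every one of them strictly so that $t^*$ can be advanced to $1$. The backbone is the energy estimate for the spacetime curvature obtained from the Bel--Robinson tensor of the maximal foliation: contracting it with the future unit normal $T$ and integrating over the domain of dependence $D$ yields, schematically, $\norm{\R}_{L^\infty_tL^2(B_t)}^2 \lesssim \norm{\R}_{L^2(B)}^2 + \int_D |\pi|\,|\R|^2$, where $\pi$ is the deformation tensor of $T$, built from $k$ and $n^{-1}\nabla n$ through the structure equation \eqref{eq:structfol1}. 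Closing this requires essentially an $L^1_tL^\infty$-type bound on $\pi$, which Sobolev embedding cannot deliver from the $L^2$ hypotheses; the whole difficulty is concentrated there.

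This is where ingredients A--D of the introduction enter. First I would recast (EVE) in the maximal-foliation gauge as an $so(3,1)$-valued Yang--Mills system for the connection one-form $A$ of a Coulomb-type (spatially divergence-free) orthonormal frame on $\Si_t$, so that schematically $\square_\g A = A\cdot\partial A + \lot$. Commuting with the Leray projection $\PP$ onto divergence-free vector fields reveals that the worst quadratic interactions are genuine \emph{null forms} $Q(\phi,\psi)$, as in \cite{Kl-Ma1,Kl-Ma2}, compatible with the covariant wave operator. The price is a large collection of further quadratic and cubic error terms coming from the commutator $[\square_\g,\PP]$, from projecting the equations onto the frame components, and from the fact that $T$ is not Killing on a curved background (its deformation tensor $\pi$ is nontrivial); the claim to be established is that each such term is either itself a null form, to be absorbed by bilinear estimates, or a cubic expression to be absorbed by a novel trilinear estimate.

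The bilinear and trilinear estimates of Step B would be reduced, via a geometric Fourier-integral parametrix $\Phi_F$ for $\square_\g\phi = F$ (Step C) whose phase $u$ solves the eikonal equation $\g^{\a\b}\partial_\a u\,\partial_\b u = 0$, together with $L^2$ control of the parametrix error $E = F - \square_\g\Phi_F$ and the sharp $L^4(\MM)$ Strichartz estimate for a frequency-localized piece of $\Phi_F$ (Step D), to the corresponding statements proved in the companion papers \cite{param1,param2,param3,param4,bil2}, which I would use as a black box. Feeding the resulting Strichartz and bilinear bounds back into the transport equation \eqref{eq:structfol1} for $k$, the constraint \eqref{constk}, and the elliptic equation \eqref{eqlapsen} for the lapse $n$, produces the missing $L^1_tL^\infty$-type control of $\pi$, which closes the curvature energy estimate and, together with elliptic estimates on $\Si_t$, the estimate for $\nabla k$. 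Propagation of the volume-radius lower bound $r_{vol}(B_t,1)\geq 1/4$ then follows from the control just obtained on the induced metric and its first two derivatives in harmonic-type coordinates, using the (rescaled) Cheeger--Gromov input of Theorem \ref{th:coordharm}. Part (2) is obtained afterwards by commuting the Yang--Mills system with covariant derivatives and propagating the low-regularity bounds through energy estimates alone, with no new analytic ingredient.

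The main obstacle — and the source of the $L^2$ criticality — is Step C: constructing and controlling $\Phi_F$ on a background whose curvature lies only in $L^2$. The level sets of the eikonal phase $u$ are null hypersurfaces whose regularity, in particular a lower bound on their null radius of injectivity, is governed precisely by the $L^2$ curvature flux along them; with anything less the Fourier-integral representation collapses, and even at exactly $L^2$ one must control the null second fundamental form and torsion of these cones at the very edge of what the geometry permits, and then extract from them the $L^2$ error bounds for $E$ and the $L^4$ Strichartz estimate of Step D. This is the technical heart of the proof, carried out in \cite{param1,param2,param3,param4,bil2}.
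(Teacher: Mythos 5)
Your outline reproduces the broad strategy of the paper (Yang--Mills/Coulomb formulation, bilinear and trilinear estimates, eikonal parametrix, $L^4$ Strichartz, companion papers as a black box), but it has two concrete gaps. First, you propose to run the bootstrap directly on the domain of dependence $D$ of the unit ball $B$. The constructions you invoke are global in space: the Coulomb gauge is produced by an Uhlenbeck-type lemma on all of $\Si_0$, the projection onto divergence-free fields (in the paper, the substitute variable $B=(-\De)^{-1}\curl A$) involves the nonlocal operator $(-\De)^{-1}$, and the eikonal phases $u(0,x,\om)$ must be initialized so as to approach $x\cdot\om$ asymptotically. None of these can be carried out on the subregion $B\subset\Si_0$ alone. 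The paper handles this by first gluing a smooth asymptotically flat solution of the constraint equations outside $B$ (Corvino--Schoen), reducing Theorem \ref{th:mainbis} to the globally small-data statement of Theorem \ref{th:mainter}; finite speed of propagation then transfers the conclusion back to $D$. Without this reduction (or some substitute for it) your argument cannot even be set up.

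Second, and more seriously, your mechanism for closing the Bel--Robinson energy estimate is wrong. You assert that the Strichartz and bilinear bounds, fed back through \eqref{eq:structfol1}, \eqref{constk} and \eqref{eqlapsen}, ``produce the missing $L^1_tL^\infty$-type control of $\pi$.'' The lapse part $n^{-1}\nab n$ is indeed bounded in $L^\infty$ by elliptic estimates, but the $k$ part of $\pi$ admits no such bound: the paper points out explicitly that the required Strichartz estimate for $k$ is false even in flat space, and no amount of bootstrapping recovers it at this regularity. The dangerous term $\int_{\MM}Q_{ij\ga\de}k^{ij}e_0^\ga e_0^\de\sim\int_{\MM}k\,\R^2$ is instead estimated \emph{directly} as a trilinear integral: one writes $k$ (via $A=\curl B+E$) through the parametrix for $B$, and observes that the resulting contraction $\in_{jm\c}N_m Q_{j\c\c\c}$ involves only curvature components of the form $\R\c L$, i.e.\ the curvature flux through the null level sets of the eikonal phase, never the bad component $\underline\a$. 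This structural cancellation, not an $L^1_tL^\infty$ bound on $\pi$, is what closes the curvature estimate, and your proposal as written would stall precisely at this point. A smaller inaccuracy of the same kind: the higher-regularity part (2) is not obtained ``through energy estimates alone''; the paper must re-verify that the null structure survives differentiation and re-apply the bilinear machinery to the commuted system.
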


\noindent{\bf Notation:}\,\, In the statement of Theorem \ref{th:mainbis}, and in the rest of the paper, the notation $f_1\les f_2$ for two real positive scalars $f_1, f_2$ means that there exists a universal constant $C>0$ such that:
$$f_1\leq C f_2.$$\\ 

Theorem \ref{th:mainbis} is not  yet   in  suitable form for our proof since some of our constructions will be global in space and may not be carried out on a subregion $B$ of $\Si_0$. Thus, we glue a smooth asymptotically flat solution of the constraint equations \eqref{const} outside of $B$, where the gluing takes place in an annulus just outside  $B$. This can be achieved using the construction  in \cite{Cor},  \cite{CorSch}. We finally get an asymptotically flat solution to the constraint equations,  defined everywhere on $\Si_0$,  which  agrees with our original data set $(\Si_0,g,k)$ inside $B$. We still denote this data set by $(\Si_0,g,k)$. It satisfies the bounds: 
$$\norm{R}_{L^2(\Si_0)}\leq 2\ep,\,\norm{\nabla k}_{L^2(\Si_0)}\leq 2\ep\textrm{ and }r_{vol}(\Si_0,1)\geq \frac{1}{4}.$$

\begin{remark}
Notice that the gluing process in \cite{Cor}--\cite{CorSch} requires the kernel of a certain linearized operator to be trivial. This is achieved by conveniently choosing the asymptotically flat solution to \eqref{const} that is glued outside of $B$ to our original data set. This choice is always possible since the metrics for which the kernel is nontrivial are non generic (see \cite{BCS}).
\end{remark}

\begin{remark}
The    formal  proofs  in the above mentioned  results  require higher regularity than $R, \nabla k \in L^2$.    However the  problem solved  there is elliptic and  subcritical  at   our desired   level of regularity.   Based on recent progress  it is highly   expected  that the results   in  \cite{Cor}--\cite{CorSch}   hold   true   under our    regularity assumptions, and even below, see   \cite{Sch}.\end{remark}

\begin{remark}
Since  $\|k\|_{L^4(\Si_0)} ^2\le \|\mbox{Ric}\|_{L^2} $
 we  deduce   that 
$\norm{k}_{L^2(B)}\lesssim \ep^{1/2}$
 on the geodesic ball $B$ of radius one.
Furthermore, asymptotic flatness is compatible with a decay of $|x|^{-2}$ at infinity, and in particular with $k$ in $L^2(\Si_0)$. So we may assume that the gluing process is such that the resulting $k$ satisfies:
$$\norm{k}_{L^2(\Si_0)}\lesssim \ep.$$
\end{remark}

Finally, we have reduced Theorem \ref{th:main} to the case of a small initial data set:
\begin{theorem}[Main theorem, version 3]\lab{th:mainter}
Let $(\MM, {\bf g})$ an asymptotically flat solution to the Einstein vacuum equations \eqref{EVE} together with a maximal foliation by space-like hypersurfaces $\Si_t$ defined as level hypersurfaces of a time function $t$. Assume that the initial slice $(\Si_0,g,k)$ is such that:
$$\norm{R}_{L^2(\Si_0)}\leq \ep,\,\norm{k}_{L^2(\Si_0)}+\norm{\nabla k}_{L^2(\Si_0)}\leq \ep\textrm{ and }r_{vol}(\Si_0,1)\geq \frac{1}{2}.$$
Then:
\begin{enumerate}
\item\textbf{$L^2$ regularity.} There exists a small universal constant $\ep_0>0$ such that if $0<\ep<\ep_0$,  the following control holds on $0\leq t\leq 1$:
$$\norm{\R}_{L^\infty_{[0,1]}L^2(\Si_t)}\lesssim \ep,\,\norm{k}_{L^\infty_{[0,1]}L^2(\Si_t)}+\norm{\nabla k}_{L^\infty_{[0,1]}L^2(\Si_t)}\lesssim \ep\textrm{ and }\inf_{0\leq t\leq 1}r_{vol}(\Si_t,1)\geq \frac{1}{4}.$$
\item \textbf{Higher regularity.} The following estimates hold on $0\leq t\leq 1$ and any $m>0$:
\bea
\sum_{|\a|\le m}\|\D^{(\a)} \R\|_{L^\infty_{[0,1]}L^2(\Sigma_t)}  \les  
\sum_{|i|\le m}  \|\nab^{(i)} \textrm{Ric}\|_{L^2(\Sigma_0)} + \|\nab^{(i)} \nab k\|_{L^2(\Sigma_0)}.
\eea
\end{enumerate}
\end{theorem}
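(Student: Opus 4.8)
The plan is to prove Theorem \ref{th:mainter} by a continuity (bootstrap) argument on the interval $[0,1]$. One introduces a large bootstrap constant and assumes, on a maximal subinterval $[0,T^*)\subset[0,1]$, a package of bounds: control of the flux of curvature through the null cones and of $\|\R\|_{L^\infty_{[0,T^*)}L^2(\Si_t)}$; control of the geometry of the maximal foliation, i.e.\ of $k$, $\nab k$ and of the lapse $n$ through \eqref{eqlapsen}, \eqref{constk}, \eqref{constR} and the structure equations; and --- crucially --- a lower bound on the radius of injectivity of the null hypersurfaces $\HH_u$ attached to an optical function $u$ solving $\g^{\a\b}\pr_\a u\,\pr_\b u=0$. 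The goal is to show that, for $\ep$ small enough, each such quantity is in fact controlled by $C\ep$ (respectively bounded below by an absolute constant), forcing $T^*=1$; the $L^2$ regularity statement and the propagation of $r_{vol}(\Si_t,1)$ then follow.

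The curvature bound comes from energy estimates built on the Bel--Robinson tensor $Q[\R]$ contracted with the future unit normal $T$, yielding schematically $\|\R\|_{L^2(\Si_t)}^2\lesssim \|\R\|_{L^2(\Si_0)}^2+\int_0^t\int_{\Si_\tau}|\pi|\,|\R|^2$, where $\pi$ is the deformation tensor of $T$, expressed through $k$ and $n^{-1}\nab n$; the error term is cubic and a naive Gronwall step would require $L^1_tL^\infty$-type control of $\pi$, which is unavailable at this regularity. Following ingredients \textbf{A} and \textbf{B}, the way around this is to recast (EVE) as a quasilinear $so(3,1)$-valued Yang--Mills system via Cartan's formalism of moving frames, to fix the spatial frame by a Coulomb-type gauge adapted to the maximal foliation, and to commute the resulting equations with the Hodge projection $\PP$ onto divergence-free vector fields on $\Si_t$. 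This exposes a specific null structure in the worst nonlinearities, at the cost of generating commutator terms $[\square_\g,\PP]$, frame-projection terms, and --- since there is no timelike Killing field, only $T$ with nontrivial $\pi$ --- a further family of terms, all of which must be absorbed by an extended bilinear estimate and a novel trilinear estimate rather than by Gronwall.

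The bilinear estimates reduce, after the reductions of \cite{param1}--\cite{param4}, to the construction of a parametrix $\Phi_F$ for scalar wave equations $\square_\g\phi=F$ on our rough background together with $L^2$ bounds on the error $E=F-\square_\g\Phi_F$ (ingredient \textbf{C}), and to a sharp $L^4(\MM)$ Strichartz estimate for a localized piece of this parametrix (ingredient \textbf{D}, established in \cite{bil2}); both ultimately rest on controlling the regularity of the level sets of $u$, for which the essential input is precisely the lower bound on the injectivity radius of the $\HH_u$, itself derived from the $L^2$ curvature bound of the previous step together with the assumed lower bound on the volume radius. We use \cite{param1}--\cite{param4} and \cite{bil2} here as a black box. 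Feeding these estimates back into the energy identities closes the bootstrap; the lower bound on $r_{vol}(\Si_t,1)$ is then propagated from $\Si_0$ using the bounded curvature and bounded foliation geometry just obtained (no volume collapse). Finally, the higher-derivative bounds of part (2) follow by a comparatively standard argument: once the foliation, the frame and the optical function are controlled at the minimal regularity above, one commutes the Bianchi equations for $\R$ and the structure equations for $k$, $n$ repeatedly with $\D$ and runs energy estimates, every newly produced error term being lower order with respect to the already-established $L^2$ geometry.

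The main obstacle is the \emph{null-scaling criticality}: with only $L^2$ control of the curvature, the optical function $u$ has barely the minimal regularity needed for its null level hypersurfaces to make sense at all, so the parametrix construction and the bilinear/trilinear estimates of ingredients \textbf{B}--\textbf{D} live exactly at the borderline and cannot afford any loss. Everything hinges on a delicate interplay: the Coulomb gauge is what makes the null structure --- and hence the very possibility of bilinear estimates --- visible, while the sharp control of the causal geometry (an injectivity-radius lower bound for null cones out of an $L^2$ curvature bound) is what makes the resulting parametrices usable; losing either one collapses the argument.
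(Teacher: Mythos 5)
Your proposal follows essentially the same route as the paper: a bootstrap on $[0,T^*]$, the Cartan/Yang--Mills reformulation in a Coulomb-type gauge adapted to the maximal foliation, Bel--Robinson energy estimates whose cubic error term is closed by the trilinear estimate rather than Gronwall, bilinear estimates reduced to the parametrix construction of \cite{param1}--\cite{param4} and the sharp $L^4(\MM)$ Strichartz estimate of \cite{bil2} used as black boxes, and higher regularity by commutation. The only implementation difference worth noting is that the paper does not commute with the nonlocal projection $\PP$ directly (it deems this too complicated) but instead introduces the variable $B=(-\De)^{-1}\curl A$, derives a wave equation for $B$, and recovers $A$ from $\curl B$ up to lower-order terms.
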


The rest of this paper is devoted to the proof of Theorem \ref{th:mainter}. Note that  we will concentrate mainly on  part (1) of the  theorem.  The proof of part (2) - which concerns the propagation of higher regularity - follows exactly the same   steps   as the proof of  part (1) and is sketched in section \ref{sect:propag}.

\subsection{Strategy of the proof}\lab{sec:strategyproof}

The proof of Theorem \ref{th:mainter} consists in four steps. \\

\noindent{\bf Step A (Yang-Mills formalism)} We  first cast the Einstein-vacuum equations  within a Yang-Mills formalism. This relies on  the Cartan formalism of moving frames. The idea 
 is to give up on    a choice of coordinates and express  instead 
 the Einstein vacuum equations  in terms of the connection  $1$-forms    associated to  moving orthonormal  frames,  i.e.  vectorfields  $e_\a$,   
 which verify, 
 \beaa
\g(e_\a, e_\b)=\m_{\a\b}=\mbox{diag}(-1,1,1,1).
\eeaa
The connection $1$-forms (they are to be interpreted 
as $1$-forms with respect to the  external index $\mu$  with values  in the Lie algebra of $so(3,1)$),  defined  by the formulas,
\bea
(\A_\mu)_{\a\b}=\g(\D_{\mu}e_\b,e_\a)\label{intr.A}
\eea
verify the equations,
\bea
\label{intr-YM}
\D^\mu \F_{\mu\nu} + [ \A^\mu, \F_{\mu\nu}]=0
\eea
where,  denoting $ (\F_{\mu\nu})_{\a\b}:=\R_{\a\b\mu\nu}$,
\bea
\label{intr-YM1}
( \F_{\mu\nu})_{\a\b}=\big(\D_\mu \A_\nu-\D_\nu \A_\mu
-[\A_\mu,
\A_\nu]\big)_{\a\b}.
\eea
In other words  we can interpret   the curvature   tensor    as the curvature of the $so(3,1)$-valued  connection  1-form  $\A$. Note also that the covariant derivatives    are taken only with respect to the \textit{external indices} $\mu, \nu$ and  do not affect the \textit{internal indices} $\a,\b$.
 We can rewrite \eqref{intr-YM} in the form,
 \bea
 \label{intr-YM2}
 \square_\g \A_\nu-\D_\nu(\D^\mu \A_\mu) =\J_\nu(\A, \D \A)
 \eea
 where,
 \beaa
 \J_\nu=\D^\mu([\A_\mu, \A_\nu])-[\A_\mu, \F_{\mu\nu}].
 \eeaa
 Observe that the equations  \eqref{intr-YM}-\eqref{intr-YM1}
 look just like the Yang-Mills equations    on a fixed Lorentzian
 manifold $(\MM, \g)$ except, of course,   that  in our case
 $\A$ and $\g$ are  not independent but connected rather   by  \eqref{intr.A},
  reflecting the quasilinear  structure of the Einstein equations.
  Just as in the case   of   \cite{Kl-Ma1},  which  establishes the well-posedness of the  Yang-Mills equation  in Minkowski space 
  in   the    energy norm (i.e. $s=1$),    we    rely   in an essential manner  on   a    Coulomb type  gauge condition.   More precisely,  we  take 
   $e_0$  to be  the  future  unit normal to the 
   $\Si_t$ foliation and choose  $e_1, e_2, e_3$ an 
   orthonormal basis  to  $\Si_t$,   in such a way
   that we have, essentially (see precise discussion in 
   section \ref{sect:compatible}),  $\div A=\nab^i A_i=0$, where 
   $A$ is the spatial component of $\A$.   It turns out that $A_0$ satisfies
   an elliptic equation  while each  component  $A_i=\g(\A, e_i)$,  $ i=1,2,3$
   verifies an equation of the form,
   \bea
     \square_\g A_i  &=&-\pr_i (\pr_0 A_0)+ A^j \pr_j A_i +A^j\pr_i A_j+\lot\label{intr:null}
\eea
with $\lot$ denoting nonlinear  terms which  can be treated by more elementary techniques (including non sharp Strichartz estimates). \\     

\noindent{\bf Step B (Bilinear and trilinear estimates)} To eliminate $\pr_i (\pr_0 A_0)$  in \eqref{intr:null},  we need   to project \eqref{intr:null} onto  divergence free vectorfields   with the help of  a non-local operator which we denote  by $\PP$.      In  the  case of the   flat  Yang-Mills 
equations, treated in   \cite{Kl-Ma1},   this leads to an equation 
of the form,
\beaa
\square  A_i&=&\PP(A^j \pr_j A_i ) +\PP(A^j\pr_i A_j)+\lot
\eeaa
where  both terms on the right can be handled  by bilinear 
 estimates.    In our case  we encounter  however three 
 fundamental differences with the  flat  situation of    \cite{Kl-Ma1}.
 \begin{itemize}
 \item To start with the operator $\PP$ does not commute 
 with $\square_\g$.   It turns out,  fortunately,   that  the terms
 generated by  commutation     can still be estimated  by 
 an extended  class of bilinear estimates which includes  contractions 
 with the curvature tensor, see section \ref{sec:proofconj}. 
 \item All  energy estimates  used in   \cite{Kl-Ma1}
 are based on the standard    timelike   Killing vectorfield $\pr_t$. In our case the corresponding 
 vectorfield  $e_0=T$ ( the future unit   normal to $\Si_t$) is not Killing.
 This leads  to another class of trilinear  error terms  which 
  we  discuss in sections    \ref{sec:bobo6} and \ref{sec:proofconj}.
  \item The main difference 
   with  \cite{Kl-Ma1}  is that we now need bilinear and trilinear  estimates
    for solutions of wave equations on    background metrics 
   which   possess only limited  regularity.  
\end{itemize} 
    This last  item is a major   problem,  both conceptually and technically.
   On the conceptual side we need to rely on  a more geometric    proof of  bilinear estimates  based on   a plane    wave representation    formula\footnote{We follow   the proof   of the  bilinear estimates  outlined   in  \cite{Kl-R3}
    which differs substantially  from that of  \cite{Kl-Ma1} and
    is reminiscent of the  null frame space strategy used by   Tataru  in his      fundamental paper \cite{Ta3}. } for solutions of  scalar wave equations,
   \beaa
   \square_\g \phi=0.
   \eeaa 
   The proof of the bilinear estimates rests on the representation formula\footnote{\eqref{parametrix.intr} actually corresponds to the representation formula for a half-wave. The full representation formula corresponds to the sum of two half-waves (see section \ref{sec:parametrix})}
   \bea
   \label{parametrix.intr}
   \phi_f(t,x)=\int_{\SSS^2} \int_0^\infty e^{i\la \uom(t,x)} \,  f(\la\om) \la^2 d\la  d\om
   \eea
    where $f$ represents schematically the initial data\footnote{Here $f$ is in fact at the level of  the Fourier transform of the initial data  and the norm  $\|\la f\|_{L^2(\RRR^3)  }  $  corresponds, roughly,  to the $H^1$ norm of the data .},    and where  $\uom$ is a solution of the eikonal equation\footnote{In the flat Minkowski space   $\uom(t,x)=t\pm x\c\om$.},
   \bea
   \g^{\a\b}\pr_\a\uom\,  \pr_\b \uom=0,\label{eikonal-intr}
   \eea
   with appropriate initial conditions on $\Si_0$ and $d\om$
    the area element of the standard sphere in $\RRR^3$.\\
    
    \begin{remark}\label{rem:scalarize}
    Note that \eqref{parametrix.intr} is a parametrix for a scalar wave equation. The lack of a good parametrix for a covariant wave equation forces us to develop a strategy based on writing the main equation in components relative to a frame, i.e. instead of dealing with the tensorial wave equation \eqref{intr-YM2} directly, we consider the system of scalar wave equations \eqref{intr:null}. Unlike in the flat case, this scalarization procedure produces several terms which are potentially dangerous, and it is fortunate that they can still be controlled by the use of an extended\footnote{such as  contractions  between the Riemann curvature tensor and   derivatives of  solutions of   scalar  wave equations.} class of  bilinear estimates.
    \end{remark}
    
 \vspace{0.3cm}   
 
    \noindent{\bf Step C (Control of the parametrix)} To prove the bilinear and trilinear estimates of Step B, we need in particular to control the parametrix at initial time (i.e. restricted to the initial slice $\Sigma_0$)
   \bea
   \label{parametrixinit.intr}
   \phi_f(0,x)=\int_{\SSS^2} \int_0^\infty e^{i\la \uom(0,x)} \,  f(\la\om) \la^2 d\la  d\om
   \eea
 and the error term corresponding to \eqref{parametrix.intr}    
   \bea
   \label{error-param}
  Ef(t,x)=\square_\g  \phi_f(t,x)=i \int_{\SSS^2} \int_0^\infty e^{i\la \uom(t,x)}   
  \,(\square_\g \uom )
   f(\la\om)  \la^3d\la d\om
   \eea
   i.e.  $\phi_f$   is an exact solution of  $\square_\g\phi=0$  only in flat space
   in which case $ \square_\g \uom=0$. This requires the following four sub steps
{\em\begin{enumerate}
\item[{\bf C1}] Make an appropriate choice for the equation satisfied by $\uom(0,x)$ on $\Sigma_0$, and control the geometry of the foliation of $\Sigma_0$ by the level surfaces of $\uom(0,x)$.

\item[{\bf C2}] Prove that the parametrix at $t=0$ given by \eqref{parametrixinit.intr} is bounded in $\mathcal{L}(L^2(\mathbb{R}^3), L^2(\Sigma_0))$ using the estimates for $\uom(0,x)$ obtained in {\bf C1}.

\item[{\bf C3}] Control the geometry of the foliation of $\mathcal{M}$ given by the level hypersurfaces of $\uom$.

\item[{\bf C4}] Prove that the error term \eqref{error-param} satisfies the estimate $\norm{Ef}_{L^2(\mathcal{M})}\leq C\norm{\lambda f}_{L^2(\mathbb{R}^3)}$ using the estimates for $\uom$ and $\square_{\g}\uom$ proved in {\bf C3}.
\end{enumerate}
}
   
To achieve Step C3 and Step C4, we need, at the very  least,  to control  $\square_\g \uom$
in $L^\infty$. This issue   was first addressed in 
 the sequence of papers \cite{Kl-R4}--\cite{Kl-R6} where an $L^\infty$ bound for  $\square_\g \uom$ was established, 
 depending only on the $L^2$ norm of the curvature flux along null hypersurfaces.      The proof required   an interplay between 
 both  geometric and analytic techniques and  had all the appearances 
 of being sharp, i.e.    we don't expect an $L^\infty$ bound for $\square_\g \uom$  which requires bounds on  less
 than two derivatives  in $L^2$ for the metric\footnote{classically, this requires, at the very  least, the control of $\R$ in $L^\infty$}.
 
 \begin{remark}
 The control of $\square_\g \uom$ in $L^\infty$ is not only crucial to achieve Step C4, but it also plays a fundamental role in Step C3. Indeed, it allows to control the radius of injectivity of the level hypersurfaces of $\uom$. Let us note that in \cite{Kl-R7}, such a control is obtained under the additional assumption of a uniform bound for the second fundamental form $k$. As we do not have such a uniform bound for $k$ in our case,  we are forced to proceed differently. We refer the reader to section 4.1 in \cite{param3} for the details.
 \end{remark}
 
 To obtain the $L^2$ bound for the Fourier integral operator $E$ defined in \eqref{error-param}, we 
 need, of course, to  go  beyond uniform   estimates 
 for $\square_\g \uom$. The classical $L^2$ bounds 
 for Fourier integral operators of the form \eqref{error-param} 
 are not at all economical in terms of the number of integration by parts which are needed.   In our case the total  number of     
 such  integration by parts  is limited by  the   regularity properties of the function $\square_\g\uom$.  To
   get an $L^2$ bound for the parametrix at initial time \eqref{parametrixinit.intr}  and the error term \eqref{error-param} within such restrictive regularity properties 
   we  need, in particular:
   \begin{itemize}
   \item In Step C1 and Step C3, a precise  control of     derivatives of $\uom$ and $\square_\g \uom$ 
   with respect to both $\om$ as well as with respect to various 
   directional derivatives\footnote{Taking  into account 
   the   different    behavior in tangential and transversal directions
    with respect to the level surfaces of $\uom$.}.   
   To get optimal control  we need,
   in particular,  a  very careful   construction of   the initial condition for    $\uom $ on $\Si_0$  and then sharp space-time estimates  of Ricci   coefficients, and their  derivatives,     associated     to the foliation induced by  $\uom$.  
   
\item  In Step C2 and Step C4, a careful decompositions of    the Fourier integral operators \eqref{parametrixinit.intr} and  \eqref{error-param} in both  $\la$  and $\om$, 
 similar to the first and second dyadic decomposition in harmonic analysis, see 
 \cite{St}, as well as a third    decomposition, which in the case of  \eqref{error-param} is done with respect 
 to the space-time variables    relying on  the geometric Littlewood-Paley  theory developed in  \cite{Kl-R6}.
\end{itemize}

Below, we make further  comments on Steps C1-C4:
\begin{enumerate}
\item \textit{The choice of $u(0,x,\om)$ on $\Sigma_0$ in Step C1.} Let us note that the typical choice $u(0,x,\om)=x\c\om$ in a given coordinate system would not work for us, since we don't have enough control on the regularity of a given coordinate system within our framework. Instead, we need to find a geometric definition of $u(0,x,\om)$. A natural choice would be
$$\square_\g u=0\textrm{ on }\Sigma_0$$
which by a simple computation turns out to be the following simple variant of the minimal surface equation\footnote{In the time symmetric case $k=0$, this is exactly the minimal surface equation}
$$\div\left(\frac{\nabla u}{|\nabla u|}\right)=k\left(\frac{\nabla u}{|\nabla u|}, \frac{\nabla u}{|\nabla u|}\right)\textrm{ on }\Sigma_0.$$
Unfortunately, this choice does not allow us to have enough control of the derivatives of $u$ in the normal direction to the level surfaces of $u$. This forces us to look for an alternate equation for $u$:
$$\div\left(\frac{\nabla u}{|\nabla u|}\right)=1-\frac{1}{|\nabla u|}+k\left(\frac{\nabla u}{|\nabla u|}, \frac{\nabla u}{|\nabla u|}\right)\textrm{ on }\Sigma_0.$$
This equation turns out to be parabolic in the normal direction to the level surfaces of $u$, and allows us to obtain the desired regularity in Step C1. On closer inspection  it is related with the  well known mean curvature flow on $\Si_0$.

\item \textit{How to achieve Step C3.} The regularity obtained in Step C1, together with null transport equations tied to the eikonal equation, elliptic systems of Hodge type, the geometric Littlewood-Paley theory of \cite{Kl-R6}, sharp trace theorems, and an extensive use of the structure of the Einstein equations, allows us to propagate the regularity on $\Sigma_0$ to the space-time, thus achieving Step C3.

\item \textit{The regularity with respect to $\om$ in Steps C1 and C3.} The regularity with respect to $x$ for $u$ is clearly limited as a consequence of the fact that we only assume $L^2$ bounds on $\R$. On the other hand, $\R$ is independent of the parameter $\om$, and one might infer that $u$ is smooth with respect to $\om$. Surprisingly, this is not at all the case. Indeed,  the regularity in $x$ obtained for $u$ in Steps C1 and C3 is better in directions tangent to the level hypersurfaces of $u$. Now, the $\om$ derivatives of the tangential directions have non zero normal components. Thus, when differentiating the structure equations with respect to $\om$, tangential derivatives to the level surfaces of $u$ are transformed in non tangential derivatives which in turn severely limits the regularity in $\om$ obtained in Steps C1 and C3.

\item \textit{How to achieve Steps C2 and C4.} Let us note that the classical arguments for proving $L^2$ bounds for Fourier operators are based either on a $T T^*$ argument, or a $T^* T$ argument, which requires several integration by parts either with respect to $x$ for $T^*T$, of with respect to $(\la, \om)$ for $TT^*$. Both methods would fail by far within the regularity for $u$ obtained in Step C1 and Step C3. This forces us to design a method which allows to take advantage both of the regularity in $x$ and $\om$. This is achieved using in particular the following ingredients:
\begin{itemize}
\item geometric integrations by parts taking full advantage of the better regularity properties in directions tangent to the level hypersurfaces of $u$,

\item the standard first and second dyadic decomposition in frequency  space, with  respect to  both size and angle (see \cite{St}), an additional decomposition in physical space relying on the geometric Littlewood-Paley projections of \cite{Kl-R6} for Step C4, as well as another decomposition involving frequency and angle for Step C2.
\end{itemize}
Even with these precautions, at several places in the proof, one encounters log-divergences which have to be tackled by ad-hoc techniques,  taking full advantage of the structure of the Einstein equations. 
\end{enumerate}

\vspace{0.5cm}

\noindent{\bf Step D (Sharp $L^4(\MM)$ Strichartz estimates)} Recall that the parametrix constructed   in Step C needs also to be used to prove  sharp $L^4(\MM)$ Strichartz estimates. Indeed the proof of several bilinear estimates of Step B reduces to the proof of sharp $L^4(\MM)$ Strichartz estimates for the parametrix \eqref{parametrix.intr} with $\la$ localized in a dyadic shell. 

More precisely, let $j\geq 0$, and let $\psi$ a smooth function on $\RRR^3$ supported in 
$$\frac 1 2 \leq |\xi|\leq 2.$$
Let $\phi_{f,j}$ the parametrix \eqref{parametrix.intr} with a additional frequency localization $\la\sim 2^j$
\begin{equation}\lab{paraml}
\phi_{f,j}(t,x)=\int_{\SSS^2} \int_0^\infty  e^{i \la \uom(t,x)}\psi(2^{-j}\la)f(\la\om)\la^2d\la d\om.
\end{equation}
We will need the sharp\footnote{Note in particular that the corresponding estimate in the flat case is sharp.} $L^4(\MM)$ Strichartz estimate
\begin{equation}\lab{strichgeneralintro}
\norm{\phi_{f,j}}_{L^4(\MM)}\les 2^{\frac{j}{2}}\norm{\psi(2^{-j}\la)f}_{L^2(\RRR^3)}.
\end{equation}
The standard procedure for proving\footnote{Note that the procedure we  describe would prove not only   \eqref{strichgeneralintro}   but the full   range  of mixed Strichartz estimates.}          \eqref{strichgeneralintro} is  based  on a  $TT^*$ argument  which  reduces  it  to an  $L^\infty$ estimate for an oscillatory integral with a phase involving $\uom$. This is then achieved  by  the method of  stationary phase which  requires  quite a  few integrations by parts.    In fact   the standard  argument   would require, at the least\footnote{The regularity \eqref{staphase} is necessary to make sense of the change of variables involved in the stationary phase method.}, that the phase function
$u=\uom$ verifies,
\begin{equation}\lab{staphase}
\partial_{t,x} u  \in L^\infty,\,\partial_{t,x}\partial_\omega^2 u\in L^\infty.
\end{equation}
This level of regularity is, unfortunately,  incompatible with the regularity  properties    of solutions
  to our  eikonal equation  \eqref{eikonal-intr}. 
 In fact,  based  on the estimates for $\uom$  derived in step C3, 
  we  are only allowed      to assume
\begin{equation}\lab{reguintro}
\partial_{t,x}  u  \in L^\infty,\, \partial_{t,x}\partial_\omega u \in L^\infty.
\end{equation}
 We are  thus are forced to follow an alternative approach\footnote{ We  refer  to  the approach  based on   the  overlap estimates for wave packets derived in \cite{Sm} and \cite{Sm-Ta} in the context of Strichartz estimates respectively for $C^{1,1}$ and $H^{2+\ep}$ metrics.  Note however that our approach    does not require a wave packet  decomposition. } to the stationary phase  method inspired by    \cite{Sm} and    \cite{Sm-Ta} .

\subsection{Structure of the paper} 

The rest of this paper is devoted to the proof of Theorem \ref{th:mainter}. Here  are the main steps.
\begin{itemize}
\item
In section \ref{sec:YMgaugetheory}, we start by  describing  the  Cartan formalism and  introduce compatible  frames, i.e. frames   $e_0, e_1, e_2, e_3$   with $e_0$  the future unit normal to the foliation $\Si_t$ and $(e_1, e_2, e_3)$ an orthonormal
  basis on $\Si_t$.  We choose  $e_1, e_2, e_3$  such that the   spatial components 
$A=(A_1, A_2, A_3)$  verify the Coulomb   condition $\nab^i A_i=0$. We then 
decompose the equations  \eqref{eq:YM7}-\eqref{eq:YM8}
  relative   to the frame. This leads   to scalar  equations for  
  $A_0=\g(\A, e_0)$ and $A_i=\g(\A, e_i)$ of the form (see Proposition \ref{prop:decomp}),
  \beaa
  \De A_0&=&  \lot\\
  \square A_i  &=&-\pr_i (\pr_0 A_0)+ A^j \pr_j A_i +A^j\pr_j A_i+\lot
  \eeaa
  where $\lot$ denote nonlinear  terms   for  which  the specific structure is irrelevant, i.e.  no bilinear estimates are needed.  The  entire proof of the  bounded $L^2$  conjecture   is  designed  to  treat the difficult  terms $A^j \pr_j A_i$  and   $A^j\pr_j A_i$. 
\item To eliminate $\pr_i (\pr_0 A_0)$ and exhibit the null structure of the term  $A^j\pr_j A_i$  we need to project the second equation onto  divergence free vectorfields.    Unlike the   flat  case of  the Yang-Mills equation (see \cite{Kl-Ma3}),  the projection  does not commute with $\square$ and we have to be  very careful with  the   commutator terms which it generates.
We effectively achieve    the desired effects of  the projection by introducing the quantity $B=(-\De)^{-1} \curl A$.  The main commutation  formulas  are discussed  in section 6 and proved in  the appendix. 
 \item In section \ref{sec:preliminaries}, we  start by deriving various preliminary  estimates 
 on the initial slice $\Si_0$, discuss an appropriate version of  Uhlenbeck's lemma  and show how to control  $A_0$, $A$ as well  as $B=(-\De)^{-1} \curl A$,  from our initial assumptions  on $\Si_0$.
 
 \item In section \ref{sec:strategyoftheproof}, we introduce our  bootstrap assumptions  and describe  the principal  steps in the proof of   version 3  our main theorem, i.e. Theorem  \ref{th:mainter}.  Note that  we are by no means economical in our choice of bootstrap assumptions. We have decided to  give  a longer  list, than strictly necessary,    in the hope that it will make  the   proof more transparent.  We make,  in particular,   a list of  bilinear, and even trilinear and Strichartz   bootstrap assumptions which   take advantage  of the special structure of the Einstein equations.  The trilinear    bootstrap assumption is   needed  in order to derive the crucial  $L^2$ estimates for the curvature  tensor. The  entire proof of   Theorem \ref{th:mainter}  is summarized  in Propositions \ref{prop:improve1} and \ref{prop:improve2} in which  all the bootstrap assumptions are   improved by estimates  which depend only on the  initial data, as well as Proposition \ref{prop:propagreg} in which we prove the propagation of higher regularity.  
  
 \item In section \ref{sec:simpleconsequencesboot}, we discuss  various elliptic estimates on the slices  $\Si_t$,  derive estimates for $B$ from the bootstrap assumptions on $A$, and we show how to  derive estimates for   $A$ from those of  $B$.
 
 \item In  section \ref{sec:bobo5}, we use the bootstrap assumptions  to derive $L^2$-spacetime  estimates
  for $\square B$ and $\pr\square B$, estimates which are crucial    in order to provide a parametrix
  representation for $B$  and prove the bilinear estimates stated  in proposition   \ref{prop:improve2}.
 It is crucial here that all the commutator  terms generated   in the process continue to have 
 the crucial bilinear structure   discussed above and thus can be all estimated by  our bilinear  bootstrap 
 assumptions.
  
  \item In section \ref{sec:bobo6}, we  derive energy estimates for the wave equations  $\square_\g  \phi=F$,
  relying again on the  bootstrap assumptions, in particular the  trilinear ones. 
  
  \item In section \ref{sec:improve1}, we improve on our basic bootstrap assumption, i.e. all bootstrap assumptions except the bilinear, trilinear and Strichartz bootstrap assumptions. This corresponds to proving Proposition   \ref{prop:improve1}.   
  
  \item In section \ref{sec:parametrix}, we show how to construct  parametric representation formulas for solutions   to the scalar  wave equation $\square_\g\phi=F$.  The main result of the section, Theorem \ref{lemma:parametrixconstruction}, depends  heavily  on Theorem \ref{prop:estparam} whose proof 
  requires, essentially,    all the constructions and proofs of    the   papers  \cite{param1}-\cite{param4}. Theorem   \ref{prop:estparam} is in fact the main black box of this paper. 
  
  \item In sections \ref{sec:improve2} and \ref{sec:improve2B}, we improve on our bilinear, trilinear and Strichartz bootstrap assumptions. This corresponds to proving Proposition \ref{prop:improve2}. Note that we rely on sharp $L^4(\MM)$ Strichartz estimates for a parametrix localized in frequency (see Proposition \ref{prop:L4strichartz}) which are proved in \cite{bil2}.

\item Finally, we prove the propagation of higher regularity in section \ref{sect:propag}. This corresponds to proving Proposition \ref{prop:propagreg}. 
\end{itemize}

The rest of this paper is devoted to the proof of Theorem \ref{th:mainter}. Note that  we will concentrate mainly on  part (1) of the  theorem.  The proof of part (2), which   follows  exactly  the same   steps   as the proof  (with some obvious simplifications)   of  part (1),    is          sketched in section \ref{sect:propag}.

\begin{remark}
To re-emphasize  that   the special structure of the Einstein equations is of fundamental importance in    deriving our result 
we would like to stress that the  bilinear estimates are needed not only to   treat the terms of the form
$A^j \pr_j A_i$  and   $A^j\pr_j A_i$ mentioned above (which are  also present   in  flat space) but also to derive  energy estimates  for solutions to $\square_\g \phi=F$. 
Moreover, a trilinear estimate is required to     get $L^2$ bounds for $\R$.  In addition  to   these, a result such as Theorem  \ref{prop:estparam} cannot possibly hold  true, for metrics $\g$   with our  limited degree of regularity,   unless   the Einstein equations are satisfied,  i.e. $\mbox{Ric}(\g)=0$.
Indeed a   crucial element of  a construction of  a parametrix
 representation for solutions to $\square_\g \phi=F$, guaranteed in  Theorem  \ref{prop:estparam}, is the control and regularity of a family of phase functions 
 with level hypersurfaces which are null with respect to $\g$.  As mention   a few times in this introduction, such controls are intimately tied to the null geometry of a space-time, e.g. lower  lower  bounds 
 for  the radius of injectivity of null hypersurfaces, and would fail,  by a lot,  for a general Lorentzian metric $\g$. 
  \end{remark}
  
  \vspace{0.2cm}
  
{\bf Conclusion.} Though this result falls short of the  crucial  goal of finding a scale invariant  well-posedness  criterion in GR, it is clearly optimal in terms of all currently available  ideas and  techniques. Indeed, within our current understanding,   a better  result would require enhanced   bilinear estimates, which in turn would rely heavily on parametrices. On the other hand, parametrices are based on solutions to the eikonal equation whose control 
  requires, at least, 
$L^2$ bounds for the curvature tensor, as can be seen in many instances in our work.  Thus,  if we are   
 to ultimately   find    a    scale invariant  well-posedness  criterion, it is clear that an entirely new circle        of ideas is  needed. Such a goal is clearly of fundamental importance not just to GR, but also to any physically relevant quasilinear hyperbolic system. \\ 

{\bf Acknowledgements.} This work would be inconceivable without the extraordinary  advancements made 
on nonlinear wave equations  in the last twenty years in which so many have
 participated.
We would like to  single out the contributions of those who have affected this  work  in a more direct fashion,   either through their papers or through  relevant  discussions,  in  various stages   of its long gestation.  D.  Christodoulou's     seminal work \cite{Ch2}  on the    weak cosmic censorship conjecture   had a direct motivating   role on our program, starting with   a series of papers  between  the first author and M. Machedon, in which spacetime  bilinear estimates     were first introduced and used    to take advantage of   the null structure of geometric semilinear  equations such as Wave Maps and Yang-Mills.  The works of  Bahouri- Chemin  \cite {Ba-Ch1}-\cite{Ba-Ch2}   and     D.Tataru \cite{Ta2}   were       the first  to go below  the classical  Sobolev exponent  $s=5/2$,   for any quasilinear system  in  higher dimensions. This   was, at the time,  a major psychological  and technical   breakthrough which opened the way for future developments.  Another major breakthrough  of the period,   with direct   influence on   our  approach to bilinear estimates in curved spacetimes,  is   D. Tataru's work \cite{Ta3}  on  critical well posedness      for Wave Maps, in which null frame spaces were first introduced.    His  joint work  with H. Smith \cite{Sm-Ta}  which, together with \cite{Kl-R2},   is the first to reach   optimal     well-posedness without  bilinear estimates, has also   influenced our approach on parametrices and Strichartz estimates.  
 The authors  would also  like  to acknowledge fruitful conversations with  L. Anderson,  and   J. Sterbenz.

\section{Einstein vacuum equations as Yang-Mills gauge theory}\lab{sec:YMgaugetheory}

\subsection{Cartan formalism} \label{sect:Cartan} 

Consider an Einstein vacuum spacetime $(\MM, \g)$. We denote the  covariant differentiation by $\D$. 
   Let $e_{\a}$ be an orthonormal frame
on $\MM$, i.e. 
\beaa
\g(e_\a, e_\b)=\m_{\a\b}=\mbox{diag}(-1,1,\ldots,1).
\eeaa
Consistent with the Cartan formalism we  define the connection 1 form, 
\bea
\label{eq:YM1abstract}
(\A)_{\a\b}(X)=\g(\D_{X}e_\b,e_\a)
\eea
where $X$ is an arbitrary vectorfield in  $T(\MM)$.
Observe that,
\beaa
(\A)_{\a\b}(X)=-(\A)_{\b\a}(X)
\eeaa
i.e.  the $1$ -form $\A_\mu dx^\mu$  takes values  in 
 the Lie  algebra of $so(3,1)$. We separate the internal indices
 $\a,\b$ from the  external indices $\mu$  according to the following 
 notation.
\bea
\label{eq:YM1}
(\A_\mu)_{\a\b}:=  (\A)_{\a\b}(\pr_\mu)      =       \g(\D_{\mu}e_\b ,e_\a)
\eea

Recall that the Riemann curvature tensor is defined by
\beaa
\R(X, Y,  U, V)=\g\big(X, \big[ \D_U \D_V-\D_V \D_U -\D_{[U,V]} Y\big]\big)
\eeaa
with $X, Y, U, V$ arbitrary vectorfields in $T(\MM)$. 
Thus, taking $U=\pr_\mu, V=\pr_\nu$,   coordinate vector-fields,
\beaa
\R(e_\a, e_\b , \pr_\mu, \pr_\nu)=\g\big(e_\a,  \D_\mu \D_\nu e_\b  -\D_\nu \D_\mu e_\b \big).
\eeaa
We write,
\beaa
\D_\nu e_\b=(D_\nu e_\b, e_\la) e_\la= (\A_\nu)^{\la}\, _{\b} e_\la
\eeaa
and,
\beaa
\D_\mu\D_\nu e_\b&=&\D_\mu\big((\A_\nu)^{\la}\,_{\b} e_\la)=\pr_\mu
 (\A_\nu)^\la\, _\b e_\la+(\A_\nu)^\la\, _\b \D_\mu e_\la\\
 &=&\pr_\mu (\A_\nu)^\la\, _\b  e_\la+(\A_\nu)^\la\, _\b   (\A_\mu)^\si\, _\la e_\si.
\eeaa
Hence,
\bea\lab{linkRA}
\R(e_\a, e_\b , \pr_\mu, \pr_\nu)&=&\pr_\mu (\A_\nu)_{\a\b}-
\pr_\nu (\A_\mu)_{\a\b}+(\A_\nu)_\a\,^\la  (\A_\mu)_{\la\b}-
(\A_\mu)_\a\,^\la  (\A_\nu)_{\la\b}.
\eea
Thus defining the Lie bracket,
\bea
\label{eq:YM2}
([\A_\mu, \A_\nu])_{\a\b}=(\A_\mu)_{\a}{\,^\ga} \,( \A_\nu)_{\ga\b}-
(\A_\nu)_{\a}{\,^\ga} \,( \A_\mu)_{\ga\b}
\eea
we obtain:
$$
\R_{\a\b\mu\nu}=\pr_\mu (\A_\nu)_{\a\b}-\pr_\nu (\A_\mu)_{\a\b}-([\A_\mu, \A_\nu])_{\a\b},
$$
or, since $\pr_\mu (\A_\nu)-\pr_\nu
(\A_\mu)=\D_\mu \A_\nu-\D_\nu \A_\mu$
\bea
\label{eq:YM3}
( \F_{\mu\nu})_{\a\b}=\R_{\a\b\mu\nu}=\big(\D_\mu \A_\nu-\D_\nu \A_\mu
-[\A_\mu,
\A_\nu]\big)_{\a\b}.
\eea
Therefore we can interpret   $\F$  as the curvature of the connection  $\A$.

Consider  now the covariant derivative of the Riemann curvature tensor,
\beaa
\D_\si \R_{\a\b\mu\nu}&=&(\D_\si \F_{\mu\nu})_{\a\b}-
 \R_{\D_\si\a\b\mu\nu}-\R_{\a \D_\si\b\mu\nu}\\
&=&(\D_\si \F_{\mu\nu})_{\a\b}-\R^\de_{\,\,\b\mu\nu}\,\g(\D_\si e_\a, e_\de)-
{\R_{\a}^{\,\, \de}}_{\,\mu\nu}\,\g(\D_\si e_\b ,e_\de)\\
&=&(\D_\si \F_{\mu\nu})_{\a\b}-(\A_\si)_{\a}\, ^{\de}(\F_{\mu\nu})_{\de\b}-
(\A_\si)_{\b}\, ^{\de}(\F_{\mu\nu})_{\a\de}\\
&=&(\D_\si \F_{\mu\nu})_{\a\b}+(\A_\si)_{\a}\,^{\de}(\F_{\mu\nu})_{\de\b}-
(\F_{\mu\nu})_{\a}\,^{\de}
(\A_\si)_{\de\b}\\
&=&\big( \D_\si \F_{\mu\nu}+[\A_\si, \F_{\mu\nu}]\big)_{\a\b}.
\eeaa
Hence,
\bea
\label{eq:YM4}
\D_\si \R_{\a\b\mu\nu}=\Da_\si F_{\mu\nu}:= \D_\si \F_{\mu\nu}+[\A_\si, \F_{\mu\nu}]
\eea
where we denote by $\Da$ the  covariant derivative
on the corresponding vector bundle. More precisely
if $\U=\U_{\mu_1\mu_2\ldots \mu_k}$ is any $k$-tensor
 on $\MM$ with values on the Lie algebra of $so(3,1)$,
\bea
\label{eq:YM4'}
\Da_\si  \U=
 \D_\si  \U+ [\A_\si, \U]. 
\eea

\begin{remark}
Recall that in $(\A_\mu)_{\a\b}$, $\a, \b$ are called the internal indices, while $\mu$ are called the  external indices. Now, the internal indices will be irrelevant for most of the paper. Thus, from now on, we will drop these internal indices, except for rare instances where we will need to distinguish between internal indices of the type $ij$ and internal indices of the type $0i$.
\end{remark}

The Bianchi identities for $\R_{\a\b\mu\nu}$
 take the form 
\bea
\label{eq: YM5}
\Da_\si \F_{\mu\nu}+\Da_\mu F_{\nu\si}+\Da_\nu \F_{\si\mu}=0.
\eea
As it is well known the Einstein vacuum equations $\R_{\a\b}=0$
imply 
$ \D^\mu \R_{\a\b\mu\nu}=0.$
Thus, in view of   equation    \eqref{eq:YM4},
\bea
\label{eq:YM6}
0=\Da^\mu\, \F_{\mu\nu}= \D^\mu \F_{\mu\nu} + [ \A^\mu, \F_{\mu\nu}]
\eea
or, in view of \eqref{eq:YM3} and  the vanishing of the Ricci curvature of $\g$,.
\bea
\label{eq:YM7}
\square \A_\nu-\D_\nu(\D^\mu \A_\mu) =\J_\nu
\eea
where
\bea
\label{eq:YM8}
\J_\nu=\D^\mu([\A_\mu, \A_\nu])-[\A_\mu, \F_{\mu\nu}].
\eea
Using again the vanishing of the Ricci curvature it is easy to check,
\bea
\label{eq:YM9}
\D^\nu \J_\nu=0.
\eea

Finally we recall the general formula of transition between two different orthonormal frames  $e_\a$ and $ \widetilde{e}_\a$ on $\MM$, related by, 
\beaa
 \widetilde{e}_\a=\O_{\a}^\ga e_\ga
\eeaa
where
$
\m_{\a\b}=\O_\a^\ga \O_\b^\de \, \m_{\ga\de}
$,
i.e. $\O$ is a smooth map from $\MM$ to the Lorentz group $O(3,1)$.
In other words,  raising and lowering indices with respect to $\m$,
\bea
\O_{\a\la}\O^{\b\la}=\de_{\a}^\b
\eea
Now, $(\widetilde{\A}_\mu)_{\a\b}=\g(\D_\mu  \widetilde{e}_\b, \widetilde{e}_\a)$. Therefore,
\bea
\label{gauge.tr}
(\widetilde{\A}_\mu)_{\a\b}=\O_\a^\ga \O_\b^\de (\A_\mu)_{\ga\de}+\pr_\mu (\O_\a^\ga) \, \O_{\b}^\de\,  \m_{\ga\de}
\eea

\subsection{Compatible frames}
\label{sect:compatible}

Recall that our spacetime is assumed to be foliated by  the level surfaces $\Si_t$ of a 
time function $t$,  which are maximal, i.e.  denoting by $k$
the second fundamental form of  $\Si_t$ we have,
\bea\lab{eq:YM9bis}
\tr_g k=0
\eea
where $g$ is the induced metric  on $\Si_t$. Let us choose $e_{(0)}=T$, the future unit normal to the $\Si_t$  foliation, and $e_{(i)}$, $i=1,2,3$  an orthonormal frame tangent  to $\Si_t$.  We call this   a frame compatible with our $\Si_t$ foliation.  We consider the connection coefficients \eqref{eq:YM1}
with respect to this frame. Thus, in particular, denoting   by  $A_0$,   respectively $A_i$, the temporal and spatial components
of $\A_\mu$
\bea
(A_i)_{0j}&=&(A_j)_{0i}=-k_{ij} ,\qquad i,j=1,2,3\label{eq:YM10}  \\
(A_0)_{0i}   &=&-n^{-1}\nabla_in  \qquad i=1,2,3\ \label{eq:YM11} 
\eea
where $n$ denotes the lapse of the $t$-foliation, i.e. $n^{-2}=-\g(\D t, \D
t)$. With this notation  we note  that,
\beaa
\nabla_lk_{ij}=\nabla_l(k_i)_j+k_{in}(A_l)_j\,^n=\nabla^l(A_i)_{0j}+k_{in}(A_l)_j\,^n
\eeaa
 where $\nabla$ is the induced covariant derivative on $\Sigma_t$ and, as  before, the notation  $\nabla_l(k_i)_j$ or    $ \nabla^l(A_i)_{0j}   $,  is meant to suggest that
 the covariant differentiation affects only the external index $i$. Recalling from \eqref{constk} that $k$ verifies the constraint equations,
$$\nabla^ik_{ij}=0,$$
 we derive,
 \bea\lab{coulomblike1}
 \nab^i( A_i)_{0j}=k_i\,^m (A_i)_{mj}.
 \eea
 
  Besides the choice of $e_0$ we are still free to make  a choice
  for the  spatial elements of the frame $e_1, e_2, e_3$.  In other words we consider  frame transformations which keep $e_0$ fixed, i.e transformations
  of the type, 
  \beaa
  \tilde{e}_i=O_i^j e_j
  \eeaa
  with  $O$ in the orthogonal group $O(3)$. 
  We now have,  according to \eqref{gauge.tr},
  \beaa
  (\tilde{A}_m)_{ij}=O_i^k O_j^l  (A_m)_{kl}+\pr_m(O_i^k) O_j^l \de_{kl}
  \eeaa
  or, schematically,
  \bea
  \tilde{A}_m &=& O A_m O^{-1}+(\pr_m O) O^{-1}
  \eea
  formula in which we understand that   only the   spatial internal indices 
  are involved. We shall use this freedom later to exhibit a frame $e_1, e_2, e_3$ 
  such that the corresponding connection $A$ satisfies the Coulomb gauge condition 
  $\nabla^l(A_l)_{ij}=0$ (see Lemma \ref{lemma:uhlenbeck}).
 
 \subsection{Notations}
 
 We introduce notations used throughout the paper. From now on, we use greek indices to denote general indices on $\MM$ which do not refer to the particular frame $(e_0, e_1, e_2, e_3)$. The letters $a, b, c, d$ will be used to denote general indices on $\Si_t$ which do not refer to the particular frame $(e_1, e_2, e_3)$. Finally, the letters $i, j, l, m, n$ will only denote indices relative to the frame $(e_1, e_2, e_3)$. Also, recall that $\D$ denotes the covariant derivative on $\MM$, while $\nabla$ denotes the induced covariant derivative on $\Si_t$. Furthermore, $\prb$ will always refer to the derivative of a scalar quantity relative to one component of the frame $(e_0, e_1, e_2, e_3)$, while $\pr$ will always refer to the derivative of a scalar quantity relative to one component of the the frame $(e_1, e_2, e_3)$, so that $\prb=(\pr_0, \pr)$. For example, 
$\pr A$ may be any term of the form $\pr_i(A_j)$, $\pr_0A$ may be any term of the form $\pr_0(A_j)$, 
$\pr A_0$ may be any term of the form $\pr_j(A_0)$, and $\prb\A=(\prb A, \prb A_0)=(\pr_0A_0, \pr A_0, \pr_0A, \pr A)$.

We introduce the curl operator $\curl$ defined for any $so(3,1)$-valued triplet $(\om_1, \om_2, \om_3)$ of functions on $\Si_t$  as  follows:
\bea\lab{def:curl}
(\curl \om)_i=\in_i\,^{jl}\partial_j(\om_l),
\eea
where $\in_{ijl}$ is fully antisymmetric and such that $\in_{123}=1$. We also introduce the divergence operator $\div$ defined for any $so(3,1)$-valued tensor $A$ on $\Si_t$  as  follows:
\bea\lab{def:div}
\div A=\nabla^l (A_l)=\pr^l (A_l)+A^2.
\eea
\begin{remark}
Since $\pr_0$ and $\pr_j$ are not coordinate derivatives, note that the commutators $[\pr_j, \pr_0]$ and $[\pr_j, \pr_l]$ do not vanish. Indeed, we have for any scalar function $\phi$ on $\MM$:
\beaa 
[\pr_i, \pr_j]\phi&=&[e_i, e_j]\phi
=(\D_ie_j-\D_je_i)\phi\\
&=&-((\D_ie_j,e_0)-(\D_je_i,e_0))e_0(\phi)+((\D_ie_j,e_l)-(\D_je_i,e_l))e_l(\phi)\\
&=&-((A_i)_{0j}-(A_j)_{0i})\pr_0\phi+((A_i)_{lj}-(A_j)_{li})\pr_l\phi,
\eeaa
and:
\beaa 
[\pr_i, \pr_0]\phi&=&[e_i, e_0]\phi=(\D_ie_0-\D_0e_i)\phi\\
&=&-((\D_ie_0,e_0)-(\D_0e_i,e_0))e_0(\phi)+((\D_ie_0,e_l)-(\D_0e_i,e_l))e_l(\phi)\\
&=&(A_0)_{0i}\pr_0\phi+((A_i)_{l0}-(A_0)_{li})\pr_l\phi.
\eeaa
Taking \eqref{eq:YM10} and \eqref{eq:YM11} into account, this can be written schematically as:
\bea\lab{notcoordinate}
[\pr_i, \pr_j]\phi=A\pr\phi\textrm{ and }[\pr_j, \pr_0]\phi=n^{-1}\nabla n\,\pr_0\phi+\A\pr\phi,
\eea
for any scalar function $\phi$ on $\MM$.
\end{remark}

\begin{remark}
The term $A^2$ in \eqref{def:div} corresponds to a quadratic expression in components of $A$, 
where the particular indices do not matter. In the rest of the paper, we will adopt this schematic notation for lower order terms (e.g. terms of the type $A^2$ and $A^3$) where the particular indices do not matter. 
\end{remark}

Finally, $\square A_0$ and $\square A_i$ will always be understood as $\square$ applied to the scalar functions$A_0$ and $A_i$, while $(\square A)_\a$ 
will refer to $\square$ acting on the differential form $A_\alpha$.
Also, $\Delta A_0$ will always refer to $\Delta(A_0)$.
 
 \subsection{ Main equations  for $(A_0, A)$} 
 
In what follows we  rewrite  equations  \eqref{eq:YM7}--\eqref{eq:YM8} with respect to the components $A_0$ and $A=(A_1, A_2, A_3)$. To do this we need the following simple lemma.
 \begin{lemma}\label{lemma:scalarize}
For any vectorfield $X$, we have:
\bea
\label{eq:YM12}
X^\a(\square \A)_\a=\square(X\cdot \A)-2\D^\lambda X\c\D_\la \A-(\square X)\cdot \A.
\eea
\end{lemma}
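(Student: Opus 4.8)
\textbf{Proof proposal for Lemma \ref{lemma:scalarize}.}

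The plan is to compute $\square(X\cdot\A)$ directly via the Leibniz rule for the covariant wave operator $\square=\D^\la\D_\la$ acting on a contraction, and then collect terms. First I would write $X\cdot\A = X^\a\A_\a$ (using $\D$-compatibility of the metric with raising/lowering, so that $X^\a\A_\a = X_\a\A^\a$) and apply $\D_\la$ twice. The first derivative gives $\D_\la(X^\a\A_\a) = (\D_\la X^\a)\A_\a + X^\a\D_\la\A_\a$. Applying $\D^\la$ once more and using the product rule again on each of the two terms produces four terms: $(\square X^\a)\A_\a$, $(\D_\la X^\a)(\D^\la\A_\a)$, $(\D^\la X^\a)(\D_\la\A_\a)$, and $X^\a(\square\A_\a)$. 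The two middle terms are equal after relabeling the contracted index $\la$, so they combine to $2\D^\la X\cdot\D_\la\A$, where the dot denotes contraction over the internal (form) index $\a$.

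Rearranging this identity to isolate $X^\a(\square\A)_\a$ gives exactly \eqref{eq:YM12}:
\[
X^\a(\square\A)_\a = \square(X\cdot\A) - 2\D^\la X\cdot\D_\la\A - (\square X)\cdot\A.
\]
The only subtlety is bookkeeping: $\A_\a$ here carries suppressed internal $so(3,1)$ indices (per the convention fixed in the remark following \eqref{eq:YM4'}), and $\D$ acts only on the external index, so the internal indices are simply spectators throughout and the scalar Leibniz rule applies componentwise. One should also note that $\square$ on the left and the $\square$ inside $\square(X\cdot\A)$ are the \emph{scalar} d'Alembertian (since $X\cdot\A$ is scalar-valued in the external index, modulo the inert internal indices), consistent with the notational conventions laid out just before this lemma, whereas $(\square\A)_\a$ is the rough wave operator on the $1$-form; the identity is precisely the commutator relating these two.

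I do not expect any real obstacle here — this is a routine application of the Leibniz rule for second covariant derivatives, and the lemma is purely algebraic in nature with no use of the Einstein equations or any curvature identity. The only thing to be careful about is that no curvature terms appear: indeed, since we never commute two covariant derivatives (the operator $\square = \D^\la\D_\la$ is applied as written, and each Leibniz step differentiates a product without reordering derivatives), the Ricci/Riemann terms that would normally arise from $[\D_\la,\D^\la]$-type manipulations are simply never generated. Thus the identity holds on an arbitrary Lorentzian manifold, with no vacuum assumption needed.
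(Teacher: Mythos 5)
Your computation is correct and is exactly the intended argument: the paper states this as a ``simple lemma'' without proof, and the identity is nothing more than the double Leibniz rule for $\D^\la\D_\la$ applied to the contraction $X^\a\A_\a$, with the internal $so(3,1)$ indices inert and no commutation of covariant derivatives (hence no curvature terms). Your bookkeeping of which $\square$ is scalar and which is tensorial, and the observation that the vacuum equations play no role here, are both accurate.
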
 

Taking $X=e_0$ in the lemma   and noting that,
\beaa
\square \, e_0= \D^\la\D_\la e_0=-\D^\la( \A_\la)_0\,^\ga e_\ga -( \A_\la)_0\,^\ga ( \A^\la)_{\ga}\, ^\mu e_\mu
\eeaa  
as well as\footnote{Recall that $\tr k=0$.}   
\bea
\label{eq:YM15}
\D^\mu (\A_\mu)&=&-\D_0 (\A_0)+\D^i (\A_i)=-\big[\pr_0 A_0+ (A_0)_0\,^i  (A_i)\big]
+\big[\nab^i (A_i)\big].
\eea
 we derive, keeping track of the term in $\pr_0 A_0$,
\beaa
 (\square \A)_0&=&\square A_0+2 (\A^\la)_0\,^\ga\, \D_\la (\A_\ga)
 -(\square e_0)\c \A\\
  &=&\square A_0+2 (\A^\la)_0\,^\ga\, \D_\la (\A_\ga)+
 \D^\la (\A_\la)_0\,^\ga \A_\ga +( \A_\la)_0\,^\ga ( \A^\la)_{\ga}\, ^\mu \A_\mu\\
 &=&
 \square A_0+2 (\A^\la)_0\,^i\, \D_\la (\A_i)+
 \D^\la (\A_\la)_0\,^i  \A_i +( \A_\la)_0\,^i  ( \A^\la)_{i}\, ^\mu \A_\mu\\
 &=&\square A_0+2 (\A^0 )_0\,^i\, \D_0 (\A_i)+2 (\A^j  )_0\,^i\, \D_j (\A_i)
-  \D_0 (\A_0)_0\,^i  \A_i         +\D_i  (\A_i)_0\,^j  \A_j         \\
&+&( \A_\la)_0\,^i  ( \A^\la)_{i}\, ^\mu \A_\mu
 \eeaa
 Thus, symbolically, recalling that $(A_0)_{0i}=-n^{-1}\nab_i n$,
 \beaa
  (\square \A)_0&=& \square(A_0)-\pr_0 (A_0)_0\,^i (A)_i+(A, n^{-1}\nab n) (\pr  \A+\pr_0 A)+ (A, n^{-1}\nab n) \c \A^2.
 \eeaa

  On the other hand,
 \beaa
\pr_0(\D^\mu (\A_\mu))&=&
-\pr_0^2 A_0-\pr_0 (A_0)_0\,^i (A)_i+\pr_0(\nab^i(A_i))   - (A_0)_0\,^i     \pr_0(A)_i
 \eeaa
Hence,
\beaa
(\square \A)_0- \pr_0(\D^\mu (\A_\mu))&=&
\square A_0-\pr_0 (A_0)_0\,^i (A_i)+\pr_0^2 A_0+\pr_0 (A_0)_0\,^i (A)_i- \pr_0(\nab^i(A_i))\\
&+&(A, n^{-1}\nab n) (\pr  \A+\pr_0 A)+ (A, n^{-1}\nab n) \c \A^2\\
&=&\square A_0 +\pr_0^2 A_0- \pr_0(\nab^i(A_i))\\
&+&(A, n^{-1}\nab n) (\pr  \A+\pr_0 A)+ (A, n^{-1}\nab n) \c \A^2.
\eeaa
On the other hand we have, by a straightforward computation,
 for any scalar $\phi$,
\bea
\label{eq:YM16}
\square\phi=-\pr_0(\pr_0\phi)+\Delta\phi+n^{-1}\nabla n\cdot\nabla\phi,
\eea
with $\De$ denoting the standard Laplace-Beltrami operator on $\Si_t$.
Therefore,
\bea\lab{eq:YMintermediateequation}
(\square \A)_0- \pr_0(\D^\mu (\A_\mu))&=&\lap A_0- \pr_0(\nab^i(A_i))+(A, n^{-1}\nab n) (\pr  \A+\pr_0 A)+ (A, n^{-1}\nab n) \c \A^2.
\eea
Finally, recalling \eqref{eq:YM8}, we have, 
\beaa
\J_0&=&\D^\mu[\A_\mu, \A_0]-[\A_\mu, \F_{\mu 0}]=(A, n^{-1}\nab n) (\pr  \A+\pr_0 A)+ (A, n^{-1}\nab n) \c \A^2.
\eeaa
Hence the $e_0$ component of \eqref{eq:YM7} takes the form,
\bea
\label{eq:YM17}
\De A_0-\pr_0(\nab^i A_i) = (A, n^{-1}\nab n) (\pr  \A+\pr_0 A)+ (A, n^{-1}\nab n) \c \A^2.
\eea
According to  \eqref{coulomblike1} we have,
\beaa
 \nab^i( A_i)_{0j}=-k_i\,^m (A_i)_{mj}.
 \eeaa
We are thus  free  to impose the  \textit{Coulomb like } gauge condition,
\bea
\label{eq:YM18}
\nab^i (A_i) _{jk}&=&0.
\eea
In fact we  write both \eqref{coulomblike1} and \eqref{eq:YM18}
in the form,
\bea
\label{eq:YM19}
\nab^i (A_i) &=&A^2.
\eea
With this choice of gauge 
 equation \eqref{eq:YM17}  takes  the form,
\bea
\label{eq:YM20}
\De A_0 = (A, n^{-1}\nab n) (\pr  \A+\pr_0 A)+ (A, n^{-1}\nab n) \c \A^2.
\eea
\begin{remark}
\label{Rem:important}
Note   that  the symbolic  structure  of the nonlinear  terms  in \eqref{eq:YM20}  is meant to emphasize  that   there are no  quadratic 
 terms of the form   $A_0  \prb A  $ and   $A_0\pr A_0$ and no cubic terms   in $(A_0)_{ij}$.  This fact is important;  as we shall see below we have  good  control  for   $L^2(\Si_t)$ norms    for  $(A_k)_{ij}$, $(A_i)_{0j} = -k_{ij} $   and   $(A_0)_{0i}=-    n^{-1}\nab_i n$ but not  for $(A_0)_{ij}$. 
 \end{remark}

It remains to derive equations for the  scalar components $A_i$, $i=1,2,3$.
First we observe, in view of \eqref{eq:YM15} and \eqref{eq:YM19},
\bea
\label{eq:YM20'}
\D^\la \A_\la=-\D_0A_0+\D^iA_i=-\pr_0A_0+\nabla^iA_i+A^2=-\pr_0 A_0+ A^2.
\eea
Using lemma \ref{lemma:scalarize} with  $X=e_{(i)}$, $i=1,2,3$  
 we  derive, 
 \beaa
 \square A_i&=& (\square \A)_i-2 (\A^\la)_i\,^\ga\, \D_\la (\A_\ga)-
\D^\la( \A_\la)_i\,^\ga \A_\ga-
( \A_\la)_i\,^\ga ( \A_\la)_\ga\,^\mu \A_\mu
\eeaa
or,
schematically, ignoring  signs or numerical constants in front of the quadratic and cubic terms:
\beaa
 \square A_i&=& (\square \A)_i+A{^j} \pr_j A_i+A_0\prb\A+A\prb A_0 +\A^3.
 \eeaa
 Recalling  \eqref{eq:YM9} we have,
\beaa
(\square \A)_i- \pr_i(\D^\mu (\A_\mu)) =J_i.
\eeaa
where $J_i$ is the $e_{(i)}$ component of $\J$.
Therefore,
\beaa
\square A_i       +\pr_i (\pr_0 A_0)   &=&A^j\c\pr_j A_i+ J_i+A_0\prb\A+A\prb A_0 +\A^3.
\eeaa
On the other hand, recalling the definition of $\J$ in \eqref{eq:YM8},
we easily find,
\beaa
J_i&=&A^i\c \pr_i A+ [A^j, F_{ji}]+A_0\prb\A+A\prb A_0+\A^3.
\eeaa 
Therefore, schematically,
\beaa
\square A_i       +\pr_i (\pr_0 A_0)   &=&A^j\c \pr_j A_i+A^j\c  \pr_i A_j+A_0\prb\A+A\prb A_0 +\A^3.
\eeaa
We summarize the results of this subsection in the following proposition.
\begin{proposition}
\label{prop:decomp}
Consider  an orthonormal frame $e_\a$  compatible with  a    maximal  $\Si_t$ foliation of the space-time $\MM$ 
with connection coefficients    $\A_\mu$   defined by \eqref{eq:YM1}, their decomposition  $\A=(A_0, A)$ relative to the  same frame  $e_\a$,
and Coulomb- like condition on the frame,
\beaa
\div A=A^2.
\eeaa
 In such a frame the  Einstein-vacuum equations take the form,
\bea
\De A_0&=&(A, n^{-1}\nab n) (\pr  \A+\pr_0 A)+ (A, n^{-1}\nab n) \c \A^2. \label{eq:YM21}
\eea
\bea
\label{eq:YM22}
\square A_i+\pr_i (\pr_0 A_0)   &=& A^j\pr_j A_i+A^j\pr_i A_j+A_0\prb\A+A\prb A_0 +\A^3.
\eea
\end{proposition}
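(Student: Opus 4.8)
The plan is to derive the two displayed equations \eqref{eq:YM21} and \eqref{eq:YM22} directly from the Yang--Mills form of the Einstein vacuum equations \eqref{eq:YM7}--\eqref{eq:YM8}, by projecting onto the frame $e_\alpha$ and using the maximality condition $\tr_g k = 0$ together with the Coulomb gauge $\div A = A^2$. The basic mechanism is Lemma \ref{lemma:scalarize}, which converts the tensorial operator $(\square\A)_\alpha$ into $\square$ applied to the \emph{scalar} $X\cdot\A$, at the cost of two correction terms $-2\D^\lambda X\cdot\D_\lambda\A$ and $-(\square X)\cdot\A$. The whole computation is a matter of applying this lemma twice, once with $X = e_0$ and once with $X = e_{(i)}$, and carefully tracking the one term that must \emph{not} be absorbed into the schematic lower-order notation, namely the term involving $\pr_0 A_0$ (respectively $\pr_i\pr_0 A_0$).

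For \eqref{eq:YM21}: first I would compute $\square e_0 = \D^\lambda\D_\lambda e_0$ in terms of the connection coefficients; since $e_0 = T$ is the unit normal, $\D_\lambda e_0$ involves only the components $(\A_\lambda)_{0i} = -k_{\lambda i}$ or $-n^{-1}\nabla_i n$, so $(\square e_0)\cdot\A$ is schematically $\A\prb\A + \A^3$. Then Lemma \ref{lemma:scalarize} with $X = e_0$ gives $(\square\A)_0 = \square A_0 + (\text{l.o.t.})$, with the l.o.t. of the advertised type after using \eqref{eq:YM15} for $\D^\mu\A_\mu$. Next I would subtract $\pr_0(\D^\mu\A_\mu)$, observe the cancellation of the $\pr_0 (A_0)_0{}^i(A_i)$ terms, and use the pointwise identity \eqref{eq:YM16}, $\square\phi = -\pr_0^2\phi + \Delta\phi + n^{-1}\nabla n\cdot\nabla\phi$, applied to $\phi = A_0$: the $-\pr_0^2 A_0$ from $\square A_0$ cancels against the $+\pr_0^2 A_0$ produced by the subtraction, leaving exactly $\Delta A_0 - \pr_0(\nabla^i A_i) + (\text{l.o.t.})$. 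Finally, checking that $\J_0 = \A\prb A + \A\pr A_0 + \A^3$ from \eqref{eq:YM8}, and then invoking the Coulomb condition \eqref{eq:YM18}--\eqref{eq:YM19} to kill $\pr_0(\nabla^i A_i)$ (more precisely, $\nabla^i(A_i)_{jk} = 0$ and \eqref{coulomblike1} show $\nabla^i A_i = A^2$, whose $\pr_0$-derivative is again l.o.t.), yields \eqref{eq:YM21}.

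For \eqref{eq:YM22}: I would apply Lemma \ref{lemma:scalarize} with $X = e_{(i)}$, so that $\square A_i = (\square\A)_i - 2(\A^\lambda)_i{}^\gamma\D_\lambda(\A_\gamma) - \D^\lambda(\A_\lambda)_i{}^\gamma\A_\gamma - (\text{cubic})$, which schematically is $(\square\A)_i + A^j\pr_j A_i + A_0\prb\A + A\prb A_0 + \A^3$. Then use $(\square\A)_i - \pr_i(\D^\mu\A_\mu) = J_i$ from \eqref{eq:YM7}, together with \eqref{eq:YM20'}, $\D^\lambda\A_\lambda = -\pr_0 A_0 + A^2$, so that $\pr_i(\D^\mu\A_\mu) = -\pr_i\pr_0 A_0 + \pr_i(A^2)$; this is where the term $\pr_i\pr_0 A_0$ surfaces on the left-hand side. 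It remains to expand $J_i$ from \eqref{eq:YM8}: $J_i = \D^\mu([\A_\mu,\A_i]) - [\A_\mu,\F_{\mu i}]$, which produces a term $A^j\pr_i A_j$ (from $[\A_j, \F_{ji}]$ via \eqref{eq:YM3}) plus the usual l.o.t. Collecting everything gives \eqref{eq:YM22}, and the proposition follows.

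The routine bookkeeping — keeping signs straight, verifying that every term not explicitly displayed genuinely has the schematic form $A^j\pr_j A_i$, $A\prb A_0$, $A_0\prb\A$ or $\A^3$, and correctly handling that $\pr_0,\pr_i$ are frame (not coordinate) derivatives so that their commutators contribute via \eqref{notcoordinate} — is tedious but presents no conceptual obstacle. The one place demanding genuine care is the \emph{exact} tracking of the quadratic term $\pr_0 A_0$ (respectively $\pr_i\pr_0 A_0$) through the two applications of Lemma \ref{lemma:scalarize}, the subtraction of $\pr_0(\D^\mu\A_\mu)$ in the $e_0$ case, and the identity \eqref{eq:YM16}: here a spurious sign or a missed cancellation of $\pr_0^2 A_0$ would change the structure of the principal part, so this is the step I would write out in full detail while leaving the remaining cubic and mixed terms in schematic form.
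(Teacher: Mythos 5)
Your proposal follows exactly the paper's own route: Lemma \ref{lemma:scalarize} applied with $X=e_0$ and $X=e_{(i)}$, the cancellation of the $\pr_0(A_0)_0{}^i(A_i)$ terms and of $\pr_0^2A_0$ via \eqref{eq:YM16}, the Coulomb condition to dispose of $\pr_0(\nab^iA_i)$, and the expansion of $\J_\nu$ via \eqref{eq:YM20'} and \eqref{eq:YM3} to isolate $\pr_i(\pr_0A_0)$ and the quadratic terms $A^j\pr_jA_i$, $A^j\pr_iA_j$. The argument is correct and coincides with the one given in the paper, including the identification of the only step requiring exact (non-schematic) bookkeeping.
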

 
 \begin{remark}
 It is extremely important to our strategy that we have reduced the covariant wave equation \eqref{eq:YM7} to the system of scalar equations \eqref{eq:YM21} \eqref{eq:YM22} (see remark \ref{rem:scalarize}).    
 \end{remark} 
 
\begin{remark}\lab{rem:improvedeqpr0A0} 
We will also need a suitable equation for $\Delta(\pr_0A_0)$. This is done in Proposition \ref{prop:equationpr0A0} below where we carefully check that the  terms of the type $(\prb A)^2$ in $\Delta(\pr_0A_0)$ can be avoided. Indeed, in view of the expected estimate $\prb A\in L^\infty_tL^2(\Sigma_t)$, this would lead to $\Delta(\pr_0A_0) \in L^\infty_tL^1(\Sigma_t)$ which is borderline for solving the Laplace equation. To check that such terms can be avoided, we treat separately the components $(A_0)_{0j}$  and the components $(A_0)_{ij}$. We exploit the fact that
\begin{itemize}
\item The components $(A_0)_{0j}$ are given by $n^{-1}\nabla n$ in view of \eqref{eq:YM11}. This allows us to rely on the elliptic equation \eqref{eqlapsen} for $n$ which is easier to handle than the equation \eqref{eq:YM21} for $A_0$ as it is at the level of one less derivative.

\item The equation for $\Delta(A_0)$ involves in particular a term $\pr_0(A^2)$ which comes from $\pr_0(\nab^iA_i)$ together with the Coulomb type gauge condition \eqref{eq:YM19}. This term would prevent us from deriving a good equation for $\Delta(\pr_0A_0)$ as it would lead in particular to a term of the type $(\pr_0 A)^2$ which is dangerous as explained above. In the case of the components $(A_0)_{ij}$, this term is fortunately absent since we have $\nab^l (A_l) _{ij} =0$ in view of \eqref{eq:YM18}.
\end{itemize} 
\end{remark}

\begin{proposition}\lab{prop:equationpr0A0}
We have 
\beaa
\Delta(\pr\pr_0n) &=& \pr(A\R)+(A, n^{-1}\nabla n) \,\pr(\pr_0A_0)+(\pr(\pr_0n), \A^2)(\R, \prb A,\pr A_0, \A^2).
\eeaa
and
\beaa
&&\De (\pr_0((A_0)_{jl}))\\
&=&  \pr\Big((A, n^{-1}\nab n)(\R, \prb \A)+A_0(\pr_0 A, \R)+\A^3\Big)+\pr A_0 (\R, \pr_0 A)\\
&&+(A, n^{-1}\nabla n) (\pr(\pr_0A_0), A\pr_0A_0)+(\pr(\pr_0n), \A^2)(\R, \prb A,\pr A_0, \A^2).
\eeaa
\end{proposition} 
 
\begin{proof}
We first derive an equation for $\nabla \pr_0 n$. Recall the elliptic equation \eqref{eqlapsen} for $n$
$$\Delta n=n|k|^2.$$
Differentiating with respect to $\pr_0$ and using the commutator formula \eqref{commutdeltapr0}, we infer
\beaa
\Delta(\pr_0n) &=& \pr_0(\Delta n)+[\Delta, \pr_0]n\\
&=& \pr_0(n)|k|^2+2nk\c\nab_0 k+2k^{ab}\nabla_a\nabla_bn-2n^{-1}\nabla_bn\nabla_b(\pr_0n)-|k|^2\pr_0n\\
&&+2n^{-1}\nabla_an k^{ab}\nabla_bn\\
&=& 2nk\c\nab_0 k+2k^{ab}\nabla_a\nabla_bn-2n^{-1}\nabla_bn\nabla_b(\pr_0n)+2n^{-1}\nabla_an k^{ab}\nabla_bn.
\eeaa
Together with the identity \eqref{eq:structfol1} for $\nab_0k$, this leads to
\beaa
\Delta(\pr_0n) &=& 2nk^{ab} (\R_{a\,0\,b\,0}-n^{-1}\nabla_a\nabla_bn-k_{ac}k_b\,^c)+2k^{ab}\nabla_a\nabla_bn-2n^{-1}\nabla_bn\nabla_b(\pr_0n)\\
&&+2n^{-1}\nabla_an k^{ab}\nabla_bn\\
&=&  2nk^{ab}\R_{a\,0\,b\,0}- 2nk^{ab}k_{ac}k_b\,^c-2n^{-1}\nabla_bn\nabla_b(\pr_0n)+2n^{-1}\nabla_an k^{ab}\nabla_bn.
\eeaa
Differentiating with respect to $\nabla$ and using the commutator formula \eqref{commutdeltanabla}, we obtain symbolically
\beaa
\Delta\nabla\pr_0n &=& \nabla\Delta(\pr_0n)+[\Delta,\nabla]\pr_0n\\
&=&  \nabla\Big(nk\R+nk^3+n^{-1}\nabla n\nabla (\pr_0n)+n^{-1}(\nabla n)^2 k\Big)+(\R+k^2)\nabla\pr_0n\\
&=& \nabla(k\R)+nk^2\nabla k+k^3\nabla n+n^{-1}\nabla n \nabla^2(\pr_0n)+n^{-1}\nabla^2n \nabla(\pr_0n)\\
&& +(n^{-1}\nabla n)^2 \nabla(\pr_0n)+n^{-1}(\nabla n)^2\nabla k+n^{-1}\nabla n \nabla^2nk+n^{-2}(\nabla n)^3 k+\\
&& +(\R+k^2)\nabla\pr_0n\\
&=& \pr(A\R)+n^{-1}\nabla n \,\pr(\pr_0A_0)+(\pr(\pr_0n), \A^2)(\R, \prb A,\pr A_0, \A^2).
\eeaa
Scalarizing with $e_1$, $e_2$ and $e_3$, we infer
\beaa
\Delta(\pr\pr_0n) &=& \pr(A\R)+(A, n^{-1}\nabla n) \,\pr(\pr_0A_0)+(\pr(\pr_0n), \A^2)(\R, \prb A,\pr A_0, \A^2).
\eeaa

Next, we derive an elliptic equation for $(A_0)_{ij}$. First, note that the full structure was not taken into account in the derivation of the equation for $\Delta(A_0)$. Indeed, the terms $A\pr A$ in \eqref{eq:YMintermediateequation} have a notable structure, and a more careful derivation of  actually leads to the following improved version
\beaa
&& (\square \A)_0- \pr_0(\D^\mu (\A_\mu))\\
&=&\lap A_0- \pr_0(\nab^i(A_i))+A^i\pr_i A+A \pr^i A_i+n^{-1}\nab n\, \pr_0 A+ (A, n^{-1}\nab n) \c \A^2\\
&=&\lap A_0- \pr_0(\nab^i(A_i))+\pr(A^2)+n^{-1}\nab n (\pr  A_0, \R_{0i\c\c})+ (A, n^{-1}\nab n) \c \A^2
\eeaa
where we used in the last equality the Coulomb type gauge condition \eqref{eq:YM19} to simplify the terms $A^i\pr_iA$ and $A\pr^iA_i$, and the identity \eqref{eq:YM3} to replace $\pr_0A$ by $\pr A_0$, $\R_{0i\c\c}$ and lower order terms. Hence the $e_0$ component of \eqref{eq:YM7} takes the form,
\bea\label{eq:YM17bbis}
\De A_0-\pr_0(\nab^i A_i) = \J_0+\pr(A^2)+n^{-1}\nab n (\pr  A_0, \R_{0i\c\c})+ (A, n^{-1}\nab n) \c \A^2.
\eea
Also, we have
\beaa
\De ((A_0)_{jl})-\pr_0((\nab^i A_i)_{jl}) &=& (\De A_0-\pr_0(\nab^i A_i))_{jl}+A\pr A_0+A_0 \pr A
\eeaa
which together with the Coulomb gauge condition \eqref{eq:YM18} for $(A_0)_{jl}$ implies
\beaa
\De ((A_0)_{jl}) &=& (\De A_0-\pr_0(\nab^i A_i))_{jl}+A\pr A_0+A_0 \pr A.
\eeaa
In view of \eqref{eq:YM17bbis}, we deduce
\begin{equation}\label{eq:YM18bbis}
\De ((A_0)_{jl})  = \J_0+\pr(A^2)+(A, n^{-1}\nab n) \pr  A_0+n^{-1}\nab n \R_{0i\c\c} +A_0 \pr A+ (A, n^{-1}\nab n) \c \A^2.
\end{equation}

\begin{remark}
Our next task is to differentiate \eqref{eq:YM18bbis} with respect to $\pr_0$. Recall from Remark \ref{rem:improvedeqpr0A0} that the main  main advantages of \eqref{eq:YM18bbis} for $(A_0)_{jl}$ compared to equation \eqref{eq:YM21} which is valid for all components of $A_0$ is that the dangerous term $\pr_0(\nab^iA_i)$ is absent since we have $\nab^l (A_l) _{ij} =0$ in view of \eqref{eq:YM18}. Also, by being more careful in the derivation of \eqref{eq:YM18bbis} compared to \eqref{eq:YM21}, we have achieved the following
\begin{itemize} 
\item In \eqref{eq:YM18bbis}, we have kept $\J_0$ as it is. When taking a $\pr_0$ derivative of \eqref{eq:YM18bbis}, this will allow us to  recast $\pr_0\J_0$ as a space divergence up to lower order terms by exploiting the fact that $\J_\mu$ is divergence free in view of \eqref{eq:YM9}.

\item The term $A\pr A$ in \eqref{eq:YM21} can actually be integrated by parts as $\pr(A^2)$ up to lower order terms. This will avoid dangerous terms of the type $\pr A \pr_0 A$ after differentiation with respect to $\pr_0$.  
\end{itemize}
\end{remark}

Next, we differentiate \eqref{eq:YM18bbis} with respect to $\pr_0$. We have
\beaa
\pr_0(\De ((A_0)_{jl})) &=&  \pr_0(\J_0)+\pr(A\pr_0 A)+[\pr_0,\pr](A^2)+(A, n^{-1}\nab n) \pr (\pr_0A_0)\\
&&+(A, n^{-1}\nab n) [\pr, \pr_0]A_0 +(\pr_0 A, \pr_0(n^{-1}\nab n)) \pr  A_0 +n^{-1}\nab n \pr_0(\R_{0i\c\c})\\
&&+A_0(\pr(\pr_0A), [\pr_0, \pr]A)+\pr_0A_0(\R, \pr A)+(A, n^{-1}\nab n) \c \A\pr_0\A\\
&&+(\pr_0A, \pr_0(n^{-1}\nab n)) \c \A^2\\
&=&  \pr_0(\J_0)+n^{-1}\nab n\pr_0(\R_{0i\c\c})+\pr\Big(A(\R, \pr_0 A)+A_0\pr_0 A\Big)+\pr A_0 \pr_0 A\\
&&+(A, n^{-1}\nabla n) (\pr(\pr_0A_0), A\pr_0A_0)+(\pr(\pr_0n), \A^2)(\R, \prb A,\pr A_0, \A^2)\\
&=&  \D_0\J_0+n^{-1}\nab n\,\J+n^{-1}\nab n\D_0\R_{0i\c\c}+\pr\Big(A(\R, \pr_0 A)+A_0\pr_0 A\Big)+\pr A_0 \pr_0 A\\
&&+(A, n^{-1}\nabla n) (\pr(\pr_0A_0), A\pr_0A_0)+(\pr(\pr_0n), \A^2)(\R, \prb A,\pr A_0, \A^2)\\
&=&  \D_0\J_0+n^{-1}\nab n \D_0\R_{0i\c\c}+\pr\Big(A(\R, \pr_0 A)+A_0\pr_0 A\Big)+\pr A_0 \pr_0 A\\
&&+(A, n^{-1}\nabla n) (\pr(\pr_0A_0), A\pr_0A_0)+(\pr(\pr_0n), \A^2)(\R, \prb A,\pr A_0, \A^2).
\eeaa
Exploiting the fact that $\J_\mu$ is divergence free in view of \eqref{eq:YM9}, we have
\beaa
\D_0\J_0 &=& \D_i\J_i\\
&=& \pr(\J)+A\J\\
&=& \pr\Big((A, n^{-1}\nab n)(\R, \prb \A)+A_0(\pr_0 A, \R)+\A^3\Big)+A^2\pr_0A_0+\A^2(\R, \prb A, \pr A_0, \A^2).
\eeaa
Also, the Einstein vacuum equations $\R_{\a\b}=0$ imply $\D^\mu \R_{\a\b\mu\nu}=0$ and hence
\beaa
n^{-1}\nab n\D_0\R_{0i\c\c} &=& -n^{-1}\nab n\D_j\R_{ji\c\c}\\
&=& \pr(n^{-1}\nab n\R)+\A^2\R+\pr A_0 \R.
\eeaa
We deduce
\beaa
\pr_0(\De ((A_0)_{jl})) &=&  \pr\Big((A, n^{-1}\nab n)(\R, \prb \A)+A_0(\pr_0 A, \R)+\A^3\Big)+\pr A_0 (\R, \pr_0 A)\\
&&+(A, n^{-1}\nabla n) (\pr(\pr_0A_0), A\pr_0A_0)+(\pr(\pr_0n), \A^2)(\R, \prb A,\pr A_0, \A^2).
\eeaa
On the other hand, we have in view of the commutation formula \eqref{commutdeltapr0}
\beaa
[\De, \pr_0](A_0) &=& 2k^{ab}\nabla_a\nabla_b(A_0)-2n^{-1}\nabla_bn\nabla_b(\pr_0(A_0))-n^{-1}\Delta n\pr_0A_0+2n^{-1}\nabla_an k^{ab}\nabla_bA_0
\eeaa
Together with the constraint equation \eqref{constk} for $k$ and the elliptic equation \eqref{eqlapsen} for $n$, we infer
\beaa
[\De, \pr_0](A_0) &=& 2\nab_a(k^{ab}\nabla_b(A_0))-2n^{-1}\nabla_bn\nabla_b(\pr_0(A_0))-|k|^2\pr_0A_0+2n^{-1}\nabla_an k^{ab}\nabla_bA_0\\
&=& \pr(A\pr A_0)+n^{-1}\nab n \, \pr(\pr_0A_0)+(A, n^{-1}\nab n)A\pr_0A_0+\A^2\pr A_0.
\eeaa
We finally obtain
\beaa
&&\De (\pr_0((A_0)_{jl}))\\
 &=& \pr_0(\De ((A_0)_{jl}))+[\De, \pr_0]((A_0)_{jl})\\
&=&  \pr\Big((A, n^{-1}\nab n)(\R, \prb \A)+A_0(\pr_0 A, \R)+\A^3\Big)+\pr A_0 (\R, \pr_0 A)\\
&&+(A, n^{-1}\nabla n) (\pr(\pr_0A_0), A\pr_0A_0)+(\pr(\pr_0n), \A^2)(\R, \prb A,\pr A_0, \A^2).
\eeaa
This concludes the proof of the lemma.
\end{proof} 
 
 We also record below  the following useful computation.
\begin{lemma}\lab{prop:projectionoperator} 
We have the following symbolic identity:
 \bea
\label{curlcurl}
\curl(\curl(A))_j= \pr_j (\div A)-\Delta  (A_j)+A\pr A.
\eea
 
\end{lemma}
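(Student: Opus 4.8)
The plan is to establish \eqref{curlcurl} by direct expansion, viewing it as the Euclidean identity $\curl\curl = \nab\,\div - \De$ corrected by the lower-order terms that arise because the frame derivatives $\pr_i=e_i$ neither commute nor are coordinate-induced. Since $\curl$ in \eqref{def:curl} acts only on the external spatial index, the suppressed internal $so(3,1)$-indices play no role and may simply be carried along; accordingly I shall manipulate the components $A_q$ as if they were scalar functions.

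First I would unwind \eqref{def:curl} twice. The antisymmetric symbol $\in_{ijl}$ has constant components relative to the orthonormal frame $(e_1,e_2,e_3)$, so the derivation $\pr_l$ annihilates it and
\[
(\curl(\curl A))_j = \in_{jlm}\,\pr_l\big(\in_{mpq}\,\pr_p(A_q)\big) = \in_{jlm}\in_{mpq}\,\pr_l\pr_p(A_q).
\]
Using the contraction identity $\in_{jlm}\in_{mpq}=\de_{jp}\de_{lq}-\de_{jq}\de_{lp}$ (raising and lowering being trivial since the frame is orthonormal) this collapses to the ``flat'' expression
\[
(\curl(\curl A))_j = \pr_l\pr_j(A_l) - \pr_l\pr_l(A_j).
\]

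Next I would treat the two terms separately. For the first, commuting $\pr_l$ past $\pr_j$ by means of \eqref{notcoordinate} produces a commutator term of the schematic form $A\pr A$ acting on the scalar $A_l$, and then the definition of the divergence in \eqref{def:div} gives $\pr_l(A_l)=\div A - A^2$; hence $\pr_l\pr_j(A_l)=\pr_j(\div A)+A\pr A$. For the second, I would invoke the standard relation between the rough frame Laplacian $\pr_l\pr_l$ and the Laplace--Beltrami operator $\De$ on $\Si_t$: for any scalar $\phi$ one has $\De\phi = \pr_l\pr_l\phi - (\nab_{e_l}e_l)\phi = \pr_l\pr_l\phi - A\pr\phi$, the correction being exactly the contraction of the connection coefficients \eqref{eq:YM1} of the frame on $\Si_t$; hence $\pr_l\pr_l(A_j)=\De(A_j)+A\pr A$. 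Substituting both into the flat expression and absorbing every lower-order contribution into the single schematic term $A\pr A$ yields \eqref{curlcurl}.

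There is no real obstacle here. The only points requiring a little care are that $\in_{ijl}$ is genuinely constant with respect to $e_l$ (which holds precisely because the frame is orthonormal), and that each deviation from the Euclidean computation --- the non-vanishing commutator $[\pr_l,\pr_j]$ of \eqref{notcoordinate}, the quadratic correction in \eqref{def:div}, and the connection correction relating $\pr_l\pr_l$ to $\De$ --- is indeed of the advertised type $A\pr A$; both facts are immediate from \eqref{eq:YM1} and \eqref{notcoordinate}. Since the statement is only a symbolic identity, one need not keep track of signs or numerical constants in any of these lower-order terms.
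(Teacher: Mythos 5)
Your proof is correct and follows essentially the same route as the paper's: unwind the double curl, apply the $\in\in$ contraction identity to reach the flat expression $\pr_l\pr_j(A_l)-\pr_l\pr_l(A_j)$, and absorb the commutator, divergence, and Laplace--Beltrami corrections into the schematic term $A\pr A$. You merely spell out the lower-order bookkeeping (constancy of $\in_{ijl}$ in the orthonormal frame, the commutator \eqref{notcoordinate}, the $A^2$ term in \eqref{def:div}, and the connection correction relating $\pr_l\pr_l$ to $\De$) that the paper's terse chain of equalities leaves implicit.
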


\begin{proof}
To prove \eqref{curlcurl} we write, using the fact that $[\pr_i, \pr_j]=A\pr$ in view of \eqref{notcoordinate}, and the definition \eqref{def:div} of $\div$:
\beaa
\curl(\curl(A))_j&=&\in_{jli}\pr_l (\in_{imn}\pr_m(A_n))\\
&=&\in_{jli}\in_{imn}\pr_l (\pr_m(A_n))\\
&=&(\de_{jm}\de_{ln}-\de_{jn}\de_{lm})\pr_l (\pr_m(A_n))\\
&=&\pr_l(\pr_j(A_l))-\pr_l(\pr_l(A_j))\\
&=&\pr_j(\div A)-\lap (A_j)+A\pr A.
\eeaa
which is \eqref{curlcurl}. This concludes the proof of the lemma.
\end{proof}

\section{Preliminaries}\label{sec:preliminaries}

\subsection{The initial slice}\label{initial.slice}

By the assumptions of Theorem \ref{th:mainter}, we have:
\bea\lab{estinit6}
\norm{R}_{L^2(\Si_0)}\leq\ep,
\eea
\bea\lab{estinit0}
\norm{k}_{L^2(\Si_0)}+\norm{\nabla k}_{L^2(\Si_0)}\leq \ep,
\eea 
and:
\bea\lab{ikea}
r_{vol}(\Si_0,1)\geq \frac{1}{2}.
\eea
\eqref{estinit0}, \eqref{estinit6} and \eqref{ikea} together with the estimates in \cite{param3} (see section  4.4 in that paper) yields:
\bea\label{estinit7}
\norm{n-1}_{L^\infty(\Si_0)}+   \norm{\nabla n}_{L^2(\Si_0)}+     \norm{\nabla^2n}_{L^2(\Si_0)}\lesssim\ep.
\eea

Also, we record the following Sobolev embeddings and elliptic estimates on $\Si_0$ that where derived under the assumptions \eqref{estinit6} and \eqref{ikea} in \cite{param3} (see section 3.5 in that paper).
\begin{lemma}[Calculus inequalities on $\Si_0$ \cite{param3}]\lab{lemma:embeddingSi0}
Assume that \eqref{estinit6} and \eqref{ikea} hold. We have on $\Si_0$ the following Sobolev embedding for any tensor $F$:
\bea\lab{ikea1}
\norm{F}_{L^6(\Si_0)}\lesssim \norm{\nabla F}_{L^2(\Si_0)}.
\eea 
Also, we define the operator $(-\Delta)^{-\frac{1}{2}}$ acting on tensors on $\Si_0$ as:
$$(-\Delta)^{-\frac{1}{2}}F=\frac{1}{\Gamma\left(\frac{1}{4}\right)}\int_0^{+\infty}\tau^{-\frac{3}{4}}U(\tau)Fd\tau,$$
where $\Gamma$ is the Gamma function, and where $U(\tau)F$ is defined using the heat flow on $\Si_0$:
$$(\pr_\tau-\Delta)U(\tau)F=0,\, U(0)F=F.$$
We have the following Bochner estimates:
\bea\lab{ikea2}
\norm{\nabla(-\Delta)^{-\frac{1}{2}}}_{\LL(L^2(\Si_0))}\les 1\textrm{ and }\norm{\nabla^2(-\Delta)^{-1}}_{\LL(L^2(\Si_0))}\les 1,
\eea
where $\mathcal{L}(L^2(\Si_0))$ denotes the set of bounded linear operators on $L^2(\Si_0)$. \eqref{ikea2} together with the Sobolev embedding \eqref{ikea1} yields:
\bea\lab{ikea3}
\norm{(-\Delta)^{-\frac{1}{2}}F}_{L^2(\Si_0)}\lesssim \norm{F}_{L^{\frac{6}{5}}(\Si_0)}.
\eea 
\end{lemma}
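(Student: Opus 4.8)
The plan is to treat the three assertions in order, since each relies on the previous one, and in each case to reduce the statement on the curved slice $\Si_0$ to a corresponding flat statement via the harmonic coordinate covering produced by Theorem \ref{th:coordharm}. First, for the Sobolev embedding \eqref{ikea1}: because $\Si_0$ has $\norm{\mbox{Ric}}_{L^2}\leq \ep$ and a lower volume radius bound, Theorem \ref{th:coordharm} gives a covering by geodesic balls $B_{r_0}(p)$ each carrying harmonic coordinates in which $g_{ij}$ is comparable to $\de_{ij}$ and $r_0\int|\pr^2 g|^2\sqrt{|g|}\,dx\leq \ep$. In each such chart the flat Gagliardo--Nirenberg--Sobolev inequality $\norm{F}_{L^6}\lesssim\norm{\pr F}_{L^2}$ holds with a universal constant, and the metric equivalence transfers this to the covariant version at the cost of a constant depending only on $c_3=\ep$; the passage from a family of local estimates to the global one on $\Si_0$ is done with a partition of unity subordinate to the covering, where the number of charts and the overlap are controlled by $r_0$ (hence ultimately by the data). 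One must be slightly careful that the gradient of the cutoffs does not spoil the estimate --- this is handled by the standard trick of first proving the embedding $\norm{F}_{L^6}\lesssim \norm{\nabla F}_{L^2}+\norm{F}_{L^2}$ locally and then using a scaling/Poincar\'e argument on each ball, but since the precise form of \eqref{ikea1} (without the $L^2$ term) is what appears in \cite{param3}, I would simply cite the construction there.

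Second, for the Bochner estimates \eqref{ikea2}: the operator $(-\Delta)^{-1/2}$ is defined via the heat semigroup through the subordination formula written in the statement, so $\norm{\nabla(-\Delta)^{-1/2}}_{\LL(L^2)}\lesssim 1$ follows from the spectral identity $\norm{\nabla(-\Delta)^{-1/2}F}_{L^2}^2 = \langle (-\Delta)^{-1/2}F, (-\Delta)(-\Delta)^{-1/2}F\rangle_{\text{after integration by parts}}= \norm{F}_{L^2}^2$, i.e. from the Bochner--Weitzenb\"ock identity together with the fact that the Ricci term is controlled in $L^2$ (which, combined with the $L^6$ embedding just established, suffices to absorb the curvature correction and makes the formal integration by parts rigorous). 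For $\norm{\nabla^2(-\Delta)^{-1}}_{\LL(L^2)}\lesssim 1$ one uses the full Bochner identity $\norm{\nabla^2 u}_{L^2}^2 = \norm{\Delta u}_{L^2}^2 - \int \mbox{Ric}(\nabla u,\nabla u)$, and estimates the Ricci term by $\norm{\mbox{Ric}}_{L^2}\norm{\nabla u}_{L^4}^2$; the $L^4$ norm of $\nabla u$ is then interpolated between $L^2$ and $L^6$ and absorbed, using the smallness $\norm{\mbox{Ric}}_{L^2}\leq\ep$, into the left-hand side. Again, these are precisely the estimates recorded in section 3.5 of \cite{param3}, so the cleanest route is to cite them and merely indicate the mechanism.

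Third, the dual bound \eqref{ikea3}: this is a formal consequence of \eqref{ikea1} and \eqref{ikea2}. Indeed, writing $(-\Delta)^{-1/2} = (-\Delta)^{-1}(-\Delta)^{1/2}$ is not the right splitting; instead one argues by duality directly. For a test tensor $G$, one has $\langle (-\Delta)^{-1/2}F, G\rangle = \langle F, (-\Delta)^{-1/2}G\rangle$ by self-adjointness, so $\norm{(-\Delta)^{-1/2}F}_{L^2} = \sup_{\norm{G}_{L^2}\leq 1}\langle F,(-\Delta)^{-1/2}G\rangle \leq \norm{F}_{L^{6/5}}\sup_{\norm{G}_{L^2}\leq 1}\norm{(-\Delta)^{-1/2}G}_{L^6}$, and then $\norm{(-\Delta)^{-1/2}G}_{L^6}\lesssim \norm{\nabla(-\Delta)^{-1/2}G}_{L^2}\lesssim \norm{G}_{L^2}$ by \eqref{ikea1} followed by \eqref{ikea2}. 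This closes the chain.

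The main obstacle is the second step: making the Bochner/Weitzenb\"ock integration by parts rigorous and obtaining \emph{universal} constants requires that the curvature error terms be genuinely absorbable, which is where the smallness $\ep$ and the interplay with the $L^6$ Sobolev embedding are essential --- a crude Sobolev bound on $\mbox{Ric}$ alone would not suffice at this regularity, and one genuinely needs the harmonic-coordinate control of $\pr^2 g$ from Theorem \ref{th:coordharm} to even define $(-\Delta)^{-1}$ as a bounded operator with the stated mapping properties. Since all of this machinery is developed in detail in \cite{param3}, for the purposes of the present paper the lemma is quoted rather than reproved.
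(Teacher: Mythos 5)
Your proposal is correct and follows essentially the same route as the paper: the embedding \eqref{ikea1} and the Bochner bounds \eqref{ikea2} are simply quoted from \cite{param3} (the paper gives no independent proof of them), and the only derived assertion, \eqref{ikea3}, is obtained exactly by your duality argument combining the self-adjointness of $(-\Delta)^{-\frac12}$ with \eqref{ikea1} and the first estimate of \eqref{ikea2}. The one minor overstatement is in your second step: for the rough Laplacian the identity $\norm{\nabla(-\Delta)^{-\frac12}F}_{L^2}=\norm{F}_{L^2}$ requires no curvature absorption at all, and it is only the second-order bound $\norm{\nabla^2(-\Delta)^{-1}}_{\LL(L^2)}\les 1$ that needs the Weitzenb\"ock curvature term to be absorbed via the $L^6$ embedding and the smallness of $\norm{\mbox{Ric}}_{L^2}$.
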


We will also need the following elliptic estimate for div-curl systems on $\Sigma_0$.
\begin{lemma}\lab{lemma:divcurllemmainitialslice}
Let $X$ a vectorfield on $\Sigma_0$. Then, we have 
\beaa
\norm{X}_{L^2(\Sigma_0)}\les \norm{\div X}_{L^{\frac{6}{5}}(\Sigma_0)}+\norm{\curl X}_{L^{\frac{6}{5}}(\Sigma_0)}+\norm{X}_{L^6(\Sigma_0)} 
\eeaa
\end{lemma}

The proof Lemma \ref{lemma:divcurllemmainitialslice} is postponed to Appendix \ref{appendix:proofdivcurllemmainitialslice}.

\subsubsection{The Uhlenbeck type lemma}

In order to exhibit a frame $e_1, e_2, e_3$ such that together with $e_0=T$ we obtain a connection $\A$ satisfying our Coulomb type gauge on the initial slice $\Si_0$, we will need the following result in the spirit of the Uhlenbeck lemma\footnote{Note that our smallness  assumptions   on $\tilde{A}$      make the proof of the Lemma    much simpler than  the original result of Uhlenbeck.}     \cite{Uhl}.

\begin{lemma}\lab{lemma:uhlenbeck}
Let $(M,g)$ be a 3 dimensional Riemannian asymptotically flat manifold. Let $R$ denote its curvature tensor and $r_{vol}(M,1)$ its volume radius on scales $\leq 1$. Let\footnote{ In the context of this paper the lemma is applied  to  the  purely spatial part  of $\A$ i.e.   $  (A_i)_{jk}$.}  $\tilde{A}$    be   a connection on $M$ corresponding to an orthonormal frame $\tilde{e}_1,  \tilde{e}_2, \tilde{e}_3   $.  Assume the following bounds:
\bea
\|\tilde{A}\|_{L^2(M)}+ \|\nab \tilde{A}\|_{L^2(M) }+ \norm{R}_{L^2(M)}\leq \delta\qquad \textrm{ and}\,\, r_{vol}(M,1)\geq \frac{1}{4},  \lab{ass:tildeA}
\eea
where $\delta>0$ is a small enough constant. Assume also that $\tilde{A}$ and $\nabla\tilde{A}$ belong to $L^2(M)$. Then, there is another connection $A$ on $M$ satisfying he Coulomb  gauge condition $\nab^l (A_l)=0$, and such that
\bea
\| A\|_{L^2(M)}+ \|\nab  A\|_{L^2(M)} \les \de
\eea
 Furthermore, if $\nabla^2\tilde{A}$ belongs to $L^2(M)$, then $\nabla^2A$ belongs to $L^2(M)$.
\end{lemma}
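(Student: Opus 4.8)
The plan is to produce the Coulomb gauge by a variational/continuity argument, exploiting the smallness hypothesis \eqref{ass:tildeA}. We seek the new frame in the form $e_i = O_i^j \tilde e_j$ with $O:M\to SO(3)$, so that the gauge transformation law gives $A_m = O\tilde A_m O^{-1} + (\pr_m O)O^{-1}$. Writing $O = \exp(\chi)$ for an $so(3)$-valued function $\chi$ (legitimate since we will stay close to the identity), the Coulomb condition $\nabla^l(A_l)=0$ becomes an elliptic equation for $\chi$, schematically $\Delta\chi = \div\tilde A + Q(\chi,\nabla\chi,\tilde A)$, where $Q$ collects the quadratic and higher commutator terms coming from $\exp$. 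First I would set this up precisely and solve it by a fixed point argument in the space $\{\chi : \nabla\chi,\nabla^2\chi \in L^2(M)\}$: the linear solvability of $\Delta\chi = f$ with the Bochner bound $\|\nabla^2(-\Delta)^{-1}\|_{\LL(L^2)}\lesssim 1$ from \eqref{ikea2} (valid under $r_{vol}(M,1)\ge 1/4$ and $\|R\|_{L^2}\le\de$) provides the needed linear estimate, and the Sobolev embedding \eqref{ikea1} controls the quadratic terms, e.g. $\|\tilde A\nabla\chi\|_{L^{6/5}} \lesssim \|\tilde A\|_{L^2}\|\nabla\chi\|_{L^3}\lesssim \de\,\|\nabla^2\chi\|_{L^2}$, which is where smallness of $\de$ closes the contraction.

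Concretely the key steps, in order, are: (1) record the transformation law and derive the elliptic PDE for $\chi$ (or equivalently for $O$), identifying the linear term $\div\tilde A = \nabla^l(\tilde A_l)$ and isolating the quadratic/cubic remainder; (2) establish the linear elliptic estimate $\|\nabla\chi\|_{L^2}+\|\nabla^2\chi\|_{L^2}\lesssim \|\div\tilde A\|_{L^{6/5}} + \|\text{quadratic}\|$, using \eqref{ikea1}, \eqref{ikea2}, \eqref{ikea3}; (3) run the fixed point in the ball $\|\nabla\chi\|_{L^2}+\|\nabla^2\chi\|_{L^2}\le C\de$, using smallness of $\de$ to absorb the nonlinearity and get a unique small solution; (4) read off $A = O\tilde A O^{-1} + (\pr O)O^{-1}$ and verify, from $\chi$ small in $H^2$ (hence $O$ close to $1$), that $\|A\|_{L^2(M)}+\|\nabla A\|_{L^2(M)}\lesssim \de$; (5) for the last sentence, bootstrap one more derivative: if $\nabla^2\tilde A\in L^2$, then differentiating the equation for $\chi$ and using the same elliptic estimates (now at one higher level, with the lower-order $H^2$ control of $\chi$ already in hand) gives $\nabla^3\chi\in L^2$, whence $\nabla^2 A\in L^2$. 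One should also check the asymptotic behavior at infinity so that the elliptic solve is well posed on the asymptotically flat manifold — e.g. solving on an exhaustion by large balls with appropriate boundary/decay conditions and passing to the limit, or working in weighted spaces; since $\tilde A,\nabla\tilde A\in L^2(M)$ globally this should be routine.

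The main obstacle I expect is step (2)–(3): ensuring that the elliptic estimate is uniform — i.e. that the implied constant in $\|\nabla^2(-\Delta)^{-1}\|_{\LL(L^2)}\lesssim 1$ genuinely depends only on the volume radius lower bound and the $L^2$ curvature bound, not on any a priori smoothness of $g$ — and that the nonlinear terms are all controlled purely by the Sobolev/Bochner inequalities available on $\Si_0$ without losing a derivative. A secondary subtlety is the correct functional-analytic setting near spatial infinity: one must make sense of $(-\Delta)^{-1}$ acting on $\div\tilde A$ there, which is handled by the heat-flow definition of $(-\Delta)^{-1/2}$ recorded in Lemma \ref{lemma:embeddingSi0} together with the global $L^{6/5}$–$L^2$ mapping property \eqref{ikea3}. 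Once these uniform elliptic estimates are in place, the smallness hypothesis makes the rest a standard perturbative argument, which is precisely why — as noted in the footnote to the lemma — the proof is much softer than Uhlenbeck's original result.
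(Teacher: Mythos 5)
Your proposal follows exactly the route the paper takes: transform the frame by $O\in O(3)$, impose the Coulomb condition to get the elliptic equation $\nabla^m\big((\pr_m O)O^{-1}+O\tilde A_m O^{-1}\big)=0$ with $O\to 1$ at infinity, and solve it perturbatively using the smallness of $\de$ together with the Sobolev and Bochner estimates of Lemma \ref{lemma:embeddingSi0}. The paper itself only sketches this ("we leave the remaining details to the reader"), and your fixed-point implementation via $O=\exp(\chi)$, together with the one-derivative bootstrap for the $\nabla^2 A$ statement, is a faithful and correctly organized filling-in of those details.
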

\begin{proof}
This is a straightforward adaptation, in a simpler situation,  of   \cite{Uhl}.
Note   that   in  the new  frame $e_1, e_2, e_3$,  defined by  
$
e_i=O_i^j\,  \tilde{e}_j
$, 
  with  $O$ in the orthogonal group $O(3)$,  we have,
  \beaa
  A_m &=& O \tilde{A}_m O^{-1}+(\pr_m O) O^{-1}.
  \eeaa
Our   Coulomb gauge condition  leads to the elliptic equation for $O$,
  \bea
  \label{gauge2}
  \nab^m\big((\pr_m O) O^{-1}+ O \tilde{A}_m O^{-1}\big)=0, \qquad O \c O^t=I,
  \eea
which, in view of the smallness assumptions and the boundary condition $O\to 1$ at infinity along $M$, admits the unique solution. We leave the remaining details to the 
reader. 
\end{proof}

\subsubsection{Control of $A$, $A_0$ and $B=\Delta^{-1}\curl(A)$ on the initial slice}\lab{sec:bobo}

Let us first deduce from the Uhlenbeck type Lemma \ref{lemma:uhlenbeck} the existence of a connection $(A_i)_{jk}$ on $\Si_0$ satisfying the Coulomb gauge condition \eqref{eq:YM18}. In view of Theorem \ref{th:coordharm}, the bound on $R$ in $L^2(\Si_0)$ and on $r_{vol}(\Si_0,1)$ assumed in Theorem \ref{th:mainter} yields the existence of a system of harmonic coordinates. Furthermore, let $\tilde{e}_1, \tilde{e}_2, \tilde{e}_3$ an orthonormal frame obtained from $\pr_{x_1}, \pr_{x_2}, \pr_{x_3}$ by a standard orthonormalisation procedure, and let $\tilde{A}$ the corresponding connection. Then, the estimates of Theorem \ref{th:coordharm} yield the fact that $\tilde{A}$ and $\nabla\tilde{A}$ belong to $L^2(M)$. Together with the estimates \eqref{estinit6} on $R$ and \eqref{ikea} on $r_{vol}(\Si_0, 1)$, and the Uhlenbeck type Lemma \ref{lemma:uhlenbeck}, we obtain the existence of a connection $(A_i)_{jk}$ on $\Si_0$ satisfying the Coulomb gauge condition \eqref{eq:YM18}.

Next, using   both   the Coulomb gauge condition for   $(A_i)_{jk}$, see  \eqref{eq:YM18}, and    the   constraint equation  $\nabla^jk_{ij}=0$,   we deduce    \eqref{eq:YM19}.   Thus,    using also the estimates \eqref{estinit6} \eqref{estinit0} and the estimates of Lemma \ref{lemma:embeddingSi0} and Lemma \ref{lemma:divcurllemmainitialslice} on the initial slice $\Si_0$, we may estimate $A, A_0$ and $B=\Delta^{-1}\curl(A)$. This is done in the following proposition.

\begin{proposition}\lab{lemma:initialslice}
We have the following estimate for $A, A_0$ and $B=\Delta^{-1}\curl(A)$ on the initial slice $\Si_0$:
$$\norm{A}_{L^2(\Si_0)}+\norm{\prb A}_{L^2(\Si_0)}+\norm{\pr A_0}_{L^2(\Si_0)}+\norm{\pr B}_{L^2(\Si_0)}+\norm{\pr\prb B}_{L^2(\Si_0)}\les \ep.$$
\end{proposition}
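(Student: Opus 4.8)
\textbf{Proof strategy for Proposition \ref{lemma:initialslice}.}

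The plan is to estimate the four quantities $\A$, $\prb A$, $\pr A_0$ and $\pr\prb B$ one at a time on $\Si_0$, using the Coulomb gauge condition \eqref{eq:YM18}, the structure equations of the maximal foliation, the elliptic estimates of Lemma \ref{lemma:embeddingSi0}, and the initial data bounds \eqref{estinit6}, \eqref{estinit0}, \eqref{estinit7}. First I would recall from \eqref{eq:YM10}--\eqref{eq:YM11} that the components of $\A$ are given explicitly in terms of the geometry of $\Si_0$: the spatial-temporal part $(A_i)_{0j}=-k_{ij}$ and $(A_0)_{0i}=-n^{-1}\nabla_i n$, while the purely spatial part $(A_i)_{jk}$ is controlled by the Uhlenbeck-type Lemma \ref{lemma:uhlenbeck}. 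Thus $\|A\|_{L^2(\Si_0)}\les \|k\|_{L^2(\Si_0)}+\de\les\ep$ by \eqref{estinit0} and the construction, and $\|A_0\|_{L^2(\Si_0)}\les \|n^{-1}\nabla n\|_{L^2(\Si_0)}\les\|\nabla n\|_{L^2(\Si_0)}\les\ep$ once one controls $\nabla n$ from \eqref{estinit7} via Sobolev embedding \eqref{ikea1} applied to $\nabla n$ (whose gradient $\nabla^2 n$ is in $L^2$). Similarly $\|\nabla A\|_{L^2(\Si_0)}\les\|\nabla k\|_{L^2(\Si_0)}+\de\les\ep$ using \eqref{estinit0} again for the $k$-part and Lemma \ref{lemma:uhlenbeck} for the spatial part.

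Next, for $\prb A=(\pr_0 A,\pr A)$: the spatial derivative $\pr A=\nabla A$ is already bounded above. For the time derivative $\pr_0 A$, there is no transport equation directly available on the single slice $\Si_0$, so instead I would use the structure equation \eqref{eq:structfol1}, $\nabla_0 k_{ab}=\R_{a0b0}-n^{-1}\nabla_a\nabla_b n-k_{ac}k_b{}^c$, to express $\pr_0$ of the components $(A_i)_{0j}=-k_{ij}$ in terms of the spacetime curvature, $\nabla^2 n$, and quadratic terms in $k$; these are all in $L^2(\Si_0)$ by \eqref{estinit6}, \eqref{estinit7}, and the $L^4$ bound $\|k\|_{L^4(\Si_0)}^2\les\|R\|_{L^2(\Si_0)}$ (cf.\ the remark after Theorem \ref{th:mainbis}). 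One also needs $\pr_0$ of the spatial components $(A_i)_{jk}$ and of $A_0$; these are handled by differentiating the defining relations, writing the result schematically as curvature plus $\A\prb\A$ plus lower order and absorbing the quadratic terms by the smallness of the $L^2$/$L^4$ norms already obtained. Finally $\pr A_0=\nabla A_0$: from $A_0=-n^{-1}\nabla n$ (schematically) one gets $\nabla A_0\sim n^{-1}\nabla^2 n + n^{-2}(\nabla n)^2$, and $\|\nabla^2 n\|_{L^2(\Si_0)}\les\ep$ by \eqref{estinit7} while the quadratic term is controlled by $\|\nabla n\|_{L^4}^2\les\|\nabla n\|_{L^2}\|\nabla^2 n\|_{L^2}\les\ep^2$ via \eqref{ikea1} applied to $\nabla n$.

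For the last quantity, $B=\Delta^{-1}\curl A$, apply $\pr\prb$ and use the Bochner estimate $\|\nabla^2(-\Delta)^{-1}\|_{\LL(L^2(\Si_0))}\les 1$ from \eqref{ikea2}. For the purely spatial derivatives $\nabla^2 B=\nabla^2(-\Delta)^{-1}\curl A$, this bounds $\|\nabla^2 B\|_{L^2(\Si_0)}\les\|\curl A\|_{H^{-1}}\les\|A\|_{L^2(\Si_0)}\les\ep$ (or, more directly, $\|\nabla^2(-\Delta)^{-1}\nabla A\|_{L^2}\les\|A\|_{L^2}$). For $\pr_0 B$, the subtlety is that $\pr_0$ and $\Delta^{-1}$ on $\Si_0$ do not commute with the heat-flow definition of the inverse Laplacian in an obvious way; the cleanest route is to write $\pr\pr_0 B=\nabla(-\Delta)^{-1}\curl(\pr_0 A)+[\text{commutator}]$, estimate the main term by $\|\nabla(-\Delta)^{-1}\|_{\LL(L^2)}\|\pr_0 A\|_{L^2}\les\ep$ using \eqref{ikea2} and the $\pr_0 A$ bound from the previous paragraph, and control the commutator $[\pr_0,(-\Delta)^{-1}\curl]$ by expanding it in terms of $\A$ times first derivatives (using \eqref{notcoordinate} and that $\pr_0$ of the metric/Christoffel symbols on $\Si_0$ is governed by $k$), which is again quadratic and small. \emph{The main obstacle} I anticipate is precisely this handling of time derivatives on the single initial slice: unlike the spacetime situation there is no evolution equation to lean on, so every $\pr_0$ must be converted, via the structure equations \eqref{eq:structfol1}--\eqref{eq:structfol3}, \eqref{eqlapsen}, and the commutator identities \eqref{notcoordinate}, into spatial quantities plus curvature plus quadratic error terms — and one must check that all the resulting quadratic (and cubic) terms are genuinely absorbable by the smallness of $\ep$, which requires the $L^4$ control of $k$ and the $L^6$ Sobolev embedding \eqref{ikea1} at exactly the right places.
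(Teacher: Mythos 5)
Your overall architecture (componentwise estimates using the structure equations for the $0j$ parts, elliptic theory for the spatial parts, and Bochner/commutator arguments for $B$) matches the paper's, but there is a genuine gap in your treatment of $A_0$. You identify $A_0$ "schematically" with $-n^{-1}\nabla n$ and then read off $\pr A_0\sim n^{-1}\nabla^2 n+n^{-2}(\nabla n)^2$ from the lapse estimate \eqref{estinit7}. However, \eqref{eq:YM11} only gives the internal components $(A_0)_{0i}=-n^{-1}\nabla_i n$; the components $(A_0)_{ij}=\g(\D_0 e_j,e_i)$, which record the rotation of the spatial frame under $\D_{e_0}$, are \emph{not} determined by the lapse and are not small for free. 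The paper estimates $\pr A_0$ by using the elliptic equation \eqref{eq:YM21}, $\De A_0=\A\prb A+\A\pr A_0+\A^3$ (valid precisely because of the Coulomb condition), multiplying by $A_0$, integrating by parts, and absorbing the cubic terms by smallness to get $\norm{\pr A_0}_{L^2(\Si_0)}\les\ep$. This step is not optional: the paper's estimate for the spatial time derivatives, $(\pr_0 A_l)_{ij}=\pr A_0+\pr A+A\A$ (obtained from \eqref{linkRA} and the curvature symmetry $\R_{ij0l}=\R_{0lij}$), shows that $\norm{(\pr_0 A)_{ij}}_{L^2}$ is controlled only up to $\norm{\pr A_0}_{L^2}$, so without the elliptic estimate for $A_0$ the bound on $\pr_0 A$ does not close.

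A secondary point: for the purely spatial components $(A_i)_{jl}$ you lean on the Uhlenbeck Lemma \ref{lemma:uhlenbeck} to supply the quantitative bound $\norm{A}_{L^2}+\norm{\nabla A}_{L^2}\les\ep$, but the hypotheses of that lemma only deliver a bound of size $\de$ tied to the a priori $L^2$ control of the auxiliary connection $\tilde A$ coming from harmonic coordinates, which is used in the paper only to establish \emph{existence} of the Coulomb frame. The paper instead re-derives the quantitative bound directly from the div-curl system $\curl A=R+A^2$, $\div A=A^2$, the identity \eqref{curlcurl} giving $(\Delta A)_{ij}=\nabla R+A\pr A+A^3$, and an integration by parts against $A_{ij}$, which closes using only $\norm{R}_{L^2(\Si_0)}\leq\ep$ and the Sobolev embedding \eqref{ikea1}. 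You should adopt this route (or verify explicitly that the Uhlenbeck output is $O(\ep)$). Your treatment of $\nabla_0 k$ via \eqref{eq:structfol1}--\eqref{eq:structfol3} and of $B$ via \eqref{ikea2} and the $[\pr_0,(-\Delta)^{-1}\curl]$ commutator is consistent with the paper.
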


\begin{remark}
In view of Proposition \ref{lemma:initialslice}, $A$ is in $L^2(\Si_0)$. Note that the corresponding estimate for $A_0$ is false and fortunately not needed. 
\end{remark}

\begin{proof}
We estimate separately the components $(A_i)_{j0}, (A_i)_{jl}, (A_0)_{i0}$ and $(A_0)_{ij}$. We start with $(A_i)_{j0}$. Recall that $(A_i)_{0j}=-k_{ij}$. Together with \eqref{estinit0}, we obtain:
\bea\lab{estinit1}
\norm{(\pr A)_{0j}}_{L^2(\Si_0)}\les \norm{\nabla k}_{L^2(\Si_0)}+\norm{A^2}_{L^2(\Si_0)}\lesssim \ep+\norm{A}^2_{L^4(\Si_0)}.
\eea
Also, $(A_i)_{jl}=g(\D_ie_j, e_l)=g(\nabla_ie_j,e_l)$. A computation similar to \eqref{linkRA} yields:
$$R(e_i, e_j , e_l, e_m)=\pr_l (\A_m)_{ij}-
\pr_m (\A_l)_{ij}+(\A_m)_i\,^n  (\A_l)_{nj}-
(\A_l)_i\,^n  (\A_m)_{nj}.$$
Thus, we have schematically:
$$(\curl A)_{ij}=R+A^2.$$
On the other hand, we have from the Coulomb gauge condition:
$$\div A=A^2.$$
Using \eqref{curlcurl}, we obtain, writing again schematically:
\bea\lab{estinit1bis}
(\Delta A)_{ij}=\nabla R+A\pr  A +A^3,
\eea
which after multiplication by $A_{ij}$ and integration by parts yields:
\bea\lab{estinit2}
\norm{(\pr A)_{ij}}^2_{L^2(\Si_0)}&\lesssim& (\norm{R}_{L^2(\Si_0)}+\norm{A}^2_{L^4(\Si_0)})\norm{\pr A}_{L^2(\Si_0)}+\norm{A}^4_{L^4(\Si_0)}\\
\nn&\les & (\ep+\norm{A}^2_{L^4(\Si_0)})\norm{\pr A}_{L^2(\Si_0)}+\norm{A}^4_{L^4(\Si_0)},
\eea
where we used \eqref{estinit6} in the last inequality. Now, recall $(A_i)_{00}=0$, which together with  \eqref{estinit1} and \eqref{estinit2} yields:
$$\norm{\pr A}_{L^2(\Si_0)}\lesssim \ep+\norm{A}^2_{L^4(\Si_0)}.$$
Together with the Sobolev embedding \eqref{ikea1}, this implies:
\bea\lab{estinit4:temporary}
\norm{\pr A}_{L^2(\Si_0)}\lesssim \ep+\norm{A}^2_{L^2(\Si_0)}+\norm{\pr A}^2_{L^2(\Si_0)}.
\eea

Next, we estimate $\norm{A}_{L^2(\Sigma_0)}$. Again, we estimate separately the components $(A_i)_{j0}, (A_i)_{jl}, (A_0)_{i0}$ and $(A_0)_{ij}$. As noticed above, we have
$$(\curl A)_{ij}=R+A^2,\,\,\div A=A^2.$$
Together with Lemma \ref{lemma:divcurllemmainitialslice}, this yields
\beaa
\norm{(A)_{ij}}_{L^2(\Sigma_0)}&\les& \norm{\div A}_{L^{\frac{6}{5}}(\Sigma_0)}+\norm{\curl A}_{L^{\frac{6}{5}}(\Sigma_0)}+\norm{A}_{L^6(\Sigma_0)}\\
&\les& \norm{\R}_{L^{\frac{6}{5}}(\Sigma_0)}+\norm{A^2}_{L^{\frac{6}{5}}(\Sigma_0)}+\norm{A}_{L^6(\Sigma_0)}\\ 
&\les& \norm{\R}_{L^{\frac{6}{5}}(\Sigma_0)}+\norm{A}^2_{L^2(\Sigma_0)}+\norm{A}^2_{L^6(\Sigma_0)}+\norm{A}_{L^6(\Sigma_0)}\\
&\les& \norm{\R}_{L^{\frac{6}{5}}(\Sigma_0)}+\norm{A}^2_{L^2(\Sigma_0)}+\norm{\pr A}^2_{L^2(\Sigma_0)}+\norm{\pr A}_{L^2(\Sigma_0)}
\eeaa
where we used the Sobolev embedding \eqref{ikea1} in the last inequality. In view of \eqref{estinit6} and the asymptotic behavior of $\R$ inherited from the asymptotic flatness of $\Sigma_0$, we have
$$\norm{\R}_{L^{\frac{6}{5}}(\Sigma_0)}\les\ep$$
and hence
\beaa
\norm{(A)_{ij}}_{L^2(\Sigma_0)} &\les& \ep+\norm{A}^2_{L^2(\Sigma_0)}+\norm{\pr A}^2_{L^2(\Sigma_0)}+\norm{\pr A}_{L^2(\Sigma_0)}.
\eeaa
In view of the fact that $(A_i)_{0j}=-k_{ij}$, $(A_i)_{00}=0$, and in view of the estimate \eqref{estinit0} for $k$, we obtain
\beaa
\norm{A}_{L^2(\Sigma_0)} &\les& \ep+\norm{A}^2_{L^2(\Sigma_0)}+\norm{\pr A}^2_{L^2(\Sigma_0)}+\norm{\pr A}_{L^2(\Sigma_0)}.
\eeaa
Together with \eqref{estinit4:temporary}, we infer
\beaa
\norm{A}_{L^2(\Si_0)}+\norm{\pr A}_{L^2(\Si_0)}\lesssim \ep+\norm{A}^2_{L^2(\Si_0)}+\norm{\pr A}^2_{L^2(\Si_0)}.
\eeaa
and thus 
\bea\lab{estinit4}
\norm{A}_{L^2(\Si_0)}+\norm{\pr A}_{L^2(\Si_0)}\lesssim \ep.
\eea

Next, we estimate $\nabla_0k$. Recall \eqref{eq:structfol1}:
$$\nabla_0k_{ab}= \R_{a\,0\,b\,0}-n^{-1}\nabla_a\nabla_bn-k_{ac}k_b\,^c.$$
Also recall Gauss equation \eqref{eq:structfol3}: 
$$\R_{a\,0\,b\,0}=R_{ab}-k_a\,^c k_{cb}.$$
Thus, we have:
\bea\lab{estinit5}
\nabla_0k= R-n^{-1}\nabla^2n+A^2.
\eea
\eqref{estinit6}, \eqref{estinit7}, \eqref{estinit4}, \eqref{estinit5} and the Sobolev embedding \eqref{ikea1} imply:
\bea\lab{estinit8}
\norm{\nabla_0k}_{L^2(\Si_0)}\les \ep.
\eea
Now, $(A_j)_{0i}=k_{ij}$, and thus:
$$(\pr_0A)_{0i}=\nabla_0k+AA_0,$$
which together with \eqref{estinit8}, \eqref{estinit4} and the Sobolev embedding \eqref{ikea1} yields:
\bea\lab{estinit8bis}
\norm{(\pr_0A)_{0i}}_{L^2(\Si_0)}&\les& \norm{\nabla_0k}_{L^2(S_0)}+ \norm{A}_{L^4(\Si_0)}\norm{A_0}_{L^4(\Si_0)}\\
\nn&\les& \ep+\ep\norm{\pr A_0}_{L^2(\Si_0)}.
\eea

Next, we estimate $(\pr_0A)_{ij}$. In view of \eqref{linkRA}, we have:
$$\R(e_i, e_j , e_0, e_l)=(\pr_0A_l)_{ij}-
(\pr_lA_0)_{ij}+A_0A.$$
Furthermore, we have:
$$\R_{0\,l\,i\,j}=(\pr_iA_j)_{0l}-(\pr_jA_i)_{0l}+A^2=\pr A+A^2.$$
Using the symmetry of the curvature tensor $\R_{i\,j\,0\,l}=\R_{0\,l\,i\,j}$, we obtain:
$$(\pr_0A_l)_{ij}=\pr A_0+\pr A+A\A,$$
which together with \eqref{estinit4} and the Sobolev embedding \eqref{ikea1} yields:
\bea\lab{estinit9}
\norm{(\pr_0A)_{ij}}_{L^2(\Si_0)}&\les& \norm{\pr A_0}_{L^2(\Si_0)}+\norm{\pr A}_{L^2(\Si_0)}+\norm{A}_{L^3(\Si_0)}\norm{\A}_{L^6(\Si_0)}\\
\nn&\les& \ep+\ep\norm{A_0}_{L^6(\Si_0)}\\
\nn&\les& \ep+\ep\norm{\pr A_0}_{L^2(\Si_0)}.
\eea
Since $A_{00}=0$, \eqref{estinit8bis} and \eqref{estinit9} yield:
\bea\lab{estinit11}
\norm{\pr_0A}_{L^2(\Si_0)}&\les& \ep+\ep\norm{\pr A_0}_{L^2(\Si_0)}.
\eea

Next, we estimate $A_0$. Recall \eqref{eq:YM21}:
\beaa
\De A_0=(A, n^{-1}\nab n) (\pr  \A+\pr_0 A)+ (A, n^{-1}\nab n) \c \A^2.
\eeaa
After multiplication by $A_0$ and integration by parts, and together with \eqref{estinit4}, \eqref{estinit11} and  the Sobolev embedding \eqref{ikea1}, 
this yields:
\beaa
\norm{\pr A_0}^2_{L^2(\Si_0)}&\les& (\norm{A}_{L^3(\Si_0)}+\norm{n^{-1}\nab n}_{L^3(\Si_0)}) (\norm{\pr  \A}_{L^2(\Si_0)}+\norm{\pr_0 A}_{L^2(\Si_0)})\norm{A_0}_{L^6(\Si_0)}\\
&&+ (\norm{A}_{L^2(\Si_0)}+\norm{n^{-1}\nab n}_{L^2(\Si_0)}) \norm{\A}^2_{L^6(\Si_0)}\norm{A_0}_{L^6(\Si_0)}\\
&\les& \ep (\ep+\norm{\pr A_0}_{L^2(\Si_0)}+\norm{\pr_0 A}_{L^2(\Si_0)}+\norm{A_0}^2_{L^6(\Si_0)})\norm{A_0}_{L^6(\Si_0)}\\
&\les& \ep (\ep+\norm{\pr A_0}_{L^2(\Si_0)}+\norm{\pr A_0}^2_{L^2(\Si_0)})\norm{\pr A_0}_{L^2(\Si_0)}
\eeaa
which implies:
\bea\lab{estinit12}
\norm{\pr A_0}_{L^2(\Si_0)}\les \ep.
\eea
Together with \eqref{estinit11}, we also obtain 
\bea\lab{estinit13}
\norm{\pr_0A}_{L^2(\Si_0)}\les \ep.
\eea

Finally, we estimate $B$ on the initial slice $\Si_0$ using the estimates for $\A$ \eqref{estinit4}, \eqref{estinit12} and \eqref{estinit13}. This will be done on $\Si_t$ in Proposition \ref{lemma:estB}. Arguing as in Proposition \ref{lemma:estB} for $t=0$ together with \eqref{estinit4}, \eqref{estinit12}, \eqref{estinit13},  the Sobolev embeddings \eqref{ikea1} and \eqref{ikea3} on $\Si_0$, the Bochner inequality on $\Si_0$ \eqref{ikea2}, we   immediately obtain:
$$\norm{\pr B}_{L^2(\Si_0)}+\norm{\pr\prb B}_{L^2(\Si_0)}\les \ep.$$
This concludes the proof of the proposition.
\end{proof}

\section{Strategy of the proof of theorem  \ref{th:mainter}}\label{sec:strategyoftheproof}

\subsection{Classical local existence}

We will need the following well-posedness result for the Cauchy problem for the Einstein equations \eqref{EVE} in the maximal foliation.

\begin{theorem}[Well-posedness for the Einstein equation in the maximal foliation]\lab{th:classicalwp}
Let $(\Si_0 ,g , k)$ be asymptotically flat and satisfying the constraint equations \eqref{const}, with $\textrm{Ric}$, $\nabla \textrm{Ric}$, $k$, $\nabla k$ and  $\nabla^2k$ in $L^2(\Si_0)$, and $r_{vol}(\Si_0,1)>0$. Then, there exists a unique asymptotically flat solution $(\MM, {\bf g})$ to the Einstein vacuum equations \eqref{EVE} corresponding to this initial data set, together with a maximal foliation by space-like hypersurfaces $\Si_t$ defined as level hypersurfaces of a time function $t$. Furthermore, there exists a time 
$$T_*=T_*(\norm{\nabla^{(l)}\textrm{Ric}}_{L^2(\Si_0)}, 0\leq l\leq 1, \norm{\nabla^{(j)}k}_{L^2(\Si_0)}, 0\leq j\leq 2, r_{vol}(\Si_0,1))>0$$
such that the maximal foliation exists for on $0\leq t\leq T_*$ with a corresponding control in $L^\infty_{[0,T_*]}L^2(\Si_t)$ for $\textrm{Ric}$, $\nabla\textrm{Ric}$, $k$, $\nabla k$ and $\nabla^2k$. 
\end{theorem}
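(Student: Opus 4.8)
The plan is to reduce to the classical local existence theorem, Theorem~\ref{thm:Bruhat}, and then install the maximal foliation by hand inside the resulting development. \textbf{Step 1 (reduction to classical data).} The hypotheses provide finite bounds $\|\textrm{Ric}\|_{L^2(\Si_0)}=:c_1$ and $r_{vol}(\Si_0,1)=:c_2>0$, so Theorem~\ref{th:coordharm} applies and covers the complement of the end of $\Si_0$ by finitely many geodesic balls carrying harmonic coordinates in which $g\in H^2$, uniformly comparable to the Euclidean metric. In such coordinates $\Delta_g g_{ij}$ equals $-2\textrm{Ric}_{ij}$ plus an expression quadratic in $\partial g$ with smooth coefficients; since $\textrm{Ric},\nab\textrm{Ric}\in L^2(\Si_0)$, a standard elliptic bootstrap (using $H^1\hookrightarrow L^6$ in three dimensions) upgrades this to $g\in H^3_{loc}(\Si_0)$, and then the bounds on $k,\nab k,\nab^2k$ give $k\in H^2_{loc}(\Si_0)$. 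Together with the asymptotic flatness of the end, $(g,k)\in H^3_{loc}(\Si_0)\times H^2_{loc}(\Si_0)$; since $3>5/2$, Theorem~\ref{thm:Bruhat} yields a globally hyperbolic vacuum development $(\MM,\g)$ for which $\Si_0$ is a Cauchy hypersurface, unique up to isometry, with $\g\in C^0_tH^3_{loc}\cap C^1_tH^2_{loc}$ in wave coordinates.

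\textbf{Step 2 (maximal foliation).} Since $\Si_0$ is maximal, i.e.\ \eqref{maxfoliation} holds, we construct on a neighbourhood of $\Si_0$ in $\MM$ a time function $t$, normalized so that $\Si_0=\{t=0\}$, whose level sets are maximal spacelike hypersurfaces. Writing the foliation with lapse $n$ and zero shift, maximality forces $n$ to satisfy the elliptic equation \eqref{eqlapsen}, $\Delta n=n|k|^2$; since $|k|^2\ge0$, this equation has a unique solution with $n\to1$ at the end and $0<n\le1$ by the maximum principle, and the evolution equation for $\tr_gk$ together with \eqref{EVE} shows that $\tr_gk=0$ is propagated. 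Short-time existence of such a foliation at the regularity inherited from Step~1 follows from an implicit function / continuity argument in weighted Sobolev spaces (in the spirit of Choquet-Bruhat and Bartnik), the linearized operator being essentially $\Delta-|k|^2$; the passage from wave coordinates to this maximal gauge is a genuine diffeomorphism at the $H^3$ level, so no regularity is lost.

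\textbf{Step 3 (quantitative time of existence and a priori control).} It remains to show the foliation persists on an interval $[0,T_*]$ with $T_*$, and the $L^\infty_{[0,T_*]}L^2(\Si_t)$ control of $\textrm{Ric},\nab\textrm{Ric},k,\nab k,\nab^2k$, depending only on the corresponding initial norms and on $r_{vol}(\Si_0,1)$. One fixes a continuation criterion --- the maximal foliation extends as long as these quantities and $\|n-1\|$ stay bounded --- and closes a bootstrap on a short interval by combining: standard curvature energy estimates (via the Bel--Robinson tensor) controlling $\R$ and $\D\R$ in $L^\infty_tL^2(\Si_t)$; elliptic estimates for $n$ from \eqref{eqlapsen}; the structure equations \eqref{eq:structfol1}--\eqref{eq:structfol3} of the maximal foliation, which recover $\nab_0k$, $\nab k$ and $\nab^2k$ from the curvature and lower-order terms; and the Gauss equation relating the intrinsic curvature of $\Si_t$ to $\R$ and $k$. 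The lower bound on $r_{vol}(\Si_0,1)$ is what makes the Sobolev and elliptic inequalities on the slices uniform, and it is itself propagated for a short time. \textbf{Step 4 (uniqueness).} Uniqueness of $(\MM,\g)$ up to isometry comes from Theorem~\ref{thm:Bruhat}, and uniqueness of the foliation follows since it is anchored at $\Si_0$ and normalized by $n\to1$ at the end, which determines the time function uniquely in the given spacetime.

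The main obstacle is Step~3: one must check that both $T_*$ and the propagated bounds depend only on the listed, relatively low, norms together with $r_{vol}(\Si_0,1)$, rather than on higher derivatives. This amounts to carefully organizing the feedback loop between the lapse equation, the structure equations of the foliation, and the curvature energy estimates. All of it, however, is carried out at the classical, non-sharp level and uses only energy estimates and Sobolev embeddings, which is why --- unlike the main theorem --- this result is a routine consequence of existing technology.
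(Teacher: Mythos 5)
Your proposal is consistent with the paper, which in fact offers no proof of Theorem~\ref{th:classicalwp} at all: it simply declares the result standard, notes that it relies solely on energy estimates (no Strichartz or bilinear estimates), and refers to \cite{ChKl} chapter 10. Your outline --- harmonic-coordinate bootstrap to reach $(g,k)\in H^3_{loc}\times H^2_{loc}$ so that Theorem~\ref{thm:Bruhat} applies, construction of the maximal gauge via the lapse equation \eqref{eqlapsen}, and a continuation argument closed by Bel--Robinson energy estimates together with the structure equations \eqref{eq:structfol1}--\eqref{eq:structfol3} --- is precisely the standard argument being invoked, and the part you flag as the main obstacle (Step~3) is exactly the content the paper defers to \cite{ChKl}.
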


Theorem \ref{th:classicalwp} requires two more derivatives both for $R$ and $k$ with respect to the main Theorem \ref{th:main}. Its proof is standard and relies solely on energy estimates (as opposed to Strichartz estimates of bilinear estimates). We refer the reader to \cite{ChKl} chapter 10 for a related statement.

\begin{remark} In the  proof  of  our main theorem, the result above will be used only in the context of an extension and continuity arguments (see \textit{Step 1} and \textit{Step 3} in section \ref{sec:proofconj}).
\end{remark}
\subsection{ Weakly regular  null  hypersurfaces}  We shall be working with  null hyper surfaces in $\MM$ verifying 
 a set of    assumptions, described below. These assumptions will be easily verified  by the level hyper surfaces  $\HH_u$   solutions $u$ of the eikonal equation $\g^{\mu\nu} \pr_\mu u\,\pr_\nu u=0$ discussed in  section \eqref{sec:parametrix}. The regularity of the eikonal  equation is  studied in detail  in \cite{param3}.

\begin{definition}[Weakly regular  null  hypersurfaces] 
Let $\HH$ be a  null hypersurface  with future  null    normal  $L$ verifying  ${\bf g}(L,T)=-1$. Let also $N=L-T$.  We denote by $\nabb$ the induced connection along the $2$-surfaces $\HH\cap\Si_t$. We say that $\HH$ is 
weakly regular provided that, 
\bea\lab{assumptionH1}
\norm{\D L}_{L^3(\HH)}+\norm{\D N}_{L^3(\HH)}\les 1,
\eea
and the following Sobolev embedding holds for any scalar function $f$ on $\HH$:
\bea\lab{assumptionH2}
\norm{f}_{L^6(\HH)}\les \norm{\nabb f}_{L^2(\HH)}+\norm{L(f)}_{L^2(\HH)}+\norm{f}_{L^2(\HH)}.
\eea
\end{definition}

\subsection{Main bootstrap assumptions}\lab{sec:mainbootassss}

Let $M\geq 1$ a large enough constant to be chosen later in terms only of universal constants. By choosing $\ep>0$ sufficiently small, we can also ensure $M\ep$ is small enough.     From now on, we assume the following bootstrap assumptions  hold true on a fixed  interval  $[0, T^*]$,  for some $0<T^*\le 1$.  Note that  $\HH$ denotes an arbitrary  weakly regular   null hypersurface,   with future directed normal  $L$,  normalized by the condition ${\bf g}(L,T)=-1$.

\begin{itemize}
\item   \textit{Bootstrap curvature assumptions}
\bea\label{bootR}
\norm{\R}_{\lsit{2}}\leq M\ep.
\eea
Also,
\bea\lab{bootcurvatureflux}
\norm{\R\c L}_{L^2(\HH)}\leq M\ep,
\eea
where $\R\c L$ denotes any component of $\R$ such 
that at least one index is contracted with $L$. 
\item \textit{Bootstrap  assumptions for the connection $\A$.}
We  also assume  that there exist  $\A=(A_0, A)$   verifying   our Coulomb type condition on  $ [0,T^*]$ ,   
such that,
\bea\label{bootA}
\norm{A}_{\lsit{2}}+\norm{\prb A}_{\lsit{2}}\leq M\ep,
\eea
and:
\bea\label{bootA0}
\nn\norm{A_0}_{L^\infty_t L^4(\Sigma_t)}+\norm{\pr A_0}_{\lsit{2}}+\norm{A_0}_{\lsitt{2}{\infty}}+\norm{\pr A_0}_{\lsit{3}}&&\\
+\norm{\pr\pr A_0}_{\lsit{\frac{3}{2}}}+\norm{\pr_0A_0}_{L^2_tL^{\frac{42}{13}}(\Sigma_t)}+\norm{\pr\pr_0A_0}_{L^2_tL^{\frac{14}{9}}(\Sigma_t)}&\leq& M\ep.
\eea
\end{itemize}

\begin{remark}
Compared to the bootstrap assumptions for $\pr A_0$, one would expect the following bootstrap assumptions for $\pr_0A_0$
$$ \norm{\pr_0 A_0}_{\lsit{3}},\,\,\norm{\pr\pr_0 A_0}_{\lsit{\frac{3}{2}}}$$
instead of the quantities
$$\norm{\pr_0A_0}_{L^2_tL^{\frac{42}{13}}(\Sigma_t)},\,\,\norm{\pr\pr_0A_0}_{L^2_tL^{\frac{14}{9}}(\Sigma_t)}$$
appearing in \eqref{bootA0}. However, such bounds could not be recovered as they correspond to the control the Laplace equation with a right-hand side in $L^1(\Si_t)$ which is borderline. Instead, relying on the elliptic equations for $\pr_0A_0$ in Proposition \ref{prop:equationpr0A0}, one could either derive bounds for 
$$ \norm{\pr_0 A_0}_{\lsit{3}+\lsit{p}},\,\,\norm{\pr\pr_0 A_0}_{\lsit{\frac{3}{2}}+\lsit{\frac{3p}{p+3}}},\,\, 3<p\leq 6$$
or with the help of the non sharp Strichartz estimate \eqref{bootstrich} bounds for
$$\norm{\pr_0A_0}_{L^2_tL^p(\Sigma_t)},\,\,\norm{\pr\pr_0A_0}_{L^2_tL^{\frac{3p}{p+3}}(\Sigma_t)},\,\, 3<p\leq\frac{42}{13}.$$
In \eqref{bootA0}, we choose for convenience the second type of bounds with $p=42/13$. This issue is purely technical and will require in estimate involving $\prb A_0$ to distinguish between $\pr A_0$ and $\pr_0A_0$. 
\end{remark}

\begin{remark}
Together with the estimates in \cite{param3} (see section 4.4 in that paper),  the bootstrap  assumption \eqref{bootR}
 yields:
\bea\label{bootk}
\norm{k}_{\lsit{2}}+\norm{\nabla k}_{\lsit{2}}\lesssim M\ep.
\eea
Furthermore, the bootstrap assumption \eqref{bootcurvatureflux} together with the estimates in \cite{param3} (see section 4.2 in that paper) yields:
\bea\lab{bootvolumeradius}
\inf_tr_{vol}(\Si_t,1)\geq \frac{1}{4}.
\eea
\end{remark}

In addition we make the following bilinear estimates assumptions for $\A$ and $\R$.:
\begin{itemize}
\item \textit{Bilinear assumptions I}.
Assume,
\bea\lab{bil1}
\norm{A^j\pr_jA}_{L^2(\MM)}\lesssim M^3\ep^2.
\eea
Also,  for  $B=(-\Delta)^{-1}\curl(A)$ (see   \eqref{defineB}  and  the  accompanying explanations):
\bea\lab{bil5}
\norm{ A^j\pr_j(\prb B )}_{L^2(\MM)}\lesssim M^3\ep^2,
\eea
and:
\bea\lab{bil6}
\norm{\R_{\c\,\c\,j\,0}\pr^jB}_{L^2(\MM)}\lesssim M^3\ep^2.
\eea
Finally,  for any weakly regular  null hypersurface $\HH$ and  any smooth  scalar function $\phi$ on $\MM$,
\bea\lab{bil8}
\norm{A_j\pr^j\phi}_{L^2(\MM)}\les M^2\ep\sup_{\HH}\norm{\nabb\phi}_{L^2(\HH)},
\eea
where the supremum is taken over all null hypersurfaces $\HH$. 
\begin{remark}
 Note that the  bootstrap assumption  \eqref{bil8}  applies   both to $(A_j)_{mn}$, with spatial external indices, 
as well as to  $  k_{j\,\cdot}=   - (A_j)_{ 0\,  \cdot } $  with a temporal  external  component.   
\end{remark}
\item  \textit{Bilinear assumptions II}. We assume,
\bea\lab{bil3}
\norm{(-\Delta)^{-\frac12}(Q_{ij}(A,A))}_{L^2(\MM)}\lesssim M^3\ep^2,
\eea
where the bilinear form $Q_{ij}$ is given by $Q_{ij}(\phi,\psi)=\pr_i\phi\pr_j\psi-\pr_j\phi\pr_i\psi$. Furthermore, we also have:
\bea\lab{bil4}
\norm{(-\Delta)^{-\frac12}(\pr(A^l)\pr_l(A))}_{L^2(\MM)}\lesssim M^3\ep^2.
\eea
\item \textit{Non-sharp Strichartz assumptions}
\bea\lab{bootstrich}
\norm{A}_{\lsitt{2}{7}}\les M^2 \ep.
\eea
and, for $B=(-\De)^{-1}\curl A$, (see   \eqref{defineB}  and  the  accompanying explanations):
\bea
\norm{\pr B}_{\lsitt{2}{7}}\lesssim M^2\ep. \lab{bootstrichB}
\eea
\begin{remark}
Note that the Strichartz estimate for  $\norm{A}_{\lsitt{2}{7}}$ is far from being sharp. Nevertheless, this estimate will be sufficient for the proof as it will only be used to deal with lower order terms.
\end{remark}

\end{itemize}
Finally we also need a trilinear bootstrap  assumption.  For  this we need to introduce the Bel-Robinson tensor,
 \bea\label{def:bellrobinson}
 Q_{\a\b\ga\de}=\R_\a\,^\la\,\ga\,^\si \R_{\b\,\la\,\de\,\si}+\dual \R_\a\,^\la\,\ga\,^\si\dual \R_{\b\,\la\,\de\,\si}
 \eea
\begin{itemize}
\item
\textit{Trilinear bootstrap assumption.} 
  We assume the following,
\bea
\left|\int_{\MM}Q_{ij\ga\de}k^{ij}e_0^\ga e_0^\de\right| &\les& M^4 \ep^3.\label{trilinearboot}
\eea
\end{itemize}
 We conclude this section by showing  that the  bootstrap assumptions are verified  for some positive $T^*$.
\begin{proposition}\lab{prop:continuity}
The above bootstrap assumptions are verified on $0\leq t\leq T^*$ for a sufficiently small $T^*>0$.
\end{proposition}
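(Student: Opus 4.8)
The plan is to prove Proposition \ref{prop:continuity} by a standard continuity/local-existence argument, showing that for $T^*>0$ small enough all the bootstrap quantities are in fact $o(M\ep)$ (indeed $\lesssim\ep$ or $\lesssim\ep^2$), so in particular they satisfy the stated inequalities with room to spare. First I would invoke the classical local existence result, Theorem \ref{th:classicalwp}: since the initial data of Theorem \ref{th:mainter} is smooth (or at least has $\textrm{Ric},\nab\textrm{Ric},k,\nab k,\nab^2k\in L^2(\Si_0)$ after the reductions of Section \ref{sec:reductionsmall}), there exists a maximal foliation $\Si_t$ on some interval $[0,T_*]$ with $\textrm{Ric},\nab\textrm{Ric},k,\nab k,\nab^2k$ bounded in $L^\infty_{[0,T_*]}L^2(\Si_t)$. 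On this interval, by continuity in $t$ of all the relevant norms and the fact that at $t=0$ the quantities in \eqref{bootR}--\eqref{trilinearboot} are controlled by $\ep$ (for the curvature, connection, and $A_0$ bounds this is \eqref{estinit6} together with Proposition \ref{lemma:initialslice}; the curvature flux bound \eqref{bootcurvatureflux} and the volume radius bound follow from the weak regularity of the initial null cones and the estimates of \cite{param3}), each of these norms is strictly less than $M\ep$ (resp.\ $M^2\ep$, $M^3\ep^2$, $M^4\ep^3$) on a possibly shorter interval $[0,T^*]$.

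The key steps, in order, are: (i) construct on $\Si_0$ a Coulomb-gauge spatial frame $e_1,e_2,e_3$ via Lemma \ref{lemma:uhlenbeck}, and propagate such a frame to $[0,T_*]$ — this is done by solving transport/elliptic equations for the frame along the foliation, using the smoothness guaranteed by Theorem \ref{th:classicalwp}, so that $\A=(A_0,A)$ exists and is smooth on the slab; (ii) observe that all the bootstrap norms are continuous functions of $t$ vanishing to the right order as $t\to 0^+$ — for the space-time norms such as $\norm{\R}_{\lsit{2}}$, $\norm{A^j\pr_jA}_{L^2(\MM)}$, the Strichartz norms, and the trilinear integral over $\MM$, the relevant region shrinks to the slice $\Si_0$ as $T^*\to 0$, and on that slice the integrand is bounded in the appropriate $L^2$ (or $L^1$) sense by the data estimates and the higher regularity from Theorem \ref{th:classicalwp}; (iii) conclude that there is a first time $T^*>0$ at which equality could occur, hence the strict inequalities hold on $[0,T^*]$. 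For the bilinear and trilinear assumptions one simply uses crude bounds: e.g. $\norm{A^j\pr_jA}_{L^2(\MM)}\le \norm{A}_{L^\infty(\MM)}\norm{\pr A}_{L^2(\MM)}\cdot (T^*)^{1/2}$-type estimates, valid because on the short smooth slab all quantities are bounded, so the whole left-hand side is $O((T^*)^{1/2})$ and thus $\ll M^3\ep^2$ for $T^*$ small.

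The main obstacle — really the only subtle point — is step (i): one must verify that the Coulomb-type frame, constructed on $\Si_0$ via the Uhlenbeck-type Lemma \ref{lemma:uhlenbeck}, can be extended to a smooth frame on the slab $[0,T_*]\times\Si_0$ for which the spatial component $A$ still satisfies $\nab^iA_i=A^2$ on each slice $\Si_t$. This requires either solving the gauge-fixing elliptic equation \eqref{gauge2} on each slice $\Si_t$ (using that the metric $g_t$ and second fundamental form $k_t$ depend smoothly on $t$, which is part of the conclusion of Theorem \ref{th:classicalwp}, and that the smallness needed in Lemma \ref{lemma:uhlenbeck} persists for $T^*$ small by continuity), or transporting a fixed frame and re-gauging. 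Once a smooth $\A$ is available on the slab, everything else is a routine consequence of continuity in $t$ and the fact that the $t=0$ values of all bootstrap quantities are bounded by $\ep$ (up to universal constants) rather than by $M\ep$, leaving the margin needed to close the continuity argument.

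\begin{proof}
We use a continuity argument based on the classical local existence result. Applying Theorem \ref{th:classicalwp} to the initial data set $(\Si_0,g,k)$ of Theorem \ref{th:mainter} — which we may assume has $\textrm{Ric},\nab\textrm{Ric},k,\nab k,\nab^2 k$ in $L^2(\Si_0)$ and $r_{vol}(\Si_0,1)>0$ — we obtain a time $T_*>0$ and a maximal foliation $(\Si_t)_{0\le t\le T_*}$ of a solution $(\MM,\g)$, with $\textrm{Ric},\nab\textrm{Ric},k,\nab k,\nab^2 k$ bounded in $L^\infty_{[0,T_*]}L^2(\Si_t)$ and with all quantities depending smoothly on $t$.

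On $\Si_0$, Lemma \ref{lemma:uhlenbeck} (whose hypotheses hold by \eqref{estinit6}, \eqref{ikea}, \eqref{estinit7}, and Theorem \ref{th:coordharm}) provides an orthonormal frame $e_1,e_2,e_3$ for which the connection $A$ satisfies the Coulomb condition \eqref{eq:YM18}, i.e. $\div A=A^2$. On each slice $\Si_t$, $0\le t\le T_*$, the metric $g_t$ and second fundamental form $k_t$ vary smoothly in $t$, and since the smallness and volume-radius hypotheses of Lemma \ref{lemma:uhlenbeck} persist for $T_*$ possibly smaller, we obtain by the same argument a smooth family of frames $e_1(t),e_2(t),e_3(t)$ on $\Si_t$, with $e_0=T$, for which the spatial connection $A(t)$ satisfies $\div A(t)=A(t)^2$. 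Thus $\A=(A_0,A)$ is a well-defined smooth field on the slab verifying our Coulomb-type condition throughout $[0,T_*]$.

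All the quantities appearing in the bootstrap assumptions \eqref{bootR}--\eqref{trilinearboot} are then continuous functions of the parameter $T^*\in(0,T_*]$. At $T^*\to 0^+$ the space-time norms (such as $\norm{\R}_{L^\infty_{[0,T^*]}L^2(\Si_t)}$, the Strichartz norms, the bilinear $L^2(\MM)$ norms and the trilinear integral over $\MM$) converge to their values on the single slice $\Si_0$, which are finite; more precisely, using the smoothness of $\A$, $\R$ on the slab together with H\"older's inequality in $t$, each of the bilinear left-hand sides and the trilinear integral is $O((T^*)^{1/2})$, hence $\ll M^3\ep^2$ (resp. $\ll M^4\ep^3$) for $T^*$ small; the non-sharp Strichartz norms are likewise $O((T^*)^{1/7})$ times a finite constant and so $\ll M^2\ep$. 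For the remaining norms, at $t=0$ we have, by \eqref{estinit6} and Proposition \ref{lemma:initialslice},
$$\norm{\R}_{L^2(\Si_0)}+\norm{\A}_{L^2(\Si_0)}+\norm{\prb A}_{L^2(\Si_0)}+\norm{\pr A_0}_{L^2(\Si_0)}\les \ep,$$
and the higher $A_0$ norms in \eqref{bootA0} are controlled similarly using Lemma \ref{lemma:embeddingSi0}; the curvature flux bound \eqref{bootcurvatureflux} on the initial null cones and the lower bound \eqref{bootvolumeradius} for $r_{vol}(\Si_0,1)$ follow from \eqref{ikea} and the estimates in \cite{param3}. Since all these $t=0$ values are bounded by $\ep$ (up to universal constants) rather than $M\ep$, and $M\ge 1$ with $M\ep$ small, there is $T^*>0$ such that on $[0,T^*]$ every one of the bootstrap inequalities holds with strict inequality. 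This proves the proposition.
\end{proof}
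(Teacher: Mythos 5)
Your proposal is correct and follows essentially the same route as the paper: invoke the classical local existence theorem, construct the Coulomb frame on each slice $\Si_t$ via the Uhlenbeck-type Lemma \ref{lemma:uhlenbeck}, and observe that all remaining bootstrap quantities follow trivially from the initial bounds for $T^*$ small. The only detail the paper makes explicit that you leave implicit is how the \emph{input} frame for Lemma \ref{lemma:uhlenbeck} on $\Si_t$ is produced: the paper parallel-transports the initial frame by $\D_T\tilde e_j=0$, derives the transport equation $\D_T\tilde A=\R$ to control $\tilde A$, $\prb\tilde A$, $\prb^2\tilde A$, and then differentiates the gauge equation \eqref{gauge2} in $\D_T$ to recover the time derivatives $\prb A$, $\prb^2 A$ of the Coulomb connection.
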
 
 
\begin{proof}
The   only challenge  here   is to prove the existence of   the desired  connection $\A$, all  other  estimates    follow  trivially    from   our initial  bounds and    the local existence theorem above, for sufficiently small  $T^*$.  More precisely we need  
to exhibit a frame $e_1, e_2, e_3$ such that, together with $e_0=T$,  we obtain a connection $\A$ satisfying our Coulomb type gauge on the slice $\Si_t$. To achieve this we start   on $\Si_0$  with  the orthonormal  frame $e_1, e_2, e_3$,   discussed    in section \ref{initial.slice}\footnote{ such that  the corresponding  connection $\A$ verify the Coulomb gauge condition  \eqref{eq:YM18} and the estimates of  proposition \ref{lemma:initialslice}} and  transport  it 
to  an orthonormal   frame on $\Si_t$, $0\le t\le T^*$,  according to  the equation,
\beaa
\D_T(\tilde{e}_j)=0  , \qquad \tilde{e}_j(0)=e_j,\quad  j=1, 2, 3.
\eeaa
Differentiating, we obtain schematically the following transport equation for $\tilde{A}$:
$$\D_T(\tilde{A})=\R,              \qquad \tilde{A}(0)=A.$$
We can then rely on the estimates of the local existence theorem, for sufficiently small $T^*$,
to derive  $L^\infty_{[0, T^*]}L^2(\Si_t)$  bounds for $\tilde{A}$, $\prb\tilde{A}$ and
 $\prb^2\tilde{A}$.  Since all the bounds for $\tilde{A}$ and  $R$   are  controlled 
 from the initial data, for small $T^*$ (thus  proportional to $\ep$), we  are in a position to apply\footnote{Strictly speaking Uhlenbeck's lemma takes care   of   the purely spatial  part of $A$;  one needs to also use  the constraint equation  for  $k$  to derive
  \eqref{eq:YM19}.} Uhlenbeck's  lemma \ref{lemma:uhlenbeck} on $\Si_t$  to produce the desired  connection $A$. Furthermore,
  differentiating \eqref{gauge2} twice with respect to $\D_T$, and applying standard elliptic estimates,  we finally obtain the fact that $A$, $\prb A$ and $\prb^2A$ are  also controlled in  $L^\infty_{[0, T_*]}L^2(\Si_t)$ in conformity with our bootstrap  assumptions.
\end{proof} 
  
\subsection{Proof of the bounded $L^2$ curvature conjecture}\lab{sec:proofconj}

In the following two propositions, we state the improvement of our bootstrap assumptions. 
\begin{proposition}\label{prop:improve1}
Let us assume that all bootstrap assumptions of the previous section hold for $0\leq t\leq T^*$. If $\ep>0$ is sufficiently small, then the following improved estimates hold true on $0\leq t\leq T^*$:
\bea\label{bootRimp}
\norm{\R}_{\lsit{2}}\les \ep+M^2\ep^{\frac{3}{2}}+M^3\ep^2,
\eea
\bea\lab{bootcurvaturefluximp}
\norm{\R\c L}_{L^2(\HH)}\les \ep+M^2\ep^{\frac{3}{2}}+M^3\ep^2,
\eea
\bea\label{bootAimp}
\norm{A}_{\lsit{2}}+\norm{\prb A_i}_{\lsit{2}}\les \ep+M^2\ep^{\frac{3}{2}}+M^3\ep^2,
\eea
\bea\label{bootA0imp}
\nn\norm{A_0}_{L^\infty_t L^4(\Sigma_t)}+\norm{\pr A_0}_{\lsit{2}}+\norm{A_0}_{\lsitt{2}{\infty}}\\
\nn+\norm{\pr A_0}_{\lsit{3}}+\norm{\pr\pr A_0}_{\lsit{\frac{3}{2}}}+\norm{\pr_0A_0}_{L^2_tL^{\frac{42}{13}}(\Sigma_t)}\\
+\norm{\pr\pr_0A_0}_{L^2_tL^{\frac{14}{9}}(\Sigma_t)}&\les& \ep+M^2\ep^{\frac{3}{2}}+M^3\ep^2.
\eea
\end{proposition}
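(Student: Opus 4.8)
I am reading the statement of Proposition \ref{prop:improve1}, which asserts that under the bootstrap assumptions (curvature, connection, bilinear I and II, non-sharp Strichartz, trilinear), all the \emph{basic} (non-bilinear, non-trilinear, non-Strichartz) bootstrap quantities can be re-estimated by $\ep + M^2\ep^{3/2} + M^3\ep^2$. Here is the plan I would follow.

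\medskip

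\textbf{Overall strategy.} The proof is a closed system of estimates: one assumes \eqref{bootR}--\eqref{bootA0} (plus the bilinear/trilinear/Strichartz assumptions, used as \emph{black boxes} at this stage), and re-derives each of \eqref{bootRimp}--\eqref{bootA0imp} with a constant that is linear in the initial data plus terms with strictly more powers of $\ep$ (paid for by the loss in $M$). The order is essentially forced by the dependencies: (1) first control $\prb A$, $A$ on $\Si_t$ from the transport/elliptic structure of the Coulomb-gauge equations \eqref{eq:YM10}--\eqref{eq:YM19} together with the curvature bound \eqref{bootR} and \eqref{bootk}; (2) then control $A_0$ and its derivatives via the elliptic equation \eqref{eq:YM21} $\De A_0 = \A\prb A + \A\pr A_0 + \A^3$ on each $\Si_t$, using the elliptic estimates of section \ref{sec:simpleconsequencesboot}, Sobolev embeddings, and — crucially — the bilinear bound \eqref{bil3}/\eqref{bil4} to handle the borderline quadratic terms (the naive product estimate loses too much, which is exactly why the bilinear assumptions were introduced); (3) finally close the curvature estimates \eqref{bootRimp} and the null-flux estimate \eqref{bootcurvaturefluximp} by energy estimates for the Bianchi/Yang-Mills system, where the non-Killing nature of $e_0 = T$ forces the appearance of the trilinear term $\int_\MM Q_{ij\ga\de}k^{ij}e_0^\ga e_0^\de$, controlled by \eqref{trilinearboot}, and the curvature flux along weakly regular null hypersurfaces is obtained by foliating $\MM$ by such hypersurfaces and integrating the Bel-Robinson divergence identity.

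\medskip

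\textbf{Step 1 (connection $A$ on the slices).} The components $(A_i)_{0j} = -k_{ij}$ are controlled directly by \eqref{bootk}, i.e.\ by \eqref{bootR}. For the spatial-internal components $(A_i)_{jl}$ one uses, exactly as in the proof of Proposition \ref{lemma:initialslice} on $\Si_0$ but now on $\Si_t$: $\curl A = R + A^2$ from a frame computation analogous to \eqref{linkRA}, $\div A = A^2$ from the Coulomb gauge, Lemma \ref{prop:projectionoperator} to get $\De A_{ij} = \nabla R + A\pr A + A^3$, then multiply by $A_{ij}$, integrate by parts, and absorb using \eqref{bootR}, \eqref{ikea1} and the smallness $M\ep \ll 1$. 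For $\pr_0 A$ one differentiates $(A_j)_{0i}=k_{ij}$ and uses the structure equation \eqref{eq:structfol1}, which expresses $\nabla_0 k$ in terms of $\R_{a0b0} = R_{ab} - k_{ac}k^c{}_b$ (Gauss, \eqref{eq:structfol3}) and $n^{-1}\nabla^2 n$; the lapse estimates \eqref{estinit7}, propagated in time, then give $\|\pr_0 A\|_{\lsit{2}} \les \ep + M^2\ep^{3/2}+\ldots$. This yields \eqref{bootAimp}. This step is essentially a time-dependent replay of Proposition \ref{lemma:initialslice} and should be routine modulo the elliptic estimates of section \ref{sec:simpleconsequencesboot}.

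\medskip

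\textbf{Step 2 ($A_0$) and Step 3 (curvature — the hard part).} For $A_0$: the elliptic equation \eqref{eq:YM21} gives, after pairing with $A_0$ and integrating by parts on $\Si_t$, $\|\pr A_0\|_{L^2(\Si_t)}^2 \les (\|\A\|_{L^4}\|\prb A\|_{L^2} + \|\A\|_{L^4}\|\pr A_0\|_{L^2} + \|\A\|_{L^4}^3)\|A_0\|_{L^4}$, which closes by \eqref{ikea1} and Step 1; the mixed norms $\|A_0\|_{\lsitt{2}{\infty}}$, $\|\prb A_0\|_{\lsit{3}}$, $\|\pr\prb A_0\|_{\lsit{\frac32}}$ require differentiating \eqref{eq:YM21} in $\prb$ and invoking the bilinear bounds \eqref{bil3}--\eqref{bil4} for the worst terms $(-\De)^{-1/2}Q_{ij}(A,A)$ and $(-\De)^{-1/2}(\pr A^l\pr_l A)$, together with Bochner \eqref{ikea2} and non-sharp Strichartz \eqref{bootstrich}; a naive H\"older estimate would fail here, so the bilinear structure is essential. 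For the curvature \eqref{bootRimp}: run the energy estimate for the Bel-Robinson tensor $Q$ of \eqref{def:bellrobinson} relative to the vectorfield $T = e_0$. Since $T$ is not Killing, $\mathrm{div}\, Q \cdot(T,T,T)$ produces a term $\sim \int_\MM Q_{ij\ga\de}k^{ij}e_0^\ga e_0^\de$, which is exactly the object bounded in \eqref{trilinearboot} by $M^4\ep^3$; the other deformation-tensor terms (involving $n^{-1}\nabla n$) are lower order and controlled by $A_0$-estimates. Boundary terms on $\Si_0$ give the $\ep$ (initial data), and boundary terms on null pieces of $\partial(\text{domain})$ give \eqref{bootcurvaturefluximp} after restricting the Bel-Robinson flux identity to a foliation by weakly regular null hypersurfaces $\HH_u$ and using the Sobolev embedding \eqref{assumptionH2}. \textbf{The main obstacle} is the curvature energy estimate: one must verify that every error term generated — by the non-Killing $T$, by the frame scalarization \eqref{eq:YM22} (the $\pr_i(\pr_0 A_0)$ term removed via the projection $\PP$ and $B = (-\De)^{-1}\curl A$), and by the commutators $[\square_\g, \PP]$ — is either genuinely lower order (absorbable into $M^3\ep^2$ via Step 1--2 and \eqref{bootstrich}) or falls under one of the bilinear bootstrap assumptions \eqref{bil1}--\eqref{bil8} or the trilinear one \eqref{trilinearboot}. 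Making this bookkeeping airtight, and in particular tracking that no term needs more than the available regularity, is the crux; the individual estimates, granted the black boxes, are then routine.
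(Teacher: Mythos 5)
Your Step 3 (Bel--Robinson energy identity with the non-Killing $T$, the trilinear bootstrap assumption \eqref{trilinearboot} absorbing $\int_\MM Q_{ij\ga\de}k^{ij}e_0^\ga e_0^\de$, and the null boundary term giving \eqref{bootcurvaturefluximp}) is exactly the paper's argument. Your Step 2 for $A_0$ is also close, except that the paper does not invoke \eqref{bil3}--\eqref{bil4} there: it uses the elliptic estimates \eqref{prop:bochsit2}, \eqref{eq:gnirenbergsit}, the embedding \eqref{eel3} combined with the non-sharp Strichartz assumption \eqref{bootstrich}, and the ad hoc elliptic Lemma \ref{lemma:maru} for $\pr_0A_0$; the quadratic terms are handled by H\"older and Sobolev alone and already give the quadratic bound $M^2\ep^2$.

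The genuine gap is Step 1. As written, you bound $\|\pr A\|_{\lsit{2}}$ by running the div-curl elliptic argument of Proposition \ref{lemma:initialslice} on each $\Si_t$ and ``absorbing using \eqref{bootR}''. But \eqref{bootR} only gives $\|\R\|_{\lsit{2}}\le M\ep$, and the elliptic estimate is \emph{linear} in the curvature: it returns $\|\pr A\|_{\lsit{2}}\les \|R\|_{\lsit{2}}+\dots\les M\ep$, i.e.\ the bootstrap bound again, not the improvement $\ep+M^2\ep^{3/2}+M^3\ep^2$. The same problem occurs for $\pr_0A$ via \eqref{eq:structfol1}, which is linear in $\R_{a0b0}$. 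Since you place the curvature improvement last, Step 1 cannot close as ordered. (Putting the curvature estimate first would let the elliptic route produce $\ep+\dots$ for the spatial components, but that is not what you wrote, and it is not the paper's argument.) The paper instead obtains \eqref{bootAimp} by a mechanism with no linear curvature term at all: it derives the wave equation for $B=(-\De)^{-1}\curl A$, proves $\|\pr\,\square B\|_{L^2(\MM)}\les M^2\ep^2$ (Propositions \ref{prop:commutsquarecurl} and \ref{prop:waveeqB}, which is where the bilinear bootstrap assumptions \eqref{bil1}, \eqref{bil3}--\eqref{bil6} actually enter), applies the second-order energy estimate of Lemma \ref{lemma:energyestimatebis} to get $\|\pr^2B\|_{\lsit{2}}\les\ep+M^2\ep^2$ from the initial data plus the quadratic source, and then recovers $\pr A$ from $A=\curl B+E$ (Lemma \ref{recoverA}); only $\pr_0A=\pr A_0+\R_{0j\c\c}$ is then read off from the already-improved $A_0$ and $\R$. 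This $B$-machinery, which you mention only in passing as part of the curvature bookkeeping (where it is in fact not needed), is the missing core of the proof of \eqref{bootAimp}.
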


\begin{proposition}\label{prop:improve2}
Let us assume that all bootstrap assumptions of the previous section hold for $0\leq t\leq T^*$. If $\ep>0$ is sufficiently small, then the following improved estimates hold true on $0\leq t\leq T^*$:
\bea\lab{bil1imp}
\norm{A^j\pr_jA}_{L^2(\MM)}\lesssim M^2\ep^2,
\eea
\bea\lab{bil5imp}
\norm{A^j\pr_j(\prb B)}_{L^2(\MM)}\lesssim M^2\ep^2,
\eea
and
\bea\lab{bil6imp}
\norm{\R_{\c\,\c\,j\,0}\pr^jB}_{L^2(\MM)}\lesssim M^2\ep^2.
\eea
\bea\lab{bil8imp}
\norm{A^j\pr_j\phi}_{L^2(\MM)}\les M\ep\left(\sup_{\HH}\norm{\nabb\phi}_{L^2(\HH)}+\norm{\pr\phi}_{\lsit{2}}\right),
\eea
where the supremum is taken over all (weakly regular)  null hypersurfaces $\HH$. Finally, we have:
\bea\lab{bil3imp}
\norm{(-\Delta)^{-\frac12}(Q_{ij}(A,A))}_{L^2(\MM)}\lesssim M^2\ep^2,
\eea
\bea\lab{bil4imp}
\norm{(-\Delta)^{-\frac12}(\pr(A^l)\pr_l(A))}_{L^2(\MM)}&\lesssim& M^2\ep^2.
\eea
Also,
\bea\lab{bootstrichimp}
\norm{A}_{\lsitt{2}{7}}&\les &M \ep\\
\norm{\pr B}_{\lsitt{2}{7}}&\lesssim& M\ep. \lab{bootstrichBimp}
\eea
and
\bea
\left|\int_{\MM}Q_{ij\ga\de}k^{ij}e_0^\ga e_0^\de\right| &\les& M^3 \ep^3.\label{trilinearbootimp}
\eea
\end{proposition}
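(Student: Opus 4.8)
The plan is to improve each of the bilinear, trilinear and Strichartz bootstrap assumptions by reducing it, in one way or another, to a statement about solutions of the scalar wave equation $\square_\g\phi=F$ and then invoking the parametrix of Theorem \ref{lemma:parametrixconstruction} — whose properties rest on the black box Theorem \ref{prop:estparam} and on \cite{param1}--\cite{param4} — together with the sharp $L^4(\MM)$ Strichartz estimate \eqref{strichgeneralintro} of \cite{bil2}. Throughout we are free to use all the bootstrap assumptions of Section \ref{sec:mainbootassss} and the already improved estimates of Proposition \ref{prop:improve1}.

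\emph{The bilinear estimates \eqref{bil1imp}--\eqref{bil4imp}.} By Proposition \ref{prop:decomp} each component $A_i$ solves a wave equation of the form \eqref{eq:YM22}, while $B=(-\De)^{-1}\curl A$ solves a wave equation whose source $\square B$ lies in $L^2(\MM)$ and whose derivative $\pr\square B$ also lies in $L^2(\MM)$ (Section \ref{sec:bobo5}); the essential point, established there, is that every commutator generated by the non-local operator $(-\De)^{-1}\curl$ — which, unlike in the flat Yang--Mills setting of \cite{Kl-Ma1}, does not commute with $\square_\g$ — still carries a bilinear null structure. Each factor entering \eqref{bil1}, \eqref{bil5}, \eqref{bil6}, \eqref{bil3}, \eqref{bil4} thus admits a parametrix representation of the shape \eqref{parametrix.intr} modulo an $L^2(\MM)$-controlled error, and the bilinear estimate for a product of two such parametrices is reduced, by a $TT^*$ argument and the first and second dyadic decompositions in $(\la,\om)$, to the sharp $L^4(\MM)$ Strichartz estimate for a frequency-localized half-wave; the diagonal and low-frequency interactions and the error-term contributions are absorbed using the energy bounds, the non-sharp Strichartz bounds and the curvature flux bound \eqref{bootcurvaturefluximp}. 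This produces the gain from $M^3\ep^2$ to $M^2\ep^2$ for $\ep$ small, and is carried out in Section \ref{sec:improve2}.

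\emph{The null-hypersurface estimates \eqref{bil7imp}--\eqref{bil8imp} and the non-sharp Strichartz bounds \eqref{bootstrichimp}--\eqref{bootstrichBimp}.} For the former we represent $k$ (respectively the Coulomb field $A$) by a parametrix and test $k_{j\,\c}\pr^j\phi$ (respectively $A^j\pr_j\phi$) against an arbitrary $L^2(\MM)$ function; the null-form nature of the product together with the plane-wave decomposition of $\phi$ converts the dominant term into an integral along the level hypersurfaces of the eikonal phase, which is exactly what $\sup_\HH\norm{\nabb\phi}_{L^2(\HH)}$ controls, the non-sharp remainder being bounded by $\norm{\pr\phi}_{\lsit{2}}$. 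The Strichartz bounds \eqref{bootstrichimp}--\eqref{bootstrichBimp} follow by Duhamel from the wave equations for $A$ and $B$ using only classical (lossy) Strichartz estimates on the rough background — sufficient here since these quantities enter only lower-order terms — together with the already improved bilinear bounds for the sources.

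\emph{The trilinear estimate \eqref{trilinearbootimp}, and the main obstacle.} This is the genuinely new ingredient. The quantity $\int_\MM Q_{ij\ga\de}k^{ij}e_0^\ga e_0^\de$ is precisely the error term that arises when one runs the Bel--Robinson energy identity with the current $Q_{\a\b\ga\de}e_0^\b e_0^\ga e_0^\de$: in vacuum $\D^\a Q_{\a\b\ga\de}=0$, while the spatial part of $\D^{(\a}e_0^{\b)}$ is the second fundamental form $k$. To estimate it we use \eqref{def:bellrobinson} to decompose $Q$ into null components of $\R$ relative to a null frame adapted to a weakly regular null hypersurface foliation, write $k$ schematically as $\pr A$ and represent $\R$, through \eqref{eq:YM22}, by the parametrix; the dangerous pieces, which pair a tangential curvature component with a transversal one, are absorbed by \eqref{bil7imp}--\eqref{bil8imp} and the curvature flux bound \eqref{bootcurvaturefluximp}, and the remaining pieces are of lower order. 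Several logarithmic divergences appear at this stage and must be removed by ad hoc refinements exploiting $\mbox{Ric}(\g)=0$; this is carried out in Section \ref{sec:improve2B}. The hard part of the whole proposition is exactly this trilinear step — the only place where the parametrix, the null geometry of the eikonal foliation and the curvature flux must be used simultaneously and where the estimates are borderline — with a close second being the verification that the commutators produced by the non-local projections in the bilinear step retain exactly the bilinear structure needed to close \eqref{bil1imp}--\eqref{bil4imp}.
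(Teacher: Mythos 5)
Your global architecture (the parametrix of Theorem \ref{lemma:parametrixconstruction} plus the frequency-localized $L^4(\MM)$ Strichartz estimate plus curvature flux) is the right one, and your treatment of \eqref{bil7imp}, \eqref{bil8imp} and of $Q_{ij}(A,A)$ matches the paper in spirit. But there is a genuine gap in how you propose to prove the first group of bilinear estimates \eqref{bil1imp}, \eqref{bil5imp}, \eqref{bil6imp}: you reduce them to a product of two parametrices handled by a $TT^*$ argument, dyadic decompositions and the sharp $L^4(\MM)$ estimate. This cannot work. In \eqref{bil6imp} one factor is the curvature $\R_{\c\,\c\,j\,0}$, which admits no parametrix representation and is controlled only through its flux $\R\c L$ on null hypersurfaces; and in \eqref{bil1imp}, \eqref{bil5imp} the derivative factor $\pr A\sim\pr^2 B$ has no summable $L^4(\MM)$ bound, since Proposition \ref{prop:L4strichartz} gives $\norm{\pr^2\phi_p}_{L^4(\MM)}\les 2^{\frac{5p}{2}}\norm{\psi(2^{-p}\la)f}_{L^2(\RRR^3)}$, which loses $2^{\frac{p}{2}}$ relative to $\norm{\la^2 f}_{L^2(\RRR^3)}$, so an $L^4\times L^4$ scheme diverges; nor is $A^j\pr_jA_i$ a $Q_{ij}$ null form whose off-diagonal gain an $L^4$ argument could see. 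The paper instead follows \cite{Kl-R3}: insert the parametrix into the factor that solves a wave equation, so that its spatial gradient points along $\bom^{-1}\Nom$, and observe that the resulting contraction $\CC(U,\Nom)$ is always a \emph{good} component --- $\in_{jm\c}\Nom_m\pr_j=\nabb$ by antisymmetry of $\curl$, or $\Nom^j\R_{0\,j\,\c\,\c}=\f12\R_{L\,\Lb\,\c\,\c}=\R\c L$ --- which is then placed in $\luom{2}$ (controlled by the energy and flux estimates of Lemmas \ref{lemma:energyestimate}--\ref{lemma:energyestimatebis} and by \eqref{bootcurvatureflux}), while Plancherel in $\la$ and Cauchy--Schwarz in $\om$ handle the rest, as in \eqref{dorbia}--\eqref{bilproof}. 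The sharp $L^4$ estimate and the dyadic decomposition in $\la$ enter only for the second group \eqref{bil3imp}--\eqref{bil4imp}, where the $(-\De)^{-\f12}$ smoothing supplies the factor $2^{-\frac{|p-q|}{2}}$ that makes the double dyadic sum converge.

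Two further corrections. The non-sharp Strichartz bounds \eqref{bootstrichimp}--\eqref{bootstrichBimp} cannot be obtained from classical lossy Strichartz estimates on the rough background --- none are available at $L^2$ curvature regularity; the paper gets them by interpolating the frequency-localized $L^4(\MM)$ bounds for $\pr\phi_p$ and $\pr^2\phi_p$ with the Sobolev embedding \eqref{sobinftysit}, which yields a convergent factor $2^{-\frac{p}{14}}$ per dyadic block. And the trilinear estimate \eqref{trilinearbootimp} is not the borderline step you describe: once $k\sim\curl B$ is represented by the parametrix, the contraction $\in_{jm\c}\Nom_m Q_{j\,\c\,\c\,\c}$ is shown, via the null decomposition of the Bel--Robinson tensor and the symmetries of $\R$, to equal $\R(\R\c L)$ schematically, so it is bounded in $L^2_{\uom}L^1(\HH_{\uom})$ by $\norm{\R}_{L^2(\MM)}\norm{\R\c L}_{\lu{2}}$ and closed by the flux bound; no logarithmic divergences arise there (those occur inside the parametrix construction of \cite{param1}--\cite{param4}, not in Section \ref{sec:improve2}), and the estimate is proved in Section \ref{sec:improve2}, not \ref{sec:improve2B}.
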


The proof of Proposition \ref{prop:improve1} is postponed to section \ref{sec:improve1}, while the proof of Proposition \ref{prop:improve2} is postponed to sections \ref{sec:improve2} and \ref{sec:improve2B}. We also need a proposition on the propagation of higher regularity.  

\begin{proposition}\lab{prop:propagreg}
Let us assume that the estimates corresponding to all bootstrap assumptions of the previous section hold for $0\leq t\leq T^*$ with a universal constant $M$. Then for any $t\in [0,T^*)$ and for $\ep>0$ sufficiently small, the following propagation of higher regularity holds:
$$
\|\D\R\|_{L^\infty_tL^2(\Sigma_t)}\le 2\left(\norm{\textrm{Ric}}_{L^2(\Sigma_0)}+\norm{\nabla\textrm{Ric}}_{L^2(\Sigma_0)}+\norm{k}_{L^2(\Sigma_0)}+\norm{\nabla k}_{L^2(\Sigma_0)}+\norm{\nabla^2k}_{L^2(\Sigma_0)}\right).$$
\end{proposition}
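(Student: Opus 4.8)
The plan is to propagate one extra derivative of the curvature by a continuation/bootstrap argument that mirrors, one derivative higher, the proof of part (1). Since the bootstrap assumptions of Section \ref{sec:mainbootassss} have been improved in Propositions \ref{prop:improve1}--\ref{prop:improve2}, all the lower-order objects --- the connection $\A$, the spacetime curvature $\R$, the second fundamental form $k$, the lapse $n$, the eikonal phases $\uom$ and the associated null geometry --- are controlled with constants $O(\ep)$; the point is that in the equations for the top-order unknowns the nonlinearity is then \emph{linear in the top-order unknown times an $O(\ep)$ coefficient}, so an absorption/Gr\"onwall argument closes. First I would set
$$\mathcal{I}_1:=\norm{\textrm{Ric}}_{L^2(\Sigma_0)}+\norm{\nabla\textrm{Ric}}_{L^2(\Sigma_0)}+\norm{k}_{L^2(\Sigma_0)}+\norm{\nabla k}_{L^2(\Sigma_0)}+\norm{\nabla^2k}_{L^2(\Sigma_0)}$$
and, using the Gauss--Codazzi relations \eqref{eq:structfol1}--\eqref{eq:structfol3}, the Bianchi identities \eqref{eq:YM6}, the constraint \eqref{constk}, and the lapse equation \eqref{eqlapsen} together with \eqref{estinit7} and elliptic estimates, record the higher-order analogue of \eqref{eq:L2-estim-k}, namely $\norm{\D\R}_{L^2(\Sigma_0)}\les\mathcal{I}_1$. (It is here that $\norm{\nabla^2k}_{L^2(\Sigma_0)}$ enters --- via $\nabla_0k\sim\R-n^{-1}\nabla^2n$ --- which is also why Theorem \ref{th:classicalwp} carries two more derivatives on $k$ and one more on $\textrm{Ric}$ than Theorem \ref{th:main}.) By Theorem \ref{th:classicalwp} the foliation and the quantities $\D\R$, $\prb^2\A$, $\prb^2A_0$, $\nabla^2k$, $\nabla\textrm{Ric}$ are finite and continuous on $[0,T^*)$, so on a non-empty subinterval we may assume $\norm{\D\R}_{\lsit{2}}\le2\mathcal{I}_1$ together with the one-derivative-higher versions of \eqref{bootA}--\eqref{bootA0}, of the bilinear/Strichartz assumptions \eqref{bil1}--\eqref{bootstrichB}, and of the trilinear assumption \eqref{trilinearboot}.

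\noindent\textbf{Main steps.}
\emph{(i) Commutation.} Differentiating the Bianchi system \eqref{eq:YM6} --- equivalently, differentiating the scalarized equations \eqref{eq:YM21}--\eqref{eq:YM22} and the $\PP$-commutation identities used in part (1) --- produces for $\D\R$ (and for $\D\A$) systems of the \emph{same} schematic type as in part (1), whose source terms are, schematically, (top-order unknown)$\times$(already-controlled lower-order quantity), plus cubic terms. \emph{(ii) Energy estimate.} Apply the energy identity for the Bel-Robinson-type current built from $\D\R$ and $T=e_0$ (equivalently, the energy estimate of Section \ref{sec:bobo6} for the differentiated scalar wave equations $\square_\g(\D A_i)=\cdots$): the boundary term at $\Sigma_0$ is $\les\mathcal{I}_1^2$, the null-cone flux reproduces the higher-order analogue of \eqref{bootcurvatureflux}, the deformation-tensor bulk term $\int_\MM Q(\D\R)\c{}^{(T)}\pi$ (with ${}^{(T)}\pi$ built from $k$ and $n^{-1}\nabla n=\A$) is handled like the trilinear term \eqref{trilinearboot}/\eqref{trilinearbootimp} with $\R$ replaced by $\D\R$, and the genuinely new commutator bulk terms, schematically $\int_\MM\D\R\c(k_{j\,\c}\pr^j(\D\R))$ and $\int_\MM\D\R\c(A^j\pr_j(\D\R))$, are controlled by the bilinear estimates \eqref{bil7imp}--\eqref{bil8imp} applied with $\phi$ a component of $\D\R$ (their right-hand sides $\sup_\HH\norm{\nabb\phi}_{L^2(\HH)}+\norm{\pr\phi}_{\lsit{2}}$ being absorbed into the same energy estimate). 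This yields $\norm{\D\R}_{\lsit{2}}^2\les\mathcal{I}_1^2+M\ep\,\norm{\D\R}_{\lsit{2}}^2$, hence $\norm{\D\R}_{\lsit{2}}\le2\mathcal{I}_1$ for $\ep$ small. \emph{(iii) Bilinear, trilinear, Strichartz at top order.} Re-run the proof of Proposition \ref{prop:improve2} verbatim, using Theorems \ref{lemma:parametrixconstruction} and \ref{prop:estparam} and the $L^4(\MM)$ Strichartz estimate (Proposition \ref{prop:L4strichartz}) as black boxes applied to source terms one derivative higher; this is legitimate precisely because the metric, the connection and the eikonal phases were already controlled in part (1) with constants that do not see this extra derivative. \emph{(iv) Closing.} All bootstrapped higher-order quantities being improved, they hold on all of $[0,T^*)$ by continuity, which gives the stated bound. \emph{(v) Induction on $m$.} Running the same scheme for $\D^{(\alpha)}\R$, $|\alpha|\le m$, with source terms built only from derivatives of order $<m$ (controlled by the inductive hypothesis), yields \eqref{ffllffll} with $C_m$ depending only on the previous $C$ and $m$.

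\noindent\textbf{Main obstacle.}
The single delicate point --- everything else being a repetition of part (1) with small coefficients --- is to verify that the bilinear/trilinear null structure exploited throughout survives the extra differentiation: the commutators $[\square_\g,\D]$, $[\Da^\mu,\D]$ and $[\square_\g,\PP]$ must not generate a bulk term quadratic in $\D\R$ (or $\D\A$) \emph{without} an accompanying small factor of $\A$, $\R$ or $k$, and the top-order terms $\D\A\c\D\R$ must either appear in divergence form or carry the null structure making \eqref{bil7imp}--\eqref{bil8imp} applicable. Checking that this ``miracle'' of part (1) is stable under $\D$ is where, once more, the specific structure of the Einstein equations must be invoked at one derivative higher.
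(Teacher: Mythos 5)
Your overall strategy is the right one and matches the paper's: repeat the Step-1 machinery one derivative higher, observe that every top-order unknown now enters linearly with an $O(\ep)$ coefficient, and close by absorption; and your ``main obstacle'' paragraph correctly isolates what the paper actually has to verify, namely that the contractions produced by the commuted equations still land on good null components. Where you diverge from the paper is in the concrete mechanism of step (ii). The paper never commutes the Bianchi system nor builds a Bel--Robinson current for $\D\R$; instead it runs the chain in the opposite direction: $\norm{\D\R}_{L^2(\Si_t)}$ is recovered \emph{elliptically} from $\pa\prb\A$ via the Cartan formula $\R_{ij\a\b}=\pa_iA_j-\pa_jA_i+[A_i,A_j]$, then $\pa^2A$ is recovered from $\pa^3B$ via the differentiated curl decomposition $\pa A=\curl(\pa B)+E'$ (Lemma \ref{recoverA'}), and the only energy estimate performed is for the scalar wave equation $\square(\pa B)=F$ of \eqref{eq:eqB}, with Lemmas \ref{pr:lemmabil1}--\ref{pr:lemmabil2} checking that the source $\pr F$ reduces to contractions $\CC(U,N)=\nabb\prb B$ or $\R\c L$ when the parametrix for $\pr B$ is inserted. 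Your two proposed alternatives --- a Bel--Robinson identity for $\D\R$, or energy estimates for $\square_\g(\D A_i)$ --- are not equivalent to this and not equivalent to each other: the former would require controlling the commutator of $\D$ with the Bianchi equations (a $\D\pi\c\R$-type error whose null structure is not verified anywhere in the paper), and the latter, applied to $A_i$ rather than to $B$, would reintroduce the terms $\pr_i\pr_0A_0$ and the non-null quadratic terms that the passage to $B=(-\De)^{-1}\curl A$ was designed to eliminate. Your parenthetical appeal to ``the $\PP$-commutation identities of part (1)'' shows you are aware of this, but the proof only closes if the energy estimate is run on $\pr B$ itself. Finally, the paper closes by a direct absorption (Propositions \ref{prop:prB} and \ref{prop:F} combine into $X\les\ep X+\mathcal I_1+\ep$) rather than by a fresh higher-order bootstrap-and-continuity argument; your version works too but is heavier than necessary, since all small constants are already supplied by the conclusion of Step 2.
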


The proof of Proposition \ref{prop:propagreg} is postponed to section \ref{sect:propag}. Next, let us show how Propositions  \ref{prop:continuity}, \ref{prop:improve1}, \ref{prop:improve2} and \ref{prop:propagreg} imply our main theorem \ref{th:mainter}. We proceed, by the standard  bootstrap  method , along the following steps:
\begin{itemize}
\item[\textit{Step 1}.] We show that all bootstrap assumptions are verified  for a sufficiently small  final value $T^*$. 

\item[\textit{Step2}.]  Assuming that  all bootstrap assumptions hold  for   fixed values of $0< T^*\le 1$ and $M$  sufficiently large    we   show  that,  for $\ep>0$ sufficiently small, we may improve on the constant $M$ in our bootstrap assumptions. 

\item[\textit{Step 3}.] Using the  estimates derived in step 2   we can extend  the time of existence $T^*$ to $T^*+\de$  such that  all the bootstrap assumptions remain true.
\end{itemize}

Now, \textit{Step 1} follows from Proposition \ref{prop:continuity}. \textit{Step 2} follows from Proposition \ref{prop:improve1} and Proposition \ref{prop:improve2}. In view of \textit{Step 2}, the estimates corresponding to all bootstrap assumptions of the previous section hold for $0\leq t\leq T^*$ with a universal constant $M$. Thus the conclusion of Proposition \ref{prop:propagreg} holds, and arguing as in the proof of Proposition \ref{prop:continuity}, we obtain \textit{Step 3}. Thus, the bootstrap assumptions hold on $0\leq t\leq 1$ for a universal constant $M$. In particular, this yields together with \eqref{bootk}:
\bea\lab{saionara2}
\norm{\R}_{\lsit{2}}\les \ep\textrm{ and }\norm{k}_{\lsit{2}}\les \ep\textrm{ for all }0\leq t\leq 1.
\eea
In view of  \eqref{bootvolumeradius}, we also obtain the following control on the volume radius:
\bea\lab{bootvolumeradius:bis}
\inf_{0\leq t\leq 1}r_{vol}(\Si_t,1)\geq \frac{1}{4}.
\eea
Furthermore, Proposition \ref{prop:propagreg} yields the following propagation of higher regularity
\bea\lab{gggggggggggggg}
\sum_{|\a|\le m}\|\D^{(\a)} \R\|_{L^\infty_{[0,1]}L^2(\Sigma_t)}  \leq C_m
  \bigg[\|\nab^{(i)} \textrm{Ric}\|_{L^2(\Sigma_t)} + \|\nab^{(i)} \nab k\|_{L^2(\Sigma_t)}\bigg]
\eea
where $C_m$ only depends on $m$.

\begin{remark}
Note that Proposition \ref{prop:propagreg} only yields the case $m=1$ in \eqref{gggggggggggggg}. The fact that \eqref{gggggggggggggg} also holds for higher derivatives $m\geq 2$ follows from the standard propagation of regularity for the classical local existence result of Theorem \ref{th:classicalwp} and the bound \eqref{gggggggggggggg} with $m=1$ coming from Proposition \ref{prop:propagreg}. 
\end{remark}

Finally, \eqref{saionara2}, the control on the volume radius \eqref{bootvolumeradius:bis} and the propagation of higher regularity \eqref {gggggggggggggg} yield the conclusion of Theorem \ref{th:mainter}. Together with the reduction to small initial data performed in section \ref{sec:reductionsmall}, this concludes the proof of the main Theorem \ref{th:main}.\\

  The rest of the paper deals with the proofs of propositions \ref{prop:improve1}, \ref{prop:improve2} and \ref{prop:propagreg}. 
  The core of the proof  is to control $A$,  the spatial part of the  connection $\A$. 
  As explained     in  the introduction  we need to project  our equation for   the spatial components 
   $A$  onto divergence free vectorfields. This is needed for two reasons, to eliminate the term 
   $\pr_i (\pr_0 A_0)$ on the left  hand side of \eqref{eq:YM22} and to   obtain, on the right hand side,
   terms which exhibit the crucial null structure we need to implement our proof. Rather than work with 
   the projection $\PP$, which is too complicated, we      rely  instead   on   the new  variable,
   \bea
   B=(-\Delta)^{-1}\curl(A) \label{defineB}
   \eea
    for which  we derive a     wave equation. 
   Since we have (see Lemma \ref{recoverA}):
$$A=\curl(B)+l.o.t$$
 it suffices  to obtain estimates for $B$  which  lead  us  to an  improvement of the bootstrap assumption \eqref{bootA} on $A$.  In section \ref{sec:bobo5},   we derive space-time  estimates  for $\square B$ and its derivatives.     Proposition  \ref{prop:improve1}, which does not require  a parametrix representation, is  proved in \ref{sec:improve1}.   Proposition  \ref{prop:improve2} is proved 
 in sections \ref{sec:improve2} and \ref{sec:improve2B} based on the  representation formula  of theorem \ref{lemma:parametrixconstruction} derived in section \ref{sec:parametrix}. Finally, Proposition \ref{prop:propagreg} is proved in section \ref{sect:propag}.

\section{Simple  consequences of the bootstrap assumptions}\label{sec:simpleconsequencesboot}
 
 In this section, we discuss elliptic estimates on $\Si_t$, we derive estimates for $B$ from the bootstrap assumptions on $A$, and we show how to recover $A$ from $B$.

\subsection{Sobolev embeddings and elliptic estimates on $\Si_t$.}

First, we derive estimates for the lapse $n$ on $\Si_t$. The bootstrap assumption on $R$ \eqref{bootR} and the estimate for $k$ \eqref{bootk} together with the estimates in \cite{param3} (see section 4.4 in that paper) yield:
\bea\label{bootn}
\norm{n-1}_{L^\infty(\MM)}+\norm{\nabla n}_{\lsit{2}}+\norm{\nabla n}_{L^\infty(\MM)}+\norm{\nabla^2n}_{\lsit{2}}+\norm{\nabla^2n}_{\lsit{3}}&&\\
\nn+\norm{\nabla(\pr_0n)}_{\lsit{2}}+\norm{\nabla(\pr_0n)}_{\lsit{3}}+\norm{\nabla^3n}_{\lsit{\frac{3}{2}}}+\norm{\nabla^2(\pr_0n)}_{\lsit{\frac{3}{2}}}&\lesssim& M\ep.
\eea

\begin{remark}\lab{rem:tempura}
Recall from \eqref{eq:YM11} that:
$$(A_0)_{0i}=-n^{-1}\nabla_in.$$
Thus, the estimates \eqref{bootn} for $n$ could in principle be deduced from the bootstrap assumptions \eqref{bootA0} for $A_0$. However, notice that $\nabla n\in L^\infty(\MM)$ in view of \eqref{bootn}, while $A_0$ is only in $\lsitt{2}{\infty}$ according to \eqref{bootA0}. This improvement for the components $(A_0)_{0i}$ of $A_0$ will turn out to be crucial    and subtle\footnote{Using   the lapse equation   
$\Delta n=n|k|^2$ and  $k, \nabla k\in \lsit{2}$, see \eqref{bootk},  together  with  the Sobolev embedding \eqref{sobineqsit}  we only deduce $k\in \lsit{6}$ from which $\Delta n\in \lsit{3}$. This  would  yield $\nabla^2n\in\lsit{3}$, and thus $\nabla n$ misses  to be in $L^\infty(\MM)$ by a log divergence.  However, one can overcome this loss by exploiting the Besov improvement with respect to the Sobolev embedding \eqref{sobineqsit}. We refer the reader to  section 4.4 in \cite{param3} for the details.} (see remark \ref{rem:tempura1}).
\end{remark}

Next, we record   the following Sobolev embeddings and elliptic estimates on $\Si_t$ that where derived under the assumptions \eqref{bootcurvatureflux} and \eqref{bootR} in \cite{param3} (see sections 3.5 and 4.2 in that paper).
\begin{lemma}[Calculus inequalities on $\Si_t$ \cite{param3}]\lab{lemma:estimatesit}
Assume that the assumptions \eqref{bootcurvatureflux} and \eqref{bootR} hold, and assume that the volume radius at scales $\leq 1$ on $\Si_0$ is bounded from below by a universal constant.  
Let $\de>0$. Then, there exists $r_0(\de)>0$ and a finite covering of $\Si_t$ by geodesic balls  of radius $r_0(\de)$ such that each geodesic ball in the covering admits a system of harmonic coordinates $x=(x_1,x_2,x_3)$ relative to which we have
\bea\label{coorharmth1bis}
(1+\de)^{-1}\de_{ij}\leq g_{ij}\leq (1+\de)\de_{ij},
\eea
and
\bea\label{coorharmth2bis}
r_0(\de)\int_{B_{r_0}(p)}|\partial^2g_{ij}|^2\sqrt{|g|}dx\leq \de.
\eea

Furthermore, we have on $\Si_t$ the following estimates for any tensor $F$:
\bea\lab{eq:gnirenbergsit}
\norm{F}_{L^{3}(\Si_t)}\lesssim \norm{\nabla F}_{L^{\frac{3}{2}}(\Si_t)},
\eea 
\bea\lab{sobineqsit}
\norm{F}_{L^{6}(\Si_t)}\lesssim \norm{\nabla F}_{L^{2}(\Si_t)},
\eea 
\bea\lab{sobinftysit}
\norm{F}_{L^\infty(\Si_t)}\lesssim \norm{\nabla F}_{L^p(\Si_t)}+\norm{F}_{L^p(\Si_t)}\,\forall p>3,
\eea
and:
\bea\lab{prop:bochsit2}
\norm{\nabla^2F}_{L^{\frac{3}{2}}(\Si_t)}\lesssim \norm{\Delta F}_{L^{\frac{3}{2}}(\Si_t)}+\norm{\nabla F}_{L^2(\Si_t)}.
\eea

Finally, we define the operator $(-\Delta)^{-\frac{1}{2}}$ acting on tensors on $\Si_t$ as:
$$(-\Delta)^{-\frac{1}{2}}F=\frac{1}{\Gamma\left(\frac{1}{4}\right)}\int_0^{+\infty}\tau^{-\frac{3}{4}}U(\tau)Fd\tau,$$
where $\Gamma$ is the Gamma function, and where $U(\tau)F$ is defined using the heat flow on $\Si_t$:
$$(\pr_\tau-\Delta)U(\tau)F=0,\, U(0)F=F.$$
We have the following Bochner estimates:
\bea\lab{bochnerestimates}
\norm{\nabla(-\Delta)^{-\frac{1}{2}}}_{\LL(L^2(\Si_t))}\les 1\textrm{ and }\norm{\nabla^2(-\Delta)^{-1}}_{\LL(L^2(\Si_t))}\les 1,
\eea
where $\mathcal{L}(L^2(\Si_t))$ denotes the set of bounded linear operators on $L^2(\Si_t)$. \eqref{bochnerestimates} together with the Sobolev embedding \eqref{sobineqsit} yields:
\bea\lab{sob}
\norm{(-\Delta)^{-\frac{1}{2}}F}_{L^2(\Si_t)}\lesssim \norm{F}_{L^{\frac{6}{5}}(\Si_t)}.
\eea 
\end{lemma}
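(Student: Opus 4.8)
The plan is to deduce every assertion of the lemma from a single structural fact: under the stated hypotheses the slice $\Si_t$ carries a uniform finite atlas of harmonic coordinate charts in which the metric is $C^0$--close to the Euclidean one and has a uniformly $L^2$--bounded Hessian. Concretely, I would first verify the two inputs needed to invoke Theorem~\ref{th:coordharm} on $\Si_t$. For the $L^2$ bound on the Ricci curvature of $\Si_t$: the Gauss equation \eqref{eq:structfol3} expresses $\textrm{Ric}$ in terms of $\R_{a0b0}$ and a quadratic term in $k$, so \eqref{bootR} together with \eqref{bootk} gives $\norm{\textrm{Ric}}_{L^2(\Si_t)}\les M\ep$. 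For the lower bound on $r_{vol}(\Si_t,1)$: this is \eqref{bootvolumeradius}, i.e. the propagation to later slices of the assumed lower bound on $r_{vol}(\Si_0,1)$. This propagation --- which is controlled by the curvature flux assumption \eqref{bootcurvatureflux} through the causal (null) geometry of $\MM$, and is carried out in \cite{param3} --- is the genuinely hard point; everything downstream is soft Riemannian analysis. With these two inputs Theorem~\ref{th:coordharm} produces, for each $\de>0$, the scale $r_0(\de)$, the finite covering by geodesic balls, and the charts satisfying \eqref{coorharmth1bis}--\eqref{coorharmth2bis}.

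Given the atlas, the Sobolev embeddings \eqref{eq:gnirenbergsit}, \eqref{sobineqsit}, \eqref{sobinftysit} follow by transplanting the corresponding flat Gagliardo--Nirenberg inequalities chart by chart. In each chart \eqref{coorharmth2bis} gives $\partial^2 g\in L^2$, hence by $W^{1,2}(\RRR^3)\hookrightarrow L^6$ the first derivatives $\partial g$, and therefore the Christoffel symbols $\Gamma$, lie in $L^6$; \eqref{coorharmth1bis} makes all $L^p$ norms computed with $g$ comparable to the Euclidean ones with universal constants. Writing $\nabla F=\partial F+\Gamma\cdot F$, the correction is estimated by H\"older (e.g. $\norm{\Gamma F}_{L^{3/2}}\le\norm{\Gamma}_{L^6}\norm{F}_{L^2}$) and absorbed thanks to the smallness in \eqref{coorharmth2bis}; a partition of unity subordinate to the finite covering then globalizes the estimate to $\Si_t$. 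The elliptic bound \eqref{prop:bochsit2} is obtained the same way: decompose $\Delta_g F=\delta^{ij}\partial_i\partial_jF+(g^{ij}-\delta^{ij})\partial_i\partial_jF+\Gamma\cdot\partial F$, apply the flat Calder\'on--Zygmund estimate $\norm{\partial^2F}_{L^{3/2}}\les\norm{\delta^{ij}\partial_i\partial_jF}_{L^{3/2}}$, absorb the first error term by the $C^0$--smallness $\norm{g-\delta}_{L^\infty}\le\de$ and the second by $\norm{\Gamma\,\partial F}_{L^{3/2}}\le\norm{\Gamma}_{L^6}\norm{\partial F}_{L^2}$, and sum over charts.

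Finally, for the operator $(-\Delta)^{-\frac12}$ and the Bochner bounds \eqref{bochnerestimates}: the Sobolev inequality \eqref{sobineqsit} is (Nash--Varopoulos) equivalent to the on--diagonal heat kernel decay $\norm{U(\tau)}_{L^1(\Si_t)\to L^\infty(\Si_t)}\les\tau^{-3/2}$, which makes the fractional-integral representation of $(-\Delta)^{-\frac12}$ convergent and its mapping properties accessible by spectral calculus; combined with the Bochner identity $\int_{\Si_t}|\nabla^2F|^2=\int_{\Si_t}|\Delta F|^2-\int_{\Si_t}\textrm{Ric}(\nabla F,\nabla F)$ and the $L^2$ Ricci bound (the non-$L^\infty$ Ricci term being disposed of by interpolation against \eqref{sobineqsit}), this yields $\norm{\nabla(-\Delta)^{-\frac12}}_{\LL(L^2(\Si_t))}\les1$ and $\norm{\nabla^2(-\Delta)^{-1}}_{\LL(L^2(\Si_t))}\les1$; estimate \eqref{sob} is then \eqref{bochnerestimates} post-composed with the dual of \eqref{sobineqsit}. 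The detailed bookkeeping --- in particular the uniformity in $t$ of all constants --- is exactly what is carried out in sections 3.5 and 4.2 of \cite{param3}, on which we rely. The only serious difficulty, to reiterate, is the slice volume radius bound \eqref{bootvolumeradius}; once it is granted, the lemma is a routine consequence of Cheeger--Gromov harmonic-coordinate technology.
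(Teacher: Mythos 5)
The paper itself gives no proof of this lemma --- it is imported verbatim from sections 3.5 and 4.2 of \cite{param3} --- and your outline is a faithful sketch of the standard argument carried out there: Gauss equation plus \eqref{bootk} for the $L^2$ bound on $\mbox{Ric}$, propagation of the volume radius to $\Si_t$ (which you correctly single out as the only genuinely hard step, resting on the flux assumption \eqref{bootcurvatureflux}), Theorem \ref{th:coordharm} for the harmonic atlas, chart-by-chart transplantation of the flat inequalities, and the Bochner identity with the Ricci term absorbed by interpolation. The one point your globalization glosses over is that a partition of unity subordinate to a finite covering naturally yields the inhomogeneous estimate $\norm{F}_{L^{6}(\Si_t)}\les \norm{\nabla F}_{L^{2}(\Si_t)}+\norm{F}_{L^{2}(\Si_t)}$, whereas \eqref{eq:gnirenbergsit} and \eqref{sobineqsit} are asserted in scale-invariant form without the lower-order term; removing it uses the asymptotic flatness of $\Si_t$ (or an isoperimetric/all-scales covering argument), which is part of what \cite{param3} supplies and is worth acknowledging explicitly.
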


\begin{remark}\lab{covornotcov}
Note that $\pr^2f=\nabla^2f+A\pr f$ for any scalar function $f$ on $\Si_t$. Thus, in view of the bootstrap assumption \eqref{bootA} for $A$, we may replace $\nabla^2$ with $\pr^2$ in the Bochner inequality \eqref{bochnerestimates} when applied to a scalar function.
\end{remark}

\subsection{ Elliptic  estimates for $B$}
Here we derive  estimates for $B$ using  the bootstrap
 assumptions \eqref{bootA} \eqref{bootA0} for $A$ and $A_0$.

\begin{proposition}\lab{lemma:estB}
Let $B_i=(-\Delta)^{-1}(\curl(A)_i)$. Then, we have:
$$\norm{\pr(B_i)}_{\lsit{2}}+\norm{\pr^2(B_i)}_{\lsit{2}}+\norm{\pr(\pr_0(B_i))}_{\lsit{2}}\les M\ep.$$
\end{proposition}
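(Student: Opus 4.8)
The plan is to exploit the definition $B_i=(-\Delta)^{-1}(\curl(A)_i)$ together with the Bochner estimates \eqref{bochnerestimates} of Lemma \ref{lemma:estimatesit} to trade every derivative of $B$ against a derivative of $A$ (modulo harmless lower order commutator terms), and then to invoke the bootstrap assumptions \eqref{bootA}, \eqref{bootA0}. Concretely, I would first treat $\norm{\pr B}_{\lsit 2}$: since $\nab B_i=\nab(-\Delta)^{-1}\curl(A)_i=\big(\nab(-\Delta)^{-\f12}\big)(-\Delta)^{-\f12}\curl(A)_i$, the first factor is bounded on $L^2(\Si_t)$ by \eqref{bochnerestimates}, and $(-\Delta)^{-\f12}\curl(A)$ is bounded in $L^2(\Si_t)$ by $\norm{A}_{L^2(\Si_t)}$ after integrating by parts to move one derivative off $\curl A$ onto $(-\Delta)^{-\f12}$ — a second application of the Bochner bound. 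Hence $\norm{\pr B}_{\lsit 2}\les \norm{A}_{\lsit 2}\les M\ep$ by \eqref{bootA}. Here one must remember, via Remark \ref{covornotcov}, that $\pr$ and $\nab$ differ only by an $A\pr(\cdot)$ term, which is quadratic and absorbed using \eqref{bootA} plus a Sobolev embedding \eqref{sobineqsit}.

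Second, for $\norm{\pr^2 B}_{\lsit 2}$ I would write $\nab^2 B_i=\nab^2(-\Delta)^{-1}\curl(A)_i$ and use the second Bochner bound $\norm{\nab^2(-\Delta)^{-1}}_{\LL(L^2(\Si_t))}\les 1$ directly on $\curl(A)_i\sim\pr A$, giving $\norm{\pr^2 B}_{\lsit 2}\les \norm{\pr A}_{\lsit 2}\les M\ep$ by \eqref{bootA}; again the passage $\pr^2\leftrightarrow\nab^2$ costs only $A\pr A$-type terms controlled by Hölder, \eqref{sobineqsit} and \eqref{bootA}.

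Third, and this is where the genuine work lies, I would estimate $\norm{\pr\pr_0 B}_{\lsit 2}$. The subtlety is that $\pr_0=e_0$ does not commute with the spatial operators $(-\Delta)^{-1}$ and $\curl$, nor with $\pr$ (see \eqref{notcoordinate}), so one must commute $\pr_0$ through: schematically $\pr_0 B_i=(-\Delta)^{-1}\curl(\pr_0 A)_i+(-\Delta)^{-1}\big([\pr_0,\curl]A\big)_i+\big[\pr_0,(-\Delta)^{-1}\big]\curl(A)_i$. The first term is handled exactly as the $\pr B$ estimate with $A$ replaced by $\pr_0 A$, using $\norm{\prb A}_{\lsit 2}\les M\ep$ from \eqref{bootA}. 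The commutator $[\pr_0,\curl]A$ produces terms of the form $\A\,\pr A$ (since $[\pr_0,\pr_j]\phi=\A\prb\phi$, and differentiating $\curl$ involves $\pr\A$ hitting $A$), which after applying $(-\Delta)^{-1}$ and then one $\nab$ — using \eqref{bochnerestimates} and \eqref{sob} to land in $L^{6/5}(\Si_t)\subset$ the dual Sobolev scale — is bounded by $\norm{\A}_{\lsit 2}\norm{\pr A}_{L^3}$-type products, hence $\les M^2\ep^2\les M\ep$ via \eqref{sobineqsit}, \eqref{eq:gnirenbergsit} and \eqref{bootA}, \eqref{bootA0}; one also needs the lapse bounds \eqref{bootn} here because $\pr_0$ acting through $\Delta$ brings in $n$ and $\pr_0 n$ via \eqref{eqlapsen}-type identities. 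The commutator $\big[\pr_0,(-\Delta)^{-1}\big]=(-\Delta)^{-1}[\Delta,\pr_0](-\Delta)^{-1}$ is the most delicate: $[\Delta,\pr_0]$ on a scalar is a first-order operator with coefficients built from $k$, $\nab k$, $n$, $\nab n$ (this follows by differentiating \eqref{eq:YM16} and using the structure equations \eqref{eq:structfol1}-\eqref{eq:structfol3}), so after two applications of $(-\Delta)^{-1}$ and one $\nab$, and using \eqref{bootk}, \eqref{bootn} together with the Sobolev and Bochner inequalities of Lemma \ref{lemma:estimatesit}, this contribution is again $\les M^2\ep^2$. Collecting the three terms gives $\norm{\pr\pr_0 B}_{\lsit 2}\les M\ep$, completing the proof.

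The main obstacle is the careful bookkeeping of the time-derivative commutators in the third estimate: one must verify that every commutator term $[\pr_0,\curl]$, $[\pr_0,(-\Delta)^{-1}]$ and $[\pr_0,\pr]$ produces only expressions quadratic in $(\A,\pr\A,k,\nab k,\pr n)$ — never a term forcing more than the available regularity — so that all of them close at the level $M^2\ep^2$, which is absorbed into $M\ep$ for $\ep$ small. This is precisely the kind of commutator analysis referred to in Remark \ref{rem:tempura} and carried out in detail in \cite{param3}, on which one leans for the $L^\infty$ and Besov-improved bounds on $n$.
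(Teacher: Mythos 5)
Your overall strategy coincides with the paper's: the first two bounds come straight from the Bochner estimates \eqref{bochnerestimates} (via Remark \ref{covornotcov}) and the bootstrap assumption \eqref{bootA}, and for the third you use exactly the paper's decomposition $\pr_0 B_i=(-\Delta)^{-1}\curl(\pr_0 A)_i+(-\Delta)^{-1}([\pr_0,\curl]A)_i-(-\Delta)^{-1}[\pr_0,\Delta](-\Delta)^{-1}\curl(A)_i$, with the same treatment of the first two pieces.

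There is, however, a genuine flaw in your handling of the third piece. You assert that $[\Delta,\pr_0]$ acting on a scalar ``is a first-order operator with coefficients built from $k$, $\nab k$, $n$, $\nab n$,'' and conclude that its contribution is $\les M^2\ep^2$ outright. The commutator is in fact \emph{second order}: by \eqref{commutdeltapr0},
$$[\pr_0,\Delta]\phi=-2k^{ab}\nabla_a\nabla_b\phi+2n^{-1}\nabla_bn\,\nabla_b(\pr_0\phi)+n^{-1}\Delta n\,\pr_0\phi-2n^{-1}\nabla_an\, k^{ab}\nabla_b\phi.$$
Applied to $\phi=B_l$, the first term requires the already-established bound on $\pr^2 B$ (harmless, since you prove that bound first), but the second term contains $\nabla_b(\pr_0 B_l)$ — i.e.\ precisely the quantity $\pr\pr_0 B$ you are trying to estimate. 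After placing it in $L^{6/5}(\Si_t)$ via H\"older with $\nabla n\in L^3(\Si_t)$, this yields a contribution of size $M\ep\,\norm{\pr\pr_0 B}_{\lsit 2}$ on the right-hand side, not $M^2\ep^2$. The argument therefore does not close as you describe it; one must observe that the offending term carries the small prefactor $M\ep$ and absorb it into the left-hand side (this is exactly how the paper concludes, arriving at $\norm{\pr\pr_0 B_i}_{\lsit 2}\les M\ep+M\ep\norm{\pr\pr_0 B_i}_{\lsit 2}$ and then absorbing for $\ep$ small). The fix is short, but as written your step ``this contribution is again $\les M^2\ep^2$'' is false, and the self-improvement/absorption mechanism is the one idea your write-up is missing.
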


\begin{proof}
Using the B\"ochner inequality on $\Si_t$ \eqref{bochnerestimates} together with Remark \ref{covornotcov}, and from the bootstrap assumption \eqref{bootA} on $A$, we have:
\bea\lab{estb1}
\norm{\pr(B_i)}_{\lsit{2}}+\norm{\pr^2(B_i)}_{\lsit{2}}\les \norm{A}_{\lsit{2}}+\norm{\pr A}_{\lsit{2}}\les M\ep.
\eea

Next, we estimate $\pr(\pr_0(B_i))$. In view of the definition of $B$, we have:
\beaa
\pr_0(B_i)&=&(-\Delta)^{-1}(\curl(\pr_0(A)))+[\pr_0,(-\Delta)^{-1}]\curl(A)+(-\Delta)^{-1}([\pr_0,\curl]A)\\
\nn&=&(-\Delta)^{-1}(\curl(\pr_0(A)))-(-\Delta)^{-1}[\pr_0,\Delta](-\Delta)^{-1}\curl(A)+(-\Delta)^{-1}([\pr_0,\curl]A)\\
\nn&=&(-\Delta)^{-1}(\curl(\pr_0(A)))-(-\Delta)^{-1}[\pr_0,\Delta]B+(-\Delta)^{-1}([\pr_0,\curl]A).
\eeaa
Thus, in view of the bootstrap assumption \eqref{bootA} for $A$, the Bochner inequality on $\Si_t$ \eqref{bochnerestimates} and the Sobolev embedding on $\Si_t$ \eqref{sob}, we have:
\bea\lab{estb2}
&&\norm{\pr\pr_0(B_i)}_{\lsit{2}}\\
\nn&\les& \norm{\pr(-\Delta)^{-1}(\curl(\pr_0(A)))}_{\lsit{2}}+\norm{\pr(-\Delta)^{-1}[\pr_0,\Delta]B}_{\lsit{2}}\\
\nn&&+\norm{\pr(-\Delta)^{-1}([\pr_0,\curl]A)}_{\lsit{2}}\\
\nn&\les& \norm{\pr_0A}_{\lsit{2}}+\norm{(-\Delta)^{-\frac12}[\pr_0,\Delta]B}_{\lsit{2}}+\norm{(-\Delta)^{-\frac12}([\pr_0,\curl]A)}_{\lsit{2}}\\
\nn&\les& M\ep+\norm{(-\Delta)^{-\frac12}[\pr_0,\Delta]B}_{\lsit{2}}+\norm{(-\Delta)^{-\frac12}([\pr_0,\curl]A)}_{\lsit{2}}\\
\nn&\les& M\ep+\norm{[\pr_0,\Delta]B}_{\lsit{\frac{6}{5}}}+\norm{(-\Delta)^{-\frac12}([\pr_0,\curl]A)}_{\lsit{2}}.
\eea

Next, we estimate the right-hand side of \eqref{estb2}. Recall the commutator formula 
\eqref{commutdeltapr0}
\beaa
[\pr_0,\Delta](B_l)=-2k^{ab}\nabla_a\nabla_b(B_l)+2n^{-1}\nabla_bn\nabla_b(\pr_0(B_l))+n^{-1}\Delta n\pr_0(B_l)-2n^{-1}\nabla_an k^{ab}\nabla_b(B_l).
\eeaa
Together with the Sobolev embedding on $\Si_t$ \eqref{sobineqsit}, the estimate \eqref{bootk} for $k$, the estimate \eqref{bootn} for $n$, and the estimate \eqref{estb1} for $B_i$, this yields:
\bea\lab{estb3}
&&\norm{[\pr_0,\Delta](B_i)}_{\lsit{\frac{6}{5}}}\\
\nn&\les & \norm{k}_{\lsit{3}}\norm{\pr^2(B_i)}_{\lsit{2}}+\norm{\nabla n}_{\lsit{3}}\norm{\pr(\pr_0(B_i))}_{\lsit{2}}\\
\nn &&+\norm{\Delta n}_{\lsit{\frac{3}{2}}}\norm{\pr_0(B_i)}_{\lsit{6}}+\norm{\nabla n}_{\lsit{2}}\norm{k}_{\lsit{6}}\norm{\pr(B_i)}_{\lsit{6}}\\
\nn &\les & M^2\ep^2+M\ep\norm{\pr(\pr_0(B_i))}_{\lsit{2}}.
\eea

Next, we estimate the last term in the right-hand side of \eqref{estb2}. In view of the commutator formulas \eqref{commutnablapr0:s} and \eqref{notcoordinate}, and in view of the definition of $\curl$, we have schematically:
$$[\pr_0,\curl]A=k\nabla A+n^{-1}\nabla n\pr_0A+\A\pr A=(A, n^{-1}\nab n)(\prb A, \pr A_0)+\pr(A_0A),$$
which together with the Bochner inequality on $\Si_t$ \eqref{bochnerestimates}, the Sobolev embedding on $\Si_t$ \eqref{sob}, the bootstrap assumption \eqref{bootA} for $A$ and the bootstrap assumption \eqref{bootA0} for $A_0$ yields:
\bea\lab{estb4}
&&\norm{(-\Delta)^{-\frac12}([\pr_0,\curl]A)}_{\lsit{2}}\\
\nn &\les& \norm{(A, n^{-1}\nab n)(\prb A, \pr A_0)}_{\lsit{\frac{6}{5}}}+\norm{A_0A}_{\lsit{2}}\\
\nn&\les& (\norm{A}_{\lsit{3}}+\norm{n^{-1}\nab n}_{\lsit{3}})(\norm{\prb A}_{\lsit{2}}+\norm{\pr A_0}_{\lsit{2}})\\
\nn&&+\norm{A_0}_{\lsit{4}}\norm{A}_{\lsit{4}}\\
\nn&\les& M^2\ep^2.
\eea
Finally, \eqref{estb2}-\eqref{estb4} imply:
\beaa
\norm{\pr\pr_0(B_i)}_{\lsit{2}}&\les& M\ep+\norm{[\pr_0,\Delta]B}_{\lsit{\frac{6}{5}}}+\norm{(-\Delta)^{-\frac12}([\pr_0,\curl]A)}_{\lsit{2}}\\
\nn&\les& M\ep+M\ep\norm{\pr(\pr_0(B_i))}_{\lsit{2}}
\eeaa
which yields:
$$\norm{\pr\pr_0(B_i)}_{\lsit{2}}\les M\ep.$$
Together with \eqref{estb1}, this concludes the proof of the proposition.
\end{proof}

\subsection{A decomposition for $A$}

Recall that $B=(-\Delta)^{-1}(\curl(A))$. We show how to recover $A$ from $B$:
\begin{lemma}\lab{recoverA}
We have the following estimate:
$$A=\curl(B)+E$$
where $E$ satisfies:
$$\norm{\pr E}_{\lsit{2}} +\norm{\pr E}_{\lsit{3}}+\norm{\pr^2E}_{\lsit{\frac{3}{2}}}+\norm{E}_{\lsitt{2}{\infty}}\les M^2\ep^2.$$
\end{lemma}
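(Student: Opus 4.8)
The plan is to write $E$ down explicitly using Lemma \ref{prop:projectionoperator} together with the Coulomb condition \eqref{eq:YM19}, and then estimate it by means of the elliptic estimates of Lemma \ref{lemma:estimatesit}, the bootstrap assumptions \eqref{bootR}, \eqref{bootA}, \eqref{bootA0}, and the bounds for $B$ of Proposition \ref{lemma:estB}. First, combining \eqref{curlcurl} with $\div A=A^2$ (and using that $\pr_j(\div A)$ is schematically of the form $A\pr A$), one gets $\De(A_j)=-\curl(\curl(A))_j+A\pr A$, hence, componentwise, $A=(-\De)^{-1}\curl(\curl(A))-(-\De)^{-1}(A\pr A)$. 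Since the frame is curved, $\curl$ and $(-\De)^{-1}$ do not commute, so I would then rewrite $(-\De)^{-1}\curl(\curl(A))=\curl\big((-\De)^{-1}\curl(A)\big)+[(-\De)^{-1},\curl]\curl(A)=\curl(B)-(-\De)^{-1}[\curl,\De]B$, using $(-\De)B=\curl(A)$ and the identity $[S^{-1},T]=-S^{-1}[T,S]S^{-1}$. This yields the decomposition $A=\curl(B)+E$ with $E=-(-\De)^{-1}[\curl,\De]B-(-\De)^{-1}(A\pr A)$.

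To estimate $E$ it suffices to estimate the two summands. For the commutator term, observe that $\curl$ is built from the frame derivatives $\pr_j=e_j$ and that $\De$ acts componentwise on $B$, so $[\curl,\De]B$ is a fixed contraction of the scalar commutators $[\pr_j,\De]$ applied to the components of $B$; by \eqref{notcoordinate} (i.e.\ $[\pr_i,\pr_j]=A\pr$) together with the spatial analogue of the commutator formula \eqref{commutdeltapr0}, one has schematically $[\curl,\De]B=A\pr^2 B+\pr A\,\pr B+A^2\pr B$. I would then bound $\norm{\pr E}_{\lsit 3}$ using that $\pr(-\De)^{-1}$ maps $L^{3/2}(\Si_t)\cap L^{6/5}(\Si_t)$ into $L^3(\Si_t)$ (which follows from the Bochner inequality \eqref{prop:bochsit2} combined with \eqref{eq:gnirenbergsit}, \eqref{bochnerestimates} and \eqref{sob}), and $\norm{\pr^2 E}_{\lsit{3/2}}$ directly from \eqref{prop:bochsit2} (handling the $\nabla^2$ versus $\pr^2$ discrepancy via $\pr^2 f=\nabla^2 f+A\pr f$ of Remark \ref{covornotcov}). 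The bilinear building blocks are then controlled by H\"older and the Sobolev embeddings \eqref{sobineqsit}, \eqref{eq:gnirenbergsit}, e.g.\ $\norm{A\pr^2 B}_{L^{3/2}(\Si_t)}\les\norm{A}_{L^6(\Si_t)}\norm{\pr^2 B}_{L^2(\Si_t)}$, $\norm{\pr A\,\pr B}_{L^{3/2}(\Si_t)}\les\norm{\pr A}_{L^2(\Si_t)}\norm{\pr B}_{L^6(\Si_t)}$, $\norm{A\pr A}_{L^{6/5}(\Si_t)}\les\norm{A}_{L^3(\Si_t)}\norm{\pr A}_{L^2(\Si_t)}$, each of which is $\les(M\ep)^2$ by \eqref{bootA}, \eqref{bootA0} and $\norm{\pr B}_{\lsit 2}+\norm{\pr^2 B}_{\lsit 2}\les M\ep$ from Proposition \ref{lemma:estB}.

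The delicate term is $\norm{E}_{\lsitt 2\infty}$. By the Sobolev embedding \eqref{sobinftysit}, $\norm{E}_{L^\infty(\Si_t)}\les\norm{\pr E}_{L^p(\Si_t)}+\norm{E}_{L^p(\Si_t)}$ for any fixed $p>3$, which forces one to put $\pr E$ in a space \emph{strictly} below the critical $L^3$ scaling, and hence to gain a little integrability beyond $L^6$ for the factors $A$ and $\pr B$ appearing in the bilinear terms above. This is exactly what the non-sharp Strichartz bootstrap assumptions \eqref{bootstrich} and \eqref{bootstrichB} provide: $A,\pr B\in L^2_tL^7(\Si_t)$. Interpolating these against the energy bounds $A,\pr B\in L^\infty_tL^6(\Si_t)$ and using H\"older in time (recall $T^*\le1$) to compensate the lost time integrability, one obtains $\norm{\pr E}_{L^2_tL^p(\Si_t)}+\norm{E}_{L^2_tL^p(\Si_t)}\les M^2\ep^2$ for some $p>3$, which closes the bound.

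I expect the bookkeeping at this last step to be the main obstacle: all of the estimates for $E$ sit at borderline Lebesgue exponents — reflecting the fact that $(-\De)^{-1}:L^{3/2}\to L^\infty$ just fails — so the non-sharp Strichartz bounds must be used in an essential way, and the exponents in the bilinear estimates have to be tracked carefully so as to land $\pr E$ strictly subcritical before invoking \eqref{sobinftysit}. The remaining steps (the algebraic derivation of the formula for $E$ and the $\lsit 3$, $\lsit{3/2}$ estimates) are routine given Lemma \ref{prop:projectionoperator}, Lemma \ref{lemma:estimatesit}, and Proposition \ref{lemma:estB}.
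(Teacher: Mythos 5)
Your decomposition and your treatment of the $\lsit{3}$ and $\lsit{\frac{3}{2}}$ norms coincide with the paper's: the same identity $A=\curl(B)-(-\De)^{-1}[\De,\curl]B+(-\De)^{-1}(A\pr A+\dots)$, the same commutator structure (the paper writes the commutator as $\R\pr B+\pr A\pr B+A\pr^2B+A^2\pr B$, keeping the Gauss-equation curvature term explicit, but since $R\sim\pr A+A^2$ schematically your version $A\pr^2B+\pr A\,\pr B+A^2\pr B$ leads to identical H\"older/Sobolev estimates), and the same use of \eqref{prop:bochsit2} and \eqref{eq:gnirenbergsit}. The one genuine divergence is the $\lsitt{2}{\infty}$ bound. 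The paper does \emph{not} go through \eqref{sobinftysit}; it proves a tailored embedding \eqref{eel3}, namely $\norm{(-\De)^{-1}v}_{\lt{\infty}}\les\norm{v}_{\lt{\frac{14}{9}}}+\norm{v}_{\lt{\frac{13}{9}}}$, by a Littlewood--Paley/Bernstein argument on $\Si_t$ (Appendix A), and then feeds $\R\pr B$, $A\pr A$, $A^3$ into it using exactly the non-sharp Strichartz bounds $A,\pr B\in L^2_tL^7(\Si_t)$ that you identify. Your route --- putting $\pr E$ in $L^2_tL^p(\Si_t)$ for some $p>3$ and invoking \eqref{sobinftysit} --- is morally the same exploitation of the $L^2_tL^7$ gain, but it is not available off the shelf: it requires $\nabla(-\De)^{-1}$ (and $\nabla^2(-\De)^{-1}$) to be bounded between Lebesgue spaces away from the $L^2$ and $L^{3/2}$ exponents listed in Lemma \ref{lemma:estimatesit}, and on a background with only $L^2$ curvature such mapping properties are precisely what must be manufactured by the geometric Littlewood--Paley theory rather than quoted from Calder\'on--Zygmund theory. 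So your plan does not avoid the Appendix-A-type work, it relocates it; the two-exponent form of \eqref{eel3} (one exponent for high frequencies, one for low) is the paper's way of making that borderline step rigorous in a single clean statement, and is the version I would adopt.
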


\begin{proof}
In view of Lemma \ref{prop:projectionoperator}, we have:
\beaa
 A=(-\Delta)^{-1}\curl(\curl(A)+(-\Delta)^{-1} (A\pr  A+A^3).
\eeaa
This yields:
\beaa
A &=& \curl(-\Delta)^{-1}(\curl(A))+[(-\Delta)^{-1},\curl]\curl(A)+(-\Delta)^{-1} (A \pr A+A^3)\\
&=& \curl(B)-(-\Delta)^{-1}[\Delta,\curl](-\Delta)^{-1}\curl(A)+(-\Delta)^{-1} (A \pr  A+A^3)\\
&=& \curl(B)-(-\Delta)^{-1}[\Delta,\curl]B+(-\Delta)^{-1} (A \pr  A+A^3),
\eeaa
which implies:
$$E=-(-\Delta)^{-1}[\Delta,\curl]B+(-\Delta)^{-1} (A \pr  A+A^3).$$
Now, we have 
$$[\Delta,\pr]\phi=R\pr \phi+\pr A\,\pr\phi+A\,\pr^2\phi$$
for any scalar function $\phi$ in $\Si_t$ where the curvature tensor $R$ on $\Si_t$ is related to $\R$ through the Gauss equation which can be written schematically: 
$$R=\R+A^2.$$
Thus, we obtain:
$$[\Delta,\curl]B=\R\pr B+\pr A\pr B+A\pr^2 B+A^2\pr B.$$
This yields:
\bea\lab{eel1}
E=-(-\Delta)^{-1}(\R\pr B+\pr A\pr B+A\pr^2B+A^2\pr B)+(-\Delta)^{-1} (A \pr  A+A^3).
\eea

In view \eqref{eel1}, and using the elliptic estimate \eqref{bochnerestimates} and the Sobolev embedding \eqref{sob} together with $\nab(-\De)^{-1}=\nab(-\De)^{-\frac{1}{2}}(-\De)^{-\frac{1}{2}}$, we have
\bea\lab{estmiatebasicL2E}
\norm{\pr E}_{\lsit{2}} 
\nn&\les& \norm{\R\pr B}_{\lsit{\frac{6}{5}}}+\norm{\pr B \pr A}_{\lsit{\frac{6}{5}}}+\norm{A\pr^2B}_{\lsit{\frac{6}{5}}}\\
\nn&&+\norm{A^2\pr B}_{\lsit{\frac{6}{5}}}+\norm{A\pr A}_{\lsit{\frac{6}{5}}}+\norm{A^3}_{\lsit{\frac{6}{5}}}\\
\nn&\les& \norm{\R}_{\lsit{2}}\norm{\pr B}_{\lsit{3}}+\norm{A}_{\lsit{3}}\norm{\pr^2B}_{\lsit{2}}\\
\nn&&+(\norm{A}_{\lsit{3}}+\norm{\pr B}_{\lsit{3}})\norm{\pr A}_{\lsit{2}}\\
\nn&&+\norm{A}^2_{\lsit{6}}(\norm{\pr B}_{\lsit{3}}+\norm{A}_{\lsit{3}})\\
\nn&\les & M^2\ep^2,
\eea
where in the last step we interpolate $L^3$ between $L^2$ and $L^6$ to estimate $\norm{A}_{\lsit{3}}$, $\norm{\pr B}_{\lsit{3}}$ and use
\eqref{bootA} and Proposition \ref{lemma:estB}.

Also, we have
\beaa
\norm{\Delta E}_{\lsit{\frac{3}{2}}}&\les& \norm{\R\pr B}_{\lsit{\frac{3}{2}}}+\norm{\pr B \pr A}_{\lsit{\frac{3}{2}}}+\norm{A\pr^2B}_{\lsit{\frac{3}{2}}}\\
&&+\norm{A^2\pr B}_{\lsit{\frac{3}{2}}}+\norm{A\pr A}_{\lsit{\frac{3}{2}}}+\norm{A^3}_{\lsit{\frac{3}{2}}}\\
\nn&\les& \norm{\R}_{\lsit{2}}\norm{\pr B}_{\lsit{6}}+\norm{A}_{\lsit{6}}\norm{\pr^2B}_{\lsit{2}}\\
&&+(\norm{A}_{\lsit{6}}+\norm{\pr B}_{\lsit{6}})\norm{\pr A}_{\lsit{2}}\\
&&+\norm{A}^2_{\lsit{6}}(\norm{\pr B}_{\lsit{6}}+\norm{A}_{\lsit{6}})\\
\nn&\les & M^2\ep^2,
\eeaa
where we used in the last inequality the Sobolev embedding \eqref{sobineqsit} on $\Si_t$, the bootstrap estimates \eqref{bootA} for $A$, the bootstrap estimate \eqref{bootR} for $\R$ and the estimates \eqref{estb1} for $B$. Using the elliptic estimate  \eqref{prop:bochsit2}  on $\Si_t$ and \eqref{estmiatebasicL2E}, we deduce:
\beaa
\norm{\pr^2E}_{\lsit{\frac{3}{2}}}&\les& \norm{\Delta E}_{\lsit{\frac{3}{2}}}+\norm{\pr E}_{\lsit{2}}\\
&\les& M^2\ep^2.
\eeaa
Together with the Sobolev embedding \eqref{eq:gnirenbergsit} on $\Si_t$, we finally obtain:
$$\norm{\pr E}_{\lsit{3}}+\norm{\pr^2E}_{\lsit{\frac{3}{2}}}\les M^2\ep^2.$$

Next, we estimate $\norm{E}_{\lsitt{2}{\infty}}$. We first claim the following non sharp embedding on $\Si_t$. For any scalar function $v$ on $\Si_t$, we have:
\bea\lab{eel3}
\norm{(-\Delta)^{-1}v}_{\lt{\infty}}\les \norm{v}_{\lt{\frac{14}{9}}}+\norm{v}_{\lt{\frac{13}{9}}}.
\eea
The proof of \eqref{eel3} requires the use of Littlewood-Paley projections on $\Si_t$ and is postponed to Appendix A. We now come back to the estimate of $\norm{E}_{\lsitt{2}{\infty}}$. Using \eqref{eel1} and \eqref{eel3}, we have:
\beaa
&&\norm{E}_{\lsitt{2}{\infty}}\\
&\les& \norm{\R\pr B}_{\lsitt{2}{\frac{14}{9}}}+\norm{A\pr A}_{\lsitt{2}{\frac{14}{9}}}+\norm{A^3}_{\lsitt{2}{\frac{14}{9}}}\\
&&+\norm{\R\pr B}_{\lsitt{2}{\frac{13}{9}}}+\norm{A\pr A}_{\lsitt{2}{\frac{13}{9}}}+\norm{A^3}_{\lsitt{2}{\frac{13}{9}}}\\
&\les& \norm{\R}_{\lsit{2}}(\norm{\pr B}_{\lsitt{2}{7}}+\norm{\pr B}_{\lsit{\frac{26}{5}}})\\
&&+(\norm{A}_{\lsitt{2}{7}}+\norm{A}_{\lsit{\frac{26}{5}}})\norm{\pr A}_{\lsit{2}}+\norm{A}^3_{\lsit{\frac{14}{3}}}+\norm{A}^3_{\lsit{\frac{13}{3}}}\\
&\les & M^2\ep^2,
\eeaa
where we used in the last inequality the bootstrap assumptions \eqref{bootA} for $A$, the 
bootstrap assumption \eqref{bootR} for $\R$, the  bootstrap   Strichartz estimate for $B, see  $ \eqref{bootstrich}, and Proposition \ref{lemma:estB}. This concludes the proof of the lemma.
\end{proof}

\section{Estimates  for $\square B $}\lab{sec:bobo5}

The goal of this section is to derive estimates  for  $\square B$, with   $B=\Delta^{-1}\curl(A)$ using the wave equation \eqref{eq:YM22} satisfied by each component of $A_i$.
 We  provide  the proof of two important  propositions concerning  estimates  for $\square \curl A$    and $\square  B$,  with   $B=\Delta^{-1}\curl(A)$. The proofs makes use of the special structure of various  bilinear expressions and thus  is based    not only on the bootstrap assumptions for $A_0, A$, $k$ and $\R$  but also  some of our  bilinear bootstrap assumptions.

We  will  need the  following straightforward commutation lemma. 
\begin{lemma}\lab{lemma:commutation}
Let $\phi$ be a $so(3,1)$  scalar function on $\MM$. We have, schematically,
\bea\lab{commsquare2}
\pr_j(\square\phi)-\square (\pr_j\phi)=
2(\A^\la)_j\,^{\mu}\,\, \pr_\la   \pr_\mu\phi+\pr_0(n^{-1}\nab n)\pr_0\phi +\pr_0A_0\pr\phi + \A^2\prb\phi.
\eea
We also have:
\bea\lab{commsquare3}
[\square, \Delta]\phi
&=&-4k^{ab}\nabla_a\nabla_b(\pr_0\phi)+4n^{-1}\nabla_bn\nabla_b(\pr_0(\pr_0\phi))
 -2\nabla_0k^{ab}\nabla_a\nabla_b\phi\\
 &+&F^{(1)}\prb^2\phi+F^{(2)}\prb\phi,\nn\\
 F^{(1)}&=& n^{-1}\nab(\pr_0n)+\pr A_0+\A^2,\nn\\
 F^{(2)}&=& n^{-1}\nab^2(\pr_0n)+\pr\pr A_0+\A\Big(n^{-1}\nab(\pr_0n),\R, \pr\A, \prb A\Big)+\A^3,\nn
\eea
where $\nabla_a$ and $\nabla_b$ denote induced covariant derivatives on $\Si_t$ applied to the scalars $\phi$, $\pr_0\phi$ and $\pr_0(\pr_0\phi)$.
\end{lemma}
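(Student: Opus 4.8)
The statement is Lemma \ref{lemma:commutation}, giving schematic commutator formulas \eqref{commsquare2} and \eqref{commsquare3}. My plan is to reduce everything to the decomposition \eqref{eq:YM16} of $\square$ relative to the $\Si_t$ foliation, namely $\square\phi=-\pr_0\pr_0\phi+\Delta\phi+n^{-1}\nabla n\cdot\nabla\phi$, together with the two commutator identities \eqref{notcoordinate} for the non-coordinate frame, i.e. $[\pr_i,\pr_j]\phi=A\pr\phi$ and $[\pr_j,\pr_0]\phi=\A\prb\phi$. Since the conclusions are purely schematic (only the \emph{types} of terms matter, not constants or index placement), I do not need to keep track of signs, and each step is a bookkeeping computation controlling which of $\prb\phi$, $\prb^2\phi$, $A$, $A_0$, $\prb A$, $\prb A_0$ appears.

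\emph{Step 1: proof of \eqref{commsquare2}.} I would write $\square\phi=-\pr_0^2\phi+\Delta\phi+n^{-1}\nabla n\cdot\nabla\phi$ and commute $\pr_j$ past each term. For $\pr_j(-\pr_0^2\phi)$ use \eqref{notcoordinate} twice: $\pr_j\pr_0^2\phi=\pr_0^2\pr_j\phi+[\pr_j,\pr_0]\pr_0\phi+\pr_0[\pr_j,\pr_0]\phi$, and the two commutator terms produce $\A\prb^2\phi+(\pr_0\A)\prb\phi+\A^2\prb\phi$, where the leading second-order piece is precisely of the form $(\A^\la)_j{}^\mu\pr_\la\pr_\mu\phi$ and the first-order pieces are $\pr_0A_0\,\prb\phi$ (keeping track that among the $\pr_0\A$ the dangerous one is $\pr_0A_0$, the others being absorbed via \eqref{bootA}) plus $\A^2\prb\phi$. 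For $\pr_j\Delta\phi$ I would use the standard commutator $[\pr_j,\Delta]\phi=R\pr\phi+(\pr A)\pr\phi+A\pr^2\phi$ — as already recorded in the proof of Lemma \ref{recoverA} — and rewrite $R=\R+A^2$ by Gauss and absorb $A\pr^2\phi$ into the $(\A^\la)_j{}^\mu\pr_\la\pr_\mu\phi$ slot. The lapse term $\pr_j(n^{-1}\nabla n\cdot\nabla\phi)$ contributes $(\nabla n)\pr^2\phi+(\pr(n^{-1}\nabla n))\pr\phi$, i.e. again a piece of the $\A\pr^2\phi$ type (recall $(A_0)_{0i}=-n^{-1}\nabla_i n$) plus a $\prb A_0\,\prb\phi$-type term, all of which fit the claimed right-hand side. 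Collecting, one gets \eqref{commsquare2}.

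\emph{Step 2: proof of \eqref{commsquare3}.} This is the commutator of $\square$ with the Laplace–Beltrami operator $\Delta$ on $\Si_t$. I would again use $\square=-\pr_0^2+\Delta+n^{-1}\nabla n\cdot\nabla$, so $[\square,\Delta]=-[\pr_0^2,\Delta]+[n^{-1}\nabla n\cdot\nabla,\Delta]$ (the $[\Delta,\Delta]$ term vanishes). The main input is the commutator $[\pr_0,\Delta]$, which is already available as \eqref{commutdeltapr0}: $[\pr_0,\Delta]\psi=-2k^{ab}\nabla_a\nabla_b\psi+2n^{-1}\nabla n\cdot\nabla(\pr_0\psi)+n^{-1}\Delta n\,\pr_0\psi-2n^{-1}(\nabla n) k\cdot\nabla\psi$. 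Then $[\pr_0^2,\Delta]\phi=\pr_0[\pr_0,\Delta]\phi+[\pr_0,\Delta]\pr_0\phi$; applying \eqref{commutdeltapr0} to $\pr_0\phi$ gives the $-4k^{ab}\nabla_a\nabla_b(\pr_0\phi)$ and $4n^{-1}\nabla n\cdot\nabla(\pr_0^2\phi)$ leading terms, while differentiating \eqref{commutdeltapr0} in $\pr_0$ produces $-2(\pr_0 k)\nabla^2\phi$ — here I substitute $\nabla_0 k=\R-n^{-1}\nabla^2 n+A^2$ from \eqref{estinit5}, whence the $-2\nabla_0 k^{ab}\nabla_a\nabla_b\phi$ term of the statement, plus error terms of the stated $F^{(1)}\prb^2\phi$ type (the $\prb A_0+\A^2$ coming from $\nabla n, k$ and their first $\pr_0$-derivatives) and $F^{(2)}\prb\phi$ type (the $\pr\prb A_0+\A\prb\A+\A^3$ from $\pr_0(n^{-1}\Delta n)$, $\pr_0(n^{-1}\nabla n\, k)$, etc., using the lapse equation $\Delta n=n|k|^2$ and \eqref{bootn} to identify $\nabla^2 n$, $\nabla\pr_0 n$ with components of $\prb A_0$). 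The remaining bracket $[n^{-1}\nabla n\cdot\nabla,\Delta]\phi$ is lower order and contributes only to $F^{(1)}\prb^2\phi+F^{(2)}\prb\phi$ by the Leibniz rule together with the scalar $[\nabla,\Delta]$ commutator $=R\cdot\nabla=(\R+A^2)\cdot\nabla$.

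\emph{Main obstacle.} There is no deep obstacle here — the lemma is a purely algebraic/schematic computation — but the one point requiring genuine care is the bookkeeping in Step 2 of \emph{which} derivatives of the lapse and of $k$ land in $F^{(1)}$ versus $F^{(2)}$, and confirming that every term produced really is of the claimed type and no worse: in particular one must use $\nabla_0 k=\R-n^{-1}\nabla^2 n+A^2$ to trade the \emph{a priori} uncontrolled $\pr_0 k$ for curvature plus second derivatives of $n$, and one must recognize $\nabla^2 n$ and $\nabla(\pr_0 n)$ as sitting at the level of $\pr\prb A_0$ rather than $\prb A_0$. Getting this matching right is the only subtle step; everything else is a direct application of \eqref{eq:YM16}, \eqref{notcoordinate}, \eqref{commutdeltapr0}, the Gauss equation, and the lapse equation.
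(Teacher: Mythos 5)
Your Step 2 follows essentially the same route as the paper's Appendix C proof of \eqref{commsquare3}: decompose $\square=-\pr_0\pr_0+\Delta+n^{-1}\nabla n\cdot\nabla$, use \eqref{commutdeltapr0} twice via $[\pr_0\pr_0,\Delta]\phi=[\pr_0,\Delta]\pr_0\phi+\pr_0([\pr_0,\Delta]\phi)$, and treat $[n^{-1}\nabla n\cdot\nabla,\Delta]$ with the scalar commutator $[\nabla_b,\Delta]\phi=(\R_{b00}{}^c+k_{bd}k^{dc})\nabla_c\phi$. The only (harmless) discrepancy is that the lemma keeps $\nabla_0k^{ab}\nabla_a\nabla_b\phi$ as is; the substitution via the structure equation is performed only later, in the proof of Proposition \ref{prop:waveeqB}.

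Step 1, however, has a genuine gap. The paper proves \eqref{commsquare2} covariantly: since $\mbox{Ric}=0$ one has $[\D_\mu,\square]\phi=\R_{\nu\mu}{}^{\nu\la}\D_\la\phi=0$, and Lemma \ref{lemma:scalarize} with $X=e_j$ then yields $\pr_j(\square\phi)-\square(\pr_j\phi)=2(\A^\la)_j{}^\mu\pr_\la\pr_\mu\phi+\D^\la(\A_\la)_j{}^\ga\pr_\ga\phi+\A^2\prb\phi$, after which the Coulomb gauge identity $\D^\la\A_\la=-\pr_0A_0+A^2$ (see \eqref{eq:YM20'}) reduces the first-order coefficient to exactly $\pr_0A_0$. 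Your $3+1$ computation instead generates terms that are \emph{not} of the claimed form and that you never cancel: $[\pr_j,\Delta]\phi$ produces $R\pr\phi=(\R+A^2)\pr\phi$ and $\pr A\,\pr\phi$, while $\pr_0[\pr_j,\pr_0]\phi$ produces $\pr_0 k\,\pr\phi$, hence by \eqref{eq:structfol1} another $\R\prb\phi$ plus $n^{-1}\nabla^2n\,\prb\phi$. These must cancel among themselves --- they do, precisely because $\mbox{Ric}=0$, which is what the covariant argument packages automatically --- but you cannot ``absorb them via \eqref{bootA}'': the lemma is an identity about which types of terms occur, not an estimate, and the distinction matters downstream. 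For instance, in \eqref{csc7} and \eqref{freddy1} the term $\pr_0A_0\prb\phi$ is estimated using $\norm{\pr_0A_0}_{\lsit{3}}$ from \eqref{bootA0}, a norm that is not available for $\pr_0A$, $\pr A$ or $\R$, so a right-hand side containing $\R\prb\phi$ or $\pr A\,\prb\phi$ would break the applications of the lemma. Either carry out the full cancellation using the Gauss, Codazzi and $\nabla_0k$ equations together with the Coulomb gauge, or adopt the paper's shorter covariant route.
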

\begin{remark}
The derivation of \eqref{commsquare3} involves the full use of the Einstein equations. The resulting structure of the terms on the right hand 
side of  \eqref{commsquare3} is crucial to the strategy of the proof of the main result and reflects ``hidden" null cancellations. We refer the reader to \eqref{csd6}-\eqref{csd13}, where this structure allows us to use bilinear estimates.
\end{remark}

\begin{proof}
 We start with the following general covariant calculation for any scalar function $\phi$ on $\MM$:
\bea\lab{commsquare1}
[\D_\mu,\square]\phi=0.
\eea
This follows trivially from 
 the vanishing of the  spacetime Ricci curvature, i.e.
$$[\D_\mu,\square]\phi=\R_{\nu\,\mu}\,^\nu\,^\la \D_\la\phi=0.$$
On the other hand, Lemma \ref{lemma:scalarize} yields:
\beaa
(e_j)^\mu\square(\D\phi)_\mu=\square(\pr_j\phi)-2(\A^\la)_j\,^{\mu}\,\, \D_\la   \pr_\mu\phi-\D^\la(\A_\la)_j\,^\ga\pr_\ga\phi-
(\A_\lambda)_j\, ^\gamma (\A^\lambda)_\gamma\, ^\sigma \pa_\sigma\phi.
\eeaa
In view of our Coulomb like gauge condition, we obtain for $\pr_j\phi $, $j=1,2,3$: 
$$\pr_j(\square\phi)-\square (\pr_j\phi)=
2(\A^\la)_j\,^{\mu}\,\, \pr_\la   \pr_\mu\phi+ \pr_0 (A_0)^\ga\,_j \, \pr_\ga \phi 
+ \A^2\prb\phi,$$
which together with the identity \eqref{eq:YM11} proves the first part  of the lemma. The proof of  the second part of Lemma \ref{lemma:commutation} is postponed to Appendix C.
\end{proof}

\subsection{Estimates  for $\square\,  \curl(A)$}

\begin{proposition}\lab{prop:commutsquarecurl}
The following estimate holds true, 
$$\sum_{i=1}^3\norm{(-\Delta)^{-\frac12}      \square(\curl(A)_i)        }_{L^2(\MM)}\lesssim M^2\ep^2.$$
\end{proposition}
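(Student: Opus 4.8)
The plan is to start from the wave equation \eqref{eq:YM22} for the scalar components $A_i$, apply $\curl$ to it, and carefully track the commutators so that $\square(\curl(A)_i)$ is expressed as a sum of terms each of which is either (a) a spatial derivative of a ``good'' bilinear/cubic expression controlled by the Bilinear assumptions II (note the presence of $(-\Delta)^{-1/2}$ in front, which makes those assumptions directly applicable), or (b) a term on which one can afford to lose a derivative because one is paying with $(-\Delta)^{-1/2}$ and estimating in $L^2(\MM)$ via the Sobolev embedding \eqref{sob} on the slices $\Si_t$, combined with the $\lsit{2}$ and $\lsitt{2}{7}$ bootstrap bounds for $A$, $A_0$, $k$, $\R$. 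The key structural input is Lemma \ref{lemma:commutation}, in particular \eqref{commsquare2}, which tells us that $[\pr_j,\square]\phi$ only produces terms of the schematic form $\A^\la{}_j{}^\mu\,\pr_\la\pr_\mu\phi + \pr_0A_0\,\prb\phi + \A^2\prb\phi$ when $\phi$ is a scalar.

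\textbf{Main steps.} First I would write $\curl(A)_i = \in_i{}^{jl}\pr_j A_l$ and commute:
$$\square(\curl(A)_i) = \in_i{}^{jl}\pr_j(\square A_l) + \in_i{}^{jl}[\square,\pr_j]A_l + (\text{terms from } [\square,\in_i{}^{jl}]),$$
where the last piece comes from the fact that $\pr_j$ is a frame (not coordinate) derivative, so that differentiating the antisymmetric symbol against the connection produces $A\pr(\square\text{-type})$ contributions; these are harmless. For the first term I substitute the right-hand side of \eqref{eq:YM22}, i.e.
$$\pr_j(\square A_l) = \pr_j\big(-\pr_l(\pr_0A_0) + A^m\pr_m A_l + A^m\pr_l A_m + A_0\prb\A + A\prb A_0 + \A^3\big),$$
and observe that the first-order term $\pr_j\pr_l(\pr_0 A_0)$ is \emph{symmetric} in $j,l$ and hence killed by the antisymmetrization $\in_i{}^{jl}$ up to commutator errors of the form $A\pr(\pr_0A_0) = A\pr\prb A_0$, which are handled by H\"older on the slices using $\norm{A}_{\lsit{2}}$ (actually $\lsit{6}$ via \eqref{sobineqsit}) against $\norm{\pr\prb A_0}_{\lsit{\frac32}}$ from \eqref{bootA0}, again with the extra $(-\Delta)^{-1/2}$ to spare. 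The genuine quadratic terms $\in_i{}^{jl}\pr_j(A^m\pr_m A_l)$ and $\in_i{}^{jl}\pr_j(A^m\pr_l A_m)$ are precisely what the null forms $Q_{jm}$ detect: after expanding by Leibniz, the ``dangerous'' pieces reassemble into $\in_i{}^{jl}A^m(\pr_j\pr_m A_l - \ldots)$ which, modulo connection commutators, is a sum of $Q_{jm}(A,A)$-type and $\pr(A^m)\pr_m(A)$-type expressions; applying $(-\Delta)^{-1/2}$ and invoking \eqref{bil3} and \eqref{bil4} bounds these by $M^3\ep^2$ — but since the final target is $M^2\ep^2$, I should be careful to only use these bilinear assumptions on pieces that are \emph{already} differentiated once, i.e. to absorb one of the two derivatives into a $(-\Delta)^{-1/2}\pr \sim O(1)$ Bochner factor, leaving the genuinely bilinear $(-\Delta)^{-1/2}(\text{quadratic})$ in the desired form. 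Second, the commutator $\in_i{}^{jl}[\square,\pr_j]A_l$ is expanded via \eqref{commsquare2} with $\phi = A_l$: this produces $\A^\la{}_j{}^\mu\pr_\la\pr_\mu A_l$, $\pr_0A_0\,\prb A_l$, and $\A^2\prb A_l$. The first of these contains second derivatives of $A$ and is the delicate one; I would split $\pr_\la\pr_\mu$ according to whether both indices are spatial (then use $(-\Delta)^{-1/2}$ and $\norm{\A}_{\lsitt{2}{7}}\norm{\pr^2 A}\ldots$ — wait, $\pr^2 A$ is not bootstrapped, so instead one rewrites $\A\,\pr^2 A = \pr(\A\pr A) - \pr\A\,\pr A$ and uses $(-\Delta)^{-1/2}\pr \sim$ bounded together with \eqref{bil4}-type control of $\pr\A\,\pr A$) or involves a $\pr_0$ (then use the $\prb A \in \lsit{2}$ bound from \eqref{bootA}). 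The lower-order terms $\pr_0A_0\,\prb A_l$ and $\A^2\prb A_l$ are estimated by H\"older on $\Si_t$: $\norm{\pr_0A_0}_{\lsit{3}}\norm{\prb A}_{\lsit{2}}$ (after removing $(-\Delta)^{-1/2}$ via \eqref{sob}, so really one needs these in $\lt{6/5}$, which follows by H\"older from the $\lsit{3}$ and $\lsit{2}$, resp. $\lsit{2}$ bootstrap bounds) and $\norm{\A}_{\lsit{\infty}}^{?}$... more carefully $\norm{\A^2\prb A}_{\lsit{6/5}} \les \norm{\A}_{\lsit{2}}\norm{\A}_{\lsit{6}}\norm{\prb A}_{\lsit{?}}$ — here one uses the Strichartz bootstrap $\norm{A}_{\lsitt{2}{7}}$ to gain the needed integrability and time-integrate, giving $M^3\ep^3 \ll M^2\ep^2$.

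\textbf{Main obstacle.} The hard part is the bookkeeping on the second-derivative term $\in_i{}^{jl}\A^\la{}_j{}^\mu\pr_\la\pr_\mu A_l$ coming from the commutator \eqref{commsquare2}: one cannot afford $\pr^2 A$ in any norm, so every such term must be algebraically reorganized — using the Coulomb gauge $\nab^l A_l = A^2$, the wave equation \eqref{eq:YM22} itself to trade $\square A = \pr_0^2 A + \Delta A + \ldots$, and integration by parts under $(-\Delta)^{-1/2}$ — into either a genuine null-form bilinear expression (to feed \eqref{bil3}, \eqref{bil4}) or a product where the extra $(-\Delta)^{-1/2}$ and one Bochner derivative suffice. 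Keeping the final constant at $M^2\ep^2$ rather than $M^3\ep^2$ requires that the genuinely quadratic-in-$A$ contributions enter only through the $(-\Delta)^{-1/2}(\text{quadratic})$ combination bounded by Bilinear assumptions II at level $M^3\ep^2$ — so in fact $M^3\ep^2$ is what one honestly gets from those, and I would re-examine whether the statement's $M^2\ep^2$ is achieved because the truly dangerous quadratic terms in $\curl$ of \eqref{eq:YM22} carry a structural gain (one of the two $A$'s appears differentiated, producing $\pr A \in \lsit{2}$ which combines with $A \in \lsitt{2}{7}$ and time-integration over $[0,T^*]\subset[0,1]$ to yield $M^2\ep^2$), leaving the $M^3\ep^2$ bilinear assumptions to absorb only the lower-strata pieces with room to spare.
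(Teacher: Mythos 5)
Your overall architecture coincides with the paper's proof: antisymmetrize the differentiated wave equation \eqref{eq:YM22} so that the term $\pr_j\pr_i(\pr_0A_0)$ cancels up to $A\pr\prb A_0$ commutators, recognize the null forms $Q_{ij}(A^l,A_l)$ and $\pr(A^l)\pr_l(A)$ that feed the Bilinear assumptions II, and expand $[\square,\pr_j]A_i$ via \eqref{commsquare2}. (As an aside, the bound the paper's own proof actually delivers is $M^3\ep^2$, coming directly from \eqref{bil1}, \eqref{bil3}, \eqref{bil4}; the $M^2\ep^2$ in the statement is harmless for the way the proposition is later used, so you should not contort the argument to save a factor of $M$.)

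The genuine gap is in your handling of the second-derivative commutator term $(\A^\la)_j{}^\mu\,\pr_\la\pr_\mu A_i$. Splitting $\la,\mu$ into spatial and temporal indices produces four contributions, and the two in which a $\pr_0$ sits at second order, namely $(A^l)_{j0}\,\pr_l\pr_0(A_i)$ and $(A_0)_{j0}\,\pr_0\pr_0(A_i)$, cannot be dispatched by ``the $\prb A\in\lsit{2}$ bound'' as you propose: they carry two derivatives of $A$, one of them temporal, and neither a Bochner factor (note $(-\Delta)^{-\frac12}\pr_0$ is \emph{not} bounded on $L^2(\Si_t)$, a point the paper makes explicitly) nor plain H\"older closes the estimate, since $\pr\prb A$ is not bootstrapped. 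The device the paper uses, and which is absent from your proposal, is the curvature identity \eqref{linkRA}: schematically $\pr_0A_i=\pr_iA_0+\R_{0\,i\,\c\,\c}+\A^2$, combined with the symmetries of $\R$ and the Einstein equations to write $\R_{0\,i\,\c\,\c}=\pr A+\A^2$, and, for the double-$\pr_0$ term, the Bianchi identity $\D_0\R_{0\,i\,\c\,\c}=\D_l\R_{l\,i\,\c\,\c}$. These substitutions convert the offending temporal derivatives into spatial ones, after which the terms reassemble into $\pr(A^l\pr_l A)+\pr(A^l)\pr_l(A)$ plus acceptable remainders, controlled by \eqref{bil1} and \eqref{bil4}. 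Your fallback of trading $\pr_0^2A$ through the wave equation could plausibly be made to work for the $(A_0)_{j0}\pr_0\pr_0(A_i)$ piece, but it does nothing for the mixed term $(A^l)_{j0}\,\pr_l\pr_0(A_i)$, which is one of the two genuinely dangerous contributions; without the curvature and Bianchi substitution that term is left unestimated and the proof does not close.
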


\begin{proof}
We have:
\bea\lab{csc1}
\square(\pr_j(A_i)-\pr_i(A_j))=\pr_j(\square(A_i))-\pr_i(\square(A_j))+[\square,\pr_j](A_i)-[\square,\pr_i](A_j).
\eea
We evaluate the first term on the right-hand side of \eqref{csc1} by differentiating \eqref{eq:YM22}. We obtain:
\bea\lab{csc2}
\pr_j(\square(A_i))=-\pr_j(\pr_i (\pr_0 A_0))+\pr_j(A^l \pr_i A_l)+\pr_j(h^{(1)}_i),
\eea
where $h^{(1)}_i$ is given by:
$$h^{(1)}_i=A^l\pr_lA_i+A_0\prb\A+A\prb A_0+\A^3=A^l\pr_lA_i+A_0(\prb A, \pr A_0)+\A\pr_0A_0+A\pr A_0+\A^3.$$
We estimate $h^{(1)}_i$ using the bootstrap assumptions \eqref{bootA} and \eqref{bootA0} for $A$ and $A_0$, the Sobolev embedding on $\Si_t$ \eqref{sob}, and the Bochner inequality \eqref{bochnerestimates} on $\Si_t$:
\bea\lab{csc3}
&&\norm{(-\Delta)^{-\frac12}\pr_j(h^{(1)}_i)}_{L^2(\MM)}\\
\nn&\les& \norm{h^{(1)}_i}_{L^2(\MM)}\\
\nn&\les& \norm{A^l\pr_l(A_i)}_{L^2(\MM)}+\norm{A_0}_{\lsitt{2}{\infty}}(\norm{\prb A}_{\lsit{2}}+\norm{\pr A_0}_{\lsit{2}})\\
\nn&&+\norm{\A}_{\lsit{\frac{21}{4}}}\norm{\pr_0A_0}_{L^2_tL^{\frac{42}{13}}(\Sigma_t)}+\norm{A}_{\lsit{6}}\norm{\pr A_0}_{\lsit{3}}+\norm{\A}^3_{\lsit{6}}\\
\nn&\les& M^2\ep^2.
\eea
In view of \eqref{csc2}, we have:
\bea\lab{csc4}
\pr_j(\square(A_i))-\pr_i(\square(A_j))&=& -\pr_j(\pr_i (\pr_0 A_0))+\pr_i(\pr_j (\pr_0 A_0))\\
\nn&& +\pr_j(A^l \pr_i A_l)-\pr_i(A^l \pr_j A_l)+\pr_j(h^{(1)}_i)-\pr_i(h^{(1)}_j)\\
\nn&=& h^{(2)}_{ij},
\eea
where $h^{(2)}_{ij}$ is given by:
$$h^{(2)}_{ij}=Q_{ij}(A^l,A_l)+ A\pr(\pr_0A_0)+A^2(\prb A, \pr\A)+A^2\pr_0A_0+\pr_j(h^{(1)}_i)-\pr_i(h^{(1)}_j),$$
and where the quadratic form $Q_{ij}$ is defined as $Q_{ij}(\phi, \psi)=\pr_i\phi\pr_j\psi-\pr_j\phi\pr_i\psi$. Note that the most dangerous term in $h^{(2)}_{ij}$ is $Q_{ij}(A^l,A_l)$. Using the bilinear assumption  \eqref{bil3}, the Sobolev embeddings on $\Si_t$ \eqref{sob} and \eqref{sobineqsit}, the bootstrap assumptions \eqref{bootA} and \eqref{bootA0} for $A$ and $A_0$, and the estimate \eqref{csc3}, we have:
\bea\lab{csc5}
&&\norm{(-\Delta)^{-\frac12}(h^{(2)}_{ij})}_{L^2(\MM)}\\
\nn&\les&  \norm{(-\Delta)^{-\frac12}(Q_{ij}(A^l,A_l))}_{L^2(\MM)}+\norm{A\pr(\pr_0A_0)}_{\lsitt{2}{\frac{6}{5}}}+\norm{A^2(\prb A, \pr A_0)}_{\lsitt{2}{\frac{6}{5}}}\\
\nn&&+\norm{A^2\pr_0A_0}_{\lsitt{2}{\frac{6}{5}}}+\norm{(-\Delta)^{-\frac12}\pr_j(h^{(1)}_i)}_{L^2(\MM)}+\norm{(-\Delta)^{-\frac12}\pr_i(h^{(1)}_j)}_{L^2(\MM)}\\
\nn&\les& M^3\ep^2+\norm{A}_{\lsit{\frac{21}{4}}}\norm{\pr(\pr_0A_0)}_{\lsitt{2}{\frac{14}{9}}}\\
\nn&&+\norm{A}^2_{\lsit{6}}(\norm{\prb A}_{\lsit{2}}+\norm{\pr A_0}_{\lsit{2}})+\norm{A}^2_{\lsit{\frac{42}{11}}}\norm{\pr_0A_0}_{\lsitt{2}{\frac{42}{13}}}+M^2\ep^2\\
\nn&\les& M^3\ep^2.
\eea

Next, we consider the commutator terms in the right-hand side of \eqref{csc1}. In view of \eqref{commsquare2}, we have:
\bea\lab{csc6}
[\square,\pr_j](A_i)=2(A^\la)_j\,^{\mu}\,\, \pr_\la   \pr_\mu(A_i)+h^{(3)}_{ij},
\eea
where $h^{(3)}_{ij}$ is given by:
\beaa
h^{(3)}_{ij} &=& \pr_0(n^{-1}\nab n)\pr_0 A+\pr_0 A_0\pr A+ \A^2\prb A\\
&=& \pr_0(n^{-1}\nab n)\pr_0A+\pr(A\pr_0 A_0)+A_i\pr\pr_0A_0+ \A^2\prb A.
\eeaa
Using the Sobolev embeddings on $\Si_t$ \eqref{sob} and \eqref{sobineqsit}, the Bochner inequality \eqref{bochnerestimates} on $\Si_t$, the estimate \eqref{bootn} for $n$, and the bootstrap assumptions \eqref{bootA} and \eqref{bootA0} for $A$ and $A_0$, we have:
\bea\lab{csc7}
&&\norm{(-\Delta)^{-\frac12}(h^{(3)}_{ij})}_{L^2(\MM)}\\
\nn&\les&  \norm{\pr_0(n^{-1}\nab n)\pr_0A}_{\lsit{\frac{6}{5}}}+\norm{A\pr_0A_0}_{L^2(\MM)}+\norm{A\pr\pr_0A_0}_{\lsit{\frac{6}{5}}}+\norm{\A^2\prb A}_{\lsit{\frac{6}{5}}}\\
\nn&\les& \norm{\pr_0(n^{-1}\nab n)}_{\lsit{3}}\norm{\pr_0A}_{\lsit{2}}+\norm{A}_{\lsit{\frac{21}{4}}}\norm{\pr_0A_0}_{\lsitt{2}{\frac{42}{13}}}\\
\nn&&+\norm{A}_{\lsit{\frac{21}{4}}}\norm{\pr\pr_0A_0}_{\lsitt{2}{\frac{14}{9}}}+\norm{\A}^2_{\lsit{6}}\norm{\prb A}_{\lsit{2}}\\
\nn&\les& M^2\ep^2.
\eea

Next, we consider the term $(A^\la)_j\,^{\mu}\,\, \pr_\la   \pr_\mu(A_i)$. We have:
\bea\lab{csc8}
(A^\la)_j\,^{\mu}\,\, \pr_\la   \pr_\mu(A_i)&=&-(A_0)_j\,^l\,\, \pr_0\pr_l(A_i)+(A_0)_{j\,0}\,\, \pr_0\pr_0(A_i)\\
\nn &+& (A^l)_j\,^m\,\, \pr_l \pr_m(A_i)-(A^l)_{j\,0}\,\, \pr_l \pr_0(A_i)+{A}^2\prb A.
\eea
Note that the most dangerous terms in \eqref{csc8} are the third and the fourth one. They will both  require the use of bilinear estimates.

We deal with each term in the right-hand side of \eqref{csc8}, starting with the first one. We have:
\beaa
(A_0)_j\,^l\,\, \pr_0\pr_l(A_i)&=& (A_0)_j\,^l\,\, \pr_l(\pr_0(A_i))+\A^2\prb A\\
&=& \pr_l (A_0\prb A)+\pr_l(A_0)\prb A+\A^2\prb A,
\eeaa
which together with the Sobolev embeddings on $\Si_t$ \eqref{sob} and \eqref{sobineqsit}, and the bootstrap assumptions \eqref{bootA} and \eqref{bootA0} for $A$ and $A_0$ yields:
\bea\lab{csc10}
&&\norm{(-\Delta)^{-\frac12}((A_0)_j\,^l\,\, \pr_0\pr_l(A_i))}_{L^2(\MM)}\\
\nn&\les& \norm{A_0\prb A}_{L^2(\MM)}+\norm{\pr_l(A_0)\prb A}_{\lsit{\frac{6}{5}}}+\norm{\A^2\prb A}_{\lsit{\frac{6}{5}}}\\
\nn&\les& \norm{A_0}_{\lsitt{2}{\infty}}\norm{\prb A}_{\lsit{2}}+\norm{\pr A_0}_{\lsit{3}}\norm{\prb A}_{\lsit{2}}\\
&&\nn+\norm{\A}^2_{\lsit{6}}\norm{\prb A}_{\lsit{2}}\\
\nn&\les& M^2\ep^2.
\eea 

Next, we consider the second term in the right-hand side of \eqref{csc8}. For that term, we would like to factorize the $\pr_0$ derivative in order to get two terms of the type $\pr_0(A_0\pr_0(A))$ and $\pr_0(A_0)\pr_0(A)$, and then conclude using elliptic estimates and Sobolev embeddings on $\Si_t$. A similar strategy worked for the first term in the right-hand side of \eqref{csc8}. But it does not work directly for this term since $(-\Delta)^{-\frac12}\pr_0$ is not necessarily bounded on $L^2(\Si_t)$. Thus, we first start by showing how one may replace one $\pr_0$ with $\pr$. Using the identity  \eqref{linkRA} relating $\A$ and $\R$, we have:
\beaa
(A_0)_{j\,0}\,\, \pr_0\pr_0(A_i)&=&(A_0)_{j\,0}\,\, \pr_0(\pr_0(A_i))+\A^2\prb A\\
&=&(A_0)_{j\,0}\,\, \pr_0(\pr_i(A_0))+A_0\pr_0(\R_{0\,i\,\c\,\c})+\A^2\prb A+A\A\pr_0A_0\\
&=& A_0\pr_i(\pr_0(A_0))+A_0\D_0\R_{0\,i\,\c\,\c}+\A^2\R+\A^2\prb A+A\A\pr_0A_0.
\eeaa
Using the Bianchi identities for $\R$, we have:
$$\D_0\R_{0\,i\,\c\,\c}=\D_l\R_{l\,i\,\c\,\c}.$$
Thus we obtain:
\beaa
(A_0)_{j\,0}\,\, \pr_0\pr_0(A_i)&=& A_0\pr_i(\pr_0(A_0))+A_0\D_l\R_{l\,i\,\c\,\c}+\A^2\R+\A^2\prb A+A\A\pr_0A_0\\
&=& A_0\pr_i(\pr_0(A_0))+\pr_l(A_0\R)+\pr_l(A_0)\R+\A^2\R+\A^2\prb A+A\A\pr_0A_0.
\eeaa
Using the Sobolev embeddings on $\Si_t$ \eqref{sob} and \eqref{sobineqsit}, the bootstrap assumptions \eqref{bootA} and \eqref{bootA0} for $A$ and $A_0$, and the bootstrap assumption \eqref{bootR} for $\R$, we obtain:
\bea\lab{csc9}
&&\norm{(-\Delta)^{-\frac12}((A_0)_{j\,0}\,\, \pr_0\pr_0(A_i))}_{L^2(\MM)}\\
\nn&\les& \norm{A_0\pr_i(\pr_0(A_0))}_{\lsitt{2}{\frac{6}{5}}}+\norm{A_0\R}_{L^2(\MM)}+\norm{\pr_l(A_0)\R}_{\lsit{\frac{6}{5}}}\\
\nn&&+\norm{\A^2\R}_{\lsit{\frac{6}{5}}}+\norm{\A^2\prb A}_{\lsit{\frac{6}{5}}}+\norm{A\A\pr_0A_0}_{\lsitt{2}{\frac{6}{5}}}\\
\nn&\les&\norm{A_0}_{\lsit{6}}\norm{\pr_i(\pr_0(A_0))}_{\lsit{\frac{3}{2}}}+\norm{A_0}_{\lsitt{2}{\infty}}\norm{\R}_{\lsit{2}}\\
\nn&&+\norm{\pr_l(A_0)}_{\lsit{3}}\norm{\R}_{\lsit{2}}+\norm{A}_{\lsit{\frac{14}{5}}}\norm{\A}_{\lsit{6}}\norm{\pr_0A_0}_{\lsitt{2}{\frac{42}{13}}}\\
\nn&&+\norm{\A}^2_{\lsit{6}}(\norm{\R}_{\lsit{2}}+\norm{\prb A}_{\lsit{2}})\\
\nn&\les & M^2\ep^2.
\eea

Next, we consider the third term in the right-hand side of \eqref{csc8}. We have:
\beaa
(A^l)_j\,^m\,\, \pr_l \pr_m(A_i)&=& A^l\pr_m(\pr_l(A))+\A^2\prb A\\
\nn &=& \pr(A^l\pr_l(A))+\pr(A^l)\pr_l(A)+\A^2\prb A.
\eeaa
Together with the bilinear  assumptions  \eqref{bil1} and \eqref{bil4}, the Sobolev embeddings on $\Si_t$ \eqref{sob} and \eqref{sobineqsit}, and the bootstrap assumptions \eqref{bootA} and \eqref{bootA0} for $A$ and $A_0$, we obtain:
\bea\lab{csc13}
&&\norm{(-\Delta)^{-\frac12}((A^l)_j\,^m\,\, \pr_l \pr_m(A_i))}_{L^2(\MM)}\\
\nn&\les& \norm{A^l\pr_l(A)}_{L^2(\MM)}+\norm{(-\Delta)^{-\frac12}(\pr(A^l)\pr_l(A))}_{L^2(\MM)}+\norm{\A^2\prb A}_{\lsit{\frac{6}{5}}}\\
\nn&\les & M^3\ep^2.
\eea

Finally, we consider the fourth term in the right-hand side of \eqref{csc8}. We would like to factorize the $\pr_0$ derivative in order to get two terms of the type $\pr_0(A^l\pr_l(A))$ and $\pr_0(A^l)\pr_l(A)$, and then conclude using the bilinear assumptions \eqref{bil1} and \eqref{bil4}. A similar strategy worked for the third term in the right-hand side of \eqref{csc8}. But it does not work directly for this term since $(-\Delta)^{-\frac12}\pr_0$ is not necessarily bounded on $L^2(\Si_t)$. Thus, as for the second term, we first start by showing how one may replace $\pr_0$ with $\pr$. Using the identity \eqref{linkRA} relating $\A$ and $\R$, we have schematically:
$$\pr_0(A_i)-\pr_i(A_0)+\A^2=\R_{0\,i\,\c\,\c}$$
which yields:
\beaa
(A^l)_{j\,0}\,\, \pr_l \pr_0(A_i)&=& (A^l)_{j\,0}\,\, \pr_l(\pr_0(A_i))+\A^2\prb A\\
&=& (A^l)_{j\,0}\,\, \pr_l(\pr_i(A_0))+(A^l)_{j\,0}\,\,\pr_l(\R_{0\,i\,\c\,\c})+\A^2(\prb A, \pr A_0)\\
&=& \pr_l((A^l)_{j\,0}\,\,\R_{0\,i\,\c\,\c})+\pr_l(A^l)\R+A\pr^2A_0+\A^2(\prb A, \pr A_0)\\
&=& \pr_l((A^l)_{j\,0}\,\,\R_{0\,i\,\c\,\c})+\A^2\R+\A^2(\prb A, \pr A_0)+A\pr^2A_0,
\eeaa
where we used in the last inequality our Coulomb like gauge choice which yields $\pr_l(A^l)=\nabla_l(A^l)=A^2$. Thus, we have:
\bea\lab{csc11}
(A^l)_{j\,0}\,\, \pr_l \pr_0(A_i)=\pr_l((A^l)_{j\,0}\,\,\R_{0\,i\,\c\,\c})+h^{(4)}_{ij},
\eea
where $h^{(4)}_{ij}$ is given by:
$$h^{(4)}_{ij}=\A^2\R+\A^2(\prb A, \pr A_0)+A\pr^2A_0.$$
Using the Sobolev embeddings on $\Si_t$ \eqref{sob} and \eqref{sobineqsit}, the bootstrap assumptions \eqref{bootA} and \eqref{bootA0} for $A$ and $A_0$, and the bootstrap assumption \eqref{bootR} for $\R$, we obtain:
\bea\lab{csc12}
&&\norm{(-\Delta)^{-\frac12}(h^{(4)}_{ij})}_{L^2(\MM)}\\
\nn&\les& \norm{\A^2\R}_{\lsit{\frac{6}{5}}}+\norm{\A^2(\prb A, \pr A_0)}_{\lsit{\frac{6}{5}}}+\norm{\A\pr^2A_0}_{\lsit{\frac{6}{5}}}\\ 
\nn&\les& \norm{\A}^2_{\lsit{6}}(\norm{\R}_{\lsit{2}}+\norm{\prb A}_{\lsit{2}}+\norm{\pr A_0}_{\lsit{2}})\\
\nn&&+\norm{A}_{\lsit{6}}\norm{\pr^2A_0}_{\lsit{\frac{3}{2}}}\\
\nn&\les& M^2\ep^2.
\eea
Next, we estimate the first term on the right-hand side of \eqref{csc11}. Since $\R_{0\,i\,0\,0}=0$, the terms $\R_{0\,i\,\c\,\c}$ are of two types: $\R_{0\,i\,m\,n}$ or $\R_{0\,i\,0\,m}$. Now, from the symmetries of $\R$ and the Einstein equations, we have:
$$\R_{0\,i\,m\,n}=\R_{m\,n\,0\,i}\textrm{ and }\R_{0\,i\,0\,m}=-\R_{n\,i\,n\,m}.$$
Also, in view of the link between $\R$ and $\A$ \eqref{linkRA}, we have schematically:
$$\R_{m\,n\,0\,i}=\pr_m(A_n)-\pr_n(A_m)+A^2\textrm{ and }\R_{n\,i\,n\,m}=\pr_n(A_i)-\pr_i(A_n)+A^2.$$
Thus, we obtain schematically:
$$\R_{0\,i\,\c\,\c}=\pr A+A^2.$$
which, using the Coulomb gauge, yields:
\beaa
\pa_l \left((A^l)_{j\,0}\,\,\R_{0\,i\,\c\,\c}\right)&=&A^l\pr_l\pr(A)+A^2\pr A\\
&=&\pr(A^l\pr_l(A))+\pr(A^l)\pr_l(A)+A^2\pr A.
\eeaa
Together with the bilinear assumptions  \eqref{bil1} and \eqref{bil4}, the Sobolev embeddings on $\Si_t$ \eqref{sob} and \eqref{sobineqsit}, and the bootstrap assumptions \eqref{bootA} and \eqref{bootA0} for $A$ and $A_0$, we obtain:
\bea\lab{csc12bis}
&&\norm{(-\Delta)^{-\frac12}(\pr_l((A^l)_{j\,0}\,\,\R_{0\,i\,\c\,\c}))}_{L^2(\MM)}\\
\nn&\les& \norm{A^l\pr_l(A)}_{L^2(\MM)}+\norm{(-\Delta)^{-\frac12}(\pr(A^l)\pr_l(A))}_{L^2(\MM)}+\norm{A^2\pr A}_{\lsit{\frac{6}{5}}}\\
\nn&\les & M^3\ep^2.
\eea
Now, \eqref{csc11}-\eqref{csc12bis} imply:
\bea\lab{csc12ter}
\norm{(-\Delta)^{-\frac12}((A^l)_{j\,0}\,\, \pr_l \pr_0(A_i))}_{L^2(\MM)}\les M^3\ep^2.
\eea
Finally, \eqref{csc8}-\eqref{csc12ter} imply:
\bea\lab{csc15}
\norm{(-\Delta)^{-\frac12}((A^\la)_j\,^{\mu}\,\, \pr_\la   \pr_\mu(A_i))}_{L^2(\MM)}\les M^3\ep^2.
\eea

In the end, \eqref{csc1}, \eqref{csc4}-\eqref{csc7}, and \eqref{csc15} yield:
$$\norm{(-\Delta)^{-\frac12}\square(\pr_j(A_i))}_{L^2(\MM)}\les M^3\ep^2.$$
This implies:
$$\norm{(-\Delta)^{-\frac12}[\square(\pr_j(A_i)-\pr_i(A_j))]}_{L^2(\MM)}\les M^3\ep^2,$$
which concludes the proof of the proposition.
\end{proof}

\subsection{Estimates for  $\square B$}
 Here we derive a wave equation for each component of $B=\Delta^{-1}\curl(A)$ and prove  the following, 
\begin{proposition}[Estimates  for $\square B$]\lab{prop:waveeqB}
The components  $B_i=(-\Delta)^{-1}(\curl(A)_i)$ verify the following estimate,
\bea
\sum_{i=1}^3\norm{\pr \,\square B_i}_{L^2(\MM)}  \lesssim M^2\ep^2.\label{spacetimeB}
\eea

We also have,
\bea
\sum_{i=1}^3\norm{\pr_0(\pr_0(B_i))}_{L^2(\MM)}\lesssim M\ep.
\eea
\end{proposition}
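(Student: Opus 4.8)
The plan is to derive a wave equation for each component $B_i=(-\Delta)^{-1}(\curl(A)_i)$ starting from the equation \eqref{eq:YM22} satisfied by the $A_i$, commute $\square$ past the nonlocal operators $(-\Delta)^{-1}$ and $\curl$, and then estimate all resulting terms using the bootstrap assumptions together with the bilinear bootstrap assumptions and the structural lemmas of this section. First I would write $\square B_i = (-\Delta)^{-1}\square(\curl(A)_i) + [\square,(-\Delta)^{-1}]\curl(A)_i$. The first piece is controlled in $L^2(\MM)$ already by Proposition \ref{prop:commutsquarecurl}, which gives $\norm{(-\Delta)^{-\frac12}\square(\curl(A)_i)}_{L^2(\MM)}\lesssim M^2\ep^2$; combined with the Bochner estimate \eqref{bochnerestimates} (applied to $\pr$, using Remark \ref{covornotcov}) this yields $\norm{\pr(-\Delta)^{-1}\square(\curl(A)_i)}_{L^2(\MM)}\lesssim M^2\ep^2$. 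For the commutator term, I would expand $[\square,(-\Delta)^{-1}] = -(-\Delta)^{-1}[\square,\Delta](-\Delta)^{-1}$ and invoke the commutator formula \eqref{commsquare3} of Lemma \ref{lemma:commutation}: the leading terms $k^{ab}\nabla_a\nabla_b\pr_0\phi$, $n^{-1}\nabla n\nabla\pr_0^2\phi$, $\nabla_0 k^{ab}\nabla_a\nabla_b\phi$ and the $F^{(1)}\prb^2\phi$, $F^{(2)}\prb\phi$ terms each get absorbed via Sobolev embeddings on $\Si_t$ (\eqref{sob}, \eqref{sobineqsit}, \eqref{eq:gnirenbergsit}), the bootstrap bounds \eqref{bootA}, \eqref{bootA0}, \eqref{bootk}, \eqref{bootn}, the estimates \eqref{estb1}–\eqref{estb2} for $B$, and — crucially for the genuinely second-order-in-$\pr_0$ pieces — the curvature--flux form of the bilinear assumptions \eqref{bil6} and \eqref{bil7}, exactly as flagged in the remark after Lemma \ref{lemma:commutation}. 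The $\pr$ on the outside is handled by applying $\pr(-\Delta)^{-1}$ and using the Bochner inequality again, trading it for $(-\Delta)^{-\frac12}$ acting on the commutator expression, so everything reduces to $\lsit{6/5}$ bounds on products, which the above inputs supply with a gain of $M^2\ep^2$.

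For the second estimate, $\norm{\pr_0(\pr_0(B_i))}_{L^2(\MM)}\lesssim M\ep$, I would use \eqref{eq:YM16} to write $\pr_0^2 B_i = -\square B_i + \Delta B_i + n^{-1}\nabla n\cdot\nabla B_i$. The middle term is bounded by $\norm{\pr^2 B_i}_{\lsit{2}}\lesssim M\ep$ from Proposition \ref{lemma:estB}; the last term by $\norm{\nabla n}_{L^\infty(\MM)}\norm{\nabla B_i}_{\lsit{2}}\lesssim M\ep$ using \eqref{bootn} and Proposition \ref{lemma:estB}; and $\square B_i$ in $L^2(\MM)$ is controlled by the (integrated, non-$\pr$-differentiated) version of the first part — indeed $\norm{\square B_i}_{L^2(\MM)}$ follows from $\norm{(-\Delta)^{-\frac12}\square\curl(A)_i}_{L^2(\MM)}$ (Proposition \ref{prop:commutsquarecurl}, composed with $\nabla(-\Delta)^{-1/2}$ bounded on $L^2$) plus the commutator term estimated as above, all $\lesssim M^2\ep^2 \lesssim M\ep$.

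The main obstacle is the appearance of genuinely second-order derivatives in the normal direction $\pr_0^2 A$ (and $\pr_0^2 A_0$) inside $[\square,\Delta]$ and inside the commutator $[\square,\pr_j]$ terms hidden in Proposition \ref{prop:commutsquarecurl}: naively these terms are not in $L^2(\MM)$ at our level of regularity, since $(-\Delta)^{-1/2}\pr_0$ is not bounded on $L^2(\Si_t)$. The resolution — following the pattern already used in the proof of Proposition \ref{prop:commutsquarecurl} — is to trade one $\pr_0$ for a spatial derivative using the identity \eqref{linkRA} relating $\A$ to $\R$ and the Bianchi identities $\D_0\R_{0i\cdot\cdot}=\D_l\R_{li\cdot\cdot}$, thereby converting the dangerous term into a divergence of a product to which the bilinear estimates \eqref{bil1}, \eqref{bil4}, \eqref{bil6} apply after integrating $\pr_l$ by parts against $(-\Delta)^{-1}$. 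Getting the bookkeeping of these "hidden null cancellations" right — ensuring that every second-$\pr_0$-derivative term is exhibited either as a total spatial derivative of a null bilinear form or as a curvature contraction covered by \eqref{bil6}–\eqref{bil7} — is the delicate part; once that structure is laid bare, the remaining estimates are routine applications of Sobolev embeddings and the bootstrap bounds.
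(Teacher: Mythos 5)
Your proposal follows essentially the same route as the paper: the decomposition $\square B_i=-(-\Delta)^{-1}[\square,\Delta]B_i+(-\Delta)^{-1}\square(\curl(A)_i)$, Proposition \ref{prop:commutsquarecurl} for the second piece, the commutator formula \eqref{commsquare3} with the constraint equation, the structure equation for $\nabla_0k$ and the Bianchi identities feeding into the bilinear assumptions, the crucial $\nabla n\in L^\infty$ bound for the $\pr_0\pr_0 B$ term, and \eqref{eq:YM16} to close the second estimate. The only minor imprecisions are that the term $k^{ab}\nabla_a\nabla_b(\pr_0 B)$ is handled in the paper via \eqref{bil5} rather than \eqref{bil7}, and the absorption of the $M\ep\norm{\pr_0\pr_0 B}_{L^2(\MM)}$ term coupling the two estimates is left implicit in your write-up, but the argument is the same.
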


\begin{proof}  The estimates for $\square B$  are simpler than those for $\pr  \square B$ and $\square \pr B$.
We prove first the estimates for  $\pr  \square B$ and derive those for  $\square \pr B $ using the commutation formula 
\eqref{commsquare2}. 

We have:
\bea
\nn\square(B_i)&=&[\square,(-\Delta)^{-1}](\curl(A)_i)+(-\Delta)^{-1}(\square(\curl(A)_i))\\
\nn&=& (-\Delta)^{-1}[\square,\Delta](-\Delta)^{-1}(\curl(A)_i)+(-\Delta)^{-1}(\square(\curl(A)_i))\\
\lab{eq:fondwaveeqB}&=& (-\Delta)^{-1}[\square,\Delta](B_i)+(-\Delta)^{-1}(\square(\curl(A)_i)).
\eea
Thus, we obtain:
\bea\lab{csd1}
&&\norm{\pr\square(B_i)}_{L^2(\MM)}\\
\nn&\les & \norm{\pr(-\Delta)^{-1}[\square,\Delta](B_i)}_{L^2(\MM)}+\norm{\pr(-\Delta)^{-1}(\square(\curl(A)_i))}_{L^2(\MM)}\\
\nn&\les & \norm{(-\Delta)^{-\frac12}[\square,\Delta](B_i)}_{L^2(\MM)}+\norm{(-\Delta)^{-\frac12}(\square(\curl(A)_i))}_{L^2(\MM)}\\
\nn&\les & \norm{(-\Delta)^{-\frac12}[\square,\Delta](B_i)}_{L^2(\MM)}+M^3\ep^2,
\eea
where we used Proposition \ref{prop:commutsquarecurl} in the last inequality.

In view of \eqref{csd1}, we need to estimate $\norm{(-\Delta)^{-\frac12}[\square,\Delta](B_i)}_{L^2(\MM)}$. Recall the commutator formula \eqref{commsquare3}:
\bea\lab{csd2}
[\square, \Delta]\phi
&=&-4k^{ab}\nabla_a\nabla_b(\pr_0\phi)+4n^{-1}\nabla_bn\nabla_b(\pr_0(\pr_0\phi))
 -2\nabla_0k^{ab}\nabla_a\nabla_b\phi\\
 &+&F^{(1)}\prb^2\phi+F^{(2)}\prb\phi,\nn\\
 F^{(1)}&=& n^{-1}\nab(\pr_0n)+\pr A_0+\A^2,\nn\\
 F^{(2)}&=& n^{-1}\nab^2(\pr_0n)+\pr\pr A_0+\A\Big(n^{-1}\nab(\pr_0n),\R, \pr\A, \prb A\Big)+\A^3,\nn
\eea
Using the estimate \eqref{bootn} for $n$ and the bootstrap assumptions \eqref{bootR} for $\R$, \eqref{bootA} for $A$ and \eqref{bootA0} for $A_0$, we have:
\bea\lab{csd3}
\norm{F^{(1)}}_{\lsit{3}}\les \norm{n^{-1}\nab(\pr_0n)}_{\lsit{3}}+ \norm{\pr A_0}_{\lsit{3}}+\norm{\A}^2_{\lsit{6}}\les M\ep,
\eea
and:
\bea\lab{csd4}
&&\norm{F^{(2)}}_{\lsit{\frac{3}{2}}}\\
\nn&\les& \norm{n^{-1}\nab^2(\pr_0n)}_{\lsit{\frac{3}{2}}}+\norm{\pr\pr A_0}_{\lsit{\frac{3}{2}}}+\norm{\A}_{\lsit{6}}^3\\
\nn&&+\norm{\A}_{\lsit{6}}(\norm{n^{-1}\nab(\pr_0n)}_{\lsit{2}}+\norm{\R}_{\lsit{2}}+\norm{\pr\A}_{\lsit{2}}+\norm{\prb A}_{\lsit{2}})\\
\nn&\les& M\ep.
\eea

Using \eqref{csd2}-\eqref{csd4} together with the estimates of Proposition \ref{lemma:estB}, we obtain:
\bea\lab{csd6}
&&\norm{(-\Delta)^{-\frac12}[\square, \Delta](B_l)}_{L^2(\MM)}\\
\nn&\les& \norm{(-\Delta)^{-\frac12}[k^{ab}\nabla_a\nabla_b(\pr_0(B_l))]}_{L^2(\MM)}+ \norm{(-\Delta)^{-\frac12}[n^{-1}\nabla_bn\nabla_b(\pr_0(\pr_0(B_l)))]}_{L^2(\MM)}\\
\nn&& + \norm{(-\Delta)^{-\frac12}[\nabla_0k^{ab}\nabla_a\nabla_b(B_l)]}_{L^2(\MM)}+\norm{F^{(1)}}_{\lsit{3}}\norm{\prb^2(B_l)}_{L^2(\MM)}\\
\nn&&+\norm{F^{(2)}}_{\lsit{\frac{3}{2}}}\norm{\prb(B_l)}_{\lsit{6}}\\
\nn&\les& \norm{(-\Delta)^{-\frac12}[k^{ab}\nabla_a\nabla_b(\pr_0(B_l))]}_{L^2(\MM)}+ \norm{(-\Delta)^{-\frac12}[n^{-1}\nabla_bn\nabla_b(\pr_0(\pr_0(B_l)))]}_{L^2(\MM)}\\
\nn&& + \norm{(-\Delta)^{-\frac12}[\nabla_0k^{ab}\nabla_a\nabla_b(B_l)]}_{L^2(\MM)}+M^2\ep^2+M\ep\norm{\pr_0(\pr_0(B_l)))}_{L^2(\MM)}.
\eea

Next, we estimate the various terms in the right-hand side of \eqref{csd6}. The first and the third will require bilinear estimates, while the second will require the estimate $\nabla n\in L^\infty(\MM)$. 
We start with the first one. We have:
\beaa
k^{ab}\nabla_a\nabla_b(\pr_0(B_l))&=& \nabla_a[k^{ab}\nabla_b(\pr_0(B_l))]-\nabla_ak^{ab}\nabla_b(\pr_0(B_l))\\
&=& \nabla_a[k^{ab}\nabla_b(\pr_0(B_l))],
\eeaa
where we used the constraint equations \eqref{constk} for $k$ in the last equality. Together with the Bochner inequality on $\Si_t$ \eqref{bochnerestimates} and the bilinear  assumption  \eqref{bil5} , we obtain:
\bea\lab{csd7}
\norm{(-\Delta)^{-\frac12}[k^{ab}\nabla_a\nabla_b(\pr_0(B_l))]}_{L^2(\MM)}\les \norm{k^{ab}\pr_b(\pr_0(B_l))]}_{L^2(\MM)}\les M^3\ep^2.
\eea

Next, we estimate the second term in the right-hand side of \eqref{csd6}. We have:
$$n^{-1}\nabla_bn\nabla_b(\pr_0(\pr_0(B_l)))=\nabla^b[n^{-1}\nabla_bn\pr_0(\pr_0(B_l))]-(n^{-1}\Delta n-n^{-2}|\nabla n|^2)\pr_0(\pr_0(B_l)).$$
Together with the estimates \eqref{bootn} for the lapse $n$ and the Sobolev embedding on $\Si_t$ \eqref{sob}, this yields:
\bea\lab{csd8}
&&\norm{(-\Delta)^{-\frac12}[n^{-1}\nabla_bn\nabla_b(\pr_0(\pr_0(B_l)))]}_{L^2(\MM)}\\
\nn&\les& \norm{n^{-1}\nabla_bn\pr_0(\pr_0(B_l))}_{L^2(\MM)}+\norm{(n^{-1}\Delta n-n^{-2}|\nabla n|^2)\pr_0(\pr_0(B_l))}_{\lsitt{2}{\frac{6}{5}}}\\
\nn&\les& (\norm{\nabla n}_{L^\infty}+\norm{n^{-1}\Delta n-n^{-2}|\nabla n|^2}_{\lsit{3}})\norm{\pr_0(\pr_0(B_l))}_{L^2(\MM)}\\
\nn&\les& M\ep\norm{\pr_0(\pr_0(B_l))}_{L^2(\MM)}.
\eea

\begin{remark}\lab{rem:tempura1}
Note that there is no room in the estimate \eqref{csd8}. In particular, the estimate $\norm{\nabla n}_{L^\infty(\MM)}\les M\ep$ given by \eqref{bootn} is crucial as emphasized in remark \ref{rem:tempura}.
\end{remark}

Finally, we consider the third term in the right-hand side of \eqref{csd6}. Recall from \eqref{eq:structfol1} that the second fundamental form satisfies the following equation:
\bea\lab{csd9}
\nabla_0k_{ab}= E_{ab}+F^{(3)}_{ab},
\eea
where $E$ is the 2-tensor on $\Si_t$ defined as:
$$E_{ab}=\R_{a\,0\,b\,0},$$
and where $F^{(3)}_{ab}$ is given by:
$$F^{(3)}_{ab}=-n^{-1}\nabla_a\nabla_bn-k_{ac}k_b\,^c.$$
In view of the estimates \eqref{bootk} for $k$ and \eqref{bootn} for $n$, $F^{(3)}_{ab}$ satisfies the estimate:
\bea\lab{csd10}
\norm{F^{(3)}_{ab}}_{\lsit{3}}\les \norm{\nabla^2n}_{\lsit{3}}+\norm{k}^2_{\lsit{6}}\les M\ep.
\eea
Next, we consider the term involving $E$ in the right-hand side of \eqref{csd9}. Using the maximal foliation assumption, the Bianchi identities and the symmetries of $\R$, we obtain:
$$\nabla^aE_{ab}=\D^a\R_{a\,0\,b\,0}+\A\R=-\D^0\R_{0\,0\,b\,0}+\A\R=\pr_0(\R_{0\,0\,b\,0})+\A\R=\A\R$$
which together with the bootstrap assumptions \eqref{bootA} for $A$ and \eqref{bootA0} for $A_0$, and the bootstrap assumption \eqref{bootR} for $\R$ yields:
\bea\lab{csd11}
\norm{\nabla^aE_{ab}}_{\lsit{\frac{3}{2}}}\les \norm{\A}_{\lsit{6}}\norm{\R}_{\lsit{2}}\les M^2\ep^2.
\eea
Now, we have:
$$E_{ab}\nabla_a\nabla_b(B_l)=\nabla^a[E_{ab}\nabla_b(B_l)]-\nabla^aE_{ab}\nabla_b(B_l)$$
which together with the bilinear estimate \eqref{bil6}, the estimates of Proposition \ref{lemma:estB} for $B$  and \eqref{csd11} yields:
\bea\lab{csd12}
&&\norm{(-\Delta)^{-\frac12}[E_{ab}\nabla_a\nabla_b(B_l)]}_{L^2(\MM)}\\
\nn&\les& \norm{(-\Delta)^{-\frac12}\nabla^a[E_{ab}\nabla_b(B_l)]}_{L^2(\MM)}+\norm{(-\Delta)^{-\frac12}[\nabla^aE_{ab}\pr_b(B_l)]}_{L^2(\MM)}\\
\nn&\les& \norm{\R_{a\,0\,b\,0}\pr_b(B_l)}_{L^2(\MM)}+\norm{\nabla^aE_{ab}}_{\lsit{\frac{3}{2}}}\norm{\pr(B_l)}_{\lsit{6}}\\
\nn&\les& M^2\ep^2.
\eea
\eqref{csd9}, \eqref{csd10}, \eqref{csd12} and the estimates of Proposition  \ref{lemma:estB} for $B$ yield:
\bea\lab{csd13}
&&\norm{(-\Delta)^{-\frac12}[\nabla_0k^{ab}\nabla_a\nabla_b(B_l)]}_{L^2(\MM)}\\
\nn&\les & \norm{(-\Delta)^{-\frac12}[E_{ab}\nabla_a\nabla_b(B_l)]}_{L^2(\MM)}+\norm{(-\Delta)^{-\frac12}[F^{(3)}_{ab}\nabla_a\nabla_b(B_l)]}_{L^2(\MM)}\\
\nn&\les & M^2\ep^2+\norm{F^{(3)}_{ab}}_{\lsit{3}}\norm{\pr^2(B)}_{\lsit{2}}\\
\nn&\les & M^2\ep^2.
\eea

Finally, \eqref{csd6}-\eqref{csd8} and \eqref{csd13} yield:
$$\norm{(-\Delta)^{-\frac12}[\square, \Delta](B_i)}_{L^2(\MM)}\les M^2\ep^2+M\ep\norm{\pr_0(\pr_0(B_i)))}_{L^2(\MM)},$$
which together with \eqref{csd1} implies:
\bea\lab{csd14}
\norm{\pr\square(B_i)}_{L^2(\MM)}\les  M^2\ep^2+M\ep\norm{\pr_0(\pr_0(B_i)))}_{L^2(\MM)}.
\eea
Recalling \eqref{eq:YM16}, we have:
$$\pr_0(\pr_0(B_i))=-\square(B_i)+\Delta(B_i)+n^{-1}\nabla n\cdot\nabla(B_i),$$
which together with the estimates of Lemma \ref{lemma:estB} for $B$, the estimates \eqref{bootn} for $n$,  and \eqref{csd14} yields:
\bea\lab{csd15}
&&\norm{\pr_0(\pr_0(B_i)))}_{L^2(\MM)}\\
\nn&\les& \norm{\square(B_i)}_{L^2(\MM)}+\norm{\Delta(B_i)}_{L^2(\MM)}+\norm{\nabla n\cdot\nabla(B_i)}_{L^2(\MM)}\\ 
\nn&\les& M^2\ep^2+M\ep\norm{\pr_0(\pr_0(B_i)))}_{L^2(\MM)}+\norm{\pr^2(B_i)}_{\lsit{2}}+\norm{\nabla n}_{L^\infty}\norm{\pr(B_i)}_{\lsit{2}}\\ 
\nn&\les& M^2\ep^2+M\ep\norm{\pr_0(\pr_0(B_i)))}_{L^2(\MM)}.
\eea
Choosing $\ep>0$ such that $M\ep$ is small enough to absorb the term $\norm{\pr_0(\pr_0(B_i)))}_{L^2(\MM)}$ in the right-hand side, \eqref{csd14} and \eqref{csd15} 
gives the desired  estimate  for both  $\|\pr\square B\|_{L^2(\MM)}$ and $\norm{\pr_0(\pr_0(B_i)))}_{L^2(\MM)}$ of the lemma. 

\end{proof}

\section{Energy estimate for the wave equation on a curved background with bounded $L^2$ curvature}\lab{sec:bobo6}

Recall that $e_0=T$, the future unit normal to the $\Si_t$ foliation. Let $\pi$ be the deformation tensor of $e_0$, that is the symmetric 2-tensor on $\MM$ defined as:
$$\pi_{\a\b}=\D_\a T_\b+\D_\b T_\a.$$
In view of the definition of the second fundamental form $k$ and the lapse $n$, we have:
\bea\lab{defpi}
\pi_{ab}=-2k_{ab},\, \pi_{a0}=\pi_{0a}=n^{-1}\nabla_an,\,\pi_{00}=0.
\eea

In what follows $\HH$ denotes an arbitrary  weakly regular null  hypersurface\footnote{i.e. it   satisfies assumptions \eqref{assumptionH1} and \eqref{assumptionH2}}  with future normal  $L$  verifying  ${\bf g}(L,T)=-1$.  We denote by $\nabb$     the induced connection on the $2$-surfaces  $\HH\cap \Si_t$.  

We have the following energy estimate for the scalar wave equation:
\begin{lemma}\lab{lemma:energyestimate}
Let $F$ a scalar function on $\MM$, and let $\phi_0$ and $\phi_1$ two scalar functions on $\Si_0$. Let $\phi$ the solution of the following wave equation on $\MM$:
\bea\lab{eq:wave}
\left\{\begin{array}{l}
\square\phi=F,\\
\phi|_{\Si_0}=\phi_0,\, \pr_0\phi|_{\Si_0}=\phi_1. 
\end{array}\right.
\eea
Then, $\phi$ satisfies the following energy estimate:
\bea
&&\norm{\prb\phi}_{\lsit{2}}+\sup_{\HH}(\norm{\nabb\phi}_{L^2(\HH)}+\norm{L(\phi)}_{L^2(\HH)})\nn\\
&\les& \norm{\nabla\phi_0}_{L^2(\Si_0)}+\norm{\phi_1}_{L^2(\Si_0)}+\norm{F}_{L^2(\MM)},\lab{energy0}
\eea
where the supremum is taken over all null hypersurfaces $\HH$ satisfying assumptions \eqref{assumptionH1} and \eqref{assumptionH2}.
\end{lemma}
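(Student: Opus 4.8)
The plan is to run the standard energy-momentum (Noether current) argument for the wave equation $\square\phi=F$, using $e_0=T$ as the multiplier, and to handle the curved background by absorbing the error terms generated by the non-vanishing deformation tensor $\pi$ via the bilinear bootstrap assumptions. First I would introduce the energy-momentum tensor of $\phi$,
\[
\Q_{\a\b}[\phi]=\D_\a\phi\,\D_\b\phi-\frac12\,\g_{\a\b}\,\D^\mu\phi\,\D_\mu\phi,
\]
which satisfies $\D^\a\Q_{\a\b}=F\,\D_\b\phi$. Contracting with the vectorfield $T=e_0$ gives the current $P_\a=\Q_{\a\b}T^\b$ with divergence
\[
\D^\a P_\a=F\,T(\phi)+\frac12\,\Q_{\a\b}[\phi]\,\pi^{\a\b},
\]
where $\pi$ is the deformation tensor of $T$ recorded in \eqref{defpi}. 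I would then integrate this identity over the spacetime slab $\bigcup_{0\le t\le T^*}\Si_t$ bounded by $\Si_0$, $\Si_{T^*}$, and — for the null flux statement — over the region to the past of a fixed weakly regular null hypersurface $\HH$.

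Next I would identify the three kinds of boundary/bulk terms. The flux through $\Si_t$ is, up to the lapse $n\approx 1$ by \eqref{bootn}, comparable to $\|\prb\phi\|_{L^2(\Si_t)}^2$ (here one uses that $\Q_{00}[\phi]\simeq |\pr_0\phi|^2+|\nab\phi|^2$ is a positive definite quadratic form in $\prb\phi$ since $T$ is timelike and unit); the flux through $\HH$ controls $\|\nabb\phi\|_{L^2(\HH)}^2+\|L(\phi)\|_{L^2(\HH)}^2$ (this is the standard fact that the null energy flux sees exactly the tangential derivatives and the good $L$-derivative, using $\g(L,T)=-1$); and the initial flux on $\Si_0$ is bounded by $\|\nabla\phi_0\|_{L^2(\Si_0)}^2+\|\phi_1\|_{L^2(\Si_0)}^2$. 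The bulk term splits as $\int_\MM F\,T(\phi)$, handled by Cauchy–Schwarz against $\|F\|_{L^2(\MM)}\,\|\prb\phi\|_{\lsit{2}}$ and absorbed by the left-hand side after a Grönwall-type argument in $t$, plus the curvature-type error $\int_\MM \Q_{\a\b}[\phi]\,\pi^{\a\b}$. By \eqref{defpi} the latter is schematically $\int_\MM k\,\prb\phi\,\pr\phi+\int_\MM n^{-1}\nab n\,\prb\phi\,\prb\phi$.

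The main obstacle is precisely the error term $\int_\MM k\,\prb\phi\,\pr\phi$: since $k$ is only in $\lsit{2}$ (via \eqref{bootk}) and $\nab n$ is in $L^\infty(\MM)$ but $k$ is not, a naive Hölder estimate fails by exactly the critical amount — this is where the special structure of the Einstein equations must enter, rather than soft energy estimates. I would dispose of it using the bilinear bootstrap assumption \eqref{bil7}, writing the dangerous piece as a contraction $k_{j\,\c}\pr^j\phi$ against $\prb\phi$ and estimating
\[
\left|\int_\MM k_{j\,\c}\,\pr^j\phi\,\prb\phi\right|\les \norm{k_{j\,\c}\pr^j\phi}_{L^2(\MM)}\norm{\prb\phi}_{\lsit{2}}\les M^2\ep\,\Big(\sup_\HH\norm{\nabb\phi}_{L^2(\HH)}\Big)\norm{\prb\phi}_{\lsit{2}},
\]
so that the factor $M^2\ep$, small by the choice of $\ep$, lets us absorb both $\sup_\HH\norm{\nabb\phi}_{L^2(\HH)}$ and $\norm{\prb\phi}_{\lsit{2}}$ into the left-hand side; the remaining term $n^{-1}\nab n\,\prb\phi\,\prb\phi$ is controlled by $\norm{\nab n}_{L^\infty(\MM)}\les M\ep$ from \eqref{bootn} and a Grönwall argument. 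Collecting the surviving terms yields \eqref{energy0}. One should also note that strictly speaking one first proves the estimate for smooth data on a space-time known to exist with enough regularity (Theorem \ref{th:classicalwp}) and then the stated estimate follows by the a priori nature of the bound; the commutator $[\pr_0,\pr]=\A\prb$ subtleties of \eqref{notcoordinate} do not appear here because the argument is purely covariant until the very last step where frames are only used to name the components.
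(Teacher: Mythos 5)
Your proposal is correct and follows essentially the same route as the paper: the energy-momentum tensor contracted with $T=e_0$, integration over the region bounded by $\Si_0$, $\Si_t$ and $\HH$, with the critical error term $\int_\MM k^{ab}\pr_a\phi\pr_b\phi$ disposed of via the bilinear bootstrap assumption \eqref{bil7} and the lapse term via $\norm{\nabla n}_{L^\infty(\MM)}\les M\ep$ from \eqref{bootn}. The only detail worth making explicit is that the trace part of the $k$-contribution, $\int_\MM \tr_g k\,(\g^{\mu\nu}\pr_\mu\phi\pr_\nu\phi)$, vanishes by the maximal foliation assumption rather than being absorbed schematically into $\int k\,\pr\phi\,\prb\phi$.
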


\begin{proof}
We introduce the energy momentum tensor $Q_{\a\b}$ on $\MM$ given by:
$$Q_{\a\b}=Q_{\a\b}[\phi]=\pr_\a\phi\pr_\b\phi-\frac{1}{2}\g_{\a\b}\left(\g^{\mu\nu}\pr_\mu\phi\pr_\nu\phi\right).$$
In view of the equation \eqref{eq:wave} satisfied by $\phi$, we have:
$$\D^\a Q_{\a\b}=F\pr_\b\phi.$$
Now, we form the 1-tensor $P$:
$$P_\a=Q_{\a 0},$$
and we obtain:
$$\D^\a P_\a=\D^\a Q_{\a 0}+Q_{\a\b}\D^\a T^\b=F\pr_0\phi+\frac12 Q_{\a\b}\pi^{\a\b},$$
where $\pi$ is the deformation tensor of $e_0$. Integrating a specifically chosen region of $\MM$, bounded by $\Sigma_0, \Sigma_t$ and $\HH$, we obtain:
\bea\lab{nrj1}
&&\norm{\prb\phi}^2_{\lsit{2}}+\sup_{\HH}\norm{\nabb\phi}^2_{L^2(\HH)}\\
\nn&\les& \norm{\nabla\phi_0}^2_{L^2(\Si_0)}+\norm{\phi_1}^2_{L^2(\Si_0)}+\left|\int_{\MM}F\pr_0\phi d\MM\right|+\left|\int_{\MM}Q_{\a\b}\pi^{\a\b}d\MM\right|\\
\nn&\les& \norm{\nabla\phi_0}^2_{L^2(\Si_0)}+\norm{\phi_1}^2_{L^2(\Si_0)}+\norm{F}_{L^2(\MM)}\norm{\pr_0\phi}_{L^2(\MM)}+\left|\int_{\MM}Q_{\a\b}\pi^{\a\b}d\MM\right|.
\eea

Next, we deal with the last term in the right-hand side of \eqref{nrj1}. In view of \eqref{defpi}, we have:
\beaa
&&\int_{\MM}Q_{\a\b}\pi^{\a\b}d\MM\\
&=&-2\int_{\MM}Q_{ab}k^{ab}d\MM+\int n^{-1}\nabla^inQ_{0i}d\MM\\
&=&-2\int_{\MM}\pr_a\phi\pr_b\phi k^{ab}d\MM+\int_{\MM}\tr_g k\left(\g^{\mu\nu}\pr_\mu\phi\pr_\nu\phi\right)d\MM+\int n^{-1}\nabla^an\pr_a\phi\pr_0\phi d\MM\\
&=&-2\int_{\MM}\pr_a\phi\pr_b\phi k^{ab}d\MM+\int n^{-1}\nabla^an\pr_a\phi\pr_0\phi d\MM,
\eeaa
where we used in the last inequality the maximal foliation assumption. Together with the bilinear bootstrap  assumption  and the estimates \eqref{bootn} for the lapse $n$, this yields:
\beaa
\left|\int_{\MM}Q_{\a\b}\pi^{\a\b}d\MM\right|&\les&\norm{k_{a\,\c}\pr^a\phi}_{L^2(\MM)}\norm{\pr\phi}_{L^2(\MM)}+\norm{\nabla n}_{L^\infty(\MM)}\norm{\prb\phi}^2_{L^2(\MM)}\\
&\les& M^2\ep\left(\sup_{\HH}\norm{\nabb\phi}_{L^2(\HH)}\right)\norm{\prb\phi}_{L^2(\MM)}+M\ep\norm{\prb\phi}^2_{L^2(\MM)},
\eeaa
which together with \eqref{nrj1} concludes the proof of the lemma.
\end{proof}

\begin{remark}
The most dangerous term in the right-hand side of the previous inequality is $\norm{k_{a\,\c}\pr^a\phi}_{L^2(\MM)}$. Usually, when deriving energy estimates for the wave equation, this term is typically estimated by:
$$\norm{k_{a\,\c}\pr^a\phi}_{L^2(\MM)}\les \norm{k}_{\lsitt{2}{\infty}}\norm{\pr\phi}_{\lsit{2}}$$
which requires a Strichartz estimate for $k$. This Strichartz estimate fails under the assumptions of Theorem \ref{th:main}, and we need to rely instead on our  bilinear estimate \eqref{bil8}.
\end{remark}

We have the following higher order energy estimate for the scalar wave equation:
\begin{lemma}\lab{lemma:energyestimatebis}
Let $F$ a scalar function on $\MM$, and let $\phi_0$ and $\phi_1$ two scalar functions on $\Si_0$. Let $\phi$ the solution of the wave equation \eqref{eq:wave} on $\MM$. Then, $\phi$ satisfies the following energy estimate:
\bea
&&\norm{\pr(\prb\phi)}_{\lsit{2}}+\norm{\pr_0(\pr_0\phi)}_{L^2(\MM)}+\sup_{\HH}\left(\norm{\nabb(\pr\phi)}_{L^2(\HH)}+\norm{L(\pr\phi)}_{L^2(\HH)}\right)\nn\\
&\les& \norm{\nabla^2\phi_0}_{L^2(\Si_0)}+\norm{\nabla\phi_0}_{L^2(\Si_0)}+\norm{\nabla\phi_1}_{L^2(\Si_0)}+\norm{\nabla F}_{L^2(\MM)},
\lab{energy1}
\eea
where the supremum is taken over all null hypersurfaces $\HH$ satisfying assumption \eqref{assumptionH1} and \eqref{assumptionH2}. Furthermore, $\square(\pr_j\phi)$ satisfies the following estimate:
$$\norm{\square(\pr_j\phi)}_{L^2(\MM)}\les M\ep(\norm{\nabla^2\phi_0}_{L^2(\Si_0)}+\norm{\nabla\phi_0}_{L^2(\Si_0)}+\norm{\nabla\phi_1}_{L^2(\Si_0)}+                \norm{ F}_{L^2(\MM)}+          \norm{\pr F}_{L^2(\MM)}).$$
\end{lemma}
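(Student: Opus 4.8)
\textbf{Plan for the proof of Lemma \ref{lemma:energyestimatebis}.} The idea is to reduce the higher-order estimate for $\phi$ to the first-order estimate already obtained in Lemma \ref{lemma:energyestimate}, applied not to $\phi$ but to the quantity $\pr_j\phi$ for $j=1,2,3$, together with a separate treatment of $\pr_0\phi$. The first step is to derive a wave equation for $\pr_j\phi$. By the commutation formula \eqref{commsquare2} of Lemma \ref{lemma:commutation} we have
\[
\square(\pr_j\phi)=\pr_j(\square\phi)-2(\A^\la)_j{}^\mu\pr_\la\pr_\mu\phi-\pr_0A_0\,\prb\phi-\A^2\prb\phi=\pr_jF+G_j,
\]
where $G_j$ collects the commutator terms. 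The second step is to bound $\norm{G_j}_{L^2(\MM)}$. The genuinely dangerous piece is $(\A^\la)_j{}^\mu\pr_\la\pr_\mu\phi$; one splits the internal index and uses the Coulomb gauge exactly as in the proof of Proposition \ref{prop:commutsquarecurl}, writing the spatial-spatial term $A^l\pr_l\pr_m\phi$ and replacing bad $\pr_0\pr_0$ terms via the link \eqref{linkRA} between $\A$ and $\R$. After these manipulations each term is either of the schematic form $A\pr(\pr\phi)$ — controlled by the bilinear bootstrap assumption \eqref{bil8} against $\sup_\HH\norm{\nabb(\pr\phi)}_{L^2(\HH)}+\norm{\pr(\pr\phi)}_{\lsit 2}$ (using the improved version \eqref{bil8imp}) — or of the form $(\prb A_0+\A^2)\prb\phi$ and $(\pr\prb A_0+\A\prb\A+\A^3)\phi$, which are handled by Hölder together with the bootstrap bounds \eqref{bootA}, \eqref{bootA0}, \eqref{bootn} and the Sobolev embeddings \eqref{sob}, \eqref{sobineqsit}, giving a gain of $M\ep$ (or $M^2\ep$ for the bilinear term). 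In particular this already proves the last assertion $\norm{\square(\pr_j\phi)}_{L^2(\MM)}\les M\ep(\cdots)$ once the energy estimate for $\pr_j\phi$ is in hand, and conversely the $M\ep$ smallness lets the bilinear terms be absorbed on the left.

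The third step is to run Lemma \ref{lemma:energyestimate} on $\psi=\pr_j\phi$. Its data on $\Si_0$ are $\psi|_{\Si_0}=\pr_j\phi_0$ and $\pr_0\psi|_{\Si_0}=\pr_0\pr_j\phi|_{\Si_0}$; the latter is expressed through $[\pr_0,\pr_j]\phi+\pr_j\phi_1$ using \eqref{notcoordinate}, so that $\norm{\nabla\psi|_{\Si_0}}_{L^2(\Si_0)}+\norm{\pr_0\psi|_{\Si_0}}_{L^2(\Si_0)}\les \norm{\nabla^2\phi_0}_{L^2(\Si_0)}+\norm{\nabla\phi_1}_{L^2(\Si_0)}$ after using the initial-slice bounds of Proposition \ref{lemma:initialslice}. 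This yields
\[
\norm{\prb\pr_j\phi}_{\lsit 2}+\sup_\HH\big(\norm{\nabb\pr_j\phi}_{L^2(\HH)}+\norm{L(\pr_j\phi)}_{L^2(\HH)}\big)\les \norm{\nabla^2\phi_0}_{L^2(\Si_0)}+\norm{\nabla\phi_1}_{L^2(\Si_0)}+\norm{\pr_jF+G_j}_{L^2(\MM)}.
\]
Inserting the bound on $G_j$ from step two, the bilinear contribution $M^2\ep\big(\sup_\HH\norm{\nabb\pr\phi}_{L^2(\HH)}+\norm{\pr\prb\phi}_{\lsit 2}\big)$ appears on the right; since $M\ep$ is small this is absorbed into the left-hand side, leaving the clean estimate in terms of $\norm{\nabla F}_{L^2(\MM)}$ (note $\norm{\pr_jF}_{L^2}\les\norm{\nabla F}_{L^2}+\norm{A\prb F}_{\ldots}$, but since only $\pr F$ with $\pr$ a frame derivative is asserted this is immediate, or one keeps $\pr F$ throughout).

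The fourth step is to recover the $\pr_0\pr_0\phi$ contribution, which is not covered by the transport-free frame derivatives $\pr_j$. For this one uses the scalar wave operator identity \eqref{eq:YM16}: $\pr_0(\pr_0\phi)=-\square\phi+\Delta\phi+n^{-1}\nabla n\c\nabla\phi=-F+\Delta\phi+n^{-1}\nabla n\c\nabla\phi$. Here $\Delta\phi$ is estimated by $\norm{\pr\prb\phi}_{\lsit 2}$ (writing $\Delta\phi=\pr^l\pr_l\phi+A\pr\phi$, cf. Remark \ref{covornotcov}) which was just controlled, $\nabla n$ is bounded in $L^\infty(\MM)$ by \eqref{bootn}, and $F$ is in $L^2(\MM)$; this gives $\norm{\pr_0\pr_0\phi}_{L^2(\MM)}$ within the asserted bound. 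Combining steps three and four over $j=1,2,3$ produces \eqref{energy1}. The main obstacle is step two: ensuring that every commutator term generated by \eqref{commsquare2}, after using the Coulomb gauge and the Einstein equations to trade $\pr_0\pr_0$ for $\pr\pr$ via \eqref{linkRA}, falls into the narrow class $A\pr(\pr\phi)$ that the bilinear estimate \eqref{bil8} can handle — i.e. that the ``hidden'' null structure survives the scalarization. This is precisely the phenomenon flagged in Remark \ref{rem:scalarize}, and it is why the specific structural form of \eqref{commsquare2} (rather than a crude commutator bound) is indispensable.
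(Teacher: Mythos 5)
Your overall strategy coincides with the paper's: commute the equation with $\pr_j$, apply Lemma \ref{lemma:energyestimate} to $\pr_j\phi$, estimate the commutator $[\square,\pr_j]\phi$ via the bilinear bootstrap assumption \eqref{bil8} plus H\"older and the bootstrap bounds, absorb the resulting $M\ep$ terms into the left-hand side, and recover $\pr_0\pr_0\phi$ from the identity \eqref{eq:YM16}. Two steps, however, are missing or misstated. First, the commutator contains the term $A^i\pr_i(\pr_0\phi)$, and applying \eqref{bil8} to it produces $\sup_{\HH}\norm{\nabb(\pr_0\phi)}_{L^2(\HH)}$ on the right. This flux is \emph{not} among the quantities controlled by the energy estimate for $\pr_j\phi$ (which only yields $\nabb(\pr\phi)$ and $L(\pr\phi)$ on $\HH$), so as written it cannot be absorbed. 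The paper closes this by decomposing $e_0=L-N$ with $N$ tangent to $\Si_t$, estimating $|\nabb(\pr_0\phi)|$ by $|\nabb(\pr\phi)|+|L(\pr\phi)|+|(\D N)(\pr\phi)|+|(\D L)(\pr\phi)|+|A\pr\phi|$, and invoking the weak-regularity assumptions \eqref{assumptionH1}--\eqref{assumptionH2} together with the trace inequality \eqref{HversusSitbis} to control the error terms. You need this step; it is where the hypotheses on $\HH$ actually enter.

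Second, your step two proposes to eliminate the $\pr_0\pr_0\phi$ contribution in the commutator ``via the link \eqref{linkRA} between $\A$ and $\R$,'' as in Proposition \ref{prop:commutsquarecurl}. That device works only when the differentiated quantity is itself a connection component (so that $\pr_0A_i$ can be traded for $\pr_iA_0$ plus curvature); for a general scalar $\phi$ there is no such identity. The correct treatment --- which your step four essentially supplies --- is to keep the term $(A_0)_{j0}\,\pr_0(\pr_0\phi)=n^{-1}\nabla_jn\,\pr_0(\pr_0\phi)$, use the $L^\infty$ bound on $\nabla n$ from \eqref{bootn} (this is precisely why the improvement of $(A_0)_{0i}$ over the generic $A_0\in L^2_tL^\infty$ bound matters, cf. Remarks \ref{rem:tempura} and \ref{rem:tempura1}), and bound $\norm{\pr_0\pr_0\phi}_{L^2(\MM)}$ separately through \eqref{eq:YM16}. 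Finally, the absorption must be performed with the bootstrap assumption \eqref{bil8} itself and not with the improved \eqref{bil8imp}, since the latter is established only afterwards using this very lemma; as stated your argument is circular on that point.
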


\begin{proof}
We derive an equation for $\pr_j\phi$. Differentiating \eqref{eq:wave}, we obtain:
\bea\lab{nrj2}
\left\{\begin{array}{l}
\square(\pr_j\phi)=\pr_jF+[\square,\pr_j](\phi),\\
\pr_j\phi|_{\Si_0}=\pr_j\phi_0,\, \pr_0(\pr_j\phi)|_{\Si_0}=\left(\pr_j(\pr_0\phi)+[\pr_0,\pr_j]\phi\right)|_{\Si_0}=\pr_j\phi_1+n^{-1}\nabla n\phi_1+\A\nabla\phi_0,
\end{array}\right.
\eea
where we used the commutator estimate \eqref{notcoordinate} in the last equality. Applying the energy estimate of Lemma \ref{lemma:energyestimate} to \eqref{nrj2}, we obtain:
\beaa
&&\norm{\prb(\pr_j\phi)}_{\lsit{2}}+\sup_{\HH}(\norm{\nabb(\pr_j\phi)}_{L^2(\HH)}+\norm{L(\pr_j\phi)}_{L^2(\HH)})\\
&\les& \norm{\nabla(\pr_j\phi_0)}_{L^2(\Si_0)}+\norm{\pr_j\phi_1+n^{-1}\nab n \phi_1+\A\nabla\phi_0}_{L^2(\Si_0)}+\norm{\pr_jF+[\square,\pr_j]\phi}_{L^2(\MM)}.
\eeaa
which after taking the supremum over $j=1, 2, 3$ yields:
\beaa
&&\norm{\pr(\prb\phi)}_{\lsit{2}}+\sup_{\HH}\left(\norm{\nabb(\pr\phi)}_{L^2(\HH)}+\norm{L(\pr\phi)}_{L^2(\HH)}\right)\\
\nn&\les& \norm{\nabla^2\phi_0}_{L^2(\Si_0)}+\norm{\nabla\phi_1}_{L^2(\Si_0)}+\norm{\nabla F}_{L^2(\MM)}+\norm{n^{-1}\nab n}_{L^3(\Si_0)}\norm{\phi_1}_{L^6(\Si_0)}\\
\nn&&+\norm{\A}_{L^6(\Si_0)}\norm{\nabla\phi_0}_{L^3(\Si_0)}+\norm{n^{-1}\nab n\pr_0\phi+\A\pr\phi}_{\lsit{2}}+\sup_j\norm{[\square,\pr_j]\phi}_{L^2(\MM)},
\eeaa
where the term $n^{-1}\nab n\pr_0\phi+\A\pr\phi$ in the last inequality comes from the commutator formula \eqref{notcoordinate} applied to $[\pr_0, \pr_j]\phi$. Together with the Sobolev embedding  \eqref{ikea1} on $\Si_0$, the estimates for $\A$ on $\Si_0$ of Proposition \ref{lemma:initialslice}, the bootstrap assumptions \eqref{bootA} and \eqref{bootA0} for $\A$, the estimate \eqref{bootn} for $n$, and the Sobolev embedding \eqref{sobineqsit} on $\Si_t$, we obtain
\beaa
&&\norm{\pr(\prb\phi)}_{\lsit{2}}+\sup_{\HH}\left(\norm{\nabb(\pr\phi)}_{L^2(\HH)}+\norm{L(\pr\phi)}_{L^2(\HH)}\right)\\
\nn&\les& \norm{\nabla^2\phi_0}_{L^2(\Si_0)}+\norm{\nabla\phi_0}_{L^2(\Si_0)}+\norm{\nabla\phi_1}_{L^2(\Si_0)}+\norm{\nabla F}_{L^2(\MM)}\\
\nn&&+\norm{n^{-1}\nab n}_{\lsit{3}}\norm{\pr_0\phi}_{\lsit{6}}+\norm{\A}_{\lsit{6}}\norm{\pr\phi}_{\lsit{3}}+\sup_j\norm{[\square,\pr_j]\phi}_{L^2(\MM)}\\
\nn&\les& \norm{\nabla^2\phi_0}_{L^2(\Si_0)}+\norm{\nabla\phi_0}_{L^2(\Si_0)}+\norm{\nabla\phi_1}_{L^2(\Si_0)}+\norm{\nabla F}_{L^2(\MM)}\\
\nn&&+M\ep\norm{\pr\prb\phi}_{\lsit{2}}+M\ep\norm{\pr\phi}_{\lsit{2}}+\sup_j\norm{[\square,\pr_j]\phi}_{L^2(\MM)}.
\eeaa
In view of the fact that we may choose $\ep$ such that $M\ep$ is small enough, we infer
\bea
\lab{nrj3}&&\norm{\pr(\prb\phi)}_{\lsit{2}}+\sup_{\HH}\left(\norm{\nabb(\pr\phi)}_{L^2(\HH)}+\norm{L(\pr\phi)}_{L^2(\HH)}\right)\\
\nn&\les& \norm{\nabla^2\phi_0}_{L^2(\Si_0)}+\norm{\nabla\phi_0}_{L^2(\Si_0)}+\norm{\nabla\phi_1}_{L^2(\Si_0)}+\norm{\nabla F}_{L^2(\MM)}+M\ep\norm{\pr\phi}_{\lsit{2}}\\
\nn&&+\sup_j\norm{[\square,\pr_j]\phi}_{L^2(\MM)},
\eea

Next, we estimate the last term in the right-hand side of \eqref{nrj3}. In view of the commutator formula \eqref{commsquare2}, we have:
\bea\lab{freddy}
[\square,\pr_j]\phi&=&2(\A^\la)_j\,^{\mu}\,\, \pr_\la   \pr_\mu\phi+\pr_0(n^{-1}\nab n)\pr_0\phi +\pr_0A_0\pr\phi+ \A^2\prb\phi\\
\nn&=& A^i\pr_i(\pr_l\phi)+A^i\pr_i(\pr_0\phi)+A^0\pr_0(\pr_l\phi)+(A^0)_{j0}\pr_0(\pr_0\phi)+\pr_0(n^{-1}\nab n)\pr_0\phi\\
\nn&& +\pr_0A_0\pr\phi+\A^2\prb\phi\\
\nn&=& A^i\pr_i(\pr_l\phi)+A^i\pr_i(\pr_0\phi)+h_j,
\eea
where $h_j$ is defined in view of the identity \eqref{eq:YM11} as:
$$h_j=A^0\pr_0(\pr_l\phi)+n^{-1}\nabla_jn \pr_0(\pr_0\phi)+\pr_0(n^{-1}\nab n)\pr_0\phi +\pr_0A_0\pr\phi+\A^2\prb\phi.$$
We estimate the various terms in the right-hand of \eqref{freddy} starting with $h_j$. The Sobolev embedding on $\Si_t$ \eqref{sobineqsit}, the bootstrap estimates \eqref{bootA0} for $A_0$ and \eqref{bootA} for $A$, and the estimate \eqref{bootn} for the lapse $n$ yield:
\bea\lab{freddy1}
&&\norm{h_j}_{L^2(\MM)}\\
\nn&\les& \norm{A^0}_{\lsitt{2}{\infty}}\norm{\pr_0(\pr_l\phi)}_{\lsit{2}}+\norm{\nabla n}_{L^\infty(\MM)}\norm{\pr_0(\pr_0\phi)}_{L^2(\MM)}\\
\nn&&+\norm{\pr_0(n^{-1}\nab n)}_{\lsit{3}}\norm{\pr_0\phi}_{\lsit{6}}+\norm{\pr_0 A_0}_{\lsitt{2}{\frac{42}{13}}}\norm{\pr\phi}_{\lsit{\frac{21}{4}}}\\
\nn&&+\norm{\A}^2_{\lsit{6}}\norm{\prb\phi}_{\lsit{6}}\\
\nn&\les& M\ep(\norm{\pr_0(\pr_0\phi)}_{L^2(\MM)}+\norm{\pr(\prb\phi)}_{\lsit{2}}+\norm{\pr\phi}_{\lsit{2}}).
\eea
Note again in view of the previous inequality that the estimate $\nabla n\in L^\infty(\MM)$ is crucial as emphasized by Remarks \ref{rem:tempura} and \ref{rem:tempura1}. Next, we deal with the first and the second term in the right-hand of \eqref{freddy}. Using the bilinear estimate \eqref{bil8}, we have:
\beaa
\norm{A^i\pr_i(\pr_l\phi)}_{L^2(\MM)}+\norm{A^i\pr_i(\pr_0\phi)}_{L^2(\MM)}&\les& M\ep\left(\sup_{\HH}(\norm{\nabb(\pr_l\phi)}_{L^2(\HH)}+\norm{\nabb(\pr_0\phi)}_{L^2(\HH)}\right),
\eeaa
which together with \eqref{freddy} and \eqref{freddy1} yields:
\bea\lab{nrj4}
&&\norm{[\square,\pr_j]\phi}_{L^2(\MM)}\\
\nn&\les& M\ep\left(\sup_{\HH}(\norm{\nabb(\pr_l\phi)}_{L^2(\HH)}+\norm{\nabb(\pr_0\phi)}_{L^2(\HH)}\right)+M\ep(\norm{\pr_0(\pr_0\phi)}_{L^2(\MM)}+\norm{\pr(\prb\phi)}_{\lsit{2}}).
\eea
It remains to estimate the term $\norm{\nabb(\pr_0\phi)}_{L^2(\HH)}$. Let us define the vectorfield $N=L-e_0$. Since ${\bf g}(L,e_0)=-1$, and since $L$ is null, $N$ is tangent to $\Si_t$. Decomposing $e_0=L-N$, we obtain schematically:
\bea\lab{decompositionpr0}
|\nabb(\pr_0\phi)| &\leq& |\nabb(\nabla_N\phi)|+|\nabb(L(\phi))|\\
\nn&\les& |\nabb(N_j\pr_j\phi)|+|\pr(L(\phi))|\\
\nn&\les& |\nabb(\pr\phi)|+|L(\pr\phi)|+|(\D N)(\pr\phi)|+|(\D L)(\pr\phi)|+|A\pr\phi|
\eea
which together with the assumptions \eqref{assumptionH1} and \eqref{assumptionH2} for $\HH$, and the embedding \eqref{HversusSitbis} on $\HH$ yields:
\bea\lab{nrj6}
&&\norm{\nabb(\pr_0\phi)}_{L^2(\HH)}\\
\nn&\les& \norm{\nabb(\pr\phi)}_{L^2(\HH)}+\norm{L(\pr\phi)}_{L^2(\HH)}+\norm{(\D N)(\pr\phi)}_{L^2(\HH)}+\norm{(\D\L)(\pr\phi)}_{L^2(\HH)}+\norm{A\pr\phi}_{L^2(\HH)}\\
\nn&\les& \norm{\nabb(\pr\phi)}_{L^2(\HH)}+\norm{L(\pr\phi)}_{L^2(\HH)}+(\norm{\D N}_{L^3(\HH)}+\norm{\D\L}_{L^3(\HH)}+\norm{A}_{L^3(\HH)})\norm{\pr\phi}_{L^6(\HH)}\\
\nn &\les& (\norm{\nabb(\pr\phi)}_{L^2(\HH)}+\norm{L(\pr\phi)}_{L^2(\HH)})(1+\norm{\nabla A}_{\lsit{2}})\\
\nn &\les& \norm{\nabb(\pr\phi)}_{L^2(\HH)}+\norm{L(\pr\phi)}_{L^2(\HH)},
\eea 
where we used the bootstrap assumptions \eqref{bootA} for $A$ in the last inequality. Finally, \eqref{nrj4}-\eqref{nrj6} yield:
\bea\lab{nrj7}
\sup_j\norm{[\square,\pr_j](\phi)}_{L^2(\MM)}&\les& M\ep\left(\sup_{\HH}(\norm{\nabb(\pr\phi)}_{\HH}+\norm{L(\pr\phi)}_{L^2(\HH)}\right)\\
\nn&&+M\ep(\norm{\pr_0(\pr_0\phi)}_{L^2(\MM)}+\norm{\pr(\prb\phi)}_{\lsit{2}}+\norm{\pr\phi}_{\lsit{2}}).
\eea

Now, \eqref{nrj3} and \eqref{nrj7} imply:
\beaa
&&\norm{\pr(\prb\phi)}_{\lsit{2}}+\sup_{\HH}\left(\norm{\nabb(\pr\phi)}_{L^2(\HH)}+\norm{L(\pr\phi)}_{L^2(\HH)}\right)\\
\nn&\les& \norm{\nabla^2\phi_0}_{L^2(\Si_0)}+\norm{\nabla\phi_0}_{L^2(\Si_0)}+\norm{\nabla\phi_1}_{L^2(\Si_0)}+\norm{\nabla F}_{L^2(\MM)}\\
\nn&& +M\ep\left(\sup_{\HH}(\norm{\nabb(\pr\phi)}_{\HH}+\norm{L(\pr\phi)}_{L^2(\HH)}\right)\\
\nn&&+M\ep(\norm{\pr_0(\pr_0\phi)}_{L^2(\MM)}+\norm{\pr(\prb\phi)}_{\lsit{2}}+\norm{\pr\phi}_{\lsit{2}}).
\eeaa
which together with the fact that we may choose $\ep$ such that $M\ep$ is small enough,  yields:
\bea
\lab{nrj8preliminary}&&\norm{\pr(\prb\phi)}_{\lsit{2}}+\sup_{\HH}\left(\norm{\nabb(\pr\phi)}_{L^2(\HH)}+\norm{L(\pr\phi)}_{L^2(\HH)}\right)\\
\nn&\les& \norm{\nabla^2\phi_0}_{L^2(\Si_0)}+\norm{\nabla\phi_0}_{L^2(\Si_0)}+\norm{\nabla\phi_1}_{L^2(\Si_0)}+\norm{\pr F}_{L^2(\MM)}+M\ep\norm{\pr_0(\pr_0\phi)}_{L^2(\MM)}\\
\nn&&+M\ep\norm{\pr\phi}_{\lsit{2}}.
\eea
Also, note that 
\beaa
\norm{\pr\phi}_{\lsit{2}} &\les & \norm{\nab\phi_0}_{L^2(\Si_0)}+\norm{\pr_0\pr\phi}_{\lsit{2}}\\
&\les&  \norm{\nab\phi_0}_{L^2(\Si_0)}+\norm{\pr\pr_0\phi}_{\lsit{2}}+\norm{n^{-1}\nab n\pr_0\phi+\A\pr\phi}_{\lsit{2}}\\
&\les&  \norm{\nab\phi_0}_{L^2(\Si_0)}+\norm{\pr\pr_0\phi}_{\lsit{2}}+\norm{n^{-1}\nab n}_{\lsit{3}}\norm{\pr_0\phi}_{\lsit{6}}\\
&&+\norm{\A}_{\lsit{6}}\norm{\pr\phi}_{\lsit{3}}\\
&\les&  \norm{\nab\phi_0}_{L^2(\Si_0)}+\norm{\pr\prb\phi}_{\lsit{2}}+M\ep\norm{\pr\phi}_{\lsit{2}}
\eeaa
where we used in the last inequality the Sobolev embedding on $\Si_t$ \eqref{sobineqsit}, the bootstrap estimates \eqref{bootA0} for $A_0$ and \eqref{bootA} for $A$, and the estimate \eqref{bootn} for the lapse $n$. Since we may choose $\ep$ such that $M\ep$ is small enough, we infer
\beaa
\norm{\pr\phi}_{\lsit{2}} &\les&  \norm{\nab\phi_0}_{L^2(\Si_0)}+\norm{\pr\prb\phi}_{\lsit{2}}
\eeaa
which together with \eqref{nrj8preliminary} and the fact that we may choose $\ep$ such that $M\ep$ is small enough implies
\bea
\lab{nrj8}&&\norm{\pr(\prb\phi)}_{\lsit{2}}+\sup_{\HH}\left(\norm{\nabb(\pr\phi)}_{L^2(\HH)}+\norm{L(\pr\phi)}_{L^2(\HH)}\right)\\
\nn&\les& \norm{\nabla^2\phi_0}_{L^2(\Si_0)}+\norm{\nabla\phi_0}_{L^2(\Si_0)}+\norm{\nabla\phi_1}_{L^2(\Si_0)}+\norm{\pr F}_{L^2(\MM)}+M\ep\norm{\pr_0(\pr_0\phi)}_{L^2(\MM)}.
\eea

In view of \eqref{nrj8}, we need an estimate for $\pr_0(\pr_0\phi)$. Proceeding as in \eqref{csd15}, we have:
\bea\lab{nrj9}
\norm{\pr_0(\pr_0(\phi))}_{L^2(\MM)}&\les& \norm{\square(\phi)}_{L^2(\MM)}+\norm{\Delta\phi}_{L^2(\MM)}+\norm{\nabla n\cdot\nabla(\phi)}_{L^2(\MM)}\\ 
\nn&\les& \norm{F}_{L^2(\MM)}+\norm{\nabla^2\phi}_{\lsit{2}}+\norm{\nabla n}_{L^\infty}\norm{\nabla\phi}_{\lsit{2}}\\ 
\nn&\les& \norm{F}_{L^2(\MM)}+\norm{\pr^2\phi}_{\lsit{2}}.
\eea
Finally, \eqref{nrj2} and \eqref{nrj7}-\eqref{nrj9} yields:
\beaa
&&\norm{\square(\pr_j\phi)}_{L^2(\MM)}\\
&\les& \norm{\pr_jF}_{L^2(\MM)}+\norm{[\square,\pr_j]\phi}_{L^2(\MM)}\\
&\les & \norm{\pr F}_{L^2(\MM)}+M\ep\left(\sup_{\HH}(\norm{\nabb(\pr\phi)}_{\HH}+\norm{L(\pr\phi)}_{L^2(\HH)})\right)\\
\nn&&+M\ep(\norm{\pr_0(\pr_0\phi)}_{L^2(\MM)}+\norm{\pr(\prb\phi)}_{\lsit{2}})\\
&\les & M\ep(\norm{\nabla^2\phi_0}_{L^2(\Si_0)}+\norm{\nabla\phi_0}_{L^2(\Si_0)}+\norm{\nabla\phi_1}_{L^2(\Si_0)}+             \norm{ F}_{L^2(\MM)} +\norm{\pr F}_{L^2(\MM)})
\eeaa
which together with \eqref{nrj8} and \eqref{nrj9} concludes the proof of the lemma.
\end{proof}

\section{Proof of Proposition \ref{prop:improve1}}\lab{sec:improve1}

Here we derive estimates for $\R, A_0$ and $A$ and thus improve 
 the basic bootstrap assumptions  \eqref{bootR}, \eqref{bootcurvatureflux}, \eqref{bootA} and \eqref{bootA0}.
\subsection{Curvature estimates} We derive the curvature estimates using the Bel-Robinson tensor,
 \beaa
 Q_{\a\b\ga\de}=\R_\a\,^\la\,\ga\,^\si \R_{\b\,\la\,\de\,\si}+\dual \R_\a\,^\la\,\ga\,^\si\dual \R_{\b\,\la\,\de\,\si}
 \eeaa
 Let 
 $$P_\a=Q_{\a\b\ga\de}e_0^\b e_0^\ga e_0^\de.$$ 
Then, we have:
\bea\lab{lybia}
D^\a P_\a=3Q_{\a\b\ga\de}\pi^{\a\b}e_0^\ga e_0^\de,
\eea
where $\pi$ is the deformation tensor of $e_0$. We introduce the  Riemannian metric,
\bea
h_{\a\b}&=&g_{\a\b}+2(e_0)_\a (e_0)_\b
\eea
 and use it to define  the following  space-time  norm for  tensors $U$:
$$|U|^2=U_{\a_1\cdots\a_k}U_{\a_1'\cdots\a'_k} h^{\a_1\a_1'}\cdots h^{\a_k\a'_k}.$$
Given two space-time tensors $U, V$ we denote by $U\c V$ a given contraction between
the two tensors and by $|U\c V|$ the  norm of the contraction according to the above definition.

Let $\HH$ be a weakly regular  null hypersurface with future normal  $L$ such that  ${\bf g}(L,T)=-1$.  Integrating \eqref{lybia} over a space-time region, 
bounded by $\Sigma_0, \Sigma_t$ and $\HH$, and using well-known properties of the Bel-Robinson tensor,
we have:
$$\int_{\Si_t}|\R|^2+\int_{\HH}|\R\c L|^2\les \norm{\R}^2_{L^2(\Si_0)}+\left|\int_{\MM}Q_{\a\b\ga\de}\pi^{\a\b}e_0^\ga e_0^\de\right|\les \ep^2+\left|\int_{\MM}Q_{\a\b\ga\de}\pi^{\a\b}e_0^\ga e_0^\de\right|.$$
We need to estimate the term in the right-hand side of the previous inequality. Note that since $\pi_{00}=0$, $\pi_{0j}=n^{-1}\nabla_jn$, and $\pi_{ij}=k_{ij}$, the bootstrap assumption \eqref{bootR} for $\R$, and the estimates \eqref{bootn} for $n$ yield:
\beaa
\int_{\Si_t}|\R|^2+\int_{\HH}|\R\c L|^2&\les& \ep^2+\norm{\nabla n}_{L^\infty}\norm{\R}^2_{\lsit{2}}+\left|\int_{\MM}Q_{ij\ga\de} k^{ij}e_0^\ga e_0^\de\right|\\
&\les& \ep^2+(M\ep)^3+\left|\int_{\MM}Q_{ij\ga\de}k^{ij}e_0^\ga e_0^\de\right|.
\eeaa

The last term on the right-hand side of the previous inequality is dangerous. Schematically it has the form 
$\left|\int_{\MM}k\R^2\right|.$ 
Typically  this term is  estimated by:
\beaa
\left|\int_{\MM}k\R^2\right|
&\les& \norm{k}_{\lsitt{2}{\infty}} \norm{\R}^2_{\lsit{2}},
\eeaa
requiring   a   Strichartz estimate for $k$ which is false even in flat space.
It is for this reason that we need  the trilinear  bootstrap assumption \eqref{trilinearboot}.
Using it we derive,
\bea\lab{camarche}
\int_{\Si_t}|\R|^2+\int_{\HH}|\R\c L|^2&\les& \ep^2+M^4 \ep^3.
\eea
which, for small $\ep$,   improves  the bootstrap assumptions \eqref{bootR} and  \eqref{bootcurvatureflux}.

\subsection{Improvement of the bootstrap assumption for $A_0$}

Recall the equation for $A_0$, \eqref{eq:YM21}
\bea\lab{tues5}
\De A_0=(A, n^{-1}\nab n) (\pr  \A+\pr_0 A)+ (A, n^{-1}\nab n) \c \A^2.
\eea
After multiplication by $A_0$ and integration by parts, and together with the estimate \eqref{bootn} for $n$, the bootstrap assumptions \eqref{bootA} for $A$ and \eqref{bootA0} for $A_0$,  and  the Sobolev embedding \eqref{sobineqsit}, 
this yields:
\bea\lab{wed0}
&&\norm{\pr A_0}^2_{\lsit{2}}\\
\nn&\les& (\norm{A}_{\lsit{3}}+\norm{n^{-1}\nab n}_{\lsit{3}}) (\norm{\pr  \A}_{\lsit{2}}+\norm{\pr_0 A}_{\lsit{2}})\norm{A_0}_{\lsit{6}}\\
\nn&&+ (\norm{A}_{\lsit{2}}+\norm{n^{-1}\nab n}_{\lsit{2}}) \norm{\A}^2_{\lsit{6}}\norm{A_0}_{\lsit{6}}\\
\nn&\les& M^3\ep^3.
\eea

Also, using the elliptic estimate \eqref{prop:bochsit2} together with \eqref{tues5}, we have:
\bea\lab{wed1}
&&\norm{\pr A_0}_{\lsit{3}}+\norm{\pr^2A_0}_{\lsit{\frac{3}{2}}}\\
\nn&\les& 
\norm{\Delta A_0}_{\lsit{\frac{3}{2}}}  +     \norm{\pr A_0}_{\lsit{2} }  \\
\nn&\les& \norm{\A}_{\lsit{6}}(\norm{\prb A}_{\lsit{2}}+\norm{\pr\A}_{\lsit{2}})+\norm{(A, n^{-1}\nab n)}_{\lsit{3}}    \norm{\A}^2_{\lsit{6}}           \\
\nn&\les& M^2\ep^2
\eea
where we used in the last inequality the estimate \eqref{bootn} for $n$, the bootstrap assumptions \eqref{bootA} on $A$ and \eqref{bootA0} on $A_0$, and the Sobolev embedding \eqref{sobineqsit}.

We will need the following basic $L^p$ elliptic estimate on $\Sigma_t$.
\begin{lemma}\lab{lemma:moser}
Let $\phi$ a scalar function solution of 
$$
\De\phi=f
$$
on $\Sigma_t$, vanishing at infinity. Then for any $3<p<\infty$
\bea\label{eq:ell}
\|\phi\|_{L^p(\Sigma_t)}\les \|f\|_{L^{\frac{3p}{2p+3}}(\Sigma_t)}.
\eea
\end{lemma}

\begin{proof}
Splitting $f$ into its positive and negative parts, if necessary, we may assume, by the maximum principle, that $\phi\ge 0$.
We multiply the equation $\De\phi=f$ by $\phi^{p/3-1}$ and integrate over $\Sigma_t$.
\beaa
 \frac{12(p-3)}{p^2}\int_{\Sigma_t} |\nabla (\phi^{\frac p6})|^2 &=& -\int_{\Sigma_t} \De\phi \phi^{\frac p3-1}\\
 &=&  -\int_{\Sigma_t} f \phi^{\frac p3-1}\\
 & \les &  \|f\|_{L^{\frac{3p}{2p+3}}(\Sigma_t)}  \|\phi\|^{\frac p3-1}_{L^{p}(\Sigma_t)}.
\eeaa
In view of the Sobolev embedding \eqref{sobineqsit}, we have
$$
\|\phi\|_{L^p(\Sigma_t)}^{\frac p3}\les \int_{\Sigma_t} |\nabla (\phi^{\frac p6})|^2
$$
and the desired inequality follows.
\end{proof}

Using Lemma \ref{lemma:moser} with $p=4$  together with \eqref{tues5}, we infer
\bea\lab{wed1bis}
\norm{A_0}_{\lsit{4}} &\les& \norm{\Delta A_0}_{\lsit{\frac{12}{11}}}\\
\nn&\les& \norm{(A, n^{-1}\nab n) (\pr  \A,\pr_0 A)+ (A, n^{-1}\nab n) \c \A^2}_{\lsit{\frac{12}{11}}}\\
\nn&\les& \norm{(A, n^{-1}\nab n)}_{\lsit{\frac{12}{5}}}( \norm{(\pr  \A,\pr_0 A)}_{\lsit{2}}+ \norm{\A}^2_{\lsit{4}})\\
\nn&\les& M^2\ep^2.
\eea

Next, using the Sobolev embedding \eqref{eel3}  together with \eqref{tues5}, we have:
\bea\lab{wed2}
\norm{A_0}_{\lsitt{2}{\infty}}&=& \norm{(-\Delta)^{-1}((A, n^{-1}\nab n) (\pr  \A+\pr_0 A)+ (A, n^{-1}\nab n) \c \A^2)}_{\lsitt{2}{\infty}}\\
\nn&\les & \norm{\A(\pr  \A,\pr_0 A)}_{\lsitt{2}{\frac{14}{9}}}+\norm{\A^3}_{\lsitt{2}{\frac{14}{9}}}+\norm{\A(\pr  \A,\pr_0 A)}_{\lsitt{2}{\frac{13}{9}}}\\
\nn&&+\norm{\A^3}_{\lsitt{2}{\frac{13}{9}}}\\\
\nn&\les & (\norm{\A}_{\lsitt{2}{7}}+\norm{\A}_{\lsit{4}})\norm{(\pr  \A,\pr_0 A)}_{\lsit{2}}\\
\nn&&+\norm{\A}^3_{\lsit{4}}+\norm{\A}^3_{\lsit{6}}\\
\nn&\les& M^3\ep^2,
\eea
where we used the bootstrap assumptions on $\A$, the Sobolev embedding \eqref{sobineqsit} and interpolation.

Next, we consider $\pr_0A_0$. Recall from Proposition \ref{prop:equationpr0A0} that 
\bea\label{wed3}
\Delta\Big((\pr\pr_0n, \pr_0((A_0)_{jl})\Big) &=&  f_1+\pr f_2,
\eea
where $f_1$ is given by:
$$f_1=\pr A_0 (\R, \pr_0 A)+(A, n^{-1}\nabla n) (\pr(\pr_0A_0), A\pr_0A_0)+(\pr(\pr_0n), \A^2)(\R, \prb A,\pr A_0, \A^2)$$
and where $f_2$ is given by:
$$f_2=(A, n^{-1}\nab n)(\R, \prb \A)+A_0(\pr_0 A, \R)+\A^3.$$
In view of the bootstrap assumptions \eqref{bootA} for $A$ and \eqref{bootA0} for $A_0$, we have:
\bea\lab{wed4}
&&\norm{f_1}_{\lsitt{2}{\frac{42}{41}}}+\norm{f_2}_{\lsitt{2}{\frac{14}{9}}}\\
\nn&\les&    \norm{\pr A_0}_{\lsit{\frac{21}{10}}} \norm{(\R, \pr_0 A)}_{\lsit{2}}\\
\nn&&+\norm{(A, n^{-1}\nabla n)}_{\lsit{3}} \norm{(\pr(\pr_0A_0), A\pr_0A_0)}_{\lsitt{2}{\frac{14}{9}}}\\
\nn&&+\norm{(\pr(\pr_0n), \A^2)}_{\lsit{\frac{21}{10}}}\norm{(\R, \prb A,\pr A_0, \A^2)}_{\lsit{2}}\\
\nn&&+\norm{(A, n^{-1}\nab n)}_{\lsitt{2}{7}}\norm{(\R, \prb A, \pr\A)}_{\lsit{2}}\\
\nn&&+\norm{(A, n^{-1}\nab n)}_{\lsit{3}}\norm{\pr_0A_0}_{\lsitt{2}{\frac{42}{13}}}\\
\nn&&+\norm{A_0}_{\lsitt{2}{7}}\norm{(\pr_0 A, \R)}_{\lsit{2}}+\norm{\A}^3_{\lsit{\frac{14}{3}}}\\
\nn&\les& M^3\ep^2.
\eea

We will use the following elliptic estimates on $\Si_t$:
\begin{lemma}\lab{lemma:maru}
Let $v$ a scalar function solution of 
$$\Delta v=f$$
on $\Sigma_t$, vanishing at infinity.
Then, for any $3<p<\infty$ we have the following estimate
$$\norm{v}_{L^p(\Si_t)}+\norm{\pr v}_{L^{\frac{3p}{p+3}}(\Si_t)}\les \norm{f}_{L^{\frac{3p}{2p+3}}(\Si_t)}.$$
\end{lemma}

\begin{lemma}\lab{lemma:marubis}
Let $v$ a scalar function solution of 
$$\Delta v=\pr f$$
on $\Sigma_t$, vanishing at infinity.
Then, for any $3<p<\infty$ we have the following estimate
$$\norm{v}_{L^p(\Si_t)}+\norm{\pr v}_{L^{\frac{3p}{p+3}}(\Si_t)}\les \norm{f}_{L^{\frac{3p}{p+3}}(\Si_t)}.$$
\end{lemma}

The proof of Lemma \ref{lemma:maru} is postponed to Appendix \ref{sec:prooflemmamaru} while the proof of Lemma \ref{lemma:marubis} is postponed to Appendix \ref{sec:prooflemmamarubis}. We now come back to the estimate of $\pr_0A_0$. In view of \eqref{wed3}, 
Lemma \ref{lemma:maru}, Lemma \ref{lemma:marubis} and the estimate \eqref{wed4}, we have:
\beaa
\norm{(\pr\pr_0n, \pr_0((A_0)_{jl})}_{\lsitt{2}{\frac{42}{13}}}+\norm{\pr(\pr\pr_0n, \pr_0((A_0)_{jl})}_{\lsitt{2}{\frac{14}{9}}}\les M^3\ep^2.
\eeaa
Together with the identity \eqref{eq:YM11} and the estimates \eqref{bootn} for $n$, we infer 
\bea\lab{wed8}
\norm{\pr_0A_0}_{\lsitt{2}{\frac{42}{13}}}+\norm{\pr\pr_0A_0}_{\lsitt{2}{\frac{14}{9}}}\les M^3\ep^2.
\eea
Finally, \eqref{wed0}, \eqref{wed1}, \eqref{wed1bis}, \eqref{wed2} and \eqref{wed8} lead to an improvement of the bootstrap assumption \eqref{bootA0} for $A_0$.

\subsection{Improvement of the bootstrap assumption for $A$}

Using the estimates for $\square B_i$ derived in Lemma \ref{prop:waveeqB},   the estimates for $B$  on the initial slice $\Si_0$ obtained in Proposition  \ref{lemma:initialslice}, and the energy estimate   \eqref{energy1} derived in Lemma \ref{lemma:energyestimatebis}, we have:

\bea\lab{tuesday2}
\norm{\pr^2B}_{\lsit{2}}\les \ep+M^2\ep^2.
\eea

Using \eqref{tuesday2} with Lemma \ref{recoverA}, we obtain:
\bea\lab{tuesday3}
\norm{\pr A}_{\lsit{2}}\les \norm{\pr^2B}_{\lsit{2}}+\norm{\pr E}_{\lsit{2}}\les \ep+M^2\ep^2.
\eea

Next, we estimate $\pr_0(A)$. Recall that:
$$\pr_0(A_j)=\pr_j(A_0)+\R_{0j\c\c}+\A^2.$$
Thus, we have:
$$\norm{\pr_0A}_{\lsit{2}}\les \norm{\pr A_0}_{\lsit{2}}+\norm{\R}_{\lsit{2}}+\norm{\A}^2_{\lsit{4}},$$
which together with the bootstrap estimates \eqref{bootA} for $A$, and the improved estimates for $\R$ and $A_0$ yields:
\bea\lab{tuesday4}
\norm{\pr_0A}_{\lsit{2}}\les \ep+ M^2\ep^{\frac{3}{2}}.
\eea
In view of the estimate of Proposition \ref{lemma:initialslice} for $A$ on the initial slice $\Si_0$, we infer
\bea\lab{tuesday4bis}
\norm{A}_{\lsit{2}}&\les& \norm{A_{|_{t=0}}}_{L^2(\Si_0)}+\norm{\pr_0A}_{\lsit{2}}\\
\nn&\les& \ep+ M^2\ep^{\frac{3}{2}}.
\eea
\eqref{tuesday3}, \eqref{tuesday4} and \eqref{tuesday4bis} lead to an improvement of the bootstrap assumption \eqref{bootA} for $A$.

Finally, \eqref{camarche}, \eqref{wed0}, \eqref{wed1}, \eqref{wed1bis}, \eqref{wed2}, \eqref{wed8},  \eqref{tuesday3}, \eqref{tuesday4} and \eqref{tuesday4bis} yield the improved estimates \eqref{bootRimp}, \eqref{bootcurvaturefluximp}, \eqref{bootA0imp} and \eqref{bootAimp}. This concludes the proof of Proposition \ref{prop:improve1}.

\section{Parametrix for the wave equation}\lab{sec:parametrix}

Let $u_\pm$ be two families of scalar functions  defined on the space-time $\MM$ and indexed by $\om\in\SSS^2$,  satisfying the eikonal equation 
$\g^{\a\b} \pa_\a u_\pm\, \pa_\b u_\pm=0$
for each $\om\in\SSS^2$. We also denote $\uom_\pm(t,x)=u_\pm(t,x,\om)$. We have the freedom of choosing $\uom_\pm$ on the initial slice $\Si_0$, and in order for the results in \cite{param2} , \cite{param4} to apply, we need to initialize $\uom_\pm$ on $\Si_0$ as in \cite{param1}. The dependence of $\uom_\pm$ on $\omega$ 
is manifested in particular through the requirement that on $\Sigma_0$ the behavior of $u_\pm$ asymptotically
approaches that of $x\cdot\omega$.

Let $\HH_{\uom_\pm}$ denote the {\it null} level hypersurfaces of $\uom_\pm$. Let $\Lom_\pm$ be their null normals, fixed by the condition 
${\bf g}(\Lom_\pm, T)=\mp 1$. Let the vectorfield tangent to $\Si_t$ $\Nom_\pm$ be defined such as to satisfy:
$$\Lom_\pm=\pm e_0+\Nom_\pm.$$
We pick $(\eom_\pm)_A,\, A=1, 2$ vectorfields in $\Si_t$ such that together with $\Nom_\pm$ we obtain an orthonormal basis of $\Si_t$. Finally, we denote by $\nabb_\pm$          derivatives in the directions $(\eom_\pm)_A,\, A=1, 2$.

\begin{remark}
Note that  from the results in \cite{param3}, $\HH_{\uom_\pm}$ satisfy assumptions \eqref{assumptionH1} and \eqref{assumptionH2} (\eqref{assumptionH1} is a straightforward consequence of Theorem 2.18 in \cite{param3} while \eqref{assumptionH2} corresponds to (3.55) in \cite{param3}).
\end{remark}
We record  the following Sobolev embedding/trace type inequality  on $\HH_u$ for functions  defined on $\MM$,    derived  in \cite{param3} (see sections 3.5 in that paper).
\begin{lemma}[An embedding on $\HH$ \cite{param3}]
For any null hypersurface $\HH_u$, defined as above,  and for any $\Si_t$-tangent tensor $F$, we have:
\bea\lab{HversusSit}
\norm{F}_{L^2(\HH)}\lesssim \norm{\nabla F}_{\lsit{\frac{3}{2}}}+\norm{F}_{\lsit{2}},
\eea
and for any $2\leq p\leq 4$:
\bea\lab{HversusSitbis}
\norm{F}_{L^p(\HH)}\lesssim \norm{\nabla F}_{\lsit{2}}+\norm{F}_{\lsit{2}}.
\eea
\end{lemma}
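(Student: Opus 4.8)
The plan is to prove both estimates by the same two-step scheme. First I would slice $\HH$ by the level sets of the time function $t$, writing $\HH=\bigcup_{0\le t\le T}S_t$ with $S_t:=\HH\cap\Si_t$, and reduce an $L^q$ integral over $\HH$ to a $t$-integral of $L^q$ integrals over the $2$-surfaces $S_t$. Then I would prove a trace inequality bounding $\norm{F}_{L^q(S_t)}$ by Sobolev norms of $F$ on the ambient slice $\Si_t$, uniformly in $t$ (and in $u$, $\om$), and finally integrate back in $t$ using $T\le1$. By linearity it suffices to treat scalar $F$; for a $\Si_t$-tangent tensor one commutes with $\nabla$ in the argument below, the additional connection terms being of the same type and controlled by the same bounds.

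For the slicing step, since the future null normal is normalized by ${\bf g}(L,T)=-1$ and the lapse $n$ of the $t$-foliation is comparable to $1$ (by \eqref{bootn}), one has $L(t)=-n^{-1}{\bf g}(L,T)=n^{-1}\approx1$; hence the flow of $L$ identifies $\HH$ with $[0,T]\times S_0$ and $d\HH\approx da_{S_t}\,dt$, so that for every $q\ge1$
\[
\norm{F}_{L^q(\HH)}^q\;\lesssim\;\int_0^T\norm{F}_{L^q(S_t)}^q\,dt\;\lesssim\;\sup_{0\le t\le T}\norm{F}_{L^q(S_t)}^q .
\]
It then remains to bound $\norm{F}_{L^q(S_t)}$, uniformly in $t$, by the advertised norms on $\Si_t$.

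For the trace step I would use that $S_t$ bounds the interior region $\Om_t=\{u<u_\HH\}\cap\Si_t$, whose outward unit conormal is $\nu=\nabla u/|\nabla u|$ (meaningful because $|\nabla u|$ is bounded above and below on $\Om_t$). Extending $\nu$ to $\Om_t$ by the same formula and applying the divergence theorem on $\Si_t$ to the vector field $|F|^q\nu$ gives
\[
\int_{S_t}|F|^q\;\lesssim\;\int_{\Om_t}|F|^{q-1}|\nabla F|\,d\Si_t\;+\;\int_{\Om_t}|F|^q\,\big|\div\nu\big|\,d\Si_t ,
\]
where $\div\nu$ restricted to a level surface of $u$ is its mean curvature in $\Si_t$. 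For \eqref{HversusSit} I take $q=2$ and apply H\"older with exponents $\frac13+\frac23=1$ and $\frac13+\frac13+\frac13=1$, together with the bound $\norm{\div\nu}_{L^3(\Om_t)}\les1$, to get $\norm{F}_{L^2(S_t)}\lesssim\norm{\nabla F}_{L^{3/2}(\Si_t)}+\norm{F}_{L^3(\Si_t)}$. For \eqref{HversusSitbis} with $2\le p\le4$ I take $q=p$; H\"older gives $\int_{S_t}|F|^p\lesssim\norm{F}_{L^{2(p-1)}(\Si_t)}^{p-1}\norm{\nabla F}_{L^2(\Si_t)}+\norm{F}_{L^{3p/2}(\Si_t)}^{p}\norm{\div\nu}_{L^3(\Om_t)}$, and since $2(p-1)$ and $\frac{3p}{2}$ both lie in $[2,6]$ for such $p$, the Sobolev embedding \eqref{sobineqsit} on $\Si_t$ interpolated with $L^2(\Si_t)$ bounds the $F$-factors by $\norm{\nabla F}_{L^2(\Si_t)}+\norm{F}_{L^2(\Si_t)}$. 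In both cases, taking the supremum over $t$ and feeding the result into the slicing inequality yields the claimed bounds.

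The main obstacle is not this elementary argument but the geometric control it presupposes, which is exactly why the lemma is imported from \cite{param3}. One needs that, with only an $L^2$ bound on the spacetime curvature, the foliation $\{S_{t,u}\}$ of $\HH$ is uniformly non-degenerate — $|\nabla u|\approx1$, controlled areas and radii of injectivity of the $S_{t,u}$ — and, most crucially, that the mean curvature of the $u$-level surfaces is bounded in $L^3(\Om_t)$ uniformly in $t$. This last fact is the $\Si_t$-counterpart, via the co-area formula and $|\nabla u|\approx1$, of the estimate $\norm{\D N}_{L^3(\HH)}\les1$ appearing in \eqref{assumptionH1}, and establishing it is precisely the delicate regularity analysis of the eikonal equation carried out in \cite{param1}--\cite{param3}; granted that input, the two inequalities follow as above with constants depending only on universal quantities.
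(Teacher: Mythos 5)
The paper does not actually prove this lemma: it is imported verbatim from \cite{param3} (Section 3.5 there), so there is no in-paper argument to compare against. Your sketch is the standard trace/coarea argument and is essentially the right one: slice $\HH$ into the $2$-surfaces $S_t=\HH\cap\Si_t$ using $L(t)=n^{-1}\approx 1$, prove a trace inequality on each $S_t$ by integrating $|F|^q$ in the $\nu=\nabla u/|\nabla u|$ direction, and close with H\"older and the Sobolev embedding \eqref{sobineqsit} on $\Si_t$; your exponent bookkeeping for both $q=2$ and $2\le p\le 4$ is correct, and for tensors one can simply run the argument on the scalar $|F|$ via Kato's inequality rather than commuting connections. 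You also correctly locate where the real work lives: the uniform two-sided bound on $|\nabla u|=b^{-1}$ and the $L^3$ control of the mean curvature of the $u$-level sets, which are exactly the eikonal estimates of \cite{param1}--\cite{param3} reflected in \eqref{assumptionH1}.

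One technical point deserves care. The sublevel sets $\Om_t=\{u<u_\HH\}\cap\Si_t$ are not compact: since $u$ is initialized to behave like $x\cdot\om$ at infinity, the surfaces $S_{t,u}$ are asymptotic planes and $\Om_t$ is a half-space-like region, so applying the divergence theorem to $|F|^q\nu$ over all of $\Om_t$ requires decay of $F$ and produces no a priori control of the boundary contribution at infinity. The cleaner implementation of the same idea is the fundamental-theorem-of-calculus version: for $u'\in[u_\HH-1,u_\HH]$ write $\int_{S_{t,u_\HH}}|F|^q\le \int_{S_{t,u'}}|F|^q+\int_{u'}^{u_\HH}\!\!\int_{S_{t,v}}\bigl(q|F|^{q-1}|\nabla_N F|+|F|^q|\tr\theta|\bigr)$, then average over $u'$ and convert the $dv\,da$ integrals to integrals over $\Si_t$ by the coarea formula using $|\nabla u|\approx 1$. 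This yields exactly the H\"older estimates you wrote, restricted to a unit slab, and avoids the unbounded-domain issue. With that adjustment, and granting the geometric inputs from \cite{param3} that you explicitly flag, your proof is correct.
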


For any pair of functions $f_\pm$ on $\RRR^3$, we define the following scalar function on $\MM$:
$$\psi[f_+, f_-](t,x)=\int_{\SSS^2} \int_0^\infty  e^{i \la \uom_+(t,x)}f_+(\la\om)\la^2d\la d\om
 +\int_{\SSS^2} \int_0^\infty e^{i\la \uom_-(t,x)}f_-(\la\om)\la^2d\la d\om.$$
We appeal to the following result from \cite{param2} \cite{param4}:
\begin{theorem}[Theorem 2.10 in \cite{param2} and Theorem 2.15 in  \cite{param4}]\lab{prop:estparam}
Let $\phi_0$ and $\phi_1$ two scalar functions on $\Si_0$. Then, there is a unique pair of functions $(f_+, f_-)$ such that: 
$$\psi[f_+, f_-]|_{\Si_0}=\phi_0\textrm{ and }\pr_0(\psi[f_+, f_-])|_{\Si_0}=\phi_1.$$
Furthermore, $f_\pm$ satisfy the following estimates:
$$\norm{\la f_+}_{L^2(\RRR^3)}+\norm{\la f_-}_{L^2(\RRR^3)}\les \norm{\nabla\phi_0}_{L^2(\Si_0)}+\norm{\phi_1}_{L^2(\Si_0)},$$
and:
$$\norm{\la^2f_+}_{L^2(\RRR^3)}+\norm{\la^2f_-}_{L^2(\RRR^3)}\les \norm{\nabla^2\phi_0}_{L^2(\Si_0)}+\norm{\nabla\phi_1}_{L^2(\Si_0)}.$$
Finally, $\square\psi[f_+, f_-]$ satisfies the following estimates:
$$\norm{\square\psi[f_+, f_-]}_{L^2(\MM)}\les M\ep(\norm{\nabla\phi_0}_{L^2(\Si_0)}+\norm{\phi_1}_{L^2(\Si_0)}),$$
and:
$$\norm{\pr\square\psi[f_+, f_-]}_{L^2(\MM)}\les M\ep(\norm{\nabla^2\phi_0}_{L^2(\Si_0)}+\norm{\nabla\phi_1}_{L^2(\Si_0)}).$$
\end{theorem}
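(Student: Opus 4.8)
The plan is to establish Theorem \ref{prop:estparam} by assembling the constructions of \cite{param1}--\cite{param4} with the sharp $L^4(\MM)$ Strichartz bound of \cite{bil2}; as stressed in the introduction this is the single heaviest ingredient of the whole proof, so I only describe the architecture. \textbf{Step 1: construction and control of the phases.} First I would fix, for each $\om\in\SSS^2$, the restriction $\uom_\pm(0,\cdot)$ to $\Si_0$ by solving the elliptic--parabolic equation of \cite{param1},
$$\div\left(\frac{\nabla u}{|\nabla u|}\right)=1-\frac{1}{|\nabla u|}+k\left(\frac{\nabla u}{|\nabla u|},\frac{\nabla u}{|\nabla u|}\right),$$
with $u_\pm(0,x,\om)$ asymptotic to $x\c\om$; its parabolic character transverse to the level surfaces is precisely what yields control of the normal and of the $\om$-derivatives of $u$ at $t=0$. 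Propagating this data into $\MM$ by the eikonal equation \eqref{eikonal-intr}, the regularity theory of \cite{param3} supplies $\pr_{t,x}u\in L^\infty$ and $\pr_{t,x}\pr_\om u\in L^\infty$ (the regularity \eqref{reguintro}), the key bound $\|\square_\g\uom_\pm\|_{L^\infty(\MM)}\les M\ep$, and all the geometry of the null foliations used below, including that the hypersurfaces $\HH_{\uom_\pm}$ are weakly regular, i.e. satisfy \eqref{assumptionH1}--\eqref{assumptionH2}. These facts rest on null transport equations, Hodge systems, the geometric Littlewood--Paley calculus of \cite{Kl-R6}, sharp trace inequalities and, crucially, the Einstein equations $\mbox{Ric}(\g)=0$.

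\textbf{Step 2: inverting the data map.} Write $\psi[f_+,f_-]=\psi_++\psi_-$ for the two half-waves and set $T(f_+,f_-)=(\psi[f_+,f_-]|_{\Si_0},\pr_0\psi[f_+,f_-]|_{\Si_0})$. In flat space $\uom_\pm(0,\cdot)=x\c\om$ and $\pr_0\uom_\pm(0,\cdot)=\pm1$, so by Plancherel $T$ is an explicit isomorphism and already yields $\|\la f_\pm\|_{L^2(\RRR^3)}\simeq\|\nabla\phi_0\|_{L^2(\Si_0)}+\|\phi_1\|_{L^2(\Si_0)}$ and the analogous statement with $\la^2$ and one more derivative of the data. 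For the curved phases, \cite{param2} proves the boundedness of the $t=0$ parametrix \eqref{parametrixinit.intr} and of $T$ on the relevant spaces — this is step C2, obtained by a first and second dyadic decomposition in $(\la,\om)$ and a $TT^*$ argument exploiting the extra tangential regularity of $\uom$ along $\Si_0$ — and that $T$ differs from its flat model by an operator of size $O(M\ep)$; a Neumann series then produces the unique pair $(f_+,f_-)$ together with the two asserted estimates.

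\textbf{Step 3: the error term.} Since $\psi[f_+,f_-]$ carries no amplitude, one has, as in \eqref{error-param},
$$\square_\g\psi[f_+,f_-]=i\sum_{\pm}\int_{\SSS^2}\int_0^\infty e^{i\la\uom_\pm}\,(\square_\g\uom_\pm)\,f_\pm\,\la^3\,d\la\,d\om,$$
and the $L^2(\MM)$ bound for this Fourier integral operator is the content of \cite{param4} (steps C3--C4): one decomposes in $\la$, in $\om$, and in the space-time variables using the geometric Littlewood--Paley projections of \cite{Kl-R6}, and — since \eqref{reguintro} falls short of the regularity \eqref{staphase} demanded by classical stationary phase — substitutes for it the wave-packet overlap method of \cite{Sm}, \cite{Sm-Ta}. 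The factor $\square_\g\uom_\pm$ contributes the gain $\|\square_\g\uom_\pm\|_{L^\infty(\MM)}\les M\ep$, so $\|\square_\g\psi[f_+,f_-]\|_{L^2(\MM)}\les M\ep(\|\nabla\phi_0\|_{L^2(\Si_0)}+\|\phi_1\|_{L^2(\Si_0)})$; the bound for $\pr\,\square_\g\psi[f_+,f_-]$ follows from the same scheme after one differentiation, using the second-order control of $\uom$ and of the foliation geometry from \cite{param3}--\cite{param4} together with the $\la^2$ estimate of Step 2.

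\textbf{Main obstacle.} The crux is Step 3 at the genuinely critical regularity: with only $\pr_{t,x}\pr_\om u\in L^\infty$ the change of variables underlying stationary phase is unavailable, and all the integrations by parts in the $TT^*$ (and in the $T^*T$ analysis behind Step 2) must be re-routed through directions tangent to the level surfaces of $u$, where $u$ is better behaved; several of the resulting estimates are only logarithmically divergent and are closed only by one further appeal to the structure of the Einstein equations. Building $u$ on $\Si_0$ with just enough normal regularity — via the parabolic equation above rather than the naive choice $\square_\g u=0$ on $\Si_0$ — and absorbing these log-losses is where essentially all the difficulty of \cite{param1}--\cite{param4} and \cite{bil2} is concentrated.
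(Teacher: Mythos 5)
The paper does not prove this statement: it is imported as the ``main black box'' from Theorem 2.11 of \cite{param2} and Theorem 2.17 of \cite{param4}, so there is no internal proof to compare yours against. Your outline faithfully reproduces the strategy the paper itself sketches in Steps C1--C4 of the introduction (parabolic initialization of $\uom$ on $\Si_0$, propagation by the eikonal equation with the regularity theory of \cite{param3}, inversion of the data map by perturbation off the flat model, and the decomposition of the error term), and in that sense it is consistent with the paper; like the paper, it ultimately rests entirely on the cited companion papers rather than on an argument given here.

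Two attributions are off, however. First, the sharp $L^4(\MM)$ Strichartz estimate of \cite{bil2} and the Smith/Smith--Tataru overlap method are not ingredients of Theorem \ref{prop:estparam}: in this paper they enter only through Step D, i.e.\ the frequency-localized Strichartz estimate of Proposition \ref{prop:L4strichartz}, which is used for the bilinear estimates II in section \ref{sec:improve2B}. The error-term bound of Step C4 is closed instead by geometric integrations by parts in directions tangent to the level sets of $u$, combined with the first and second dyadic decompositions in $(\la,\om)$ and a third decomposition in the space-time variables via the geometric Littlewood--Paley theory of \cite{Kl-R6}. Second, the paper states explicitly that both the $TT^*$ and the $T^*T$ arguments ``would fail by far'' at the available regularity for $u$, so describing Step C2 as a $TT^*$ argument misstates what \cite{param2} does; the whole point of C2 is to design a substitute that exploits the regularity in $x$ and in $\om$ simultaneously. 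Neither slip affects the logical status of your write-up, but both should be corrected if the outline is meant to describe what the cited works actually do.
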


\begin{remark}
The content of Theorem 10.3 is a deep statement about existence 
of a generalized Fourier transform and its inverse on $\Sigma_0$, and  existence and accuracy of a
parametrix for the scalar wave equation on $\MM$, with merely $L^2$ curvature bounds assumptions on the
ambient geometry.
The existence of $f_\pm$ and the first two estimates of Theorem \ref{prop:estparam} are proved in \cite{param2}, while the last two estimates in Theorem \ref{prop:estparam} are proved in \cite{param4}.
\end{remark}

\begin{remark}
As $\uom_\pm$ satisfy the eikonal equation, note that 
\bea\lab{formulainparam4}
\square\psi[f_+, f_-] &=& i\int_{\SSS^2} \int_0^\infty \square\uom_+(t,x) e^{i \la \uom_+(t,x)}f_+(\la\om)\la^3d\la d\om\\
\nn&& +i\int_{\SSS^2} \int_0^\infty \square\uom_-(t,x) e^{i\la \uom_-(t,x)}f_-(\la\om)\la^3d\la d\om
 \eea
 which is a sum of two Fourier integral operators with phase $\uom_\pm$ and symbol $\square\uom_\pm$. The last two estimates in Theorem \ref{prop:estparam} are proved in \cite{param4} by considering one of the two half waves in \eqref{formulainparam4}, as the estimate is identical for both half waves. Note also the identity (2.28) in \cite{param4} which allows to rewrite the symbol of the Fourier integral operator \eqref{formulainparam4} under the form appearing in Theorem 2.15 of \cite{param4}. 
\end{remark}

We associate to any pair of functions $\phi_0, \phi_1$ on $\Si_0$ the function $\Psi_{om}[\phi_0,\phi_1]$ defined for $(t,x)\in\MM$ as:
$$\Psi_{om}[\phi_0,\phi_1]=\psi[f_+, f_-]$$
where $(f_+, f_-)$ is defined in view of Theorem \ref{prop:estparam} as the unique pair of functions associated to $(\phi_0, \phi_1)$. In particular, we obtain:
$$\norm{\la f_+}_{L^2(\RRR^3)}+\norm{\la f_-}_{L^2(\RRR^3)}\les \norm{\nabla\phi_0}_{L^2(\Si_0)}+\norm{\phi_1}_{L^2(\Si_0)},$$
$$\norm{\la^2f_+}_{L^2(\RRR^3)}+\norm{\la^2f_-}_{L^2(\RRR^3)}\les \norm{\nabla^2\phi_0}_{L^2(\Si_0)}+\norm{\nabla \phi_1}_{L^2(\Si_0)},$$
\bea\lab{par1:bis}
\norm{\square\Psi_{om}[\phi_0,\phi_1]}_{L^2(\MM)}\les M\ep(\norm{\nabla\phi_0}_{L^2(\Si_0)}+\norm{\phi_1}_{L^2(\Si_0)}),
\eea
and:
\bea\lab{par1}
\norm{\pr\square\Psi_{om}[\phi_0,\phi_1]}_{L^2(\MM)}\les M\ep(\norm{\nabla^2\phi_0}_{L^2(\Si_0)}+\norm{\nabla \phi_1}_{L^2(\Si_0)}).
\eea

Next, let $\uoms_\pm$ two families, indexed by $\om\in\SSS^2$ and $s\in \RRR$\footnote{In fact, we need to restrict $s$ to an open interval of $[0,1]$ for which the bootstrap assumptions \eqref{bootR} \eqref{bootcurvatureflux} on $\R$ hold}, of scalar functions on the space-time $\MM$ satisfying the eikonal equation for each $\om\in\SSS^2$ and $s\in\RRR$. We have the freedom of choosing $\uoms_\pm$ on the slice $\Si_s$, and in order for the results in \cite{param2} \cite{param4} to apply, we need to initialize $\uoms_\pm$ on $\Si_s$ as in \cite{param1}.  
Note that the families $\uom_\pm$ correspond to $\uoms$ with the choice $s=0$. For any pair of functions $f_\pm$ on $\RRR^3$, and for any $s\in\RRR$, we define the following scalar function on $\MM$:
$$\psi_s[f_+, f_-](t,x,s)=\int_{\SSS^2} \int_0^\infty  e^{i \la \uoms_+(t,x)}f_+(\la\om)\la^2d\la d\om
 +\int_{\SSS^2} \int_0^\infty e^{i\la \uoms_-(t,x)}f_-(\la\om)\la^2d\la d\om.$$
We have the following straightforward corollary of Theorem \ref{prop:estparam}:
\begin{corollary}\lab{cor:estparam}
Let $s\in\RRR$. Let $\phi_0$ and $\phi_1$ two scalar functions on $\Si_s$. Then, there is a unique pair of functions $(f_+, f_-)$ such that: 
$$\psi_s[f_+, f_-]|_{\Si_s}=\phi_0\textrm{ and }\pr_0(\psi_s[f_+, f_-])|_{\Si_s}=\phi_1.$$
Furthermore, $f_\pm$ satisfy the following estimates:
$$\norm{\la f_+}_{L^2(\RRR^3)}+\norm{\la f_-}_{L^2(\RRR^3)}\les \norm{\nabla\phi_0}_{L^2(\Si_s)}+\norm{\phi_1}_{L^2(\Si_s)},$$
and:
$$\norm{\la^2f_+}_{L^2(\RRR^3)}+\norm{\la^2f_-}_{L^2(\RRR^3)}\les \norm{\nabla^2\phi_0}_{L^2(\Si_s)}+\norm{\nabla\phi_1}_{L^2(\Si_s)}.$$
Finally, $\square\psi_s[f_+, f_-]$ satisfies the following estimates:
$$\norm{\square\psi_s[f_+, f_-]}_{L^2(\MM)}\les M\ep(\norm{\nabla\phi_0}_{L^2(\Si_s)}+\norm{\phi_1}_{L^2(\Si_s)}),$$
and:
$$\norm{\pr\square\psi_s[f_+, f_-]}_{L^2(\MM)}\les M\ep(\norm{\nabla^2\phi_0}_{L^2(\Si_s)}+\norm{\nabla\phi_1}_{L^2(\Si_s)}).$$
\end{corollary}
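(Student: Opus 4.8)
The plan is to derive Corollary \ref{cor:estparam} directly from Theorem \ref{prop:estparam} by a time-translation argument, using the fact that both the Einstein vacuum equations and the structure of the bootstrap assumptions are invariant under shifting the time function by a constant. First I would observe that Theorem \ref{prop:estparam} is stated for families $\uom_\pm$ initialized on $\Si_0$, and that the families $\uoms_\pm$ are, by construction, initialized on $\Si_s$ using \emph{exactly the same} prescription from \cite{param1} that was used to initialize $\uom_\pm$ on $\Si_0$; they solve the same eikonal equation $\g^{\a\b}\pa_\a\uoms_\pm\,\pa_\b\uoms_\pm=0$ on $\MM$, with the only difference being the slice on which the initial condition (asymptotically $x\cdot\om$) is imposed. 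Thus, for fixed $s$, the pair $(\uoms_+, \uoms_-)$ plays on $\Si_s$ precisely the role that $(\uom_+,\uom_-)$ plays on $\Si_0$.

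The key step is to reduce to Theorem \ref{prop:estparam} by relabeling the time coordinate. Given $s\in\RRR$ (restricted to the relevant open interval of $[0,1]$ where \eqref{bootR}, \eqref{bootcurvatureflux} hold, as noted in the footnote), introduce the shifted time function $t' = t - s$, so that $\Si_s$ becomes the zero-slice $\Si'_0$ of the new foliation. Since the maximal foliation, the compatible frame, the lapse $n$, the second fundamental form $k$, the connection $\A$, and the Coulomb gauge condition are all defined geometrically and do not reference the origin of $t$, all the bootstrap assumptions of Section \ref{sec:mainbootassss} hold verbatim with respect to the shifted foliation on $0\le t'\le T^*-s$ (and in particular the norms $\|\cdot\|_{L^2(\MM)}$, $\|\cdot\|_{\lsit{p}}$ are unchanged, being taken over the same spacetime region and the same slices). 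Applying Theorem \ref{prop:estparam} to the shifted data $(\phi_0,\phi_1)$ on $\Si'_0 = \Si_s$, with parametrix built from $\uoms_\pm$ (which is now initialized on the zero-slice of the shifted foliation exactly as required), yields the existence and uniqueness of $(f_+,f_-)$, the two estimates on $\|\la f_\pm\|_{L^2(\RRR^3)}$ and $\|\la^2 f_\pm\|_{L^2(\RRR^3)}$ in terms of $\|\nabla\phi_0\|_{L^2(\Si_s)}$, $\|\phi_1\|_{L^2(\Si_s)}$, $\|\nabla^2\phi_0\|_{L^2(\Si_s)}$, $\|\nabla\phi_1\|_{L^2(\Si_s)}$, and the two estimates on $\|\square\psi_s[f_+,f_-]\|_{L^2(\MM)}$ and $\|\pr\square\psi_s[f_+,f_-]\|_{L^2(\MM)}$. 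Translating back to the original $t$ gives exactly the statement of Corollary \ref{cor:estparam}.

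The main thing to check — and the only place where a genuine argument rather than bookkeeping is needed — is that the \emph{hypotheses} of Theorem \ref{prop:estparam} (equivalently, the results of \cite{param1}--\cite{param4} that underlie it) are genuinely insensitive to the choice of initial slice within $[0,T^*]$, and in particular that the regularity and geometric-control estimates for $\uoms_\pm$ hold uniformly in $s$. This is why the footnote restricts $s$ to an open subinterval of $[0,1]$ on which \eqref{bootR} and \eqref{bootcurvatureflux} are valid: those are precisely the assumptions under which the parametrix construction and the $L^\infty$/Besov control of $\square_\g\uoms$ from \cite{param3}, \cite{param4} were carried out, and they are phrased in a time-translation-invariant way. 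Since the constant $M$ in these estimates depends only on the universal bootstrap constants and not on which slice is taken as ``initial,'' the implied constants in Corollary \ref{cor:estparam} are uniform in $s$. With this observation in hand the corollary is immediate, which is why it is labeled ``straightforward.''
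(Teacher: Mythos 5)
Your proposal is correct and coincides with the paper's intended reasoning: the paper gives no proof, labeling the corollary a ``straightforward'' consequence of Theorem \ref{prop:estparam}, and the implicit argument is exactly the one you spell out, namely that the phase functions $\uoms_\pm$ are initialized on $\Si_s$ by the same geometric prescription of \cite{param1} used on $\Si_0$, and that the bootstrap assumptions and the black-box results of \cite{param1}--\cite{param4} are insensitive to which slice of the maximal foliation is declared initial, so the theorem applies verbatim with $\Si_0$ replaced by $\Si_s$ and with constants uniform in $s$.
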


Next, for any $s\in \RRR$, we associate to any function $F$ on $\Si_s$ the function $\Psi(t,s)F$ defined for $(t,x)\in\MM$ as:
$$\Psi(t,s)F=\psi_s[f_+, f_-](t)$$
where $(f_+, f_-)$ is defined in view of Corollary \ref{cor:estparam} as the unique pair of functions associated to the choice $(\phi_0, \phi_1)=(0,-nF)$. In particular, we obtain in view of Corollary \ref{cor:estparam} and the control of the lapse $n$ given by \eqref{bootn}:
$$\norm{\la f_+}_{L^2(\RRR^3)}+\norm{\la f_-}_{L^2(\RRR^3)}\les \norm{F}_{L^2(\Si_s)},$$
$$\norm{\la^2f_+}_{L^2(\RRR^3)}+\norm{\la^2f_-}_{L^2(\RRR^3)}\les \norm{\nabla F}_{L^2(\Si_s)},$$
\bea\lab{par2:bis}
\norm{\square\Psi(t,s)F}_{L^2(\MM)}\les M\ep\norm{F}_{L^2(\Si_s)},
\eea
and:
\bea\lab{par2}
\norm{\pr\square\Psi(t,s)F}_{L^2(\MM)}\les M\ep\norm{\nabla F}_{L^2(\Si_s)}.
\eea

\begin{remark}\lab{rem:duhamel}
Note that we have 
$$\square\left(\int_0^t\Psi(t,s)F(s)ds\right)=F(t)+\int_0^t\square\Psi(t,s)F(s)ds.$$
\end{remark}

Now, we are in position to construct a parametrix for the wave equation \eqref{eq:wave}. 

\begin{theorem}[Representation formula]\lab{lemma:parametrixconstruction}
Let $F$ a scalar function on $\MM$, and let $\phi_0$ and $\phi_1$ two scalar functions on $\Si_0$. Let $\phi$ the solution of the wave equation \eqref{eq:wave} on $\MM$. Then, there is a sequence of scalar functions $(\phi^{(j)}, F^{(j)})$, $j\ge 0$ on $\MM$, defined according to:
\beaa
\phi^{(0)}=\Psi_{om}[\phi_0,\phi_1]+\int_0^t\Psi(t,s)F^{(0)}(s,.)ds,\qquad F^{(0)}=F
\eeaa
and for all $j\geq 1$:
$$\phi^{(j)}=\int_0^t\Psi(t,s)F^{(j)}(s,.)ds,\qquad F^{(j)}=-\square\phi^{(j-1)}+F^{(j-1)}$$
such that,
$$\phi=\sum_{j=0}^{+\infty}\phi^{(j)},$$ 
 and    such that  $\phi^{(j)}$ and $F^{(j)}$ satisfy the following estimates:
$$\norm{\prb\phi^{(j)}}_{\lsit{2}}+\norm{F^{(j)}}_{L^2(\MM)}\les (M\ep)^j(\norm{\nabla\phi_0}_{L^2(\Si_0)}+\norm{\phi_1}_{L^2(\Si_0)}+\norm{F}_{L^2(\MM)}),$$
and:
$$\norm{\pr\prb\phi^{(j)}}_{\lsit{2}}+\norm{\pr F^{(j)}}_{L^2(\MM)}\les (M\ep)^j(\norm{\nabla^2\phi_0}_{L^2(\Si_0)}+\norm{\nabla\phi_1}_{L^2(\Si_0)}+\norm{\pr F}_{L^2(\MM)}).$$
\end{theorem}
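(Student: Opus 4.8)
The plan is to construct the sequence $(\phi^{(j)}, F^{(j)})$ iteratively as indicated and then prove the two families of estimates by induction on $j$, the base case $j=0$ being the heart of the matter. First I would treat $j=0$: by definition $\phi^{(0)} = \Psi_{om}[\phi_0,\phi_1] + \int_0^t \Psi(t,s)F(s,\cdot)\,ds$. The energy estimate of Lemma \ref{lemma:energyestimate} applied to $\Psi_{om}[\phi_0,\phi_1]$ gives control of $\norm{\prb\Psi_{om}[\phi_0,\phi_1]}_{\lsit{2}}$ by $\norm{\nabla\phi_0}_{L^2(\Si_0)}+\norm{\phi_1}_{L^2(\Si_0)}+\norm{\square\Psi_{om}[\phi_0,\phi_1]}_{L^2(\MM)}$, and the last term is $\les M\ep(\norm{\nabla\phi_0}_{L^2(\Si_0)}+\norm{\phi_1}_{L^2(\Si_0)})$ by \eqref{par1:bis}. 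For the Duhamel term $\int_0^t\Psi(t,s)F(s,\cdot)ds$, each slice summand $\Psi(t,s)F(s,\cdot)$ solves a wave equation with data $(0,-nF(s))$ on $\Si_s$ up to the error $\square\Psi(t,s)F(s)$; applying the energy estimate on the truncated slab together with \eqref{par2:bis} and integrating in $s$ over $[0,1]$ (Minkowski's integral inequality, using $T^*\le 1$) gives $\norm{\prb(\int_0^t\Psi(t,s)F(s)ds)}_{\lsit{2}}\les \norm{F}_{L^2(\MM)}$. Then $F^{(1)} = -\square\phi^{(0)} + F$; but by Remark \ref{rem:duhamel}, $\square\phi^{(0)} = F + \square\Psi_{om}[\phi_0,\phi_1] + \int_0^t \square\Psi(t,s)F(s)ds$, so $F^{(1)} = -\square\Psi_{om}[\phi_0,\phi_1] - \int_0^t\square\Psi(t,s)F(s)ds$, which by \eqref{par1:bis} and \eqref{par2:bis} is bounded in $L^2(\MM)$ by $M\ep(\norm{\nabla\phi_0}_{L^2(\Si_0)}+\norm{\phi_1}_{L^2(\Si_0)}+\norm{F}_{L^2(\MM)})$.

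Next I would run the inductive step for $j\ge 1$. Assume the stated bounds hold for indices $\le j$. Since $\phi^{(j)} = \int_0^t\Psi(t,s)F^{(j)}(s)ds$, the same energy estimate plus \eqref{par2:bis} argument as above gives $\norm{\prb\phi^{(j)}}_{\lsit{2}}\les \norm{F^{(j)}}_{L^2(\MM)}\les (M\ep)^j(\ldots)$. For the error, $F^{(j+1)} = -\square\phi^{(j)} + F^{(j)} = -\int_0^t\square\Psi(t,s)F^{(j)}(s)ds$ again by Remark \ref{rem:duhamel}, so $\norm{F^{(j+1)}}_{L^2(\MM)}\les M\ep\,\norm{F^{(j)}}_{L^2(\MM)}\les (M\ep)^{j+1}(\ldots)$. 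This closes the $L^2$ induction. The higher-order estimates are obtained identically, using instead Lemma \ref{lemma:energyestimatebis} (which controls $\norm{\pr\prb\phi}_{\lsit{2}}+\norm{\pr_0(\pr_0\phi)}_{L^2(\MM)}$ and $\norm{\square(\pr_j\phi)}_{L^2(\MM)}$) together with the estimates \eqref{par1} and \eqref{par2} for $\norm{\pr\square\Psi}$; the factor $M\ep$ appearing in the bound for $\norm{\square(\pr_j\phi)}$ in Lemma \ref{lemma:energyestimatebis} is what produces the geometric gain $(M\ep)^j$ in the $\pr$-differentiated series as well.

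Finally, the convergence $\phi = \sum_{j\ge 0}\phi^{(j)}$ follows from the geometric decay: choosing $\ep$ small enough that $M\ep < 1/2$, the series $\sum_j \phi^{(j)}$ converges in the norm $\norm{\prb\,\cdot}_{\lsit{2}}$ (and the differentiated series in $\norm{\pr\prb\,\cdot}_{\lsit{2}}$), and the partial sums $\Phi_N = \sum_{j=0}^N\phi^{(j)}$ satisfy $\square\Phi_N = F - F^{(N+1)} \to F$ in $L^2(\MM)$ by the telescoping identity $\square\phi^{(j)} = F^{(j)} - F^{(j+1)}$ together with $\square\phi^{(0)} = F - F^{(1)}$. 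Since $\Phi_N$ has the correct Cauchy data on $\Si_0$ for every $N$ (only $\phi^{(0)}$ contributes non-trivial data, the $\phi^{(j)}$, $j\ge 1$, having vanishing data by construction of $\Psi(t,s)$), uniqueness for the linear wave equation \eqref{eq:wave} — guaranteed by the energy estimate of Lemma \ref{lemma:energyestimate} — identifies the limit with $\phi$.

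The main obstacle I anticipate is the careful handling of the Duhamel term: one must justify differentiating under the $\int_0^t ds$ sign, apply the energy estimate on the correct truncated space-time slab $\{s\le t'\le t\}$ rather than all of $\MM$, and verify that the $s$-integration of the $L^2(\MM)$-in-$(t,x)$ norms can be performed (a Minkowski/Fubini argument exploiting $T^*\le 1$). Everything else is a routine propagation of the geometric series, but the bookkeeping of which estimate (\eqref{par1:bis}/\eqref{par2:bis} versus \eqref{par1}/\eqref{par2}, Lemma \ref{lemma:energyestimate} versus Lemma \ref{lemma:energyestimatebis}) feeds which term must be done with care so that no derivative is lost and the powers of $M\ep$ come out exactly as stated.
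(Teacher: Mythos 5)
Your proposal is correct and follows essentially the same route as the paper: the identity $F^{(j+1)}=-\int_0^t\square\Psi(t,s)F^{(j)}(s)\,ds$ from Remark \ref{rem:duhamel}, the parametrix error bounds \eqref{par1:bis}--\eqref{par2} to produce the geometric factor $(M\ep)^j$, the energy estimates of Lemmas \ref{lemma:energyestimate}--\ref{lemma:energyestimatebis} to pass from $F^{(j)}$ to $\phi^{(j)}$, and the telescoping identity plus uniqueness to identify the sum with $\phi$. The only (harmless) divergence is that you estimate the Duhamel term slice by slice, whereas the paper simply notes that for $j\geq 1$ the function $\phi^{(j)}$ solves $\square\phi^{(j)}=F^{(j)}-F^{(j+1)}$ with vanishing Cauchy data and applies the energy estimate once.
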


\begin{proof}
Let us define:
$$F^{(0)}=F\textrm{ and }\phi^{(0)}=\Psi_{om}[\phi_0,\phi_1]+\int_0^t\Psi(t,s)F^{(0)}(s,.)ds.$$
Then, we define iteratively for $j\geq 1$:
$$F^{(j)}=-\square\phi^{(j-1)}+F^{(j-1)}\textrm{ and }\phi^{(j)}=\int_0^t\Psi(t,s)F^{(j)}(s,.)ds.$$
In view of Remark \ref{rem:duhamel}, note that for $j\geq 1$:
$$\square\phi^{(j)}=\square\int_0^t\Psi(t,s)F^{(j)}(s,.)ds=F^{(j)}+\int_0^t\square\Psi(t,s)F^{(j)}(s,.)ds,$$
which yields:
$$F^{(j+1)}=-\int_0^t\square\Psi(t,s)F^{(j)}(s,.)ds.$$
Thus, we obtain in view of \eqref{par2:bis} and \eqref{par2}:
$$\norm{F^{(j+1)}}_{L^2(\MM)}\les M\ep\norm{F^{(j)}}_{L^2(\MM)},$$
and:
$$\norm{\pr F^{(j+1)}}_{L^2(\MM)}\les M\ep\norm{\pr F^{(j)}}_{L^2(\MM)}.$$ 
Therefore, we obtain for all $j\geq 2$:
\bea\lab{par3:bis}
\norm{F^{(j)}}_{L^2(\MM)}\les (M\ep)^{j-1}\norm{F^{(1)}}_{L^2(\MM)},
\eea
and:
\bea\lab{par3}
\norm{\pr F^{(j)}}_{L^2(\MM)}\les (M\ep)^{j-1}\norm{\pr F^{(1)}}_{L^2(\MM)}.
\eea
Also, we have:
$$\square\phi^{(0)}=F^{(0)}+\square\Psi_{om}[\phi_0,\phi_1]+\int_0^t\square\Psi(t,s)F(s,.)ds,$$
This yields:
$$F^{(1)}=-\square\Psi_{om}[\phi_0,\phi_1]-\int_0^t\square\Psi(t,s)F(s,.)ds$$
which together with \eqref{par1:bis}, \eqref{par1}, \eqref{par2:bis} and \eqref{par2} implies:
$$\norm{F^{(1)}}_{L^2(\MM)}\les M\ep(\norm{\nabla\phi_0}_{L^2(\Si_0)}+\norm{\phi_1}_{L^2(\Si_0)}+\norm{F}_{L^2(\MM)}),$$
and:
$$\norm{\pr F^{(1)}}_{L^2(\MM)}\les M\ep(\norm{\nabla^2\phi_0}_{L^2(\Si_0)}+\norm{\nabla\phi_1}_{L^2(\Si_0)}+\norm{\pr F}_{L^2(\MM)}).$$
Together with \eqref{par3:bis} and \eqref{par3}, we obtain for any $j\geq 1$:
\bea\lab{par4:bis}
\norm{F^{(j)}}_{L^2(\MM)}\les (M\ep)^j(\norm{\nabla\phi_0}_{L^2(\Si_0)}+\norm{\phi_1}_{L^2(\Si_0)}+\norm{F}_{L^2(\MM)}),
\eea
and:
\bea\lab{par4}
\norm{\pr F^{(j)}}_{L^2(\MM)}\les (M\ep)^j(\norm{\nabla^2\phi_0}_{L^2(\Si_0)}+\norm{\nabla\phi_1}_{L^2(\Si_0)}+\norm{\pr F}_{L^2(\MM)}).
\eea

We now estimate $\phi^{(j)}, j\geq 1$. For $j\geq 1$, $\phi^{(j)}$ satisfies the following wave equation:
\begin{displaymath}
\left\{\begin{array}{l}
\square\phi^{(j)}=F^{(j)}-F^{(j+1)},\\
\phi^{(j)}|_{\Si_0}=0,\, \pr_0(\phi^{(j})|_{\Si_0}=0. 
\end{array}\right.
\end{displaymath}
which together with Lemmas \ref{lemma:energyestimate} and \ref{lemma:energyestimatebis}, \eqref{par4:bis} and \eqref{par4}  yields:
\bea\lab{par5:bis}
\norm{\prb\phi^{(j)}}_{\lsit{2}}\les (M\ep)^j(\norm{\nabla\phi_0}_{L^2(\Si_0)}+\norm{\phi_1}_{L^2(\Si_0)}+\norm{F}_{L^2(\MM)}),
\eea
and:
\bea\lab{par5}
\norm{\pr(\prb\phi^{(j)})}_{\lsit{2}}\les (M\ep)^j(\norm{\nabla^2\phi_0}_{L^2(\Si_0)}+\norm{\nabla\phi_1}_{L^2(\Si_0)}+\norm{\pr F}_{L^2(\MM)}).
\eea

Now, we have:
$$\square\left(\sum_{j=0}^J\phi^{(j)}\right)=\sum_{j=0}^J(F^{(j)}-F^{(j+1)})=F-F^{(J+1)},$$
which together with \eqref{par4} and \eqref{par5} yields in the limit $j\rightarrow +\infty$:
$$\square\left(\sum_{j=0}^{+\infty}\phi^{(j)}\right)=F.$$
Note also that 
$$\phi^{(0)}|_{\Si_0}=\phi_0\textrm{ and }\pr_0\phi^{(0)}|_{\Si_0}=\phi_1,$$
while for all $j\geq 1$, we have:
$$\phi^{(j)}|_{\Si_0}=0\textrm{ and }\pr_0\phi^{(j)}|_{\Si_0}=0.$$
Thus, $\sum_{j=0}^{+\infty}\phi^{(j)}$ satisfies the wave equation \eqref{eq:wave}, and by uniqueness, we have:
$$\phi=\sum_{j=0}^{+\infty}\phi^{(j)}.$$
This concludes the proof of the theorem.
\end{proof}

\section{Proof of Proposition \ref{prop:improve2} (part 1)}\lab{sec:improve2}

The goal of this and next sections   is to prove Proposition \ref{prop:improve2}. This requires  the use of the representation formula of 
Theorem \ref{lemma:parametrixconstruction}. In this section  we derive the improved bilinear estimate \eqref{bil1imp}, \eqref{bil5imp}, \eqref{bil6imp}, \eqref{bil8imp} and \eqref{bil8imp} of Proposition \ref{prop:improve2}. We also  derive  the improved trilinear estimate  \eqref{trilinearbootimp}.

\subsection{Improvement of the bilinear bootstrap assumptions I}\lab{sec:proofbil1}  We prove the bilinear  estimates
   \eqref{bil1imp}, \eqref{bil5imp}, \eqref{bil6imp}, \eqref{bil8imp}, \eqref{bil8imp}. These bilinear estimates all involve the $L^2(\MM)$ norm of quantities of the type:
$$\CC(U,\pr\phi),$$
where $\CC(U,\pr\phi)$ denotes a contraction with respect to one index between a tensor $U$ and $\pr\phi$, for $\phi$ a solution of the scalar wave equation \eqref{eq:wave} with $F, \phi_0$ and $\phi_1$ satisfying the estimate:
$$\norm{\nabla^2\phi_0}_{L^2(\Si_0)}+\norm{\nabla\phi_1}_{L^2(\Si_0)}+\norm{\pr F}_{L^2(\MM)}\les M\ep.$$
In particular, we may use the parametrix constructed in Lemma \ref{lemma:parametrixconstruction} for $\phi$:
$$\phi=\sum_{j=0}^{+\infty}\phi^{(j)},$$
with:
$$\phi^{(0)}=\Psi_{om}[\phi_0,\phi_1]+\int_0^t\Psi(t,s)F(s,.)ds,$$
and for all $j\geq 1$:
$$\phi^{(j)}=\int_0^t\Psi(t,s)F^{(j)}(s,.)ds.$$
Thus, we need to estimate the norm in $L^2(\MM)$ of contractions of quantities of the type:
$$\CC(U, \pr(\Psi_{om}[\phi_0,\phi_1]))+\sum_{j=0}^{+\infty}\int_0^t\CC(U, \pr(\Psi(t,s)F^{(j)}(s,.)))ds.$$
After using the definition of $\Psi_{om}$ and $\Psi(t,s)$, and the estimates for $F^{(j)}$ provided by Lemma \ref{lemma:parametrixconstruction}, this reduces to estimating:
$$\int_{\SSS^2} \int_0^\infty  \CC(U, \pr(e^{i \la \uom_+(t,x)}))f_+(\la\om)\la^2d\la d\om
 +\int_{\SSS^2} \int_0^\infty \CC(U,\pr(e^{i\la \uom_-(t,x)}))f_-(\la\om)\la^2d\la d\om,$$
where $f_\pm$ in view of Theorem \ref{prop:estparam} and the estimates for $F, \phi_0$ and $\phi_1$ 
satisfies:
$$\norm{\la^2f_\pm}_{L^2(\RRR^3)}\les M\ep.$$
Since both half-wave parametrices are estimated in the same way, the bilinear estimates \eqref{bil1},  \eqref{bil5}, \eqref{bil6}, \eqref{bil8} and \eqref{bil8} all estimate the norm in $L^2(\MM)$ of  contractions of quantities of the type:
$$\int_{\SSS^2} \int_0^\infty \CC(U,\pr(e^{i \la \uom(t,x)}))f(\la\om)\la^2d\la d\om,$$
where $f$ satisfies:
\bea\lab{shouf}
\norm{\la^2f}_{L^2(\RRR^3)}\les M\ep.
\eea
Now, we have:
$$\pr_j(e^{i \la \uom})=i\la e^{i \la \uom}\pr_j(\uom),$$
and the gradient of $\uom$ on $\Si_t$ is given by:
$$\nabla(\uom)=\bom^{-1}\Nom,$$
where $\bom=|\nabla(\uom)|^{-1}$ is the null lapse, and 
$$\Nom=\frac{\nabla\uom}{|\nabla\uom|}$$ 
is the unit normal to $\HH_{\uom}\cap\Si_t$ along $\Si_t$.  Thus, the bilinear estimates \eqref{bil1},  \eqref{bil5}, \eqref{bil6}, \eqref{bil8} and \eqref{bil8} all  reduce to  $L^2(\MM)$-estimates of   expressions of the form:

\bea
\frak{C}[U,f]:=\int_{\SSS^2} \int_0^\infty e^{i \la \uom(t,x)}\bom^{-1}\CC(U,\Nom)f(\la\om)\la^3d\la d\om,
\eea
where $f$ satisfies \eqref{shouf}. 

To estimate $\frak{C}[U,f]$  we  follow the strategy of \cite{Kl-R3}. 
\bea\lab{dorbia}
\left\|\frak{C}[U,f]\right\|_{L^2(\MM)}&\les&
 \int_{\SSS^2} \left\|  \bom^{-1}\CC(U, \Nom)\left(\int_0^{+\infty}e^{i \la \uom(t,x)}f(\la\om)\la^3d\la\right)\right\|_{L^2(\MM)} d\om\\
&\les & \int_{\SSS^2} \norm{\bom^{-1}}_{L^\infty(\MM)}\norm{\CC(U, \Nom)}_{\luom{2}}\left\|\int_0^{+\infty}e^{i \la \uom(t,x)}f(\la\om)\la^3d\la\right\|_{L^2_{\uom}} d\om          \nn\\
&\les & \left(\sup_{\om\in\SSS^2}\norm{\bom^{-1}}_{L^\infty(\MM)}\right)\left(\sup_{\om\in\SSS^2}\norm{\CC(U, \Nom)}_{\luom{2}}\right)\left(\int_{\SSS^2}\norm{\la^3f(\la\om)}_{L^2_\la}d\om\right)\nn\\
&\les & \left(\sup_{\om\in\SSS^2}\norm{\bom^{-1}}_{L^\infty(\MM)}\right)\left(\sup_{\om\in\SSS^2}\norm{\CC(U, \Nom)}_{\luom{2}}\right)\norm{\la^2f}_{L^2(\RRR^3)},\nn
\eea
where we used Plancherel in $\la$ and Cauchy Schwarz in $\om$. Now, since $\uom$ has been initialized on $\Si_0$ as in \cite{param1}, and satisfies the eikonal equation on $\MM$, the results in \cite{param3} (see section 4.8 in that paper) under the assumption of Theorem \ref{th:mainter} imply:
$$\sup_{\om\in\SSS^2}\norm{\bom^{-1}}_{L^\infty(\MM)}\les 1.$$
Together with the fact that $f$ satisfies \eqref{shouf}, and with \eqref{dorbia}, we finally obtain:
\bea\lab{bilproof} 
&&\left\|\int_{\SSS^2} \int_0^\infty  e^{i \la \uom(t,x)}\bom^{-1}\CC(U, \Nom)f(\la\om)\la^3d\la d\om\right\|_{L^2(\MM)}\\
\nn&\les & M\ep\left(\sup_{\om\in\SSS^2}\norm{\CC(U, \Nom)}_{\luom{2}}\right).
\eea

It remains to estimate the right-hand side of \eqref{bilproof} for the contractions appearing in the bilinear estimates \eqref{bil1imp}, \eqref{bil5imp}, \eqref{bil6imp}, \eqref{bil8imp} and \eqref{bil8imp}. Since all the estimates in the proof are uniform in $\om$, we drop the index $\om$ to ease the notations.

\begin{remark}
In the    proof of  bilinear estimates \eqref{bil1imp}, \eqref{bil5imp}, \eqref{bil6imp}, \eqref{bil8imp} and \eqref{bil8imp}, 
the tensor $U$ appearing  in the expression $\CC(U, N)$ is either $\R $ or    derivatives of solutions $\phi$ of a 
a scalar wave equation.    In view of the bootstrap assumption \eqref{bootcurvatureflux} for the curvature flux,  as well  as the energy estimate for the wave equation in Lemma \ref{lemma:energyestimate}, we can control 
$ \| \CC(U, N)\|_{\lu{2}}$  as long as we can show that $\CC(U, N)$ can be expressed  in terms of,
$$\R\c L,\, \nabb\phi\textrm{ and }L(\phi).$$
In other words, our goal is to check that the term $\CC(U, N)$  does not involve the dangerous terms of the type:
$$\underline{\a}\textrm{ and }\Lb\phi$$
where $\Lb$ is the vectorfield defined as $\Lb=2T-L$, and $\underline{\a}$ is the two tensor on $\Si_t\cap \HH_u$ defined as:
$$\underline{\a}_{AB}=\R_{\Lb\,A\,\Lb\,B}.$$
\end{remark}

 \subsubsection{Proof of   \eqref{bil1imp}}
 Since $A=\curl(B)+E$ in view of Lemma \ref{recoverA}, we have:
\bea\lab{sodeska}
\norm{A^j\pr_j(A)}_{L^2(\MM)}&\les& \norm{(\curl(B))^j\pr_j(A)}_{L^2(\MM)}+\norm{E}_{\lsitt{2}{\infty}}\norm{\pr A}_{\lsit{2}}\\
\nn&\les& \norm{(\curl(B))^j\pr_j(A)}_{L^2(\MM)}+M^3\ep^3,
\eea
where we used in the last inequality Lemma \ref{recoverA} for $E$, and the bootstrap assumption \eqref{bootA} for $A$. Next, we estimate $\norm{(\curl(B))^j\pr_j(A)}_{L^2(\MM)}$. Recall that we have:
$$(\curl(B))^j\pr_j(A)=\in_{jmn}\pr_m(B_n)\pr_j(A).$$

We are now ready to apply  the representation theorem \ref{lemma:parametrixconstruction}  to $B$. 
Indeed, according to  Proposition  \ref{prop:waveeqB},  and Proposition   \ref{lemma:estB},    we have
\bea
\square B=F, \qquad 
\|\pr F\|_{L^2(\MM)}&\les& M^2\ep^2\label{squareB.}\\
\norm{\prb B(0)}_{L^2(\Si_0)}+\norm{\pr^2 B(0)}_{L^2(\Si_0)}+\norm{\pr(\pr_0 B(0))}_{L^2(\Si_0)}&\les& M\ep.\nn
\eea

We are thus in a position to apply  the reduction discussed in the  subsection above and  reduce our desired bilinear estimate to an estimate for,
\beaa
\CC(U, N)&=&\in_{jm\c}N_m\pr_j(A)
\eeaa
Now, we decompose $\pr_j$ on the orthonormal frame $N, f_A, A=1, 2$ of $\Si_t$, where we recall that $f_A, A=1, 2$ denotes an orthonormal basis of $\HH_u\cap \Si_t$. We have schematically:
\bea\lab{decompositionpr}
\pr_j=N_jN+\nabb,
\eea
where $\nabb$ denotes derivatives which are tangent to $\HH_u\cap \Si_t$. Thus, we have:
$$\CC(U,N)=\in_{jm\c}N_mN_j\pr_N(A)+\nabb(A)=\nabb(A),$$
where we have  used the antisymmetry of $\in_{jm\c}$ in the last equality. Therefore, we obtain in this case:
$$\norm{\CC(U, N)}_{\lu{2}}\les \norm{\nabb(A)}_{\lu{2}}.$$
It remains  to  estimate $\norm{\nabb(A)}_{\lu{2}}$. Since we have $A=\curl(B)+E$ in view of Lemma \ref{recoverA}, we obtain:
\beaa
\norm{\nabb(A)}_{\lu{2}}&\les&  \norm{\nabb(\pr B)}_{\lu{2}}+\norm{\nabb(E)}_{\lu{2}}\\
&\les&  \norm{\nabb(\pr B)}_{\lu{2}}+\norm{\pr E}_{\lsit{2}}+\norm{\pr^2E}_{\lsit{\frac{3}{2}}}\\
&\les & \norm{\nabb(\pr B)}_{\lu{2}}+M\ep,\\
&\les& M\ep
\eeaa
where we used the embedding \eqref{HversusSit} and the estimates for $E$ given by Lemma \ref{recoverA}. Furthermore, we have in view of Proposition \ref{prop:waveeqB} and Lemma \ref{lemma:energyestimatebis} the following estimate for $B$:
$$\norm{\nabb(\pr B)}_{\lu{2}}\les M\ep.$$
We finally obtain:
$$\norm{\nabb(A)}_{\lu{2}}\les M\ep.$$
This improves the bilinear estimate \eqref{bil1}.

\subsubsection{Proof of \eqref{bil5imp}}   In view of Lemma \ref{recoverA}  $A=\curl(B)+E$. Arguing as in \eqref{sodeska}, we reduce the proof to the estimate of: 
$$\norm{(\curl B)^j\pr_j(\prb B)}_{L^2(\MM)}.$$
Since $B$ satisfies  the wave equation \eqref{squareB.}, the quantity $\CC(U, N)$ is in this case, 
$$\CC(U,N)=\in_{jm\c}N_m\pr_j(\prb B).$$
Using the decomposition of $\pr_j$ \eqref{decompositionpr} and the antisymmetry of $\in_{jm\c}$, we have schematically:
\bea\lab{eq:usefullater}
\in_{jm\c}N_m\pr_j(\prb B)&=&\in_{jm\c}N_mN_j\pr_N(\prb B)+\nabb(\prb B)\\
\nn&=&\nabb(\prb B)\\
\nn&=&\nabb(\pr B)+\nabb(\pr_0 B)\\
\nn&=&\nabb(\pr B)+L(\pr B)+(\D L+\D N+A)\pr(B),
\eea
where in the last equality  we used the decomposition \eqref{decompositionpr0} for $\nabb(\pr_0(B))$.  Together with the assumptions \eqref{assumptionH1} and \eqref{assumptionH2} on $\HH_u$, and the Sobolev embedding \eqref{HversusSitbis} on $\HH_u$, we obtain\footnote{We    ignore  in the estimates   below      the lower order term   $\|\pr B \|_{L^2(\HH_u)}$  which is easier to handle.           }:
\beaa
&&\norm{\in_{jm\c}N_m\pr_j(\prb B)}_{L^2(\HH_u)}\\
&\les& \norm{\nabb(\pr B)}_{L^2(\HH_u)}+\norm{L(\pr B)}_{L^2(\HH_u)}\\
&&+(\norm{\D N}_{L^3(\HH_u)}+\norm{\D L}_{L^3(\HH_u)}+\norm{A}_{L^3(\HH_u)})\norm{\pr B}_{L^6(\HH_u)}\\
&\les& (1+\norm{A}_{\lsit{2}}+\norm{\pr A}_{\lsit{2}})(\norm{\nabb(\pr B)}_{L^2(\HH_u)}+\norm{L(\pr B)}_{L^2(\HH_u)})\\
&\les& \norm{\nabb(\pr B)}_{L^2(\HH_u)}+\norm{L(\pr B)}_{L^2(\HH_u)},
\eeaa
where we used the bootstrap assumption \eqref{bootA} for $A$ in the last inequality. Now, we have in view of Proposition  \ref{prop:waveeqB} and Lemma \ref{lemma:energyestimatebis} the following estimate for $B$:
$$\norm{\nabb(\pr B)}_{\lu{2}}+\norm{L(\pr B)}_{\lu{2}}\les M\ep,$$
which improves the bilinear estimate \eqref{bil5}.

\subsubsection{Proof of  \eqref{bil6imp}} Since $B$ satisfies a wave equation in view of Lemma \ref{prop:waveeqB}, the quantity $\CC(U, N)$ is in this case:
$$N_j\R_{0\,j\,\c\,\c}=\R_{0\,N\,\c\,\c}.$$
Thus, using the fact that $L=T+N$, $\Lb=T-N$ and the symmetries of $\R$, we deduce:
$$N_j\R_{0\,j\,\c\,\c}=\frac{1}{2}\R_{L\,\Lb\,\c\,\c}$$
which together with the bootstrap assumption for the curvature flux \eqref{bootcurvatureflux} improves the bilinear estimate \eqref{bil6}.


\subsubsection{Proof of \eqref{bil8imp}} We have $A=\curl(B)+E$ in view of Lemma \ref{recoverA}. Arguing as in \eqref{sodeska}, we reduce the proof to the estimate of: 
$$\norm{(\curl B)^j\pr_j\phi}_{L^2(\MM)}.$$
Since $B$ satisfies a wave equation in view of Lemma \ref{prop:waveeqB}, the quantity $\CC(U, N)$ is in this case:
$$\in_{jm\c}N_m\pr_j\phi=\nabb\phi.$$
Using again the decomposition \eqref{decompositionpr} for $\pr_j$ and the antisymmetry of $\in_{jm\c}$, we obtain schematically:
$$\in_{jm\c}N_m\pr_j\phi=\in_{jm\c}N_mN_j\pr_N\phi+\nabb\phi=\nabb\phi,$$
which improves the bilinear estimate \eqref{bil8}.

\subsection{Improvement of the trilinear estimate}

In this section, we shall derive the improved trilinear estimate \eqref{trilinearbootimp}. Let $Q_{\a\b\ga\de}$ the Bell-Robinson tensor of $\R$:
 \bea
 Q_{\a\b\ga\de}=\R_\a\,^\la\,\ga\,^\si \R_{\b\,\la\,\de\,\si}+\dual \R_\a\,^\la\,\ga\,^\si\dual \R_{\b\,\la\,\de\,\si}
 \eea
We need an trilinear estimate for the following quantity
\beaa
\left|\int_{\MM}Q_{ij\ga\de}k^{ij}e_0^\ga e_0^\de\right|.
\eeaa
We have $A=\curl(B)+E$ by Lemma \ref{recoverA}. Arguing as in \eqref{sodeska}, we reduce the proof to the estimate of: 
$$\left|\int_{\MM}Q_{\c\,j\ga\de}(\curl(B))_je_0^\ga e_0^\de\right|.$$
Making use of the wave equation \eqref{squareB.} for $B$
 we argue as in the beginning of section \ref{sec:proofbil1}  to  reduce the proof to an  estimate of the following:
$$\left|\int_{\MM}\int_{\SSS^2} \int_0^\infty e^{i \la \uom(t,x)}\bom^{-1}\left(\in_{jm\c}\Nom_m Q_{j\,\c\,\c\,\c}\right)f(\la\om)\la^3d\la d\om d\MM\right|$$
where $f$ satisfies:
$$\norm{\la^2f}_{L^2(\RRR^3)}\les M\ep.$$
Arguing as in \eqref{dorbia} \eqref{bilproof}, we obtain:  
\beaa
&&\left|\int_{\MM}\int_{\SSS^2} \int_0^\infty e^{i \la \uom(t,x)}\bom^{-1}\left(\in_{jm\c}\Nom_m Q_{j\,\c\,\c\,\c}\right)f(\la\om)\la^3d\la d\om d\MM\right|\\
\nn&\les & \int_{\SSS^2} \left\|  \bom^{-1}\left(\in_{jm\c}\Nom_m Q_{j\,\c\,\c\,\c}\right)\left(\int_0^{+\infty}e^{i \la \uom(t,x)}f(\la\om)\la^3d\la\right)\right\|_{L^1(\MM)} d\om\\
\nn&\les & \int_{\SSS^2} \norm{\bom^{-1}}_{L^\infty(\MM)}\norm{\in_{jm\c}\Nom_m Q_{j\,\c\,\c\,\c}}_{L^2_{\uom}L^1(\HH_{\uom})}\left\|\int_0^{+\infty}e^{i \la \uom(t,x)}f(\la\om)\la^3d\la\right\|_{L^2_{\uom}} d\om\\
\nn&\les & \left(\sup_{\om\in\SSS^2}\norm{\bom^{-1}}_{L^\infty(\MM)}\right)\left(\sup_{\om\in\SSS^2}\norm{\in_{jm\c}\Nom_m Q_{j\,\c\,\c\,\c}}_{L^2_{\uom}L^1(\HH_{\uom})}\right)\left(\int_{\SSS^2}\norm{\la^3f(\la\om)}_{L^2_\la}d\om\right)\\
\nn&\les & \sup_{\om\in\SSS^2}\norm{\in_{jm\c}\Nom_m Q_{j\,\c\,\c\,\c}}_{L^2_{\uom}L^1(\HH_{\uom})}M\ep,
\eeaa
where we used Plancherel in $\la$ and Cauchy Schwarz in $\om$. Thus, we finally obtain:
\bea\lab{kabul}
\left|\int_{\MM}Q_{ij\ga\de}k^{ij}e_0^\ga e_0^\de\right|\les\sup_{\om\in\SSS^2}\norm{\in_{jm\c}N_m Q_{j\,\c\,\c\,\c}}_{L^2_{\uom}L^1(\HH_{\uom})}M\ep+M^3 \ep^3.
\eea

Next, we estimate the right-hand side of \eqref{kabul}. Since all the estimates in the proof will be uniform in $\om$, we drop the index $\om$ to ease the notations.  The formula for the Bell-Robinson tensor $Q$ yields:
\beaa
Q_{j\c\c\c}&=&\R_j\,^\la\,\c\c \R_{\c\,\la\,\c\,\c}+dual\\
&=& -\frac{1}{2}\R_{j\,L\,\c\c} \R_{\c\,\Lb\,\c\,\c}-\frac{1}{2}\R_{j\,\Lb\,\c\c} \R_{\c\,L\,\c\,\c}+\R_{j\,A\,\c\c} \R_{\c\,A\,\c\,\c}+dual,
\eeaa
where we used the frame $L, \Lb, f_A, A=1, 2$ in the last equality. Thus, we have schematically:
$$\in_{jm\c}N_m Q_{j\,\c\,\c\,\c}=\R(\R\c L+\in_{jm\c}N_m\R_{j\,A\,\c\c})$$

 Decomposing $e_j$  with respect  to the   orthonormal  frame $N, f_B, B=1, 2$,  we note that:
$$\in_{jm\c}N_m \R_{jA\c\c}=\in_{jm\c}N_jN_m \R_{NA\c\c}+\in_{jm\c}(f_B)_jN_m \R_{BA\c\c}=\R_{BA\c\c}.$$
On the other hand, decomposing  $\R_{BA\c\c}$ further  and using the symmetries of $\R$, one easily checks that $\R_{BA\c\c}$ must contain at least one $L$ so that it is of the type $\R\c\L$.
  Thus, we have schematically:
  \bea
  \in_{jm\c}N_m Q_{j\,\c\,\c\,\c}=\R( \R\c L).
  \eea
Thus,  in view of   \eqref{kabul},   making use of  the bootstrap assumptions \eqref{bootR} on $R$ and \eqref{bootcurvatureflux} on the curvature flux, we deduce,
\beaa
\left|\int_{\MM}Q_{ij\ga\de}k^{ij}e_0^\ga e_0^\de\right|
&\les& (M\ep)^3+M\ep\norm{\R  \R_L}_{L^2_uL^1(\HH_u)}\\
&\les& (M\ep)^3+M\ep\norm{\R}_{L^2(\MM)}\norm{\R_L}_{\lu{2}}\\
&\les& M^3 \ep^3
\eeaa
In other words,
\bea\lab{belrob2}
\left|\int_{\MM}Q_{ij\ga\de}k^{ij}e_0^\ga e_0^\de\right|\les (M\ep)^3.
\eea
 which yields  the desired  improvement of the trilinear estimate \eqref{trilinearboot}.
 
\section{Proof of Proposition \ref{prop:improve2}, (part 2)}\lab{sec:improve2B}

In this section we prove the bilinear estimates  II. We start with a discussion of the sharp $L^4$ Strichartz estimate.

\subsection{The sharp $L^4(\MM)$ Strichartz estimate}

To a function $f$ on $\RRR^3$ and a family $\uom$ indexed by $\om\in\SSS^2$ of scalar functions on the space-time $\MM$ satisfying the eikonal equation for each $\om\in\SSS^2$, we associate a half-wave parametrix:
$$\int_{\SSS^2} \int_0^\infty  e^{i \la \uom(t,x)}f(\la\om)\la^2d\la d\om.$$
Let $p$ be an integer  and $\psi$ be a smooth cut-off function on supported on the interval $[1/2,2]$. We call a half-wave wave parametrix localized at frequencies of size $\la\sim 2^p$ the following Fourier integral operator:
$$\int_{\SSS^2} \int_0^\infty  e^{i \la \uom(t,x)}\psi(2^{-p}\la)f(\la\om)\la^2d\la d\om.$$
We have the following $L^4(\MM)$ Strichartz estimates localized in frequency for a half wave parametrix which are proved in \cite{bil2}:
\begin{proposition}[Corollary 2.8 in \cite{bil2}]\lab{prop:L4strichartz}
Let $f$ be a function on $\RRR^3$, let $p\in\mathbb{N}$, and let $\psi$ be as defined above. Let $\uom$ be a family of scalar functions on the space-time $\MM$ satisfying the eikonal equation for each $\om\in\SSS^2$ and initialized on the initial slice $\Si_0$ as in \cite{param1}. Define a scalar function $\phi_p$ on $\MM$ as 
the following oscillatory integral:
$$\phi_p(t,x)=\int_{\SSS^2} \int_0^\infty  e^{i \la \uom(t,x)}\psi(2^{-p}\la)f(\la\om)\la^2d\la d\om.$$
Then, we have the following $L^4(\MM)$ Strichartz estimates for $\phi_p$:
\bea
\norm{\phi_p}_{L^4(\MM)}&\les& 2^{\frac{p}{2}}\norm{\psi(2^{-p}\la)f}_{L^2(\RRR^3)},\\
\norm{\pr\phi_p}_{L^4(\MM)}&\les& 2^{\frac{3p}{2}}\norm{\psi(2^{-p}\la)f}_{L^2(\RRR^3)},\\
\norm{\pr^2\phi_p}_{L^4(\MM)}&\les& 2^{\frac{5p}{2}}\norm{\psi(2^{-p}\la)f}_{L^2(\RRR^3)}.
\eea
\end{proposition}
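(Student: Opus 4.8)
\textbf{Plan of proof of Proposition \ref{prop:L4strichartz}.} The statement is quoted as an external input from \cite{bil2}, so strictly speaking nothing is to be proved here; however, let me describe how such an estimate is established, since the structure of the argument dictates everything that follows in Step D of the present paper. The plan is to reduce the $L^4(\MM)$ bound to an $L^2(\MM)$ bound on a bilinear object by the standard device $\norm{\phi_p}_{L^4(\MM)}^2 = \norm{\phi_p^2}_{L^2(\MM)}$, and then to analyze $\phi_p^2$ as an oscillatory integral in the two angular variables $\om, \om' \in \SSS^2$. Concretely, writing $\phi_p^2$ out as a double integral over $\SSS^2\times\SSS^2$ and $(0,\infty)^2$, one is led to estimate
\bea
\norm{\phi_p^2}_{L^2(\MM)} \les \left(\int_{\SSS^2\times\SSS^2} \left| K(\om,\om')\right| d\om d\om'\right)^{1/2}\norm{\psi(2^{-p}\la)f}_{L^2(\RRR^3)}^2,\nn
\eea
where $K(\om,\om')$ is the $L^2(\MM)$ norm of the product of two localized half-waves travelling in directions $\om$ and $\om'$. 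The first step is therefore a $TT^*$-type reduction reducing matters to an $L^\infty$ estimate on this kernel $K(\om,\om')$, uniform in $p$ up to the claimed power $2^{p/2}$.

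\textbf{Key steps.} First, I would perform the first and second dyadic (Littlewood--Paley) decompositions in frequency: the radial localization $\la\sim 2^p$ is given, and one further decomposes each angular sphere into caps of size $2^{-p/2}$, following \cite{St}. This produces a sum over pairs of caps $(\nu,\nu')$, and the main gain comes from the observation that two wave packets with separated angular parameters have essentially disjoint supports in a suitable space-time foliation — this is where the \emph{overlap estimates for wave packets} of \cite{Sm}, \cite{Sm-Ta} enter. Second, for the diagonal and near-diagonal terms (where $\om$ and $\om'$ are within angular distance $\sim 2^{-p/2}$), one cannot gain from transversality and must instead use the geometry of a single null hypersurface $\HH_{\uom}$; here the regularity $\partial_{t,x} u\in L^\infty$, $\partial_{t,x}\partial_\om u\in L^\infty$ from \eqref{reguintro} is exactly what is needed to make the change of variables $(t,x)\mapsto(\uom,\text{transverse coords})$ and carry out the integration. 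Third, summing the bilinear pieces over caps and over dyadic scales produces the final $2^{p/2}$ loss, and the estimates for $\pr\phi_p$ and $\pr^2\phi_p$ follow by the same argument after noting that each derivative hitting $e^{i\la\uom}$ produces a factor $\la \partial u\sim 2^p$ in $L^\infty$ (again by \eqref{reguintro}), giving the extra powers $2^{3p/2}$ and $2^{5p/2}$ respectively.

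\textbf{Main obstacle.} The hard part is precisely the diagonal contribution where angular transversality is unavailable: the classical stationary phase argument there would require two $\om$-derivatives of the phase to lie in $L^\infty$, i.e.\ \eqref{staphase}, which is false in our setting. The resolution — and this is the technical heart of \cite{bil2} — is to replace the stationary phase method by a wave-packet / overlap argument in the spirit of \cite{Sm}, \cite{Sm-Ta}, exploiting that the level hypersurfaces of $\uom$ are genuinely null (hence controlled by the results of \cite{param3} on the eikonal equation, in particular the lower bound $\norm{\bom^{-1}}_{L^\infty}\les 1$ and the $L^3(\HH)$ bounds on the Ricci coefficients). A secondary difficulty is that, unlike \cite{Sm-Ta}, one wishes to avoid a full wave-packet decomposition; instead one works directly with the parametrix $\phi_p$ and extracts the needed separation from the geometric Littlewood--Paley theory of \cite{Kl-R6} together with the sharp trace and Sobolev embeddings \eqref{HversusSit}, \eqref{HversusSitbis} on $\HH$. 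Log-divergences at the endpoint of the dyadic sum are handled by ad hoc arguments using the structure of the Einstein equations, as flagged at the end of Step C4 above.
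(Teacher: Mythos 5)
The paper does not prove this proposition: it is imported verbatim as Corollary 2.8 of \cite{bil2} and used as a black box, so there is no internal proof here to compare your sketch against. That said, your outline is consistent with the strategy the paper announces in Step D of the introduction — a $TT^*$-type reduction, the failure of classical stationary phase because \eqref{staphase} is unavailable and one only has \eqref{reguintro}, and an alternative in the spirit of the overlap estimates of \cite{Sm}, \cite{Sm-Ta} that avoids a full wave packet decomposition. Two caveats. First, what you have written is a plan, not a proof; the actual argument occupies all of \cite{bil2} and cannot be reconstructed from the information contained in the present paper, so you should present it as such. Second, your derivation of the $2^{3p/2}$ and $2^{5p/2}$ factors is too quick at second order: $\pr^2(e^{i\la\uom})$ produces, besides the leading term $-\la^2(\pr\uom)^2e^{i\la\uom}$ which your $L^\infty$ bound on $\pr\uom$ from \eqref{reguintro} does handle, the lower-order term $i\la\,(\pr^2\uom)\, e^{i\la\uom}$, and $\pr^2\uom$ is \emph{not} in $L^\infty$ on these rough backgrounds — this is precisely the obstruction you yourself identify for the phase. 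Controlling that contribution requires the mixed-norm space-time estimates on the Ricci coefficients of the foliation by level sets of $\uom$ established in \cite{param3}, not merely \eqref{reguintro}.
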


Note that these Strichartz estimates are  sharp. 

\subsection{Improvement of the non sharp Strichartz estimates}\lab{sec:improvnonsharpstrich}

In this section, we derive the improved non sharp Strichartz estimates \eqref{bootstrichimp} and  \eqref{bootstrichBimp}. In order to do this, we first estimate the $\lsitt{2}{7}$ norm of $\partial B$ using the  $L^4(\MM)$ Strichartz estimate together with Sobolev embeddings on $\Si_t$.\begin{corollary}\lab{cor:strichartzB}
$B$ satisfies the following Strichartz estimate:
$$\norm{\pr B}_{\lsitt{2}{7}}\lesssim M\ep.$$
\end{corollary}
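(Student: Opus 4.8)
The plan is to estimate $\|\pr B\|_{\lsitt{2}{7}}$ by interpolating between the $L^4(\MM)$ Strichartz estimate of Proposition \ref{prop:L4strichartz} (applied to a parametrix representation of $B$) and the energy-type estimates already controlling $B$ in $\lsit{2}$-based norms. First I would recall from \eqref{squareB.} (i.e. Proposition \ref{prop:waveeqB} together with Proposition \ref{lemma:estB}) that $B$ solves a wave equation $\square B = F$ with $\|\pr F\|_{L^2(\MM)}\les M^2\ep^2$ and with initial data bounded by $M\ep$ on $\Si_0$. Hence the representation formula of Theorem \ref{lemma:parametrixconstruction} applies to each component $B_i$, writing $B_i=\sum_{j\ge 0}\phi^{(j)}$ where $\phi^{(0)}$ contains the genuine parametrix piece $\Psi_{om}[\phi_0,\phi_1]$ plus a Duhamel term, and the $\phi^{(j)}$ for $j\geq 1$ are controlled with an extra factor $(M\ep)^j$; summing the geometric series reduces everything to estimating the half-wave parametrices $\int_{\SSS^2}\int_0^\infty e^{i\la\uom}f(\la\om)\la^2\,d\la\,d\om$ with $\|\la^2 f\|_{L^2(\RRR^3)}\les M\ep$, and their $\pr$-derivatives.

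Next I would dyadically decompose in $\la$: write $f=\sum_p \psi(2^{-p}\la)f$ and let $\phi_p$ be the corresponding frequency-localized parametrix, so that $\pr B$ is (up to the lower-order tail from Theorem \ref{lemma:parametrixconstruction}) a sum $\sum_p \pr\phi_p$. Proposition \ref{prop:L4strichartz} gives $\|\pr\phi_p\|_{L^4(\MM)}\les 2^{3p/2}\|\psi(2^{-p}\la)f\|_{L^2}$, while the energy estimate of Lemma \ref{lemma:energyestimate} (or directly the bound $\|\pr^2 B\|_{\lsit{2}}\les M\ep$ from Proposition \ref{lemma:estB} via Bernstein) gives $\|\pr\phi_p\|_{\lsit{2}}\les 2^{p}\|\psi(2^{-p}\la)f\|_{L^2}$, in fact with a gain: since $\pr\phi_p$ at fixed $t$ is essentially frequency-localized at scale $2^p$ on $\Si_t$, we have $\|\pr\phi_p\|_{\lt{\infty}}\les 2^{3p/2}\|\pr\phi_p\|_{\lt{2}}\cdot 2^{?}$... more cleanly, I would use the Sobolev embedding $\|F\|_{L^7(\Si_t)}\les \|F\|_{L^4(\Si_t)}^{\th}\|F\|_{L^\infty(\Si_t)}^{1-\th}$ with $\th = 4/7$ combined with $\|F\|_{L^\infty(\Si_t)}\les \|\nabla F\|_{L^p(\Si_t)}+\|F\|_{L^p(\Si_t)}$ for $p>3$ (inequality \eqref{sobinftysit}), applied to $F=\pr\phi_p$, so that the $L^\infty$ norm is controlled by $2^{Cp}$ times an $L^2$-based quantity. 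Then $\|\pr\phi_p\|_{\lsitt{2}{7}}\les 2^{\sigma p}\|\psi(2^{-p}\la)f\|_{L^2(\RRR^3)}$ for some $\sigma<2$; since $\|\la^2 f\|_{L^2}\les M\ep$, the factor $\|\psi(2^{-p}\la)f\|_{L^2}\les 2^{-2p}M\ep$ beats $2^{\sigma p}$ and the sum over $p$ converges geometrically to $\les M\ep$.

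The main obstacle, and the step requiring the most care, is getting the \emph{endpoint} bookkeeping of exponents to close with room to spare: the $L^4(\MM)$ Strichartz bound for $\pr\phi_p$ costs $2^{3p/2}$ (a full derivative loss relative to $\phi_p$), and the Sobolev chain from $L^4(\Si_t)$ up to $L^7(\Si_t)$ via $L^\infty$ costs additional fractional derivatives, so one must verify that after summing against the $2^{-2p}$ decay coming from $\|\la^2 f\|_{L^2}\les M\ep$ one still has a convergent dyadic sum — i.e. that the net exponent $\sigma$ satisfies $\sigma<2$. A cleaner route that avoids the $L^\infty$ detour is to interpolate directly between $\|\pr\phi_p\|_{L^4(\MM)}$ and a mixed $L^\infty_t L^2$ (or $L^2_t L^6$) bound for $\pr\phi_p$ following from Proposition \ref{lemma:estB} and the Sobolev embedding \eqref{sobineqsit}, noting that $L^2_t L^7(\Si_t)$ sits on the interpolation segment between $L^4(\MM)=L^4_tL^4$ and $L^2_tL^6$... one checks $\frac17$ lies between $\frac14$ and $\frac16$, so $L^2_tL^7$ is reached by Hölder in $t$ together with interpolation of the spatial $L^p$, at the cost of only the derivative already controlled. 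Either way, once the frequency-summed bound $\|\pr B\|_{\lsitt{2}{7}}\les M\ep$ is in hand, together with the tail estimates of Theorem \ref{lemma:parametrixconstruction} (which contribute $O((M\ep)^2)$), the corollary follows; this also feeds, combined with Lemma \ref{recoverA}, into the companion estimate $\|A\|_{\lsitt{2}{7}}\les M\ep$ of \eqref{bootstrichimp}.
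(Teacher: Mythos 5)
Your main route is essentially the paper's proof: after reducing via Theorem \ref{lemma:parametrixconstruction} to frequency-localized half-wave parametrices $\phi_p$, one interpolates $\norm{\pr\phi_p}_{\lsitt{2}{7}}\les \norm{\pr\phi_p}_{L^4(\MM)}^{4/7}\norm{\pr\phi_p}_{\lsitt{4}{\infty}}^{3/7}$, bounds the $L^\infty(\Si_t)$ factor by $\norm{\pr^2\phi_p}_{L^4(\MM)}$ via \eqref{sobinftysit}, and applies Proposition \ref{prop:L4strichartz} to both factors, giving net exponent $\sigma=\frac{4}{7}\cdot\frac{3}{2}+\frac{3}{7}\cdot\frac{5}{2}=\frac{27}{14}<2$ and hence a convergent $2^{-p/14}$ dyadic sum. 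Your suggested shortcut via interpolation with $L^2_tL^6(\Si_t)$ does not work, since $\frac17<\frac16$ so $L^7(\Si_t)$ lies outside that interpolation segment, but it is not needed.
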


\begin{proof}
Decompose $B$ as before,  with the help  of  Theorem  \ref{lemma:parametrixconstruction},

\bea\lab{kyoto}
\norm{\pr B}_{\lsitt{2}{7}}\leq \sum_{j=0}^{+\infty}\norm{\pr\phi^{(j)}}_{\lsitt{2}{7}}.
\eea
Thus is suffices to prove for all $j\geq 0$:
\bea\lab{kyoto1}
\norm{\pr\phi^{(j)}}_{\lsitt{2}{7}}\lesssim (M\ep)^{j+1}.
\eea
The estimates in \eqref{kyoto1} are analogous for all $j$, so it suffices to prove \eqref{kyoto1} in the case $j=0$. In view of the definition of $\phi^{(0)}$, the estimates for $B$ on the initial slice $\Si_0$ obtained in Lemma \ref{lemma:initialslice}, the estimate \eqref{spacetimeB} for $\pr \square  B$, and the definition of $\Psi_{om}$ and $\Psi(t,s)$, \eqref{kyoto1} reduces to the following estimate for a half wave parametrix:
\bea\lab{kyoto1bis}
\left\|   \pr \left( \int_{\SSS^2} \int_0^\infty  e^{i \la \uom(t,x)}  f(\la\om)\la^2d\la d\om\right)\right\|_{\lsitt{2}{7}}\les \norm{\la^2 f}_{L^2(\RRR^3)}.
\eea

Next, we introduce $\varphi$  and $\psi$ two smooth compactly supported functions on $\RRR^+$ such that $\psi$ is supported away from 0 and: 
\begin{equation}\label{kyoto2}
\varphi(\lambda)+\sum_{p\geq 0}\psi(2^{-p}\lambda)=1\textrm{ for all }\lambda\in\RRR.
\end{equation}
We define the family of scalar functions $\phi_p$ for $p\geq -1$ on $\MM$ as:
\bea\lab{kyoto3}
\phi_{-1}(t,x)=\int_{\SSS^2} \int_0^\infty  e^{i \la \uom(t,x)}\varphi(\la)f(\la\om)\la^2d\la d\om,
\eea
and for all $p\geq 0$:
\bea\lab{kyoto4}
\phi_p(t,x)=\int_{\SSS^2} \int_0^\infty  e^{i \la \uom(t,x)}\psi(2^{-p}\la)f(\la\om)\la^2d\la d\om.
\eea
In view of \eqref{kyoto2}, we have:
$$\pr\left(\int_{\SSS^2} \int_0^\infty  e^{i \la \uom(t,x)}f(\la\om)\la^2d\la d\om\right)=\sum_{p\geq -1}\pr \phi_p(t,x),$$
which yields:
\bea\lab{kyoto5}
\left\|  \pr\left(     \int_{\SSS^2} \int_0^\infty  e^{i \la \uom(t,x)}f(\la\om)\la^2d\la d\om\right)\right\|_{\lsitt{2}{7}}\les\sum_{p\geq -1}\norm{\pr \phi_p}_{\lsitt{2}{7}}.
\eea

The estimate for $\phi_{-1}$ is easier, so we focus on $\phi_p$ for $p\geq 0$. Using the Sobolev embedding \eqref{sobinftysit} on $\Si_t$, the $L^4(\MM)$ Strichartz localized in frequency of Proposition \ref{prop:L4strichartz}, and the fact that $\psi$ is supported in $(0,+\infty)$, we have,  
\beaa
\norm{\pr \phi_p}_{\lsitt{2}{7}}&\les &\norm{\pr \phi_p}^{\frac{4}{7}}_{L^4(\MM)}\norm{\pr \phi_p}^{\frac{3}{7}}_{\lsitt{4}{\infty}}\\
\nn&\les & \norm{\pr \phi_p}^{\frac{4}{7}}_{L^4(\MM)}\norm{\pr^2\phi_p}^{\frac{3}{7}}_{L^4(\MM)} +\norm{\pr\phi_p}_{L^4(\MM)}  \\
\nn&\les & \left(2^{\frac{3p}{2}}\norm{\psi(2^{-p}\la)f}_{L^2(\RRR^3)}\right)^{\frac{4}{7}}\left(2^{\frac{5p}{2}}\norm{\psi(2^{-p}\la)f}_{L^2(\RRR^3)}\right)^{\frac{3}{7}}\\
\nn&\les & 2^{-\frac{p}{14}}\norm{\la^2\psi(2^{-p}\la)f}_{L^2(\RRR^3)}\\
\nn&\les & 2^{-\frac{p}{14}}\norm{\la^2 f}_{L^2(\RRR^3)}.
\eeaa
Note that we have ignored  the lower order term $\norm{\pr\phi_p}_{L^4(\MM)}$   which    is easier  and   can be treated   in the same way.
Together with \eqref{kyoto5}, we obtain:
$$\left\|\pr\left(  \int_{\SSS^2} \int_0^\infty  e^{i \la \uom(t,x)}f(\la\om)\la^2d\la d\om\right)\right\|_{\lsitt{2}{7}}\les\left(\sum_{p\geq -1}2^{-\frac{p}{14}}\right)\norm{\la^2 f}_{L^2(\RRR^3)}\les \norm{\la^2 f}_{L^2(\RRR^3)},$$
which is \eqref{kyoto1bis}. This concludes the proof of the Corollary.
\end{proof}

Lemma \ref{recoverA} and Corollary \ref{cor:strichartzB} yield:
\bea\lab{saionara1}
\norm{A}_{\lsitt{2}{7}}\les \norm{\pr B}_{\lsitt{2}{7}}+\norm{E}_{\lsitt{2}{7}}\les M\ep+M^2\ep^2,
\eea
which is an improvement on the bootstrap assumption \eqref{bootstrich}.

\subsection{Improvement of the bilinear bootstrap assumptions II}\lab{sec:proofbil2} 
In this section, we derive the improved bilinear estimate \eqref{bil3imp} and \eqref{bil4imp} of Proposition \ref{prop:improve2}. Recall the decomposition $A=\curl(B)+E$ of Lemma \ref{recoverA}. Using the bootstrap assumption \ref{bootA} for $A$, the estimates for $E$ given by Lemma \ref{recoverA} and the Sobolev embedding on $\Si_t$ \eqref{sobineqsit}, we have:
\beaa
&&\norm{(-\Delta)^{-\frac{1}{2}}(\pr A\pr E)}_{L^2(\MM)}+\norm{(-\Delta)^{-\frac{1}{2}}(\pr E\pr E)}_{L^2(\MM)}\\
&\les& \norm{\pr A\pr E}_{\lsit{\frac{6}{5}}}+\norm{\pr E\pr E}_{\lsit{\frac{6}{5}}}\\
&\les& \norm{\pr A}_{\lsit{2}}\norm{\pr E}_{\lsit{3}}+\norm{\pr E}_{\lsit{3}}\norm{\pr E}_{\lsit{2}}\\
&\les& M^2\ep^2.
\eeaa
Together with the decomposition $A=\curl(B)+E$ of Lemma \ref{recoverA}, this implies that the proof of 
the bilinear estimates \eqref{bil3imp} and \eqref{bil4imp} reduces to:
$$\norm{(-\Delta)^{-\frac12}(Q_{ij}(\curl(B),\curl(B)))}_{L^2(\MM)}+\norm{(-\Delta)^{-\frac12}(\pr(\curl(B)^l)\pr_l(\curl(B)))}_{L^2(\MM)}\lesssim M^2\ep^2,$$
where the bilinear form $Q_{ij}$ is given by $Q_{ij}(\phi,\psi)=\pr_i\phi\pr_j\psi-\pr_j\phi\pr_i\psi$. Also, note that:
$$\pr(\curl(B)^l)\pr_l(\curl(B))=Q_{ij}(\pr B,\pr B)+A\,\pr B\, \pr^2 B.$$
Together with the Sobolev embedding \eqref{ikea3} on $\Si_t$, the bootstrap assumptions \eqref{bootA} on $A$, and the estimates oProposition \ref{lemma:estB} for $B$, we obtain:
\beaa
&&\norm{(-\Delta)^{-\frac12}(\pr(\curl(B)^l)\pr_l(\curl(B)))}_{L^2(\MM)}\\
&\les& \norm{(-\Delta)^{-\frac12}(Q_{ij}(\pr B,\pr B))}_{L^2(\MM)}+\norm{(-\Delta)^{-\frac12}(A\,\pr B\, \pr^2 B)}_{L^2(\MM)}\\
&\les& \norm{(-\Delta)^{-\frac12}(Q_{ij}(\pr B,\pr B))}_{L^2(\MM)}+\norm{A\,\pr B\, \pr^2 B}_{\lsit{\frac{6}{5}}}\\
&\les& \norm{(-\Delta)^{-\frac12}(Q_{ij}(\pr B,\pr B))}_{L^2(\MM)}+\norm{A}_{\lsit{6}}\norm{\pr B}_{\lsit{6}}\norm{\pr^2 B}_{\lsit{2}}\\
&\les& \norm{(-\Delta)^{-\frac12}(Q_{ij}(\pr B,\pr B))}_{L^2(\MM)}+M^3\ep^2.
\eeaa
Finally, the proof of the bilinear estimates \eqref{bil3} and \eqref{bil4} reduces to:
\bea\lab{domo}
\norm{(-\Delta)^{-\frac12}(Q_{ij}(\pr B,\pr B))}_{L^2(\MM)}\lesssim M^2\ep^3.
\eea

Next, we focus on proving \eqref{domo}.  Decomposing $B$ according to   Theorem  \ref{lemma:parametrixconstruction}, we have:
\bea\lab{domo0}
\norm{(-\Delta)^{-\frac12}(Q_{ij}(\pr B,\pr B))}_{L^2(\MM)}\leq \sum_{m, n=0}^{+\infty}\norm{(-\Delta)^{-\frac12}(Q_{ij}(\pr \phi^{(m)},\pr \phi^{(n)}))}_{L^2(\MM)}.
\eea
Thus it suffices to prove for all $m, n\geq 0$:
\bea\lab{domo1}
\norm{(-\Delta)^{-\frac12}(Q_{ij}(\pr \phi^{(m)}, \pr \phi^{(n)}))}_{L^2(\MM)}\lesssim (M\ep)^{m+1}(M\ep)^{n+1}.
\eea
The estimates in \eqref{domo1} are analogous for all $m, n$, so it suffices to prove \eqref{domo1} in the case $(m,n)=(0,0)$. In view of the definition of $\phi^{(0)}$, the estimates for $B$ on the initial slice $\Si_0$ obtained in Lemma \ref{lemma:initialslice},    estimate \eqref{spacetimeB} for $\pr \square B$, \eqref{domo1} reduces to the following bilinear estimate for half-wave parametrices:

\bea\lab{domo1bis}
\left\|(-\Delta)^{-\frac12}Q_{ij}(\pr\phi^{(1)}, \pr\phi^{(2)})\right\|_{L^2(\MM)}    \les  \norm{\la^2 f_1}_{L^2(\RRR^3)}\norm{\la^2 f_2}_{L^2(\RRR^3)}.
\eea
with,
\bea
\phi^{(k)}=\int_{\SSS^2} \int_0^\infty  e^{i \la \uom(t,x)}f_k(\la\om)\la^2d\la d\om, \qquad k=1,2.
\eea
Recall the smooth cut off functions $\varphi$  and $\psi$ introduced in the proof of Corollary \ref{cor:strichartzB}. We define two families of scalar functions $\phi^j_p, j=1, 2$ for $p\geq -1$ on $\MM$ as:
\bea\lab{domo3}
\phi^{(j)}_{-1}(t,x)=\int_{\SSS^2} \int_0^\infty  e^{i \la \uom(t,x)}\varphi(\la)f_j(\la\om)\la^2d\la d\om,
\eea
and for all $p\geq 0$:
\bea\lab{domo4}
\phi^{(j)}_p(t,x)=\int_{\SSS^2} \int_0^\infty  e^{i \la \uom(t,x)}\psi(2^{-p}\la)f_j(\la\om)\la^2d\la d\om.
\eea
In view of \eqref{kyoto2}, we have:
$$\pr\phi^{(k)} =\pr\left( \int_{\SSS^2} \int_0^\infty  e^{i \la \uom(t,x)}f_k(\la\om)\la^2d\la d\om\right)=\sum_{p\geq -1} \pr \phi^{(k)}_p(t,x),$$
which yields:
\bea\lab{domo5}
\left\|(-\Delta)^{-\frac12}Q_{ij}(\pr\phi^{(1)}, \pr\phi^{(2)})\right\|_{L^2}     &\les & \sum_{p, q\geq -1}\norm{(-\Delta)^{-\frac12}\big(Q_{ij}\pr \phi^{(1)}_p, \pr\phi^{(2)}_q)\big)}_{L^2}.
\eea

The estimates involving $\phi^{(k)}_{-1}$ are easier, so we focus on $\phi^{(1)}_p, \phi^{(2)}_q$ for $p, q\geq 0$. 
We may assume $q\geq p$. Note that the structure of $Q_{ij}$ implies:
$$Q_{ij}(\pr \phi^1_p,\pr  \phi^2_q)=\pr(\pr^2\phi^{(1)}_p\c \pr\phi_q^{(2)})+A\c \pr^2  \phi^{(1)}_p \c   \pr \phi^{(2)}_q$$
which yields:
\beaa
&&\norm{(-\Delta)^{-\frac12}(Q_{ij}(\pr \phi^{(1)}_p, \pr \phi^{(2)}_q))}_{L^2(\MM)}\\
&\les& \norm{(-\Delta)^{-\frac12}\pr\big(\pr^2\phi^{(1)}_p\c \pr \phi^{(2)}_q\big)}_{L^2(\MM)}+\norm{(-\Delta)^{-\frac12}\big (A\c \pr^2\phi^{(1)}_p\c\pr \phi^{(2)}_q\big)}_{L^2(\MM)}\\
&\les& \norm{\pr^2\phi^{(1)}_p\c  \pr \phi^{(2)}_q}_{L^2(\MM)}+\norm{A\c\pr^2\phi^1_p\c \pr \phi^2_q}_{\lsitt{2}{\frac{6}{5}}}\\
&\les& \norm{\pr^2\phi^{(1)}_p}_{L^4(\MM)}\norm{\pr \phi^{(2)}_q}_{L^4(\MM)}+\norm{A}_{\lsit{3}}\norm{\pr^2\phi^{(1)}_p}_{L^4(\MM)}\norm{\pr \phi^{(2)}_q}_{L^4(\MM)}\\
&\les & \norm{\pr^2\phi^{(1)}_p}_{L^4(\MM)}\norm{\pr\phi^{(2)}_q}_{L^4(\MM)}
\eeaa
where we used the bootstrap assumption \eqref{bootA} for $A$ in the last inequality. Together with the $L^4(\MM)$ frequency localized  Strichartz estimate of Proposition \ref{prop:L4strichartz}, and the fact that $\psi$ is supported in $(0,+\infty)$, we obtain:
\beaa
\norm{(-\Delta)^{-\frac12}(Q_{ij}(\pr \phi^{(1)}_p,\pr  \phi^{(2)}_q))}_{L^2(\MM)}&\les & 2^{\frac{5p}{2}+\frac{3q}{2}}\norm{\psi(2^{-p}\la)f_1}_{L^2(\RRR^3)}\norm{\psi(2^{-q}\la)f_2}_{L^2(\RRR^3)}\\
\nn&\les & 2^{\frac{p}{2}-\frac{q}{2}}\norm{\la^2\psi(2^{-p}\la)f_1}_{L^2(\RRR^3)}\norm{\la^2\psi(2^{-q}\la)f_2}_{L^2(\RRR^3)}.
\eeaa
Since we assume $q\geq p$, this yields:
\bea\lab{domo6}
&&\sum_{p, q\geq -1}\norm{(-\Delta)^{-\frac12}(Q_{ij}(\pr \phi^{(1)}_p, \pr \phi^{(2)}_q))}_{L^2(\MM)}\\
\nn&\les& \sum_{p, q\geq -1}2^{-\frac{|p-q|}{2}}\norm{\la^2\psi(2^{-p}\la)f_1}_{L^2(\RRR^3)}\norm{\la^2\psi(2^{-q}\la)f_2}_{L^2(\RRR^3)}\\
\nn&\les& \left(\sum_{p\geq -1}\norm{\la^2\psi(2^{-p}\la)f_1}^2_{L^2(\RRR^3)}\right)^{\frac{1}{2}}\left(\sum_{q\geq -1}\norm{\la^2\psi(2^{-q}\la)f_2}^2_{L^2(\RRR^3)}\right)^{\frac{1}{2}}\\
\nn&\les &  \norm{\la^2 f_1}_{L^2(\RRR^3)}\norm{\la^2 f_2}_{L^2(\RRR^3)}.
\eea
Finally, \eqref{domo5} and \eqref{domo6} imply \eqref{domo1bis}. This concludes the proof of the improved bilinear  estimates \eqref{bil3imp} and \eqref{bil4imp}.

Finally, \eqref{saionara1}, the results in section \ref{sec:proofbil1} and section \ref{sec:proofbil2}, and \eqref{belrob2} yield the improved estimates \eqref{bil1imp}, \eqref{bil5imp}, \eqref{bil6imp}, \eqref{bil8imp}, \eqref{bil8imp}, \eqref{bil3imp}, \eqref{bil4imp}, \eqref{bootstrichimp}, and \eqref{trilinearbootimp}. This concludes the proof of Proposition \ref{prop:improve2}.

\section{Propagation of regularity}\label{sect:propag}

The goal of this section is to prove Proposition \ref{prop:propagreg}. Recall from the statement of Proposition \ref{prop:propagreg} that we assume that the estimates corresponding to all bootstrap assumptions of the section \ref{sec:mainbootassss} hold for $0\leq t\leq T^*$ with a universal constant $M$. For the convenience of the reader, we recall some of these estimates below
\bea\label{pr:bootR}
\norm{\R}_{\lsit{2}}+\norm{\R\c L}_{L^2(\HH)}\les \ep,
\eea
\bea\label{pr:bootA}
\norm{A}_{\lsit{2}}+\norm{\prb A}_{\lsit{2}}\les \ep,
\eea
and:
\bea\label{pr:bootA0}
\nn\norm{A_0}_{\lsit{4}}+\norm{\prb A_0}_{\lsit{2}}+\norm{A_0}_{\lsitt{2}{\infty}}+\norm{\pr A_0}_{\lsit{3}}&&\\
+\norm{\pr\pr A_0}_{\lsit{\frac{3}{2}}}+\norm{\pr_0A_0}_{\lsitt{2}{\frac{42}{13}}}+\norm{\pr\pr_0A_0}_{\lsitt{2}{\frac{14}{9}}} &\les& \ep.
\eea
Note also that we have \eqref{bootn} with a universal constant $M$, i.e.
\bea\lab{pr:bootn}
\norm{n-1}_{L^\infty(\MM)}+\norm{\nabla n}_{L^\infty(\MM)}\les \ep.
\eea

Finally,  for any weakly regular  null hypersurface $\HH$ and  any smooth  scalar function $\phi$ on $\MM$,
\bea\label{pr:bil8}
\norm{A^j\pr_j\phi}_{L^2(\MM)}\les \ep\sup_{\HH}\norm{\nabb\phi}_{L^2(\HH)},
\eea
where the supremum is taken over all null hypersurfaces $\HH$. \\


We now turn to the proof of Proposition \ref{prop:propagreg}. The estimates will be similar to the one derived in Proposition \ref{prop:improve1} and \ref{prop:improve2}. Thus, we will simply sketch the arguments without going into details. Note that the crucial  point is to check that the null structure - which is at the core of the proof of Proposition \ref{prop:improve1} and \ref{prop:improve2} - is preserved after differentiation. This will be done in section \ref{sec:preservenullstruct}. 

\subsection{Elliptic estimates}

Recall that we need to control $\norm{\D\R}_{L^\infty_tL^2(\Sigma_t)}$. Note that  we may restrict our attention to the control of $\D_l\R_{ij\a\b}$ and $\D_0\R_{ij\a\b}$. Indeed, all the other components are recovered from the Bianchi identities and the standard symmetries of $\R$. Furthermore, since the estimates are for $\D_l\R_{ij\a\b}$ and $\D_0\R_{ij\a\b}$ similar, we will restrict only to the control of $\D_l\R_{ij\a\b}$. These components, according to the Cartan formalism, can be expressed in the form
\begin{equation}\label{eq:RA}
\R_{ij\a\b} = \left (\pa_i A_j-\pa_j A_i+[A_i,A_j]\right)_{\a\b}. 
\end{equation}
Since
$$
\D_l\R = \pa_l \R+ \A \R
$$
we can estimate 
$$
\|\D_l\R_{ij\a\b}\|_{L^2(\Sigma_t)} \les \|\pa_l\R_{ij\a\b}\|_{L^2(\Sigma_t)} + \|\R\|_{L^6(\Sigma_t)} \|\A\|_{L^3(\Sigma_t)}. 
$$
Furthermore, from \eqref{eq:RA}
$$
\|\pa \R_{ij\a\b}\|_{L^2(\Sigma_t)} \les \|\pa^2 A\|_{L^2(\Sigma_t)} +   \|\pa A\|_{L^6(\Sigma_t)} \|\A\|_{L^3(\Sigma_t)} \les 
\|\pa^2 A\|_{L^2(\Sigma_t)}. 
$$
Proceeding the same way with all other components of $\D\R$, we derive
\bea\lab{eq:da2dr}
\|\pa \R\|_{L^2(\Sigma_t)}  \les \|\pa\prb \A\|_{L^2(\Sigma_t)}+\ep^2. 
\eea

In view of \eqref{eq:da2dr}, it suffices to derive $L^2$ bounds for $\pa\prb\A$. Furthermore, since $A_0$ satisfies an elliptic equation, and hence better estimates, we focus on the estimates for $\pr\prb A$. We sketch below the estimates just for $\pr^2 A$ since the estimates $\pr\pr_0A$ are similar. To establish bounds on $\|\pa^2 A\|_{L^2(\Sigma_t)}$ we use the following analog of Lemma \ref{recoverA}:
\begin{lemma}\lab{recoverA'}
The following decomposition
$$\pa A=\curl(\pa B)+E'$$
holds with $E'$ satisfying:
$$\norm{\pr E'}_{\lsit{3}}+\norm{\pr^2E'}_{\lsit{\frac{3}{2}}}+\norm{E'}_{\lsitt{2}{\infty}}\les \eps \left (\|\pa^3 B\|_{L^\infty_t L^2(\Sigma_t)}+
\|\pa^3 B\|_{L^2_t L^7(\Sigma_t)}\right)+\ep^2.$$
Furthermore, $A$ satisfies
$$\|\pa^2 A\|_{L^\infty_t L^2(\Sigma_t)}\les \|\pa^3 B\|_{L^\infty_t L^2(\Sigma_t)}+\ep^2.$$
\end{lemma}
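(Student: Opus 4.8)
The statement is the higher-order analogue of Lemma \ref{recoverA}, with $A$ replaced by $\pa A$ and $B$ replaced by $\pa B$, and its proof follows the same scheme. The starting point is the symbolic identity of Lemma \ref{prop:projectionoperator}, namely $\curl(\curl(A))=\pa(\div A)-\lap A+A\pa A$, combined with the Coulomb condition $\div A=A^2$ which yields
$$A=\curl\big((-\Delta)^{-1}\curl A\big)+(-\Delta)^{-1}\big(A\pa A+A^3\big)=\curl(B)+E,$$
as in Lemma \ref{recoverA}. Applying one derivative $\pa$ to this relation, commuting $\pa$ past $\curl$ (which costs a $A\pa A$-type term in view of \eqref{notcoordinate}) and past $(-\Delta)^{-1}$ (which, after writing $[\pa,(-\Delta)^{-1}]=-(-\Delta)^{-1}[\pa,\Delta](-\Delta)^{-1}$ and expanding $[\pa,\Delta]$ using the Gauss equation $R=\R+A^2$ as in the proof of Lemma \ref{recoverA}, produces terms of schematic type $\R\pa B$, $\pa A\,\pa B$, $A\pa^2B$, $A^2\pa B$), I obtain
$$\pa A=\curl(\pa B)+E',$$
where $E'$ collects all the commutator and lower-order contributions, schematically $E'=(-\Delta)^{-1}\big(\R\pa^2 B+\pa A\pa^2B+A\pa^3B+A^2\pa^2B+\dots\big)$ plus the $\pa$-differentiated version of the original $E$ from Lemma \ref{recoverA}.

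\textbf{Estimating $E'$.} The bound on $E'$ is obtained exactly as in Lemma \ref{recoverA}: use the elliptic estimate \eqref{prop:bochsit2} to control $\norm{\pa^2 E'}_{\lsit{3/2}}\les\norm{\Delta E'}_{\lsit{3/2}}$ and the Gagliardo--Nirenberg estimate \eqref{eq:gnirenbergsit} for $\norm{\pa E'}_{\lsit3}$, and use the non-sharp embedding \eqref{eel3} for $\norm{E'}_{\lsitt{2}{\infty}}$. Every term is a product of a factor with bootstrap control in $\lsit{2}$, $\lsit{6}$, $\lsitt27$, $L^\infty(\MM)$ (namely $\R$, $A$, $\pa A$, $\pa^2 B$, $n$) — all of size $\les\ep$ by \eqref{pr:bootR}--\eqref{pr:bootn}, Lemma \ref{lemma:estB}, Corollary \ref{cor:strichartzB}, and Lemma \ref{recoverA} — times exactly one "top-order" factor which is $\pa^3 B$. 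Holder in the appropriate Lebesgue exponents on $\Si_t$, together with the Sobolev embedding \eqref{sobineqsit}, then gives the top-order factor in the norms $\norm{\pa^3B}_{L^\infty_tL^2(\Si_t)}$ or $\norm{\pa^3B}_{L^2_tL^7(\Si_t)}$ (the latter arising, as in Lemma \ref{recoverA}, from the terms estimated through \eqref{eel3} where one uses $\norm{\pa^3B}_{\lsitt27}$ in place of $\norm{\pa B}_{\lsitt27}$). This produces precisely the claimed bound
$$\norm{\pr E'}_{\lsit3}+\norm{\pr^2E'}_{\lsit{3/2}}+\norm{E'}_{\lsitt2\infty}\les\eps\big(\norm{\pa^3B}_{L^\infty_tL^2(\Si_t)}+\norm{\pa^3B}_{L^2_tL^7(\Si_t)}\big)+\ep^2.$$

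\textbf{The bound on $\pa^2 A$.} From $\pa A=\curl(\pa B)+E'$ one has $\norm{\pa^2A}_{L^\infty_tL^2(\Si_t)}\les\norm{\pa^3B}_{L^\infty_tL^2(\Si_t)}+\norm{\pa E'}_{L^\infty_tL^2(\Si_t)}$, and interpolating the $E'$ estimate just proved (between $\lsit{3/2}$ and $\lsit3$, or simply using $\norm{\pa E'}_{\lsit2}\les\norm{\pa E'}_{\lsit3}^{\theta}\norm{\pa^2E'}_{\lsit{3/2}}^{1-\theta}$-type Gagliardo--Nirenberg, or directly $\norm{\pa E'}_{\lsit2}\les\norm{\pa^2E'}_{\lsit{6/5}}\les\norm{\pa^2E'}_{\lsit{3/2}}$) controls $\norm{\pa E'}_{\lsit2}$ by $\eps\norm{\pa^3B}_{\dots}+\ep^2$; since $\eps$ is small this term is absorbed (it will eventually be closed against a separate energy/parametrix estimate for $\pa^3 B$, exactly parallel to the scheme of Propositions \ref{prop:improve1}--\ref{prop:improve2}), leaving $\norm{\pa^2A}_{L^\infty_tL^2(\Si_t)}\les\norm{\pa^3B}_{L^\infty_tL^2(\Si_t)}+\ep^2$.

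\textbf{Main obstacle.} The routine part is the Holder bookkeeping; the genuine point — and the only place where anything must be \emph{checked} rather than copied — is that commuting the derivative $\pa$ through the nonlocal operators $\curl$ and $(-\Delta)^{-1}$, and through $[\pa,\Delta]$ via the Gauss equation, does not create any term with \emph{two} top-order factors (e.g. $\pa^2B\cdot\pa^2B$ with no small curvature or $A$ prefactor), nor any term of the forbidden type $\underline\alpha$ or $\underline L\phi$ that would spoil the subsequent null-structure arguments. In other words, the derived equation for $\pa A$ must still exhibit the same null structure as the equation for $A$ — this is the content of the section on preservation of the null structure referenced in the excerpt, and it is the substantive step; once it is in place, the present lemma is immediate.
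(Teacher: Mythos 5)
Your decomposition and estimation scheme is essentially the paper's. The paper's bookkeeping is marginally lighter: it does not commute $\pa$ through $(-\Delta)^{-1}$ at all, but simply sets $E'=\pa E+[\pa,\curl]B$ with $E$ given by the explicit formula \eqref{eel1}, so that $E'=-\pa(-\Delta)^{-1}\big(\R\pr B+\pr A\pr B+A\pr^2B+A^2\pr B+A\pr A+A^3\big)+A\pa B$, and then estimates $\pa E'$ directly through $\pa^2(-\Delta)^{-1}$ and the Bochner/elliptic bounds; your version with $[\pa,(-\Delta)^{-1}]=-(-\Delta)^{-1}[\pa,\Delta](-\Delta)^{-1}$ is equivalent but generates extra curvature commutator terms to track. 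One point in your write-up is not accurate as stated: it is not true that every term carries exactly one top-order factor equal to $\pa^3B$. Differentiating $\pa A\,\pr B$ produces $\pa^2A\,\pr B$, and the factor $\|\R\|_{L^6(\Si_t)}\les\|\pa\R\|_{L^2(\Si_t)}$ appears as well, so the right-hand side of the $E'$ estimate a priori contains $\ep\left(\|\pa^2A\|_{\lsit{2}}+\|\pa\R\|_{\lsit{2}}\right)$ in addition to $\ep\|\pa^3B\|_{\lsit{2}}$. The paper closes this loop by the chain $\|\pa\R\|\les\|\pa^2A\|+\ep^2$ (essentially \eqref{eq:da2dr}), $\|\pa^2A\|\les\|\pa^3B\|+\|\pa E'\|$, and then absorbing $\ep\|\pa E'\|$ into the left-hand side, using that the $L^3(\Si_t)$ norm dominates the $L^2(\Si_t)$ norm in the localized setting. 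Your absorption remark is aimed at the wrong term: the $\ep\|\pa^3B\|$ contribution needs no absorption (it is already of the form allowed by the conclusion); what must be absorbed is the self-referential $\ep\|\pa E'\|$ (equivalently $\ep\|\pa^2A\|$) term. With that correction the argument coincides with the paper's.
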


\begin{proof}
Recall from Lemma \ref{recoverA} that we have the following decomposition
$$
A=\curl B +E, 
$$
with 
\bea\lab{eq:eel1}
E=-(-\Delta)^{-1}(\R\pr B+\pr A\pr B+A\pr^2B+A^2\pr B)+(-\Delta)^{-1} (A \pr  A+A^3)
\eea
given in \eqref{eel1}. We now introduce the new variables $\pa A$ and $\pa B$, linked by the equation
$$
\pa A=\curl (\pa B) +E', 
$$
where 
$$
E'=\pa E+[\pa,\curl] B,
$$
so that 
$$
E'=-\pa (-\Delta)^{-1}\left [  (    \R\pr B+\pr A\pr B+A\pr^2B+A^2\pr B)+A \pr  A+A^3\right ]+A\pa B.
$$
It then follows that 
\bea\lab{eq:what}
\|\pa^2 A\|_{L^\infty_t L^2(\Sigma_t)}\les \|\pa^3 B\|_{L^\infty_t L^2(\Sigma_t)}+\|\pa E'\|_{L^\infty_t L^2(\Sigma_t)}
\eea
and
\begin{align*}
\|\pa E'\|_{L^\infty_t L^2(\Sigma_t)} &\les \left(\|R\|_{L^\infty_t L^3(\Sigma_t)} + \|\pa A\|_{L^\infty_t L^3(\Sigma_t)}+ \|A\|^2_{L^\infty_t L^{6}(\Sigma_t)}\right) 
\|\pa B\|_{L^\infty_t L^6(\Sigma_t)} \\ &+ \|A\|_{L^\infty_t L^6(\Sigma_t)} \|\pa^2 B\|_{L^\infty_t L^3(\Sigma_t)}\\ &+ 
\|A\|_{L^\infty_t L^6(\Sigma_t)} \|\pa A\|_{L^\infty_t L^3(\Sigma_t)} + \|A\|^3_{L^\infty_t L^6(\Sigma_t)}\\ &\les  \epsilon\left (\|\pa R\|_{L^\infty_t L^2(\Sigma_t)}+
\|\pa^2 A\|_{L^\infty_t L^2(\Sigma_t)}+\|\pa^3 B\|_{L^\infty_t L^2(\Sigma_t)}\right)+\ep^2\\ &\les   \epsilon\left 
(\|\pa E'\|_{L^\infty_t L^2(\Sigma_t)}+\|\pa^3 B\|_{L^\infty_t L^2(\Sigma_t)}\right)+\ep^2,
\end{align*}
where in the second inequality we interpolated between 
$\|\pa A\|_{L^\infty_t L^2(\Sigma_t)}$, $\|R\|_{L^\infty_t L^2(\Sigma_t)}$, $\|\pa^2 B\|_{L^\infty_t L^2(\Sigma_t)}$ and 
$\|\pa^2 A\|_{L^\infty_t L^2(\Sigma_t)}$, $\|\pa R\|_{L^\infty_t L^2(\Sigma_t)}$, $\|\pa^3 B\|_{L^\infty_t L^2(\Sigma_t)}$, using 
the already established bounds on the former, and in the last inequality we used the bounds \eqref{eq:da2dr} and \eqref{eq:what}.
From this we obtain that 
$$\|\pa^2 A\|_{L^\infty_t L^2(\Sigma_t)}\les \|\pa^3 B\|_{L^\infty_t L^2(\Sigma_t)}+\ep^2.$$
Similarly,
\begin{align*}
\|\pa E'\|_{L^\infty_t L^3(\Sigma_t)} &\les \left(\|R\|_{L^\infty_t L^6(\Sigma_t)} + \|\pa A\|_{L^\infty_t L^6(\Sigma_t)}+ \|A\|^2_{L^\infty_t L^{12}(\Sigma_t)}\right) 
\|\pa B\|_{L^\infty_t L^6(\Sigma_t)} \\ &+ \|A\|_{L^\infty_t L^6(\Sigma_t)} \|\pa^2 B\|_{L^\infty_t L^6(\Sigma_t)}\\ &+ 
\|A\|_{L^\infty_t L^6(\Sigma_t)} \|\pa A\|_{L^\infty_t L^6(\Sigma_t)} + \|A\|^3_{L^\infty_t L^9(\Sigma_t)}\\ &\les  \epsilon\left (\|\pa R\|_{L^\infty_t L^2(\Sigma_t)}+
\|\pa^2 A\|_{L^\infty_t L^2(\Sigma_t)}+\|\pa^3 B\|_{L^\infty_t L^2(\Sigma_t)}\right)+\ep^2\\ &\les   \epsilon\left 
(\|\pa E'\|_{L^\infty_t L^2(\Sigma_t)}+\|\pa^3 B\|_{L^\infty_t L^2(\Sigma_t)}\right)+\ep^2
\end{align*}
and 
$$\|\pa E'\|_{L^\infty_t L^3(\Sigma_t)} \les   \epsilon\|\pa^3 B\|_{L^\infty_t L^2(\Sigma_t)}+\ep^2.$$
The other estimates are proved in the same fashion. This concludes the proof of the lemma.
\end{proof}

In view of Lemma \ref{recoverA'}, it remains to estimate $\pr^3B$. This will be done following the same circle of ideas used for $\pr^2B$. 
 
\subsection{The wave equation for $\pa B$}

Recall from \eqref{eq:fondwaveeqB} that we have, schematically: 
$$
\Box B=(-\Delta)^{-1} [\Box,\Delta]B + (-\Delta)^{-1} \Box (\curl A). 
$$
Using the commutation formula \eqref{commsquare2} we obtain
\begin{equation}\label{eq:eqB}
\Box \pr B=F
\end{equation}
where $F$ is given by
\begin{equation}\label{eq:eqBbis}
F=A^\ell \pa_\ell\prb B + A^0\prb^2 B+\A^2 \prb B +\pa_0 A^0 \prb B+ \pa (-\Delta)^{-1} [\Box,\Delta]B + \pa (-\Delta)^{-1} \Box (\curl A).
\end{equation}

Using the energy estimates for the wave equation derived in Lemma \ref{lemma:energyestimatebis} and the proof of the non sharp Strichartz estimate in section \ref{sec:improvnonsharpstrich}, we obtain the following proposition.
\begin{proposition}\label{prop:prB}
We have the following estimates
\begin{itemize}
\item Energy estimate
\begin{equation}\label{eq:enB}
\mathcal E:=\sup_{\mathcal H}(\|\nabb \pa^2 B\|_{L^2({\mathcal H})}+\|L \pa^2 B\|_{L^2({\mathcal H})}) + \|\prb \pa^2 B\|_{L^\infty_t L^2(\Sigma_t)} \les \|\prb \pa^2 B\|_{L^2(\Sigma_0)}+\|\pr F\|_{L^2(\mathcal M)}+\ep.
\end{equation}
\item Non sharp Strichartz estimate 
\begin{equation}\label{eq:strB}
\mathcal S:=\|\pa^2 B\|_{L^2_t L^7(\Sigma_t)} \les \|\prb \pa^2 B\|_{L^2(\Sigma_0)}+\|\pr F\|_{L^2(\mathcal M)}+\ep.
\end{equation}
\end{itemize}
Furthermore,  we have the following estimate for $\pr\pr_0\pr_0 B$
\bea\lab{prpr0pr0B}
\norm{\pr\pr_0\pr_0 B}_{L^2(\MM)}\les  \|\prb \pa^2 B\|_{L^2(\Sigma_0)}+\|\pr F\|_{L^2(\mathcal M)}+\ep.
\eea
\end{proposition}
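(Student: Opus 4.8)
\textbf{Plan of proof of Proposition \ref{prop:prB}.} The proof follows exactly the pattern of the second-order case: one applies the energy estimate of Lemma \ref{lemma:energyestimatebis} and the non-sharp Strichartz argument of section \ref{sec:improvnonsharpstrich} to the wave equation \eqref{eq:eqB}--\eqref{eq:eqBbis} satisfied by $\pr B$, treating $\pr B$ as the unknown $\phi$ with source $F$. The only genuinely new point is to verify that $\|\pr F\|_{L^2(\MM)}$ is controlled, in terms of the higher-order energy-flux-Strichartz quantities $\mathcal E$ and $\mathcal S$ introduced above (together with the initial data and lower order terms of size $\les\ep^2$), with a gain of a power of $\ep$ in front of $\mathcal E,\mathcal S$, so that these terms can be absorbed on the left-hand side.

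First I would examine term by term the right-hand side \eqref{eq:eqBbis} of the equation for $\pr B$. The terms $\pr_0A^0\prb B$, $\A^2\prb B$ are handled by H\"older and the Sobolev embeddings on $\Si_t$ exactly as the corresponding lower order terms in Lemma \ref{prop:waveeqB}, picking up $\ep\cdot(\text{data})$. The terms $A^\ell\pr_\ell\prb B$ and $A^0\prb^2 B$ are the dangerous bilinear contributions: after one further $\pr$-differentiation one produces terms of the schematic form $A^\ell\pr_\ell(\pr\prb B)$ and $\pr A\,\pr\prb B$; the first is estimated by the improved bilinear bootstrap \eqref{bil8imp} applied to $\phi=\pr\prb B$, whose flux $\sup_\HH\|\nabb(\pr\prb B)\|_{L^2(\HH)}$ is precisely part of $\mathcal E$ (after decomposing $\pr_0=L-N$ as in \eqref{decompositionpr0} and using the weak regularity of $\HH$, exactly as in the proof of \eqref{bil5imp}), while the second is absorbed into $\|\pr A\|_{\lsit 2}\|\pr\prb B\|_{\lsit\infty}$ with a Strichartz factor bounding $\pr\prb B$ in $\lsitt 2 7$, i.e.\ in terms of $\mathcal S$. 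The term $\pr(-\Delta)^{-1}[\square,\Delta]B$ differentiated once is handled by re-running the proof of Proposition \ref{prop:commutsquarecurl} and Proposition \ref{prop:waveeqB} with one extra derivative, using the commutator formula \eqref{commsquare3} (whose structure, as emphasised in the remark there, is what makes the bilinear estimates applicable); here one needs the bilinear bootstrap \eqref{bil5imp}, \eqref{bil6imp} applied to $\pr B$ in place of $B$, which in turn requires the flux and Strichartz control of $\pr^2 B$ packaged in $\mathcal E$ and $\mathcal S$. Finally $\pr(-\Delta)^{-1}\square(\curl A)$ is controlled by the higher-order analogue of Proposition \ref{prop:commutsquarecurl}, obtained by differentiating \eqref{eq:YM22} twice and using \eqref{bil1imp}, \eqref{bil3imp}, \eqref{bil4imp} applied after one more derivative.

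Collecting these estimates yields $\|\pr F\|_{L^2(\MM)}\les \ep(\mathcal E+\mathcal S)+\|\prb\pr^2 B\|_{L^2(\Si_0)}+\ep^2$. Plugging this into the energy estimate \eqref{energy1} of Lemma \ref{lemma:energyestimatebis} (with $\phi=\pr B$) gives $\mathcal E\les \|\prb\pr^2 B\|_{L^2(\Si_0)}+\ep\mathcal E+\ep\mathcal S+\ep$, and plugging it into the non-sharp Strichartz argument of Corollary \ref{cor:strichartzB} and section \ref{sec:improvnonsharpstrich} (again with $\phi=\pr B$, using the parametrix of Theorem \ref{lemma:parametrixconstruction} and the frequency-localized $L^4(\MM)$ estimate of Proposition \ref{prop:L4strichartz}) gives $\mathcal S\les\|\prb\pr^2 B\|_{L^2(\Si_0)}+\ep\mathcal E+\ep\mathcal S+\ep$. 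For $\ep$ small enough one absorbs $\ep(\mathcal E+\mathcal S)$ on the left, obtaining \eqref{eq:enB} and \eqref{eq:strB}. The estimate \eqref{prpr0pr0B} for $\pr\pr_0\pr_0 B$ is then derived exactly as \eqref{csd15}: from \eqref{eq:YM16} one writes $\pr_0\pr_0(\pr B)=-\square(\pr B)+\Delta(\pr B)+n^{-1}\nabla n\cdot\nabla(\pr B)$, estimates $\square(\pr B)=F$ via the bound on $\|\pr F\|_{L^2(\MM)}$ just obtained (or rather $\|F\|_{L^2(\MM)}$, which is even easier), uses $\nabla n\in L^\infty(\MM)$ from \eqref{bootn}, $\|\pr^2(\pr B)\|_{\lsit 2}\les\mathcal E$, and absorbs the resulting $\ep\|\pr\pr_0\pr_0 B\|_{L^2(\MM)}$ term on the left.

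\textbf{Main obstacle.} The delicate point, as in the second-order case, is that there is no room to spare: every bilinear term must be put in a form to which one of the improved bilinear estimates \eqref{bil1imp}--\eqref{bil8imp} or \eqref{bil3imp}--\eqref{bil4imp} applies, which forces one to carefully re-derive the ``hidden null cancellations'' of the commutator identities \eqref{commsquare2}, \eqref{commsquare3} at one higher order and to verify that the extra $\pr$-derivative does not destroy the antisymmetric $\curl$-structure exploited in \eqref{eq:usefullater} and in section \ref{sec:proofbil1}. This is precisely the content of the verification that ``the null structure is preserved after differentiation'' announced in section \ref{sec:preservenullstruct}; modulo that structural check, the analytic estimates are routine repetitions of those already carried out for $B$.
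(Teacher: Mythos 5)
Your proposal is correct and follows the paper's route exactly: the statement (which keeps $\|\pr F\|_{L^2(\MM)}$ on the right-hand side) is obtained by applying the higher-order energy estimate of Lemma \ref{lemma:energyestimatebis} and the parametrix/frequency-localized $L^4$ Strichartz argument of Corollary \ref{cor:strichartzB} to $\phi=\pr B$ with $\square(\pr B)=F$, and by deriving \eqref{prpr0pr0B} as in \eqref{csd15}. Note only that the bulk of your write-up — the term-by-term control of $\|\pr F\|_{L^2(\MM)}$ via the preserved null structure — is the content of the separate Proposition \ref{prop:F} (proved in section \ref{sec:preservenullstruct}) rather than of the present statement, though your account of it also matches the paper.
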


In view of Proposition \ref{prop:prB}, we need to estimate $\pr F$. This is done in the following proposition:
\begin{proposition}\label{prop:F}
$F$ satisfies the following estimate
$$
\|\pr F\|_{L^2(\mathcal M)}\les \ep\left(\|\pr^3 B_0\|_{L^2(\Sigma_0)}+\|\pr^2 B_1\|_{L^2(\Sigma_0)} +
\mathcal E+\mathcal S+ \|\pr F\|_{L^2(\mathcal M)}\right)+\ep^2.
$$
\end{proposition}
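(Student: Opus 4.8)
\textbf{Proof strategy for Proposition \ref{prop:F}.} The plan is to differentiate the expression \eqref{eq:eqBbis} for $F$ once (symbolically, with $\pr$), and estimate each of the resulting terms in $L^2(\MM)$, mirroring term-by-term the argument already carried out for the lower-order quantity $\square B$ in Proposition \ref{prop:waveeqB} (Section \ref{sec:bobo5}) and for $\square(\pr_j\phi)$ in Lemma \ref{lemma:energyestimatebis}. The key point, as stressed at the start of this section, is that applying $\pr$ does not destroy the null structure: each dangerous quadratic term in $F$ is of one of the schematic types $A^j\pr_j(\text{good scalar})$, $k_{j\cdot}\pr^j(\text{good scalar})$, $Q_{ij}(A,A)$, $\pr(A^l)\pr_l(A)$, or $\R_{\cdot\cdot j0}\pr^jB$, and $\pr$ of such a term is again, after using the commutator formula \eqref{notcoordinate} (which replaces $[\pr_i,\pr_j]$ by $A\pr$) and the wave equation, a sum of terms of the same bilinear type — now applied to $\pr B$, $\pr^2 B$, $\pr_0 B$ in place of $B$ — plus genuinely lower-order cubic/quartic terms in $\A$ and $\R$ that are handled by Hölder and the Sobolev/Bochner inequalities of Lemma \ref{lemma:estimatesit}.

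First I would treat the ``elliptic-type'' pieces $\pr(-\Delta)^{-1}[\square,\Delta]B$ and $\pr(-\Delta)^{-1}\square(\curl A)$: differentiating once more and using the Bochner bound $\norm{\nabla^2(-\Delta)^{-1}}_{\LL(L^2(\Si_t))}\les 1$, these reduce to controlling $[\square,\Delta](\pr B)$ and $\square(\curl(\pr A))$ in, respectively, $(-\Delta)^{-1/2}L^2$ and $L^2$, which is exactly the content of (differentiated versions of) Proposition \ref{prop:commutsquarecurl} and the commutator formula \eqref{commsquare3} — here one invokes the bilinear bootstrap assumptions \eqref{bil5}, \eqref{bil6}, \eqref{bil3}, \eqref{bil4} applied to $\pr B$ (legitimate since $\pr B$ solves a wave equation with controlled source, by \eqref{eq:eqB}), the estimate $\nabla n\in L^\infty(\MM)$ from \eqref{bootn}, and the higher-order bounds on $\R$ and $\A$ encoded in $\mathcal E$ and $\mathcal S$. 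Next I would handle the explicitly written quadratic terms $A^\ell\pr_\ell\prb B$, $A^0\prb^2 B$, $\A^2\prb B$, $\pr_0 A^0\prb B$: applying $\pr$ produces $\pr A\cdot\pr\prb B$, $A\cdot\pr^2\prb B$ (the dangerous ones, controlled by \eqref{bil5}-type bilinear estimates in $\pr B$, yielding $\ep\,\mathcal E$), together with terms like $\pr A^0\cdot\prb^2 B$, $A^0\cdot\pr\prb^2 B$ (using $A_0\in\lsitt{2}{\infty}$ and $\pr A_0\in\lsit{3}$ from \eqref{bootA0}, paired with $\prb^2 B,\ \pr\prb^2 B\in\lsit{2}$ or $\lsit{6}$ via $\mathcal E$ and the Sobolev embedding), and cubic/quartic terms in $\A$ absorbed into $\ep^2$. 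The term $\pr_0 A^0$ is rewritten using the elliptic equation \eqref{eq:YM21} (or rather its differentiated form from Section \ref{sec:improve1}) so that it is controlled in $\lsit{3}$; the factor $\prb B$ and its derivatives then sit in $\lsit{6}$.

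The step I expect to be the main obstacle is exactly the one where a $\pr_0$ must be traded for a spatial $\pr$ in the terms of the form $A\cdot\pr\pr_0(\pr^2 B)$ and $k\cdot\pr\pr_0(\pr B)$ — the operator $(-\Delta)^{-1/2}\pr_0$ is not bounded on $L^2(\Si_t)$, so one cannot directly integrate the $(-\Delta)^{-1/2}$ by parts against a $\pr_0$. As in the proof of Proposition \ref{prop:commutsquarecurl} (the treatment of the second and fourth terms in \eqref{csc8}), the fix is to use the identity \eqref{linkRA} relating $\pr_0 A$ to $\pr A_0 + \R$, then the Bianchi identities to convert $\D_0\R_{0i\cdot\cdot}$ into $\D_l\R_{li\cdot\cdot}$, and the Coulomb gauge $\pr_l(A^l)=A^2$; this reorganizes the $\pr_0$-term into a spatial divergence plus harmless terms, at which point the bilinear assumptions \eqref{bil1}, \eqref{bil4} (and \eqref{bil5}) applied to $\pr B$ close the estimate. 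One must also carefully verify, as in \eqref{decompositionpr0} and \eqref{nrj6}, that the boundary/flux contributions $\nabb(\pr_0\pr^2 B)$ on a weakly regular $\HH$ are controlled by $\mathcal E$ using assumptions \eqref{assumptionH1}, \eqref{assumptionH2} and the trace embedding \eqref{HversusSitbis}. Collecting all contributions gives the bound $\norm{\pr F}_{L^2(\MM)}\les\ep(\norm{\pr^3 B_0}_{L^2(\Si_0)}+\norm{\pr^2 B_1}_{L^2(\Si_0)}+\mathcal E+\mathcal S+\norm{\pr F}_{L^2(\MM)})+\ep^2$, which is the assertion; the self-referential $\ep\norm{\pr F}$ on the right is then absorbed for $\ep$ small, and combined with Proposition \ref{prop:prB} yields closed control of $\pr^3 B$, hence of $\pr^2 A$ and $\D\R$ via Lemma \ref{recoverA'} and \eqref{eq:da2dr}.
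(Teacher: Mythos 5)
Your proposal follows essentially the same route as the paper: differentiate \eqref{eq:eqBbis}, isolate the quadratic terms $A^\ell\pa\pa_\ell\prb B$, $\pa A^\ell\pa_\ell\prb B$, $[\square,\Delta]B$ and $\square(\curl A)$, and close them with the bilinear machinery (the $A^j\pr_j$ and $k_{j\,\c}\pr^j$ contractions of \eqref{pr:bil7}--\eqref{pr:bil8}, the $Q_{ij}$ null forms, the conversion $\nabla_0k=\R_{a0b0}+\mbox{l.o.t.}$ giving $\R\c L$ flux terms, and the parametrix for $\pr B$ which solves \eqref{eq:eqB}), together with $\nabla n\in L^\infty$ and the estimate for $\pr\pr_0\pr_0 B$. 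The only cosmetic difference is that you phrase part of the argument through $(-\Delta)^{-1/2}$-weighted norms as at the lower order, whereas the paper absorbs the two spatial derivatives of $(-\Delta)^{-1}$ by Bochner and estimates $[\square,\Delta]B$ and $\square(\curl A)$ directly in $L^2(\MM)$; this is equivalent bookkeeping.
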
 
The proof of Proposition \ref{prop:F} is postponed to the next section. In view of Proposition \ref{prop:prB} and Proposition \ref{prop:F}, we obtain the control for $\|\prb \pa^2 B\|_{L^\infty_t L^2(\Sigma_t)}$ by its initial data which together with the elliptic estimates of the previous section yields the desired control for $\norm{\D\R}_{L^\infty_t L^2(\Sigma_t)}$. This concludes the proof of Proposition \ref{prop:propagreg}. \\

\subsection{Proof of Proposition \ref{prop:F}}\lab{sec:preservenullstruct}

From \eqref{eq:eqB}, we have
\beaa
\pa F &= &\pa A^\ell \pa_\ell\prb B+A^\ell \pa \pa_\ell\prb B + \pa(A^0\prb^2 B+\A^2 \prb B +\pa_0 A^0 \prb B)\\
&&+ \pa^2 
 (-\Delta)^{-1} [\Box,\Delta]B + \pa^2 (-\Delta)^{-1} \Box (\curl A).
\eeaa
This yields
\bea\lab{oooooo}
\norm{\pa F}_{L^2(\MM)} &\les &\norm{\pa A^\ell \pa_\ell\prb B}_{L^2(\MM)}+\norm{A^\ell \pa \pa_\ell\prb B}_{L^2(\MM)} + \norm{[\Box,\Delta]B}_{L^2(\MM)}\\
\nn&&+\norm{\Box (\curl A)}_{L^2(\MM)}+ \norm{\pa(A^0\prb^2 B+\A^2 \prb B +\pa_0 A^0 \prb B)}_{L^2(\MM)}\\
\nn&\les &\norm{\pa A^\ell \pa_\ell\prb B}_{L^2(\MM)}+\norm{A^\ell \pa \pa_\ell\prb B}_{L^2(\MM)} + \norm{[\Box,\Delta]B}_{L^2(\MM)}\\
\nn&&+\norm{\Box (\curl A)}_{L^2(\MM)}+ l.o.t.,
\eea
where we neglect the cubic terms and the terms involving $A^0$ since, as in the proof of Proposition \ref{prop:improve1} and \ref{prop:improve2}, they are significantly easier to treat.

Next, we isolate the terms $\Box (\curl A)$ and $[\Box,\Delta]B$ on the right-hand side of \eqref{oooooo}. 
From the proof of Proposition \ref{prop:commutsquarecurl}, we have
$$
\Box (\curl A)= \pa (A^\ell \pa_\ell A) + Q_{ij} (A^\ell,A_\ell) + {\text{terms involving}}\,\, A_0 +{\text{cubic terms}}.
$$
We deduce
\begin{equation}\lab{oooooo1}
\norm{\Box (\curl A)}_{L^2(\MM)}\les \norm{A^\ell \pa_\ell\pr A}_{L^2(\MM)}+\norm{\pa A^\ell \pa_\ell A}_{L^2(\MM)} + \norm{Q_{ij} (A^\ell,A_\ell)}_{L^2(\MM)} + l.o.t.
\end{equation}

Next, we deal with $[\square, \Delta]B$. According to \eqref{csd2}, we have schematically
\begin{align*}
[\square, \Delta]B
&=k^{ab}\nabla_a\nabla_b(\pr_0 B)+n^{-1}\nabla_bn\nabla_b(\pr_0(\pr_0 B))+\nabla_0k^{ab}\nabla_a\nabla_b B\\ &
+ {\text{terms involving}}\,\, A_0 +{\text{cubic terms}}.
\end{align*}
We deduce 
\beaa
\norm{[\square, \Delta]B}_{L^2(\MM)}&\les& \norm{k^{ab}\nabla_a\nabla_b(\pr_0 B)}_{L^2(\MM)}+\norm{\nabla_0k^{ab}\nabla_a\nabla_b B}_{L^2(\MM)}\\
&&+\norm{n^{-1}\nabla_bn\nabla_b(\pr_0(\pr_0 B))}_{L^2(\MM)}+l.o.t.
\eeaa
Together with \eqref{oooooo} and \eqref{oooooo1}, we obtain
\bea\lab{oooooo2}
\norm{\pa F}_{L^2(\MM)} &\les &\norm{\pa A^\ell \pa_\ell\prb B}_{L^2(\MM)}+\norm{A^\ell \pa \pa_\ell\prb B}_{L^2(\MM)}+\norm{\pr A^\ell \pa_\ell  A}_{L^2(\MM)} \\
\nn&&+\norm{A^\ell \pa_\ell\pa  A)}_{L^2(\MM)} + \norm{Q_{ij} (A^\ell,A_\ell)}_{L^2(\MM)}+\norm{k^{ab}\nabla_a\nabla_b(\pr_0 B)}_{L^2(\MM)}\\
\nn&&+\norm{\nabla_0k^{ab}\nabla_a\nabla_b B}_{L^2(\MM)}+\norm{n^{-1}\nabla_bn\nabla_b(\pr_0(\pr_0 B))}_{L^2(\MM)}+l.o.t.
\eea

We will use the following bilinear estimates.
\begin{lemma}\lab{pr:lemmabil1}
We have
\beaa
&&\norm{A^\ell \pa \pa_\ell\prb B}_{L^2(\MM)}+\norm{A^\ell \pa_\ell\pa  A}_{L^2(\MM)}+\norm{k^{ab}\nabla_a\nabla_b(\pr_0 B)}_{L^2(\MM)}\\
&\les& \ep(\|\prb \pa^2 B\|_{L^2(\Sigma_0)}+\EE+\SS)+\ep^2.
\eeaa
\end{lemma}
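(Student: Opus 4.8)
I would prove Lemma~\ref{pr:lemmabil1} as the one–derivative analogue of the bilinear estimates \eqref{bil5imp}, \eqref{bil7imp}, \eqref{bil1imp}, following verbatim the scheme of Section~\ref{sec:proofbil1}. The first move is to replace the connection factors by curl's of $B$: using $A=\curl(B)+E$ from Lemma~\ref{recoverA} (together with $k_{j\,\c}=A^j$ from \eqref{eq:YM10}) and, when a derivative falls on the connection factor, $\pa A=\curl(\pa B)+E'$ from Lemma~\ref{recoverA'}. Substituting these expansions into each of the three terms splits off contributions containing at least one factor of $E$, $E'$, $\pr E$ or $\pr E'$; I would estimate these crudely with the $E$, $E'$ bounds of Lemmas~\ref{recoverA}, \ref{recoverA'}, the bootstrap estimate \eqref{pr:bootA}, the Sobolev embeddings \eqref{sobineqsit}, \eqref{eq:gnirenbergsit} on $\Si_t$ and the non sharp Strichartz bound, producing $\ep(\EE+\SS)+\ep^2$. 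For the third term I would first integrate by parts, $k^{ab}\nabla_a\nabla_b(\pr_0B)=\nabla_a(k^{ab}\nabla_b(\pr_0B))$ using the constraint \eqref{constk} exactly as in \eqref{csd7}, so that it too becomes a term of the first type with $\prb B$ replaced by $\pr_0B$. Thus everything reduces to controlling $\norm{(\curl B)^\ell\,\pa\pa_\ell\prb B}_{L^2(\MM)}$ and $\norm{(\curl B)^\ell\,\pa_\ell\pa A}_{L^2(\MM)}$.

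\textbf{Parametrix and reduction to a flux norm.} Next I would invoke the wave equation $\square(\pa B)=F$, with $F$ given by \eqref{eq:eqBbis}, and apply the representation formula of Theorem~\ref{lemma:parametrixconstruction} to $\pa B$ (legitimate since $F$ and $\pr F$ are a priori in $L^2(\MM)$ by the classical regularity of Theorem~\ref{th:classicalwp}). As in \eqref{dorbia}--\eqref{bilproof}, after inserting the half-wave parametrices for $\pa B$, writing $\pr_m(B)=i\la e^{i\la\uom}\bom^{-1}\Nom_m$, using $\sup_\om\norm{\bom^{-1}}_{L^\infty(\MM)}\les 1$ from \cite{param3}, Plancherel in $\la$ and Cauchy--Schwarz in $\om$, each reduced term is bounded by $M\ep\,\sup_\om\norm{\CC(U,\Nom)}_{\luom{2}}$ with $\CC(U,N)=\in_{jm\,\c}N_m\,\pr_j\Psi$, $\Psi$ being a second derivative of $\pa B$; the accompanying factor $\norm{\la^2 f}_{L^2(\RRR^3)}$ is controlled by $\norm{\nabla^2(\pa B)|_{\Si_0}}_{L^2(\Si_0)}+\norm{\nabla\pr_0(\pa B)|_{\Si_0}}_{L^2(\Si_0)}+\norm{\pr F}_{L^2(\MM)}$, which by Proposition~\ref{prop:prB} is absorbed into $\norm{\prb\pa^2 B}_{L^2(\Si_0)}+\EE+\SS$ for $\ep$ small. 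Then, exactly as in \eqref{eq:usefullater}, I would decompose $\pr_j=N_jN+\nabb$ on the orthonormal frame $N,(\eom)_A$ of $\HH_{\uom}\cap\Si_t$: the $N_jN$ part drops by antisymmetry of $\in_{jm\,\c}$, leaving $\CC(U,N)=\nabb\Psi$, and a further decomposition $\pr_0=L-N$ together with \eqref{assumptionH1}, \eqref{assumptionH2} and the trace estimate \eqref{HversusSitbis} gives $\norm{\nabb\Psi}_{L^2(\HH)}\les\norm{\nabb(\pa^2 B)}_{L^2(\HH)}+\norm{L(\pa^2 B)}_{L^2(\HH)}+\text{l.o.t.}\les\EE$. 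Combining the pieces yields the stated bound.

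\textbf{The hard part.} The delicate point — and the real content of Section~\ref{sec:preservenullstruct} — is the last reduction in Step~2: I must check that differentiating once more does not resurrect the dangerous components, i.e.\ that the null-frame decomposition of $\nabb\Psi$ only ever produces the good quantities $\nabb(\pa^2 B)$ and $L(\pa^2 B)$, and never a $\Lb$-derivative $\Lb(\pa^2 B)$ (equivalently, on the curvature side for the term coming from the Bianchi identities, never $\aa_{AB}=\R_{\Lb A\Lb B}$). This amounts to a careful accounting of the commutators $[\nabb,\pr_0]$, $[\pr_i,\pr_j]$ (given schematically by \eqref{notcoordinate}) and $[\D,\pa]$: each commutator term must be shown to carry a factor of $A$, $\D L$ or $\D N$, hence to be an $L^3(\HH)$ coefficient absorbed via \eqref{HversusSitbis} and \eqref{pr:bootA}, or else to be itself a tangential or $L$-derivative. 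A secondary nuisance is the circular-looking appearance of $\norm{\pr F}_{L^2(\MM)}$ when licensing the parametrix for $\pa B$; this is handled, as elsewhere in the paper, by the fact that Lemma~\ref{pr:lemmabil1} only claims a bound with a small constant $\ep$ in front of $\EE+\SS$, so the loop is closed by absorption once Proposition~\ref{prop:F} and Proposition~\ref{prop:prB} are combined.
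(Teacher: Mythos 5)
Your overall scheme — replace $A$ by $\curl B$ (and $\pa A$ by $\curl(\pa B)$ via Lemma \ref{recoverA'}), run the half-wave parametrix on the curl factor, kill the normal component by antisymmetry of $\in$, and bound the surviving tangential derivative by the flux $\EE$ via \eqref{eq:usefullater} — is exactly the mechanism behind the paper's proof, and your closing discussion of which null components may and may not appear is the right thing to check. The paper itself is much shorter: it observes that after the substitution $\pa A=\curl(\pa B)+E'$ all three terms are of the form $A^j\pr_j\phi$ or $k_{j\,\c}\pr^j\phi$ with $\phi$ a component of $\pa\prb B$ or $\nabla_b(\pr_0 B)$, cites the already-established bilinear estimates \eqref{pr:bil7}--\eqref{pr:bil8} (which hold for \emph{arbitrary} scalars $\phi$, and whose proof is precisely your parametrix argument run once and for all on $B$), and then converts $\nabb\pa\prb B$ into $\nabb\pa^2B$, $L(\pa^2 B)$ and lower order terms exactly as in \eqref{eq:usefullater}.

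Two concrete corrections. First, the parametrix you need here is the one for $B$, licensed by Proposition \ref{prop:waveeqB} ($\norm{\pr\,\square B}_{L^2(\MM)}\les M^2\ep^2$), not the one for $\pa B$ via \eqref{eq:eqB}--\eqref{eq:eqBbis}. In every term of Lemma \ref{pr:lemmabil1} the connection factor $A^\ell$ or $k^{ab}$ is \emph{undifferentiated}, so it is $(\curl B)^\ell=\in_{\ell mn}\pr_m B_n$ that receives the oscillatory-integral representation and produces the factor $\Nom_m$; the source of $\square B$ is controlled independently of the quantities being estimated, so there is no circularity to absorb and no $\norm{\pr F}_{L^2(\MM)}$ should appear in the bound. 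That term genuinely enters only in Lemma \ref{pr:lemmabil2}, where a \emph{differentiated} connection factor forces the parametrix for $\pr B$; if you run the argument with the parametrix for $\pa B$ here, you prove a statement weaker than the lemma as written (it still suffices for Proposition \ref{prop:F}, but only after the absorption you describe). Second, the integration by parts $k^{ab}\nabla_a\nabla_b(\pr_0B)=\nabla_a[k^{ab}\nabla_b(\pr_0B)]$ accomplishes nothing in this lemma: in \eqref{csd7} that identity is useful only because the term sits under $(-\Delta)^{-\frac12}$, which absorbs the outer $\nabla_a$. Here there is no smoothing operator, and the term is already of the form $k^{a\,\c}\pr_a\phi$ with $\phi=\nabla_b(\pr_0B)$, to which \eqref{pr:bil7} applies directly.
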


\begin{lemma}\lab{pr:lemmabil2}
We have
\beaa
&&\norm{\pa A^\ell \pa_\ell\prb B}_{L^2(\MM)}+\norm{\pr A^\ell \pa_\ell  A)}_{L^2(\MM)}+\norm{Q_{ij} (A^\ell,A_\ell)}_{L^2(\MM)}+\norm{\nabla_0k^{ab}\nabla_a\nabla_b B}_{L^2(\MM)}\\
&\les& \ep(\|\prb \pa^2 B\|_{L^2(\Sigma_0)}+\EE+\SS+\|\pr F\|_{L^2(\mathcal M)})+\ep^2.
\eeaa
\end{lemma}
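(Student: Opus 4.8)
The four quantities in Lemma \ref{pr:lemmabil2} are the once--differentiated analogues of bilinear expressions already controlled in Proposition \ref{prop:improve2}, so the plan is to re--run the geometric bilinear argument of section \ref{sec:proofbil1} carrying one extra derivative. In each term one factor is a rough object --- $\pa A$, $\pr A$, or $\nabla_0k$ --- which should supply the single power of $\ep$, while the other factor admits a wave equation and is handled by a parametrix. The first step is an algebraic reduction of the rough factor: I would write $\pa A=\curl(\pa B)+E'$ by Lemma \ref{recoverA'}, and $\pr A=\pr(\curl B)+\pr E$ via the once--differentiated version of Lemma \ref{recoverA}; for $\nabla_0 k$ I would use the structure equation \eqref{eq:structfol1}, $\nabla_0 k_{ab}=\R_{a\,0\,b\,0}+F^{(3)}_{ab}$, and then, exactly as in \eqref{csd9}--\eqref{csd13}, trade the $\pr_0$ on the curvature for a spatial derivative via the second Bianchi identity together with the maximal foliation condition ($\nabla^a\R_{a\,0\,b\,0}=\A\R$). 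The contributions of $E$, $E'$, $F^{(3)}$, together with the purely ``lower order'' pieces, are absorbed by H\"older's inequality and the Sobolev embeddings on $\Si_t$ (the norm $\EE$ entering through $\|\nabla^2 B\|_{\lsit{6}}\les\|\pa^3 B\|_{\lsit{2}}\les\EE$), giving terms of size $\ep(\|\prb\pa^2 B\|_{L^2(\Si_0)}+\EE+\SS+\|\pr F\|_{L^2(\MM)})+\ep^2$. What remains are genuinely bilinear terms in which the rough factor is a curl, $\curl(\pa B)$ or $\pr(\curl B)$, or a bare curvature component $\R_{a\,0\,b\,0}$.

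For the remaining bilinear terms I would insert the representation formula of Theorem \ref{lemma:parametrixconstruction} into the factor which satisfies a wave equation: $\Box(\pa B)=F$ from \eqref{eq:eqB} when that factor is a high derivative of $B$ (so that Theorem \ref{prop:estparam} bounds the parametrix data by $\|\prb\pa^2 B\|_{L^2(\Si_0)}+\|\pr F\|_{L^2(\MM)}$), or the equation $\Box B$ of Lemma \ref{prop:waveeqB} when it is only a low derivative (so the data are merely $O(\ep)$, yielding an $O(\ep^2)$ contribution). Summing the geometric series in $j$, each bilinear term reduces --- exactly as in the passage \eqref{dorbia}--\eqref{bilproof} --- to an $L^2(\MM)$ bound for an oscillatory integral $\int_{\SSS^2}\int_0^\infty e^{i\la\uom}\bom^{-1}\CC(U,\Nom)f(\la\om)\la^3\,d\la\,d\om$, where $U$ is the remaining (rough) factor. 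Using $\sup_\om\|\bom^{-1}\|_{L^\infty(\MM)}\les 1$ from \cite{param3}, Plancherel in $\la$ and Cauchy--Schwarz in $\om$, this is bounded by $\big(\sup_\om\|\CC(U,\Nom)\|_{\luom{2}}\big)$ times the relevant parametrix--data norm.

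The crux --- the content of section \ref{sec:preservenullstruct} --- is that the null cancellations survive differentiation: I would check that $\CC(U,\Nom)$ contains only the \emph{good} null quantities $\R\c L$, $\nabb\pa^2 B$ and $L(\pa^2 B)$, and never the dangerous $\aa=\R_{\Lb\,A\,\Lb\,B}$ nor a $\Lb$--derivative of the parametrized factor. Dropping the $\om$ index, for a curl factor one decomposes $\pr_j=N_j\pr_N+\nabb$ and uses the antisymmetry of $\in_{jm\c}$ to kill the $\pr_N$--component, leaving only tangential derivatives; for a curvature factor, $L=T+N$, $\Lb=T-N$, the symmetries of $\R$ and the Bianchi/maximal--foliation reduction above force the relevant contraction into the form $\R\c L$. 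Then $\|\CC(U,\Nom)\|_{\luom{2}}\les\sup_{\HH}\big(\|\nabb\pa^2 B\|_{L^2(\HH)}+\|L\pa^2 B\|_{L^2(\HH)}\big)+\sup_{\HH}\|\R\c L\|_{L^2(\HH)}$, which is $\les\EE+\ep$ by the energy estimate \eqref{eq:enB} for $\pa B$, the embedding \eqref{HversusSitbis} on $\HH$, the curvature flux bound in \eqref{pr:bootR}, and \eqref{pr:bootn}. Combining the three steps gives the stated estimate; the companion Lemma \ref{pr:lemmabil1} is proved the same way, the only difference being that there \emph{both} factors are rough, so each sits in a null norm controlled by $\ep$.

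The main obstacle is exactly this last step: one must verify term by term that differentiating the structure equations and the commutator identities \eqref{commsquare2}--\eqref{commsquare3} does not convert a tangential derivative on the parametrized factor into a transversal one, nor generate an $\aa$--contraction of $\R$ --- the ``hidden'' null cancellations of the Einstein equations must persist at one more derivative. There is essentially no slack, since the gain is a single power of $\ep$ tested against the bootstrap quantities $\EE$, $\SS$ and $\|\pr F\|_{L^2(\MM)}$; in particular, as in section \ref{sec:bobo5}, the estimate $\nabla n\in L^\infty(\MM)$ of \eqref{pr:bootn} and the Bianchi trick of \eqref{csd9}--\eqref{csd13} must be used at full strength, and the second--order operator $\nabla_a\nabla_b$ hitting $B$ must be integrated by parts on $\Si_t$ so that exactly one derivative lands on each factor before the parametrix is invoked.
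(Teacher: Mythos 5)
Your architecture --- reduce the rough factors via Lemma \ref{recoverA'} and the identity $\nabla_0k_{ab}=\R_{a0b0}+\lot$, insert the parametrix, bound the oscillatory integral by Plancherel in $\la$ and Cauchy--Schwarz in $\om$, and verify that the antisymmetry of the curl symbol and the symmetries of $\R$ confine the $N$-contraction to good null components --- is exactly the paper's. The gap is in the final bookkeeping of where the single power of $\ep$ comes from. After the reduction of the first three terms to $Q_{ij}(\pr B,\prb B)$, both factors are \emph{second} derivatives of $B$. The paper applies the parametrix to $\pr B$ through the wave equation $\square(\pr B)=F$ of \eqref{eq:eqB}, so the parametrix data norm is the \emph{large} quantity $\norm{\prb \pa^2 B}_{L^2(\Si_0)}+\norm{\pr F}_{L^2(\MM)}$, entering linearly; the factor contracted with $N$ then becomes, once the antisymmetry kills the normal component, $\nabb\prb B$ --- a tangential derivative of a \emph{first}-order quantity --- whose flux $\sup_{\HH}\norm{\nabb\prb B}_{L^2(\HH)}\les\ep$ is supplied by the already-established lower-order estimates (Proposition \ref{prop:waveeqB}, Lemma \ref{lemma:energyestimatebis} and the manipulation \eqref{eq:usefullater}). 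Your bound $\norm{\CC(U,\Nom)}_{\luom{2}}\les\sup_{\HH}(\norm{\nabb\pa^2B}_{L^2(\HH)}+\norm{L\pa^2B}_{L^2(\HH)})\les\EE$ is off by one derivative: no arrangement of these bilinear terms places a tangential \emph{third} derivative of $B$ in the contraction, and if it did, the resulting product (parametrix data norm)$\,\times\,\EE$ would be quadratic in the large quantities and could not be absorbed into the right-hand side of the lemma.

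The alternative you gesture at --- putting the parametrix on the ``low'' factor with $O(\ep)$ data so that the product becomes $\ep(\EE+\ep)$ --- is not available: representing a second derivative of $B$ through the parametrix for $B$ itself requires two derivatives of the oscillatory integral, hence a bound on $\norm{\la^3 f}_{L^2(\RRR^3)}$, i.e.\ $H^3$-type control of the data, which Theorem \ref{prop:estparam} does not provide; whereas the parametrix for $\pr B$ necessarily carries the large data norm. You have in effect transposed the mechanism of Lemma \ref{pr:lemmabil1}: there the \emph{undifferentiated} $A^\ell$ or $k^{ab}$ supplies the $\ep$ through the bilinear assumptions \eqref{pr:bil7}--\eqref{pr:bil8}, and the third derivative of $B$ does sit in the null norm controlled by $\EE$ --- that is where $\ep\,\EE$ arises, not here (so your closing remark that in Lemma \ref{pr:lemmabil1} ``each factor sits in a null norm controlled by $\ep$'' is also inaccurate). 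With the roles corrected, the remainder of your argument --- the reduction of $\nabla_0 k$ to $\R_{a0b0}$ and the identification $\R_{N\,0\,\c\,\c}=\R\c L$, bounded by the flux \eqref{pr:bootR} --- goes through as in the paper.
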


The proof of Lemma \ref{pr:lemmabil1} is postponed to section \ref{sec:pr:lemmabil1} and the proof of Lemma \ref{pr:lemmabil2} is postponed to section \ref{sec:pr:lemmabil2}. We now conclude the proof of Proposition \ref{prop:F}. In view of \eqref{oooooo2}, Lemma \ref{pr:lemmabil1} and Lemma \ref{pr:lemmabil2}, we have
\beaa
\norm{\pa F}_{L^2(\MM)} \les \norm{n^{-1}\nabla_bn\nabla_b(\pr_0(\pr_0 B))}_{L^2(\MM)}+\ep(\|\prb \pa^2 B\|_{L^2(\Sigma_0)}+\|\pr F\|_{L^2(\mathcal M)})+\ep^2.
\eeaa
Using the estimates \eqref{pr:bootn} and \eqref{prpr0pr0B}, we have
\beaa
\norm{n^{-1}\nabla_bn\nabla_b(\pr_0(\pr_0 B))}_{L^2(\MM)}&\les& \norm{n^{-1}\nabla n}_{L^\infty(\MM)}\norm{\pr\pr_0\pr_0 B}_{L^2(\MM)}\\
&\les& \ep(\|\prb \pa^2 B\|_{L^2(\Sigma_0)}+\|\pr F\|_{L^2(\mathcal M)})+\ep^2
\eeaa
and we deduce
\beaa
\norm{\pa F}_{L^2(\MM)} \les \ep(\|\prb \pa^2 B\|_{L^2(\Sigma_0)}+\EE+\SS+\|\pr F\|_{L^2(\mathcal M)})+\ep^2
\eeaa
which is the desired estimate. This concludes the proof of Proposition \ref{prop:F}.

\subsubsection{Proof of Lemma \ref{pr:lemmabil1}}\lab{sec:pr:lemmabil1}

In view of the identity $\pa A=\curl(\pa B)+E'$ of Lemma \ref{recoverA'}, we have
\beaa
&&\norm{A^\ell \pa \pa_\ell\prb B}_{L^2(\MM)}+\norm{A^\ell \pa_\ell\pa  A}_{L^2(\MM)}+\norm{k^{ab}\nabla_a\nabla_b(\pr_0 B)}_{L^2(\MM)}\\
&\les& \norm{A^\ell \pa \pa_\ell\prb B}_{L^2(\MM)}+\norm{k^{ab}\nabla_a\nabla_b(\pr_0 B)}_{L^2(\MM)}+l.o.t.
\eeaa
Together with the bilinear estimates \eqref{pr:bil8} and \eqref{pr:bil8}, we deduce
\beaa
&&\norm{A^\ell \pa \pa_\ell\prb B}_{L^2(\MM)}+\norm{A^\ell \pa_\ell\pa  A}_{L^2(\MM)}+\norm{k^{ab}\nabla_a\nabla_b(\pr_0 B)}_{L^2(\MM)}\\
&\les& \eps \sup_{\mathcal H}\|\nabb\pa \prb B\|_{L^2(\mathcal H)}+l.o.t.
\eeaa
Arguing as in \eqref{eq:usefullater}, we finally obtain
\beaa
&&\norm{A^\ell \pa \pa_\ell\prb B}_{L^2(\MM)}+\norm{A^\ell \pa_\ell\pa  A}_{L^2(\MM)}+\norm{k^{ab}\nabla_a\nabla_b(\pr_0 B)}_{L^2(\MM)}\\
&\les& \eps \sup_{\mathcal H}(\|\nabb\pa^2B\|_{L^2(\mathcal H)}+\|L\pa^2B\|_{L^2(\mathcal H)})+l.o.t.\\
&\les & \ep(\|\prb \pa^2 B\|_{L^2(\Sigma_0)}+\EE+\SS)+\ep^2
\eeaa
which is the desired estimate. This concludes the proof of Lemma \ref{pr:lemmabil1}.

\subsubsection{Proof of Lemma \ref{pr:lemmabil2}}\lab{sec:pr:lemmabil2}

In view of the wave equation \eqref{eq:eqB} satisfied by $\pa B$, we may use for $\pr B$ the parametrix constructed in Lemma \ref{lemma:parametrixconstruction}:
$$\pr B=\sum_{j=0}^{+\infty}\phi^{(j)},$$
with:
$$\phi^{(0)}=\Psi_{om}[\phi_0,\phi_1]+\int_0^t\Psi(t,s)F(s,.)ds,$$
and for all $j\geq 1$:
$$\phi^{(j)}=\int_0^t\Psi(t,s)F^{(j)}(s,.)ds.$$
Furthermore  $\phi^{(j)}$ and $F^{(j)}$ satisfy the following estimate:
$$\norm{\pr\prb\phi^{(j)}}_{\lsit{2}}+\norm{\pr F^{(j)}}_{L^2(\MM)}\les \ep^j(\norm{\pr^3B_0}_{L^2(\Si_0)}+\norm{\pr^2B_1}_{L^2(\Si_0)}+\norm{\pr F}_{L^2(\MM)}).$$

We will show that the proof of the bilinear estimates of Lemma \ref{pr:lemmabil2} all  involve the $L^2(\MM)$ norm of quantities of the type:
$$\CC(U,\pr (\pr B)),$$
where $\CC(U,\pr(\pr B))$ denotes a contraction with respect to one index between a tensor $U$ and $\pr (\pr B)$. Using the parametrix for $\pr B$ discussed above, and arguing as in section \ref{sec:proofbil1}, we obtain the analog of  \eqref{bilproof}:  
\bea\lab{pr:bilproof} 
\norm{\CC(U,\pr (\pr B))}_{L^2(\MM)}&\les& (\norm{\pr^3B_0}_{L^2(\Si_0)}+\norm{\pr^2B_1}_{L^2(\Si_0)}+\norm{\pr F}_{L^2(\MM)})\\
\nn&&\times\left(\sup_{\HH}\norm{\CC(U, N)}_{L^2(\HH)}\right)
\eea
where the supremum is taken over all weakly regular null hypersurfaces, and where $N$ is the unit normal to $\HH\cap \Sigma_t$ inside $\Sigma_t$.  

We are now ready to prove the bilinear estimates of Lemma \ref{pr:lemmabil2}. Using the decomposition of Lemma \ref{recoverA'}, we have
\bea\lab{pr:aller}
&&\norm{\pa A^\ell \pa_\ell\prb B}_{L^2(\MM)}+\norm{\pr A^\ell \pa_\ell  A}_{L^2(\MM)}+\norm{Q_{ij} (A^\ell,A_\ell)}_{L^2(\MM)}\\
\nn&\les& \norm{Q_{ij} (\pr B, \prb B)}_{L^2(\MM)}+l.o.t.
\eea
which is of the type $\CC(U,\pr (\pr B))$ with $U=\pr\prb B$. Now, arguing as in section \ref{sec:proofbil1}, we have in this case 
$$\CC(U, N)=\in_{ij}\pr_i\prb BN_j=\nabb\prb B$$ and we deduce from \eqref{pr:bilproof} and \eqref{pr:aller}
\bea\lab{pr:aller1}
&&\norm{\pa A^\ell \pa_\ell\prb B}_{L^2(\MM)}+\norm{\pr A^\ell \pa_\ell  A)}_{L^2(\MM)}+\norm{Q_{ij} (A^\ell,A_\ell)}_{L^2(\MM)}\\
\nn&\les& \left(\sup_{\HH}\norm{\nabb\prb B}_{L^2(\HH)}\right)\left(\mathcal E+\mathcal S+\|F\|_{L^2(\mathcal M)}\right)+\ep^2\\
\nn&\les& \eps \left(\mathcal E+\mathcal S+\|F\|_{L^2(\mathcal M)}\right)+\ep^2.
\eea

Next, we consider the term $\nabla_0k^{ab}\nabla_a\nabla_b B$. Recall from \eqref{csd9} that we have
$$
\nabla_0k_{ab}= \R_{a0b0}+l.o.t.
$$
It follows that,
\bea\lab{pr:aller2}
\|\nabla_0k^{ab}\nabla_a\nabla_b B\|_{L^2(\mathcal M)} \les \|\R_{a0b0}\nabla_a\nabla_b B\|_{L^2(\mathcal M)} +l.o.t.
\eea
The right-hand side of \eqref{pr:aller2} is of the type $\CC(U,\pr (\pr B))$ with $U=\R_{a0b0}$. Now, arguing as in section \ref{sec:proofbil1}, we have in this case 
$$\CC(U, N)=\R_{N0\,\c\,\c}=\R\c L$$ 
and we deduce from \eqref{pr:bilproof} and \eqref{pr:aller}
\bea\lab{pr:aller3}
&&\norm{\pa A^\ell \pa_\ell\prb B}_{L^2(\MM)}+\norm{\pr A^\ell \pa_\ell  A)}_{L^2(\MM)}+\norm{Q_{ij} (A^\ell,A_\ell)}_{L^2(\MM)}\\
\nn&\les& \left(\sup_{\HH}\norm{\R\c L}_{L^2(\HH)}\right)\left(\mathcal E+\mathcal S+\|F\|_{L^2(\mathcal M)}\right)+\ep^2\\
\nn&\les& \ep\left(\mathcal E+\mathcal S+\|F\|_{L^2(\mathcal M)}\right)+\ep^2
\eea
where we used \eqref{pr:bootR} in the last inequality. Finally, \eqref{pr:aller1} and \eqref{pr:aller3} yield the desired estimate. This concludes the proof of Lemma \ref{pr:lemmabil2}.

\appendix

\section{Proof of Lemma \ref{lemma:divcurllemmainitialslice}}\lab{appendix:proofdivcurllemmainitialslice}

Let $X$ a vectorfield on $\Sigma_0$. We use the harmonic coordinate system on $\Si_t$ of Lemma \ref{lemma:estimatesit} with $t=0$. We have in a coordinate patch $U$:
\beaa
\div X &=& \frac{1}{\sqrt{|g|}}\dot{\pr}_i(\sqrt{|g|}g^{ij}X_j)\\
&=& \frac{1}{\sqrt{|g|}}\dot{\pr}_i(\sqrt{|g|}(g^{ij}-\delta^{ij})X_j+\delta^{ij}(\sqrt{|g|}-1)X_j)+\frac{1}{\sqrt{|g|}}\dot{\div}X 
\eeaa
where $\dot{\partial}$ and $\dot{\div}$ denote the derivatives and the flat divergence relative to the coordinate system defined above, 
as opposed to the frame derivatives $\pa$ and the divergence $\div$. This yields
\beaa
\dot{\div}X = \sqrt{|g|}\div X-\dot{\pr}_i\Big((\sqrt{|g|}(g^{ij}-\delta^{ij})+\delta^{ij}(\sqrt{|g|}-1))X_j\Big).  
\eeaa
Also, we have
$$\curl X=\dot{\curl}X.$$
Let $\varrho$ a smooth cut-off function localized in the coordinate patch $U$. Then, we have
\beaa
&&(\dot{\div}(\varrho X), \dot{\curl}(\varrho X))\\
 &=& \varrho\Big(\sqrt{|g|}\div X-\dot{\pr}_i\Big((\sqrt{|g|}(g^{ij}-\delta^{ij})+\delta^{ij}(\sqrt{|g|}-1))X_j\Big), \curl(X)\Big)\\
 &&+(\dot{\nabla}\varrho X, \dot{\nab}\varrho\dot{\wedge} X)\\
 &=& \Big(\varrho\sqrt{|g|}\div X-\dot{\pr}_i\Big((\sqrt{|g|}(g^{ij}-\delta^{ij})X_j+\delta^{ij}(\sqrt{|g|}-1))\varrho X_j\Big), \curl(X)\Big)\\
 &&+((\sqrt{|g|}(g^{ij}-\delta^{ij})X_j+\delta^{ij}(\sqrt{|g|}-1))\dot{\pr}_i\varrho\varrho X_j+\dot{\nabla}\varrho X, \dot{\nab}\varrho\dot{\wedge} X).
\eeaa
Let us denote by $\dot{\mathcal{D}}$ the div-curl system in coordinates, i.e.
$$\dot{\mathcal{D}}=(\dot{\div}(\varrho X), \dot{\curl}(\varrho X)).$$
We obtain
\beaa
\varrho X &=& \dot{\mathcal{D}}^{-1} \Big(\varrho\sqrt{|g|}\div X-\dot{\pr}_i\Big((\sqrt{|g|}(g^{ij}-\delta^{ij})X_j+\delta^{ij}(\sqrt{|g|}-1))\varrho X_j\Big), \curl(X)\Big)\\
 &&+\dot{\mathcal{D}}^{-1} ((\sqrt{|g|}(g^{ij}-\delta^{ij})X_j+\delta^{ij}(\sqrt{|g|}-1))\dot{\pr}_i\varrho\varrho X_j+\dot{\nabla}\varrho X, \dot{\nab}\varrho\dot{\wedge} X). 
\eeaa
Now, we use the following standard elliptic estimates on $\RRR^3$:
\beaa
\dot{\mathcal{D}}^{-1}\in \LL(L^{\frac{6}{5}}(\RRR^3), L^2(\RRR^3)),\,\, \dot{\mathcal{D}}^{-1}\dot{\partial}\in \LL(L^2(\RRR^3)),
\eeaa
where the notation $\mathcal{L}(X,Y)$ stands for the set of bounded linear operators from the space $X$ to the space $Y$. Together with our assumption on the harmonic coordinates \eqref{coorharmth1bis}, this yields in a coordinate Patch $U$
\beaa
&&\norm{\varrho X}_{L^2(U)} \\
&\les&  \norm{\div X}_{L^{\frac{6}{5}}(U)}+\norm{\curl X}_{L^{\frac{6}{5}}(U)}+\norm{(\sqrt{|g|}(g^{ij}-\delta^{ij})+\delta^{ij}(\sqrt{|g|}-1))\varrho X}_{L^2(U)}+\norm{\dot{\pr}\varrho X}_{L^{\frac{6}{5}}(U)}\\
&\les&  \norm{\div X}_{L^{\frac{6}{5}}(U)}+\norm{\curl X}_{L^{\frac{6}{5}}(U)}+\norm{g^{ij}-\delta^{ij}}_{L^\infty(U)}\norm{X}_{L^2(U)}+\norm{\dot{\pr}\varrho }_{L^{\frac{3}{2}}(U)}\norm{X}_{L^6(U)}\\
&\les&  \norm{\div X}_{L^{\frac{6}{5}}(U)}+\norm{\curl X}_{L^{\frac{6}{5}}(U)}+\delta\norm{X}_{L^2(U)}+C(\de)\norm{X}_{L^6(U)}\\
\eeaa
We then sum the contributions of the covering of $\Sigma_0$ by harmonic coordinate patches $U$ satisfying \eqref{coorharmth1bis}  together with a partition of unity $(\varrho_U)$ subordonate to the covering. Eventually increasing $C(\delta)$, we obtain
\beaa
\norm{X}_{L^2(\Si_0)} &\les&  \norm{\div X}_{L^{\frac{6}{5}}(\Si_0)}+\norm{\curl X}_{L^{\frac{6}{5}}(\Si_0)}+\delta\norm{X}_{L^2(\Si_0)}+C(\de)\norm{X}_{L^6(\Si_0)}.
\eeaa
Recall from Lemma \ref{lemma:estimatesit} that we have the freedom of choice for $\delta>0$. By choosing $\delta>0$ small enough, we finally obtain
\beaa
\norm{X}_{L^2(\Si_0)} &\les&  \norm{\div X}_{L^{\frac{6}{5}}(\Si_0)}+\norm{\curl X}_{L^{\frac{6}{5}}(\Si_0)}+\norm{X}_{L^6(\Si_0)}.
\eeaa
This concludes the proof of Lemma \ref{lemma:divcurllemmainitialslice}.

\section{Proof of \eqref{eel3}}

The goal of this appendix is to prove \eqref{eel3}. We first introduce 
 Littlewood-Paley projections on $\Si_t$.  
 These were constructed in \cite{param3} (see section 3.6 in that paper) using the heat flow on $\Si_t$. We recall below their main properties:
\begin{proposition}[Main properties of the LP $Q_j$ \cite{param3}]\label{prop:LP}
 Let $F$ a tensor on $\Sigma_t$. The LP-projections $Q_j$ on $\Si_t$ verify the following properties:

i) \quad {\sl Partition of unity}
\bea\lab{eq:partition}
\sum_jQ_j=I.
\eea

ii) \quad {\sl $L^p$-boundedness} \quad For any $1\le
p\le \infty$, and any interval $I\subset \Bbb Z$,
\bea\lab{eq:pdf1}
\|Q_IF\|_{L^p(\Si_t)}\lesssim \|F\|_{L^p(\Si_t)}
\eea

iii)\quad {\sl Finite band property}\quad For any $1\le p\le \infty$.
\begin{equation}
\begin{array}{lll}
\|\lap Q_j F\|_{L^p(\Si_t)}&\lesssim & 2^{2j} \|F\|_{L^p(\Si_t)}\\
\|Q_jF\|_{L^p(\Si_t)} &\lesssim & 2^{-2j} \|\lap F \|_{L^p(\Si_t)}.
\end{array}
\end{equation}

In addition, the $L^2$ estimates
\begin{equation}
\begin{array}{lll}
\|\nab Q_j F\|_{L^2(\Si_t)}&\lesssim & 2^{j} \|F\|_{L^2(\Si_t)}\\
\|Q_jF\|_{L^2(\Si_t)} &\lesssim & 2^{-j} \|\nab F  \|_{L^2(\Si_t)}
\end{array}
\end{equation}
hold together with the dual estimate
$$\| Q_j \nab F\|_{L^2(\Si_t)}\lesssim 2^j \|F\|_{L^2(\Si_t)}$$

iv) \quad{\sl Bernstein inequality}\quad For any $2\le p\leq +\infty$ and $j\in\mathbb{Z}$
$$\|Q_j F\|_{L^p(\Si_t)}\lesssim 2^{\frac{3}{2}(1-\frac 2p)j} \|F\|_{L^2(\Si_t)}$$
together with the dual estimates 
$$\|Q_j F\|_{L^2(\Si_t)}\lesssim 2^{\frac{3}{2}(1-\frac 2p)j} \|F\|_{L^{p'}(\Si_t)}$$
\end{proposition}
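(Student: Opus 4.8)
The plan is to follow the construction of the geometric Littlewood--Paley projections via the heat flow, exactly as in \cite{Kl-R6} and adapted to the present low-regularity setting in \cite{param3}. First I would fix a Paley--Littlewood profile $m_0$ on $(0,\infty)$, set $m_j(\tau)=2^{2j}m_0(2^{2j}\tau)$, and define $Q_jF=\int_0^\infty m_j(\tau)\,U(\tau)F\,d\tau$, where $U(\tau)$ is the heat semigroup on $\Si_t$ (built from the connection Laplacian when acting on tensors). All four properties then reduce to (a) elementary bookkeeping with the rescaled multipliers $m_j$, and (b) a short list of analytic facts about $U(\tau)$ on $\Si_t$: uniform $L^p$-boundedness, the $L^2$ Bochner identity, and Gaussian/ultracontractive kernel bounds. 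For (i), the telescoping identity $\sum_jQ_j=I$ follows from the normalization of $m_0$ together with the endpoint behaviour $U(0)=I$ and $U(\tau)F\to 0$ as $\tau\to\infty$ (the latter uses that an asymptotically flat $\Si_t$ carries no nontrivial $L^2$ harmonic tensors). For (ii), $L^p$-boundedness, one uses that $U(\tau)$ is uniformly bounded on every $L^p(\Si_t)$, so that $\|Q_IF\|_{L^p(\Si_t)}\les\big(\int|m_I(\tau)|\,d\tau\big)\|F\|_{L^p(\Si_t)}\les\|F\|_{L^p(\Si_t)}$, the last inequality being the $L^1$-normalization of the multiplier, uniform in the interval $I$.

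For (iii), the finite band property, I would write $\lap Q_jF=\int_0^\infty m_j(\tau)\,\partial_\tau U(\tau)F\,d\tau$ (using $\partial_\tau U=\lap U$) and integrate by parts in $\tau$, so that the $2^{2j}$-scaling of $\partial_\tau m_j$ produces $\|\lap Q_jF\|_{L^p(\Si_t)}\les 2^{2j}\|F\|_{L^p(\Si_t)}$; the reverse bound $\|Q_jF\|_{L^p(\Si_t)}\les 2^{-2j}\|\lap F\|_{L^p(\Si_t)}$ is the same computation with $\lap$ pulled out of the integral. The $L^2$ gradient bounds come from the Bochner identity $\|\nab U(\tau)F\|_{L^2(\Si_t)}^2=-\langle\lap U(\tau)F,U(\tau)F\rangle_{L^2(\Si_t)}+(\text{curvature term})$ on $\Si_t$, where the $L^2$ curvature bound on $\Si_t$ is precisely what controls the error term; combining this with the finite band estimate gives $\|\nab Q_jF\|_{L^2(\Si_t)}\les 2^j\|F\|_{L^2(\Si_t)}$, and the dual estimates $\|Q_j\nab F\|_{L^2(\Si_t)}\les 2^j\|F\|_{L^2(\Si_t)}$ and $\|Q_jF\|_{L^2(\Si_t)}\les 2^{-j}\|\nab F\|_{L^2(\Si_t)}$ follow by transposition.

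For (iv), the Bernstein inequality, the key input is ultracontractivity of the heat semigroup on $\Si_t$, namely $\|U(\tau)\|_{\LL(L^{p'}(\Si_t),L^2(\Si_t))}\les\tau^{-\frac32(\frac1{p'}-\frac12)}$, which in turn follows from the Sobolev inequality $\|F\|_{L^6(\Si_t)}\les\|\nab F\|_{L^2(\Si_t)}$ on $\Si_t$ --- itself a consequence of the $L^2$ bound on the curvature and the lower bound on the volume radius imported from \cite{param3}. Inserting this into the definition of $Q_j$, whose multiplier $m_j$ is concentrated at $\tau\sim 2^{-2j}$, yields $\|Q_jF\|_{L^p(\Si_t)}\les 2^{\frac32(1-\frac2p)j}\|F\|_{L^2(\Si_t)}$, and the dual estimate again follows by transposition.

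The main obstacle is item (iv) together with the curvature error term in (iii): both require quantitative heat-kernel control (ultracontractivity, Gaussian bounds, and the validity of the Bochner identity with a merely $L^2$ --- rather than $L^\infty$ --- curvature error) on a Riemannian $3$-manifold whose metric is controlled only through $\|R\|_{L^2(\Si_t)}$ and a volume-radius lower bound. This is exactly the analytic heart of the geometric Littlewood--Paley theory of \cite{Kl-R6}, reworked at this regularity level in \cite{param3}; in the present paper Proposition \ref{prop:LP} is used as a black box, so the proof here consists in invoking \cite{param3}.
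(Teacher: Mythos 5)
Your proposal matches the paper's treatment: Proposition \ref{prop:LP} is not proved in this paper but imported verbatim from \cite{param3} (ultimately from the geometric Littlewood--Paley theory of \cite{Kl-R6}), and your heat-flow construction $Q_jF=\int_0^\infty m_j(\tau)\,U(\tau)F\,d\tau$ with the reductions to $L^p$-boundedness of $U(\tau)$, integration by parts in $\tau$, the $L^2$ integration-by-parts identity, and ultracontractivity is exactly the argument of those references. Your closing paragraph correctly identifies both that the proposition is used here as a black box and where the genuine analytic difficulty lies (Bernstein and the heat-kernel bounds for tensors under only $L^2$ curvature control and a volume-radius lower bound).
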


We now rely on Proposition \ref{prop:LP} to prove \eqref{eel3}. Using  Proposition \ref{prop:LP}, we have for any scalar function $v$ on $\Si_t$:

\beaa
\norm{(-\Delta)^{-1}v}_{\lt{\infty}}&\les& \sum_{j\in\mathbb{Z}}\norm{Q_j(-\Delta)^{-1}v}_{\lt{\infty}}\\
&\les& \sum_{j\in\mathbb{Z}}2^{\frac{3j}{2}}\norm{Q_j(-\Delta)^{-1}f}_{\lt{2}}\\
&\les& \sum_{j\in\mathbb{Z}}2^{-\frac{j}{2}}\norm{Q_jf}_{\lt{2}}\\
&\les& \left(\sum_{j\geq 0}2^{-\frac{j}{14}}\right)\norm{f}_{\lt{\frac{14}{9}}}+\left(\sum_{j< 0}2^{\frac{j}{13}}\right)\norm{f}_{\lt{\frac{13}{9}}}\\
&\les& \norm{f}_{\lt{\frac{14}{9}}}+\norm{f}_{\lt{\frac{13}{9}}}.
\eeaa
This concludes the proof of \eqref{eel3}.

\section{Proof of Lemma \ref{lemma:maru}}\lab{sec:prooflemmamaru}

Recall that $3<p<+\infty$ and $v$ is the solution of
$$\Delta v=f.$$
In view of Lemma \ref{lemma:moser}, we have
\bea\lab{aprioriboundLptostart}
\norm{v}_{L^p(\Si_t)}\les \norm{f}_{L^{\frac{3p}{2p+3}}(\Si_t)}.
\eea

Next, we need to derive an estimate for $\pr v$. We proceed as in the proof of Lemma \ref{lemma:divcurllemmainitialslice}. Using the harmonic coordinate system on $\Si_t$ of Lemma \ref{lemma:estimatesit}, we have
\beaa
\Delta v &=& \frac{1}{\sqrt{|g|}}\dot{\pr}_i(\sqrt{|g|}g^{ij}\dot{\pr}_jv)\\
&=& \dot{\De}v+\frac{1}{\sqrt{|g|}}\dot{\pr}_i\Big(\sqrt{|g|}(g^{ij}-\de^{ij})\dot{\pr}_jv+(\sqrt{|g|}-1)\de^{ij}\dot{\pr}_jv\Big)
\eeaa
This yields
\beaa
 \dot{\De}v &=& f -\frac{1}{\sqrt{|g|}}\dot{\pr}_i\Big(\sqrt{|g|}(g^{ij}-\de^{ij})\dot{\pr}_jv+(\sqrt{|g|}-1)\de^{ij}\dot{\pr}_jv\Big).
 \eeaa
Let $\varrho$ a smooth cut-off function localized in the coordinate patch $U$. Then, we have
\bea\lab{wed5}
 &&\dot{\De}(\varrho v)\\
 \nn&=& \varrho f -\frac{\varrho}{\sqrt{|g|}}\dot{\pr}_i\Big(\sqrt{|g|}(g^{ij}-\de^{ij})\dot{\pr}_jv+(\sqrt{|g|}-1)\de^{ij}\dot{\pr}_jv\Big) -2\dot{\nab}\varrho\c\dot{\nab}v-\dot{\De}\varrho v\\
 \nn&=& \varrho f -\dot{\pr}_i\left(\varrho(g^{ij}-\de^{ij})\dot{\pr}_jv+\varrho\left(1-\frac{1}{\sqrt{|g|}}\right)\de^{ij}\dot{\pr}_jv\right) -2\dot{\nab}\varrho\c\dot{\nab}v-\dot{\De}\varrho v\\
 \nn&=& \varrho f -\dot{\pr}_i\left(\varrho(g^{ij}-\de^{ij})\dot{\pr}_jv+\varrho\left(1-\frac{1}{\sqrt{|g|}}\right)\de^{ij}\dot{\pr}_jv\right) -2\dot{\div}(v\dot{\nab}\varrho)+\dot{\De}\varrho v.
 \eea
Now, we use the following standard elliptic estimates on $\RRR^3$ for any $3<p<+\infty$:
\beaa
(-\dot{\Delta})^{-1}\dot{\partial}_i\in \LL(L^{\frac{3p}{2p+3}}(\RRR^3), L^{\frac{3p}{p+3}}(\RRR^3)),\,\, (-\dot{\Delta})^{-1}\dot{\partial}^2_{ij}\in \LL(L^{\frac{3p}{p+3}}(\RRR^3)).
\eeaa
Together with \eqref{wed5} and our assumptions on the harmonic coordinates \eqref{coorharmth1bis} \eqref{coorharmth2bis}, this yields in a coordinate Patch $U$:
\beaa
&&\norm{\pr(\varrho v)}_{L^\frac{3p}{p+3}(U)}\\
&\les& \norm{f}_{L^\frac{3p}{2p+3}(U)}+\norm{(g^{ij}-\de^{ij})\dot{\pr}_jv}_{L^\frac{3p}{p+3}(U)}+\norm{(\sqrt{|g|}-1)\dot{\pr}_jv}_{L^\frac{3p}{p+3}(U)}+\norm{\dot{\nab}\rho v}_{L^\frac{3p}{p+3}(U)}\\
&&+ \norm{\dot{\De}\varrho v}_{L^\frac{3p}{2p+3}(U)}\\
&\les& \norm{f}_{L^\frac{3p}{2p+3}(U)}+\norm{g^{ij}-\de^{ij}}_{L^\infty(U)}\norm{\dot{\pr}_jv}_{L^\frac{3p}{p+3}(U)}+(\norm{\dot{\nab}\varrho}_{L^3(U)}+\norm{\dot{\De}\varrho}_{L^{\frac{3}{2}}(U)})\norm{v}_{L^p(U)}\\
&\les& \norm{f}_{L^\frac{3p}{2p+3}(U)}+\de\norm{\pr v}_{L^\frac{3p}{p+3}(U)}+C(\de)\norm{v}_{L^p(U)}.
\eeaa
We then sum the contributions of the covering of $\Sigma_t$ by harmonic coordinate patches $U$ satisfying \eqref{coorharmth1bis}  together with a partition of unity $(\varrho_U)$ subordonate to the covering. Eventually increasing $C(\delta)$, we obtain
\beaa
\norm{\pr v}_{L^\frac{3p}{p+3}(\Sigma_t)}&\les& \norm{f}_{L^\frac{3p}{2p+3}(\Sigma_t)}+\de\norm{\pr v}_{L^\frac{3p}{p+3}(\Sigma_t)}+C(\de)\norm{v}_{L^p(\Sigma_t)}.
\eeaa
Recall from Lemma \ref{lemma:estimatesit} that we have the freedom of choice for $\delta>0$. By choosing $\delta>0$ small enough, we finally obtain
\beaa
\norm{\pr v}_{L^\frac{3p}{p+3}(\Sigma_t)}&\les& \norm{f}_{L^\frac{3p}{2p+3}(\Sigma_t)}+\norm{v}_{L^p(\Sigma_t)}
\eeaa
which together with \eqref{aprioriboundLptostart} yields
\beaa
\norm{v}_{L^p(\Sigma_t)}+\norm{\pr v}_{L^\frac{3p}{p+3}(\Sigma_t)}&\les& \norm{f}_{L^\frac{3p}{2p+3}(\Sigma_t)}.
\eeaa
This concludes the proof of Lemma \ref{lemma:maru}.

\section{Proof of Lemma \ref{lemma:marubis}}\lab{sec:prooflemmamarubis}

Recall that $3<p<+\infty$  and $v$ is the solution of
$$\Delta v=\pr f.$$
In view of Lemma \ref{lemma:maru}, we have
$$\pr(-\De)^{-1}\in \LL(L^{\frac{3q}{2q+3}}(\Si_t), L^{\frac{3q}{q+3}}(\Si_t))\textrm{ for any }3<q<+\infty.$$
Taking the dual, we infer
$$(-\De)^{-1}\pr\in \LL(L^{\frac{3q}{2q-3}}(\Si_t), L^{\frac{3q}{q-3}}(\Si_t))\textrm{ for any }3<q<+\infty.$$
In particular, choosing
$$q=\frac{3p}{p-3}\in (3,+\infty)$$
we obtain
$$(-\De^{-1})\pr\in \LL(L^{\frac{3p}{p+3}}(\Si_t), L^p(\Si_t))\textrm{ for any }3<q<+\infty.$$
Since
$$v=-(-\De)^{-1}\pr f,$$
we deduce
\bea\lab{aprioriboundLptostartbis}
\norm{v}_{L^p(\Si_t)}\les \norm{f}_{L^{\frac{3p}{p+3}}(\Si_t)}.
\eea

Next, we need to derive an estimate for $\pr v$. We proceed as in the proof of Lemma \ref{lemma:maru}. We have the harmonic coordinate system on $\Si_t$ of Lemma \ref{lemma:estimatesit}
\beaa
 \dot{\De}v &=& \pr f -\frac{1}{\sqrt{|g|}}\dot{\pr}_i\Big(\sqrt{|g|}(g^{ij}-\de^{ij})\dot{\pr}_jv+(\sqrt{|g|}-1)\de^{ij}\dot{\pr}_jv\Big).
 \eeaa
Let $\varrho$ a smooth cut-off function localized in the coordinate patch $U$. Then, we have
\bea\lab{wed5bis}
 &&\dot{\De}(\varrho v)\\
 \nn&=& \varrho \pr f -\frac{\varrho}{\sqrt{|g|}}\dot{\pr}_i\Big(\sqrt{|g|}(g^{ij}-\de^{ij})\dot{\pr}_jv+(\sqrt{|g|}-1)\de^{ij}\dot{\pr}_jv\Big) -2\dot{\nab}\varrho\c\dot{\nab}v-\dot{\De}\varrho v\\
 \nn&=& \pr(\varrho f)-\pr\varrho f -\dot{\pr}_i\left(\varrho(g^{ij}-\de^{ij})\dot{\pr}_jv+\varrho\left(1-\frac{1}{\sqrt{|g|}}\right)\de^{ij}\dot{\pr}_jv\right) -2\dot{\div}(v\dot{\nab}\varrho)+\dot{\De}\varrho v.
 \eea
Now, we use the following standard elliptic estimates on $\RRR^3$ for any $3<p<+\infty$:
\beaa
(-\dot{\Delta})^{-1}\dot{\partial}_i\in \LL(L^{\frac{3p}{2p+3}}(\RRR^3), L^{\frac{3p}{p+3}}(\RRR^3)),\,\, (-\dot{\Delta})^{-1}\dot{\partial}^2_{ij}\in \LL(L^{\frac{3p}{p+3}}(\RRR^3)).
\eeaa
Together with \eqref{wed5bis} and our assumptions on the harmonic coordinates \eqref{coorharmth1bis} \eqref{coorharmth2bis}, this yields in a coordinate Patch $U$:
\beaa
&&\norm{\pr(\varrho v)}_{L^\frac{3p}{p+3}(U)}\\
&\les& \norm{\varrho f}_{L^\frac{3p}{p+3}(U)}+\norm{\pr\varrho f}_{L^\frac{3p}{2p+3}(U)}+\norm{(g^{ij}-\de^{ij})\dot{\pr}_jv}_{L^\frac{3p}{p+3}(U)}+\norm{(\sqrt{|g|}-1)\dot{\pr}_jv}_{L^\frac{3p}{p+3}(U)}\\
&&+\norm{\dot{\nab}\rho v}_{L^\frac{3p}{p+3}(U)}+ \norm{\dot{\De}\varrho v}_{L^\frac{3p}{2p+3}(U)}\\
&\les& \norm{f}_{L^\frac{3p}{p+3}(U)}+\norm{g^{ij}-\de^{ij}}_{L^\infty(U)}\norm{\dot{\pr}_jv}_{L^\frac{3p}{p+3}(U)}+(\norm{\dot{\nab}\varrho}_{L^3(U)}+\norm{\dot{\De}\varrho}_{L^{\frac{3}{2}}(U)})\norm{v}_{L^p(U)}\\
&\les& \norm{f}_{L^\frac{3p}{p+3}(U)}+\de\norm{\pr v}_{L^\frac{3p}{p+3}(U)}+C(\de)\norm{v}_{L^p(U)}.
\eeaa
We then sum the contributions of the covering of $\Sigma_t$ by harmonic coordinate patches $U$ satisfying \eqref{coorharmth1bis}  together with a partition of unity $(\varrho_U)$ subordonate to the covering. Eventually increasing $C(\delta)$, we obtain
\beaa
\norm{\pr v}_{L^\frac{3p}{p+3}(\Sigma_t)}&\les& \norm{f}_{L^\frac{3p}{p+3}(\Sigma_t)}+\de\norm{\pr v}_{L^\frac{3p}{p+3}(\Sigma_t)}+C(\de)\norm{v}_{L^p(\Sigma_t)}.
\eeaa
Recall from Lemma \ref{lemma:estimatesit} that we have the freedom of choice for $\delta>0$. By choosing $\delta>0$ small enough, we finally obtain
\beaa
\norm{\pr v}_{L^\frac{3p}{p+3}(\Sigma_t)}&\les& \norm{f}_{L^\frac{3p}{p+3}(\Sigma_t)}+\norm{v}_{L^p(\Sigma_t)}
\eeaa
which together with \eqref{aprioriboundLptostartbis} yields
\beaa
\norm{v}_{L^p(\Sigma_t)}+\norm{\pr v}_{L^\frac{3p}{p+3}(\Sigma_t)}&\les& \norm{f}_{L^\frac{p}{p+3}(\Sigma_t)}.
\eeaa
This concludes the proof of Lemma \ref{lemma:marubis}.

\section{Proof of Lemma \ref{lemma:commutation}}

The goal of this appendix is to prove Lemma \ref{lemma:commutation}. The commutation formula \eqref{commsquare2} has already been proved at the beginning of section \ref{sec:bobo5}. Thus, it only remains to prove the commutation formula \eqref{commsquare3}.  Recalling \eqref{eq:YM16},
\beaa
\square\phi=-\pr_0(\pr_0\phi)+\Delta\phi+n^{-1}\nabla n\cdot\nabla\phi, 
\eeaa
 Thus, we have:
\bea\label{commutsquaredelta1}
[\square,\Delta]\phi&=&[-\pr_0\pr_0+n^{-1}\nabla n\c\nabla+\Delta,\Delta]\phi\\
\nn&=& -[\pr_0\pr_0,\Delta]\phi+[n^{-1}\nabla n\c\nabla,\Delta]\phi.
\eea
We thus have to calculate the commutators $[\pr^2_0,\Delta]\phi$ and 
$[n^{-1}\nabla n\c\nabla,\Delta]\phi$. For any tensor $U$ tangent to $\Si_t$, we denote by $\nabla_0U$ the projection of $\D_0U$ to $\Si_t$. We have the following commutator formula 
for any vectorfield $U$ tangent to $\Si_t$:
\bea\lab{commutnablapr0:t}
[\nabla_b, \nabla_0]U_a=k_{bc}\nabla_cU_a-n^{-1}\nabla_bn \nabla_0U_a+(n^{-1}k_{ab}\nabla_cn-n^{-1}k_{bc}\nabla_an+\R_{0abc})U_c,
\eea 
while for a scalar $\phi$, the commutator formula reduces to:
\bea\lab{commutnablapr0:s}
[\nabla_b, \nabla_0]\phi=k_{bc}\nabla_c\phi-n^{-1}\nabla_bn \pr_0\phi.
\eea 
Using the commutator formulas \eqref{commutnablapr0:t} and \eqref{commutnablapr0:s} and the fact that $[\pr_0,\Delta]\phi=[\nabla_0,\nabla^a]\nabla_a\phi+\nabla^a[\nabla_0,\nabla_a]\phi$, we obtain:
\bea\lab{commutdeltapr0}
[\pr_0,\Delta]\phi=-2k^{ab}\nabla_a\nabla_b\phi+2n^{-1}\nabla_bn\nabla_b(\pr_0\phi)+n^{-1}\Delta n\pr_0\phi-2n^{-1}\nabla_an k^{ab}\nabla_b\phi,
\eea
where we used the constraint equation \eqref{constk} and the fact that, in view
of the Einstein equations and the symmetries of $\R$, we have:
$$\g^{ab}\R_{0abc}=0.$$
Differentiating the commutator formula \eqref{commutdeltapr0} with respect to $\pr_0$ and using the commutator formulas \eqref{commutnablapr0:t} and \eqref{commutnablapr0:s}, we obtain:
\beaa
&&\pr_0([\pr_0,\Delta]\phi)\\
&=&-2k^{ab}\nabla_a\nabla_b(\pr_0\phi)+2n^{-1}\nabla_bn\nabla_b(\pr_0(\pr_0\phi))+(-2\nabla_0k^{ab}+4k^{ac}k_c\,^b)\nabla_a\nabla_b\phi\\
&&+(2n^{-1}\nabla_b(\pr_0n)-10k^{ab}n^{-1}\nabla_an)\nabla_b(\pr_0\phi)+(n^{-1}\Delta n+2n^{-2}|\nabla n|^2)\pr_0(\pr_0\phi)\\
&&+(2k^{ac}\R_{0acb}+2k^{ac}\nabla_ck_{ab}-2n^{-1}\nabla_an\nabla_0k^{ab}+2k^{ab}n^{-1}\nabla_a(\pr_0n)+4k^{ac}k_{cb}n^{-1}\nabla_an\\
&&+2|k|^2n^{-1}\nabla_bn-2k^{ab}n^{-2}\nabla_an\pr_0n)\nabla_b\phi\\
&&+(n^{-1}\Delta(\pr_0n)-4k^{ab}n^{-1}\nabla_a\nabla_bn+2n^{-2}\nabla_bn\nabla_b(\pr_0n))\pr_0\phi.
\eeaa
Together with the commutator formula \eqref{commutdeltapr0} applied to $\pr_0\phi$, we obtain:
\bea\label{commutdeltapr00}
&&[\pr_0\pr_0,\Delta]\phi\\
\nn&=&[\pr_0,\Delta]\pr_0\phi+\pr_0([\pr_0,\Delta]\phi)\\
\nn&=&-4k^{ab}\nabla_a\nabla_b(\pr_0\phi)+4n^{-1}\nabla_bn\nabla_b(\pr_0(\pr_0\phi))+(-2\nabla_0k^{ab}+4k^{ac}k_c\,^b)\nabla_a\nabla_b\phi\\
\nn&&+(2n^{-1}\nabla_b(\pr_0n)-12k^{ab}n^{-1}\nabla_an)\nabla_b(\pr_0\phi)+(2n^{-1}\Delta n+2n^{-2}|\nabla n|^2)\pr_0(\pr_0\phi)\\
\nn&&+(2k^{ac}\R_{0acb}+2k^{ac}\nabla_ck_{ab}-2n^{-1}\nabla_an\nabla_0k^{ab}+2k^{ab}n^{-1}\nabla_a(\pr_0n)+4k^{ac}k_{cb}n^{-1}\nabla_an\\
\nn&&+2|k|^2n^{-1}\nabla_bn-2k^{ab}n^{-2}\nabla_an\pr_0n)\nabla_b\phi\\
\nn&&+(n^{-1}\Delta(\pr_0n)-4k^{ab}n^{-1}\nabla_a\nabla_bn+2n^{-2}\nabla_bn\nabla_b(\pr_0n))\pr_0\phi.
\eea

We also compute the commutator $[n^{-1}\nabla n\nabla,\Delta]\phi$:
\beaa
&&[n^{-1}\nabla n\nabla,\Delta]\phi\\
&=& -\Delta(n^{-1}\nabla_bn)\nabla_b\phi-\nabla_a(n^{-1}\nabla_bn)\nabla_a\nabla_b\phi+n^{-1}\nabla_bn[\nabla_b,\Delta]\phi\\
&=& -n^{-1}\nabla_b(\Delta n)\nabla_b\phi-n^{-1}[\Delta,\nabla_b]n\nabla_b\phi+n^{-2}\nabla_an\nabla_a\nabla_bn\nabla_b\phi\\
&&+n^{-2}\nabla_bn\nabla_an\nabla_a\nabla_b\phi-n^{-1}\nabla_a\nabla_bn\nabla_a\nabla_b\phi+n^{-1}\nabla_bn[\nabla_b,\Delta]\phi.
\eeaa
Now, we have the following commutator formula:
\bea\label{commutdeltanabla}
[\nabla_b,\Delta]\phi=R_{b}\,^{c} \nab_c\phi=  (\R_{b00}\,^{c}+k_{bd}k^{dc} )\nabla_c\phi,
\eea
where we used the Gauss equation for $R$, the Einstein equations for $\R$ and the maximal foliation assumption. Thus, we obtain:
\bea\label{commutdeltanablannabla}
&&[n^{-1}\nabla n\nabla,\Delta]\phi\\
\nn &=& (-n^{-1}\nabla_a\nabla_bn+n^{-2}\nabla_bn\nabla_an)\nabla_a\nabla_b\phi+(-n^{-1}\nabla_b(\Delta n)+n^{-2}\nabla_an\nabla_a\nabla_bn\\
\nn&&+2(\R_{b00a}+k_{ba}k_a\,^c)n^{-1}\nabla_an)\nabla_b\phi.
\eea
Finally, \eqref{commutsquaredelta1}, \eqref{commutdeltapr00} and \eqref{commutdeltanablannabla} yield:
\beaa
&&[\pr_0\pr_0,\Delta]\phi\\
\nn&=&[\pr_0,\Delta]\pr_0\phi+\pr_0([\pr_0,\Delta]\phi)\\
\nn&=&-4k^{ab}\nabla_a\nabla_b(\pr_0\phi)+4n^{-1}\nabla_bn\nabla_b(\pr_0(\pr_0\phi))\\
\nn&&+(-2\nabla_0k^{ab}+4k^{ac}k_c\,^b-n^{-1}\nabla_a\nabla_bn+n^{-2}\nabla_bn\nabla_an)\nabla_a\nabla_b\phi\\
\nn&&+(2n^{-1}\nabla_b(\pr_0n)-12k^{ab}n^{-1}\nabla_an)\nabla_b(\pr_0\phi)+(2n^{-1}\Delta n+2n^{-2}|\nabla n|^2)\pr_0(\pr_0\phi)\\
\nn&&+(2k^{ac}\R_{0acb}+2k^{ac}\nabla_ck_{ab}-2n^{-1}\nabla_an\nabla_0k^{ab}+2k^{ab}n^{-1}\nabla_a(\pr_0n)+4k^{ac}k_{cb}n^{-1}\nabla_an\\
\nn&&+2|k|^2n^{-1}\nabla_bn-2k^{ab}n^{-2}\nabla_an\pr_0n-n^{-1}\nabla_b(\Delta n)+n^{-2}\nabla_an\nabla_a\nabla_bn\\
&&+2(\R_{b00a}+k_{ba}k_a\,^c)n^{-1}\nabla_an)\nabla_b\phi\\
&&+(n^{-1}\Delta(\pr_0n)-4k^{ab}n^{-1}\nabla_a\nabla_bn+2n^{-2}\nabla_bn\nabla_b(\pr_0n))\pr_0\phi,
\eeaa
from which \eqref{commsquare3} easily follows. This concludes the proof of Lemma \ref{lemma:commutation}.

\end{document}